\documentclass[twoside,a4paper,reqno,11pt]{amsart} 
\usepackage[top=28mm,right=28mm,bottom=28mm,left=28mm]{geometry}

\usepackage{amsfonts, amsmath, amssymb, mathrsfs, bm, latexsym, stmaryrd, array, hyperref, mathtools, bold-extra}

\renewcommand{\a}{\alpha}
\renewcommand{\b}{\beta}

\newcommand{\e}{\varepsilon}
\renewcommand{\l}{\lambda} 

\newcommand{\s}{\sigma}
\renewcommand{\O}{\Omega}

\newcommand{\C}{\mathcal{C}}

\newcommand{\la}{\langle}
\newcommand{\ra}{\rangle}

\renewcommand{\to}{\rightarrow}

\newcommand{\leqs}{\leqslant}
\newcommand{\geqs}{\geqslant}

\newcommand{\vs}{\vspace{2mm}}

\makeatletter
\newcommand{\imod}[1]{\allowbreak\mkern4mu({\operator@font mod}\,\,#1)}
\makeatother

\newtheorem{theorem}{Theorem} 
\newtheorem{theoremm}{Theorem}

\newtheorem*{conj*}{Conjecture}

\newtheorem{conj}[theorem]{Conjecture}

\newtheorem{thm}{Theorem}[section] 
\newtheorem{prop}[thm]{Proposition} 
\newtheorem{lem}[thm]{Lemma}
 
\newtheorem{con}[thm]{Conjecture}

\theoremstyle{definition}
\newtheorem{rem}[thm]{Remark}
\newtheorem{remk}[theorem]{Remark}

\begin{document}

\author{Timothy C. Burness}
\address{T.C. Burness, School of Mathematics, University of Bristol, Bristol BS8 1UG, UK}
\email{t.burness@bristol.ac.uk}

\author{Marco Fusari}
\address{M. Fusari, Dipartimento di Matematica ``Felice Casorati”, University of Pavia, Via Ferrata 5, 27100 Pavia, Italy}
\email{lucamarcofusari@gmail.com}
 
\title[On derangements in simple permutation groups]{On derangements in simple permutation groups}

\dedicatory{\rm Dedicated to Martin Liebeck on the occasion of his 70th birthday} 

\begin{abstract}
Let $G \leqslant {\rm Sym}(\Omega)$ be a finite transitive permutation group and recall that an element in $G$ is a derangement if it has no fixed points on $\O$. Let $\Delta(G)$ be the set of derangements in $G$ and define $\delta(G) = |\Delta(G)|/|G|$ and $\Delta(G)^2 = \{ xy \,:\, x,y \in \Delta(G)\}$. In recent years, there has been a focus on studying derangements in simple groups, leading to several remarkable results. For example, by combining a theorem of Fulman and Guralnick with recent work by Larsen, Shalev and Tiep, it follows that $\delta(G) \geqs 0.016$ and $G = \Delta(G)^2$ for all sufficiently large simple transitive groups $G$. In this paper, we extend these results in several directions. For example, we prove that $\delta(G) \geqs 89/325$ and $G = \Delta(G)^2$ for all finite simple primitive groups with soluble point stabilisers, without any order assumptions, and we show that the given lower bound on $\delta(G)$ is best possible. We also prove that every finite simple transitive group can be generated by two conjugate derangements, and we present several new results on derangements in arbitrary primitive permutation groups.
\end{abstract}

\date{\today}

\maketitle

\setcounter{tocdepth}{1}
\tableofcontents

\section{Introduction}\label{s:intro}

Let $G$ be a finite transitive permutation group on a set $\Omega$ with $|\O| \geqs 2$ and point stabiliser $H = G_{\a}$. Recall that an element of $G$ is a \emph{derangement} if it has no fixed points on $\O$. We write $\Delta(G)$ for the set of derangements in $G$ (sometimes we will use $\Delta(G,\O)$ or $\Delta(G,H)$, if we need to specify $\O$ or $H$). By a classical theorem of Jordan \cite{Jordan2}, published in 1872, we know that $\Delta(G)$ is non-empty. This elementary observation leads naturally to a wide range of problems and applications that have been intensively studied in recent years (for instance, see Serre's article \cite{Serre} for interesting applications in number theory and topology).

In one direction, there is an extensive literature concerning the existence of derangements with specified properties. A well known theorem of Fein, Kantor and Schacher \cite{FKS}, which relies on the Classification of Finite Simple Groups (CFSG), shows that every group $G$ as above contains a derangement of prime power order, which in turn has important number-theoretic applications concerning the structure of Brauer groups of global field extensions (see \cite{FKS}). However, transitive groups with no derangements of prime order do exist. For example, the smallest Mathieu group ${\rm M}_{11}$, viewed as a primitive permutation group of degree $12$, does not contain a prime order derangement. These so called \emph{elusive groups} have been the subject of numerous papers and they are closely related to some interesting open problems, such as the \emph{Polycirculant Conjecture} in algebraic graph theory (see the survey article \cite{AAS} and \cite[Section 1.3]{BGiu} for further details).

In a different direction, it is natural to consider the proportion of derangements in $G$, 
\[
\delta(G) = \delta(G,\O) = \delta(G,H) = \frac{|\Delta(G)|}{|G|},
\]
which one can view as the probability that a uniformly random element in $G$ has no fixed points. This has been widely studied since the 1990s and there has been a special interest in determining lower bounds. For example, a theorem of Cameron and Cohen \cite{CC} shows that $\delta(G) \geqs |\O|^{-1}$, with equality if and only if $G$ is sharply $2$-transitive. And by applying CFSG, the groups with $\delta(G) < 2|\O|^{-1}$ have been determined by Guralnick and Wan \cite{GW}, motivated by applications to curves over finite fields in arithmetic geometry. The latter two results have been generalised in a recent preprint of Garzoni \cite{Garzoni}, where the  main theorem states that if $G$ is primitive and $|\O|$ is sufficiently large, then either $G$ is a Frobenius group or 
\[
\delta(G) \geqs \frac{|\O|^{\frac{1}{2}}+1}{2|\O|}.
\]
This lower bound is best possible and it settles a conjecture of Guralnick and Tiep \cite[p.272]{GT03} on primitive affine groups.

Perhaps the most striking result on the proportion of derangements is the following deep theorem of Fulman and Guralnick, which establishes a conjecture of Boston and Shalev from the 1990s (see \cite{Boston}). The proof is presented in the sequence of papers \cite{FG1, FG2, FG3, FG4}.

\begin{theorem}[Fulman \& Guralnick]\label{t:FG0}
There is an absolute constant $\e > 0$ such that $\delta(G) \geqs \e$ for every finite simple transitive permutation group $G$.
\end{theorem}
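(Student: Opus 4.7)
The plan is to prove the bound by a CFSG-based case analysis. First I would reduce to primitive actions: if $H = G_\a$ is contained in a maximal subgroup $M < G$, then every derangement of $G$ on $G/M$ is automatically a derangement on $G/H$, so $\delta(G,H) \geqs \delta(G,M)$, and it suffices to bound $\delta(G)$ when $G$ is primitive. I would then invoke CFSG to split the problem into four families: sporadic groups (a finite list, handled by direct verification and contributing a positive minimum); alternating groups $A_n$ with $n \geqs 5$; classical groups of Lie type; and exceptional groups of Lie type.

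For the alternating groups, the approach is to apply the O'Nan--Scott theorem to classify the primitive actions of $A_n$ and $S_n$, and to estimate $\delta(G)$ type by type. The principal ``combinatorial'' cases---the natural action on $n$ points and the actions on $k$-subsets and uniform partitions---can be attacked via the cycle index of $S_n$: a permutation fixes some $k$-subset iff some subset of its cycle lengths sums to $k$, and classical estimates on random permutation statistics yield a uniform lower bound on the proportion of permutations avoiding this. The remaining O'Nan--Scott types (product, diagonal, twisted wreath, affine, almost simple) all have point stabilisers small enough in $|G|$ that an elementary union bound, combined with induction on a smaller simple group, suffices.

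For classical and exceptional groups of Lie type, I would apply Aschbacher's theorem to reduce to maximal subgroups $H$ lying in one of the geometric classes $\C_1,\ldots,\C_8$ or the almost simple class $\ms$. The basic estimate is
\[
1 - \delta(G) \;=\; \frac{1}{|G|}\Bigl|\bigcup_{h \in H} h^G\Bigr| \;\leqs\; \sum_{C} \frac{|C|}{|G|},
\]
where the sum runs over $G$-conjugacy classes $C$ with $C \cap H \neq \emptyset$. For subspace stabilisers (class $\C_1$), the plan is to show, via the cycle-index/generating-function machinery for matrices over $\F$ developed by Fulman, that with probability bounded away from $0$ uniformly in $n$ and $q$, the characteristic polynomial of a random element of $G$ has an irreducible factor whose degree forbids a fixed subspace of the prescribed dimension. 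For the remaining geometric classes and for $\ms$, I would combine Liebeck-type bounds $|H| \leqs |G|^{1-c}$ with fixed-point-ratio estimates of the form $\fpr(g) \leqs |g^G|^{-s}$ (Liebeck--Saxl, Liebeck--Shalev, Burness) to show $1-\delta(G) \to 0$ as $|G| \to \infty$; base cases in bounded rank are handled by direct computation.

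The hardest part is expected to be the class $\ms$ for classical groups, where $H$ is almost simple acting irreducibly and the subgroup structure is not geometric, so one cannot exploit the natural module directly. The argument there relies on the sharpest available fixed-point-ratio bounds combined with a careful low-rank case division. Exceptional groups of Lie type follow a similar template using Lawther--Liebeck--Seitz bounds. Since the scheme combines many CFSG-dependent inputs, the constant $\e$ produced this way will be positive but not explicit; quantitative refinements (such as the $0.016$ mentioned in the abstract) would require further ingredients, notably Larsen--Shalev--Tiep's character bounds.
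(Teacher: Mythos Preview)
The paper does not give its own proof of this theorem: it is quoted as a result of Fulman and Guralnick, with the proof deferred to the cited sequence \cite{FG1,FG2,FG3,FG4} (and restated for reference as Theorem~\ref{t:FG}). So there is no in-paper argument to compare against; what you have written is a sketch of the strategy in those references, and at that level of resolution it is broadly accurate: reduce to maximal $H$, split by CFSG, treat sporadic groups by inspection, treat alternating groups combinatorially, and treat groups of Lie type via Aschbacher's theorem, handling subspace actions with generating-function/cycle-index methods and the remaining Aschbacher classes with bounds of Liebeck--Shalev/Burness type on $|H|$ and on fixed-point ratios.

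Two points where your outline drifts. For $G=A_n$, the O'Nan--Scott theorem is not the right organising principle: the relevant trichotomy is whether the maximal subgroup $H$ acts intransitively, imprimitively, or primitively on $\{1,\ldots,n\}$. The first two cases are the $k$-subset and partition actions you mention; the third is not handled by ``small stabiliser plus induction'' but by the theorem of \L{}uczak and Pyber \cite{LP} that the proportion of elements of $S_n$ lying in \emph{some} proper primitive subgroup tends to $0$ (this is exactly the reduction used in Section~\ref{ss:prop_alt} of the present paper). Second, the explicit constant $0.016$ for sufficiently large $G$ comes from \cite{FG4} itself and does not rely on Larsen--Shalev--Tiep character bounds; the LST work \cite{LST} cited here concerns derangement \emph{width} ($G=\Delta(G)^2$), not the proportion $\delta(G)$.
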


The constant $\e$ is undetermined, although \cite[Theorem 1.1]{FG4} states that one can take $\e = 0.016$ for all sufficiently large simple groups. 

Since $\Delta(G)$ is a normal subset, it is also natural to consider the analogous problem for conjugacy classes. Here recent work of Eberhard and Garzoni \cite{EG} shows that the proportion of conjugacy classes consisting of derangements in simple transitive groups of Lie type is also bounded away from zero (it is easy to see that  the conclusion is false for alternating groups). It is interesting to note that the latter result extends to almost simple groups of Lie type, whereas examples can be constructed to show that $\delta(G)$ can be arbitrarily small in the almost simple setting (we will return to this below).

In order to state our first result, it will be convenient to define
\[
\a(G) = \min\{\delta(G,H) \,:\, \mbox{$H < G$ core-free}\},
\]
where we recall that a subgroup $H$ of $G$ is \emph{core-free} if $\bigcap_{g \in G}H^g = 1$ (equivalently, the natural transitive action of $G$ on $G/H$ is faithful). Then Theorem \ref{t:FG0} implies that there is an absolute (and undetermined) constant $\e>0$ such that $\a(G) \geqs \e$ for every simple group $G$. In Theorem \ref{t:main1} below, we show that $\e = 1/e$ is asymptotically the best possible constant for alternating groups, and we determine the optimal constant for every sporadic group. Note that Table \ref{tab:spor} is presented in Section \ref{ss:prop_spor}. Throughout this paper, whenever we use the term ``simple group", we implicitly assume the group is non-abelian (and we adopt the notation for simple groups used in \cite{KL}).

\begin{theoremm}\label{t:main1}
Let $G$ be a simple alternating or sporadic group. 

\vspace{1mm}

\begin{itemize}\addtolength{\itemsep}{0.2\baselineskip}
\item[{\rm (i)}] If $G = A_n$, then $\a(G) \to 1/e$ as $n \to \infty$.
\item[{\rm (ii)}] If $G$ is a sporadic group, then $\a(G)$ is recorded in Table \ref{tab:spor}. In particular, we have $\a(G) \geqs 2197/7425$, with equality if and only if $G$ is the McLaughlin group ${\rm McL}$.
\end{itemize}
\end{theoremm}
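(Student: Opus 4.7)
The plan rests on the following monotonicity observation: if $K \leqs H < G$ are proper subgroups of $G$, then $aKa^{-1} \subseteq aHa^{-1}$ for every $a \in G$, so $\Delta(G,H) \subseteq \Delta(G,K)$ and hence $\delta(G,K) \geqs \delta(G,H)$. Consequently the minimum defining $\a(G)$ is always attained on a maximal core-free subgroup, and throughout the argument it suffices to consider maximal subgroups.

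For part (i), I would first establish the upper bound $\limsup \a(A_n) \leqs 1/e$ using the natural action of degree $n$. The classical inclusion--exclusion identity gives $|\Delta(S_n, S_{n-1})|/n! = \sum_{k=0}^{n} (-1)^k/k! \to 1/e$, and a parity refinement (splitting the derangements of $S_n$ according to sign) yields $\delta(A_n, A_{n-1}) \to 1/e$ likewise. The matching asymptotic lower bound requires showing that along any sequence of maximal core-free $H_n < A_n$ distinct from the natural point stabiliser, $\liminf \delta(A_n,H_n) \geqs 1/e$. Invoking the O'Nan--Scott classification of maximal subgroups of $A_n$, I would split into three cases. For intransitive stabilisers $(S_k \times S_{n-k}) \cap A_n$ with $k \geqs 2$ fixed, the number of fixed $k$-subsets of a uniformly random $\p \in S_n$ is a polynomial in the short cycle counts $C_1, \ldots, C_k$, which converge jointly to independent Poissons of means $1, 1/2, \ldots, 1/k$; the resulting explicit limit strictly exceeds $1/e$ (for instance $2e^{-3/2}$ when $k = 2$). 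For $k$ growing with $n$ (up to $n/2$), a standard estimate on subsets of cycles summing to $k$ gives $\delta(A_n, H_n) \to 1$. For imprimitive maximal subgroups $(S_a \wr S_b) \cap A_n$ and for primitive maximal subgroups other than $A_{n-1}$, known bounds on $|H|$ make $[A_n:H]$ super-polynomial in $n$ while $|H|$ is subexponential, so crude union-bound estimates on $\bigcup_g H^g$ force $\delta(A_n, H_n) \to 1$.

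Part (ii) amounts to a finite verification. For each of the $26$ sporadic simple groups $G$, the conjugacy classes of maximal subgroups are known (from the ATLAS and subsequent work), as is the fusion of $H$-classes into $G$-classes. For each maximal $H \leqs G$ I would compute
\[
\delta(G, H) = \frac{1}{|G|}\sum_{c\,:\,c \cap H = \varnothing} |c|,
\]
the sum running over the conjugacy classes $c$ of $G$, and take the minimum over $H$ to fill in the corresponding row of Table \ref{tab:spor}. This is straightforwardly implemented via the character-table libraries in GAP or Magma. Direct inspection of the resulting table then reveals the global minimum $2197/7425$, attained uniquely at $G = {\rm McL}$.

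The main technical obstacle lies in part (i), specifically in handling the intransitive family uniformly in $k$: one cannot simply invoke a fixed-$k$ Poisson limit, but must stitch together the small-$k$ analysis with the growing-$k$ regime and verify that no sequence $H_n$ can produce $\liminf \delta < 1/e$. The primitive case is comparatively easy, resting on classical bounds for the orders of primitive subgroups of $S_n$. Part (ii), while routine in principle, requires careful bookkeeping across the $26$ groups, with the largest sporadic groups (notably the Monster) being the most delicate due to the relatively recent completion of the classification of their maximal subgroups.
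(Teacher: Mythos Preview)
Your reduction to maximal subgroups and the computational plan for part (ii) are essentially what the paper does, though for the Monster not all fusion maps are stored in the libraries, so the paper resorts to crude spectrum bounds of the form $\delta(G,H) \geqs |\{x \in G : |x| \notin \omega(H)\}|/|G|$ for fifteen of the maximal subgroups; you should anticipate this rather than expecting a pure table look-up.

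Part (i), however, has two genuine gaps. First, your treatment of imprimitive maximal subgroups is wrong: for $H = (S_a \wr S_b) \cap A_n$ with, say, $a = b = \sqrt n$ or $a = n/2$, $b = 2$, the order $|H| = (a!)^b\, b!$ is far from subexponential, and a union bound over conjugates gives nothing better than $|G|$. The paper bypasses this entirely by invoking the theorem of \L{}uczak and Pyber, which says that the proportion of elements of $A_n$ lying in \emph{any} proper transitive subgroup tends to $0$; this single result disposes of the imprimitive and primitive cases simultaneously and is not something one can replace by naive counting.

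Second, your claim that $\delta(A_n,H_n) \to 1$ for intransitive $H_n$ with $k \to \infty$ is not supported by any ``standard estimate'' and is in fact not what the paper proves. The paper's argument for the intransitive family is quite different from a Poisson-limit analysis: it introduces $c(n,k) = \max\{a(n,k),b(n,k)\}$, where $a,b$ count even/odd permutations fixing a $k$-set, establishes the recursion
\[
c(n,k) \leqs \frac{1}{n}\Bigl(k + \sum_{j=k+1}^{n-k} c(n-j,k)\Bigr),
\]
and uses it inductively to prove the uniform bound $c(n,k) \leqs 0.63$ for all $k \geqs 2$ and $n \geqs 7$. Since $f(n,1) > 0.63$, this yields $f(n,k) \leqs f(n,1)$ for every $k$, which is exactly what is needed (and is stronger than the asymptotic statement). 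Your Poisson approach could in principle handle each fixed $k$, but the ``stitching'' you identify as the main obstacle is precisely where your outline has no content, and the claimed limiting behaviour for growing $k$ is unjustified.
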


\begin{remk}
Let us record some comments on the statement of Theorem \ref{t:main1}.

\vspace{1mm}

\begin{itemize}\addtolength{\itemsep}{0.2\baselineskip}
\item[{\rm (a)}] To prove part (i), we will show that  
\[
\a(A_n) = \sum_{j=2}^{n}\frac{(-1)^j}{j!} - \frac{(-1)^n(n-1)}{n!} = \frac{[n!/e]}{n!} - \frac{(-1)^n(n-1)}{n!}
\]
for all sufficiently large $n$, where $[x]$ denotes the nearest integer to $x$. 
Here the key step is to show that if we take the natural action of $G = A_n$ on the set of $k$-element subsets of $\{1, \ldots, n\}$, then $\delta(G)$ is minimal when $k=1$ or $n-1$ (see Proposition \ref{p:alt_intrans}). 

\item[{\rm (b)}] We conjecture that the formula for $\a(A_n)$ in (a) holds for all $n \geqs 9$, which would imply that $\e = 13/45$ is the optimal constant in Theorem \ref{t:FG0}  as we range over all simple alternating groups (see Conjecture \ref{c:1}).

\item[{\rm (c)}] The proof for sporadic groups relies on computational methods, working closely with the information on character tables and fusion maps available in the \textsf{GAP} Character Table Library \cite{GAPCTL}. 
\end{itemize}
\end{remk}

Our next result gives an effective and best possible version of Theorem \ref{t:FG0} for simple primitive groups with soluble point stabilisers. Here the possibilities for $G$ and $H$ have been determined by Li and Zhang \cite{LZ} (see \cite[Tables 14-20]{LZ}). For example, if $G = A_n$ is an alternating group and $n \geqs 17$, then $n = p$ is a prime and $H = {\rm AGL}_1(p) \cap G$ is the only possibility (up to conjugacy in $G$).

\begin{theoremm}\label{t:main2}
Let $G$ be a finite simple primitive permutation group with soluble point stabiliser $H$. Then  
\[
\delta(G) \geqs \frac{89}{325},
\]
with equality if and only if $G = {}^2F_4(2)'$ and $H = 2^2.[2^8].S_3$.
\end{theoremm}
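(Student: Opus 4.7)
The starting point is the classification of Li and Zhang \cite{LZ}, which lists every pair $(G,H)$ with $G$ a finite simple group and $H$ a soluble maximal subgroup, reducing Theorem B to a finite (but extensive) case analysis, organised by the type of $G$: alternating, sporadic, exceptional of Lie type, or classical. The basic combinatorial identity we shall use throughout is
\[
\delta(G,H) = 1 - \frac{1}{|G|}\sum_{C \cap H \neq \emptyset} |C|,
\]
where the sum runs over conjugacy classes $C$ of $G$, since $g$ is a derangement on $G/H$ precisely when $g^G \cap H = \emptyset$. The task in each case is therefore to identify, or at least bound above, the union of those conjugacy classes of $G$ that meet $H$.

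For the alternating case, Li--Zhang gives $G = A_p$ with $p$ prime and $H = \mathrm{AGL}_{1}(p) \cap G$ of order $p(p-1)/2$ for all $p \geqs 17$; every non-identity element of $H$ is either a $p$-cycle or a product of cycles of a common length $d$ dividing $(p-1)/2$, a very restrictive cycle type, and a direct enumeration of the corresponding $A_p$-classes yields $\delta(G,H) \to 1$ as $p \to \infty$, with the finitely many small-degree exceptions checked individually. For the sporadic groups, and for the low-rank exceptional groups of Lie type over small fields (including the Tits group $G = {}^2F_4(2)'$), we compute $\delta(G,H)$ directly from the character table and stored class fusions in the \textsf{GAP} Character Table Library \cite{GAPCTL}, by the same computational framework deployed for Theorem A. The extremal value $89/325$ is witnessed explicitly in this step.

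The bulk of the technical effort lies with the infinite families of classical groups and the generic members of exceptional families, together with their soluble maximal subgroups (normalisers of Singer-type maximal tori and a few small parabolic-like subgroups arising only over very small fields). Since $H$ contains no generic regular semisimple elements of $G$, one exhibits large unions of conjugacy classes disjoint from $H$---for example classes of regular semisimple elements whose centraliser in $G$ is a torus not embedding into $H$, or classes whose order has a prime divisor not dividing $|H|$---and, combined with standard lower bounds on conjugacy class sizes in groups of Lie type via $|g^G| = |G|/|C_G(g)|$, this gives $\delta(G,H) \geqs 89/325$ with considerable room whenever $q$ is not too small. The main obstacle will be the finitely many small-$q$ members of each classical and exceptional family, where the generic estimates are too coarse and $|H|$ is comparatively large in $|G|$: these must be handled individually, either by explicit character-table computation in \textsf{GAP} for cases in the Atlas range, or by a fine analysis of the embedding $H < G$ using Aschbacher's subgroup structure theorem and known conjugacy class data for the corresponding groups of Lie type. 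Showing that no such small-$q$ case attains a value of $\delta(G,H)$ strictly smaller than that given by the Tits group is the crux of the argument.
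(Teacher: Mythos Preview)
Your proposal is correct and follows essentially the same approach as the paper: reduction via Li--Zhang, direct computation for sporadic groups and small-field cases (where the extremal value $89/325$ for ${}^2F_4(2)'$ is found), explicit cycle-type enumeration for $A_p$ with $H = \mathrm{AGL}_1(p)\cap G$, and for the infinite families identifying regular semisimple classes disjoint from $H$ via coprimality of $|C_G(x)|$ with $|H|$. One technical tool worth flagging that the paper relies on and that your sketch leaves implicit is a Coxeter-number lemma: every regular semisimple element of a simple group of Lie type has order at least the Coxeter number $h$, so whenever $(|H|,|C_G(x)|) < h$ the element $x$ is forced to be a derangement---this is what makes the coprimality arguments go through cleanly for torus normalisers in the exceptional groups, where $H$ itself does contain many regular semisimple elements.
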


\begin{remk}\label{r:main2}
Some comments on the statement of Theorem \ref{t:main2} are in order.

\vspace{1mm}

\begin{itemize}\addtolength{\itemsep}{0.2\baselineskip}
\item[{\rm (a)}] For sporadic groups, the given lower bound is valid for all transitive actions with soluble point stabilisers. In fact, we prove that $\delta(G) \geqs 21/55$, with equality if and only if $G = {\rm M}_{11}$ and $H = {\rm U}_{3}(2).2$ or $2.S_4$ (see Proposition \ref{p:spor_sol}).

\item[{\rm (b)}] In order to establish Theorem \ref{t:main3} below, we require a slightly more general version of Theorem \ref{t:main2}. So we will actually prove that $\delta(G) \geqs 89/325$ for every finite simple transitive group $G$ with soluble point stabiliser $H$, where $H = G \cap M$ for some maximal subgroup $M$ of an almost simple group with socle $G$.

\item[{\rm (c)}] We conjecture that $\e = 89/325$ is the optimal constant for all transitive groups with soluble point stabilisers. In fact, we speculate that this is the best possible constant in Theorem \ref{t:FG0}, without any additional assumptions.
\end{itemize}
\end{remk}

As noted above, Theorem \ref{t:FG0} does not extend to almost simple groups. For example, as explained in \cite[Section 6]{FG1}, if we take $G = {\rm Aut}({\rm L}_2(p^r)) = {\rm PGL}_2(p^r){:}\la \varphi \ra$ and $\O = \varphi^G$, where $p$ and $r$ are primes with $(r,p(p^2-1))=1$ and $\varphi$ is a field automorphism of order $r$, then every element in $G \setminus {\rm PGL}_2(p^r)$ has a fixed point and thus 
\[
\delta(G) \leqs \frac{|{\rm PGL}_2(p^r)|}{|G|}  = \frac{1}{r}
\]
can be arbitrarily small. 

In this example, notice that $\delta(G) < 3/\log n$ if $p=2$ and $r \geqs 5$, where $n = |\O|$ is the degree of $G$ and $\log$ is the natural logarithm. This shows that \cite[Theorem 1.5]{FG1} is essentially best possible since it states that there exists an absolute constant $\gamma>0$ such that $\delta(G) \geqs \gamma/\log n$ for all almost simple primitive permutation groups $G$ of degree $n$ (moreover, this extends to all non-affine primitive groups). Here we use Theorem \ref{t:FG0} to establish another natural extension to primitive permutation groups (recall that the \emph{socle} of $G$ is the product of its minimal normal subgroups).

\begin{theoremm}\label{t:main3}
Let $G$ be a finite primitive group with socle $N$ and point stabiliser $H$. Then the following hold:

\vspace{1mm}

\begin{itemize}\addtolength{\itemsep}{0.2\baselineskip}
\item[{\rm (i)}] $\delta(N) \geqs \e$, where $\e$ is the constant in Theorem \ref{t:FG0}. 

\item[{\rm (ii)}] If $H$ is soluble, then $\delta(N) \geqs 89/325$, with equality if and only if $N = {}^2F_4(2)'$ and $H \cap N = 2^2.[2^8].S_3$.
\end{itemize}
\end{theoremm}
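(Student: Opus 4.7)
The plan is to proceed by considering the possibilities for $N$ according to the O'Nan--Scott classification of finite primitive groups. Since $G$ is primitive, $N$ is transitive on $\O$ and, being a minimal normal subgroup, acts faithfully: the kernel of this action is a normal subgroup of $G$ contained in the minimal normal subgroup $N$, hence trivial.

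For the affine and twisted wreath types, $N$ acts regularly on $\O$, so every non-identity element is a derangement and $\delta(N) = 1 - 1/|N| \geqs 1/2$, which exceeds both $\e$ and $89/325$. For the almost simple type ($N=T$ simple), part (i) follows directly from Theorem~\ref{t:FG0}, while for (ii) the hypothesis that $H$ is soluble forces $H \cap N$ to be soluble, and since $H$ is a maximal subgroup of the almost simple group $G$ with socle $N$, the generalisation of Theorem~\ref{t:main2} recorded in Remark~\ref{r:main2}(b) yields $\delta(N) \geqs 89/325$ together with the stated equality case. For the product action type ($N=T^k$ with $k \geqs 2$ and $\O = \Delta^k$), a tuple $(t_1, \dots, t_k) \in N$ is a derangement iff at least one $t_i$ is a derangement of the simple group $T$ on $\Delta$; writing $\delta^* = \delta(T,\Delta)$, this gives
\[
\delta(N) = 1-(1-\delta^*)^k \geqs \delta^*.
\]
The action of $T$ on $\Delta$ is faithful and transitive with point stabiliser of the form $T \cap M$ for some maximal subgroup $M$ of the almost simple primitive group on $\Delta$ underlying the wreath construction, so Theorem~\ref{t:FG0} gives (i), and when $H$ is soluble each direct factor $T_\delta$ of $H\cap N$ is soluble, so Remark~\ref{r:main2}(b) applies and gives (ii) with strict inequality because $k \geqs 2$.

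It remains to treat the diagonal-type cases (simple diagonal, compound diagonal, holomorph of simple, holomorph of compound), where $N = T^k$ and the stabiliser $H \cap N$ always contains a diagonal copy of the non-abelian simple group $T$; hence $H\cap N$ is non-soluble and these cases do not contribute to (ii). For (i), in the simple diagonal and holomorph-of-simple cases the criterion for $(t_1, \dots, t_k) \in N$ to fix a point is that all the $t_i$ lie in a common $T$-conjugacy class, giving
\[
1-\delta(N) = \sum_C (|C|/|T|)^k \leqs \max_C (|C|/|T|)^{k-1} \leqs \tfrac{1}{2},
\]
using the elementary bound $|C_T(t)| \geqs 2$ for all $t \neq 1$ in the simple group $T$; the compound-diagonal and holomorph-of-compound variants reduce to these via a product-action decomposition of the diagonal action. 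The main obstacle is the O'Nan--Scott case analysis itself: one must verify in each type that the point stabiliser in the relevant simple group $T$ satisfies the hypotheses of Remark~\ref{r:main2}(b) so that the generalised version of Theorem~\ref{t:main2} can be applied, and that the diagonal-type stabilisers in $N$ are indeed non-soluble so that they can be excluded when proving (ii).
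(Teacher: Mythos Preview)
Your proposal is correct and follows essentially the same O'Nan--Scott case analysis as the paper: regular socle in the affine and twisted wreath types, direct appeal to Theorem~\ref{t:FG0} and Remark~\ref{r:main2}(b) in the almost simple type, reduction to the factor action in the product type, and exclusion of the diagonal types from part~(ii) because the stabiliser in $N$ contains a diagonal copy of $T$.

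Two small points worth noting. First, your bound $1-\delta(N)\leqs (1/2)^{k-1}\leqs 1/2$ in the diagonal case uses only $|C_T(t)|\geqs 2$; the paper sharpens this to $\delta(N)\geqs 2/3$ by invoking the fact that a finite group with a self-centralising involution is soluble, so $|C_T(t)|\geqs 3$ for every $t$ in a non-abelian simple group $T$. Either bound suffices here. Second, your opening sentence calls $N$ ``a minimal normal subgroup'', but the socle of a primitive group can be the product of two minimal normal subgroups (your own holomorph cases); the faithfulness of $N$ on $\O$ still follows, since any nontrivial normal subgroup of a primitive group is transitive, but the stated reason needs this slight adjustment.
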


Further motivation for studying derangements for simple groups stems from a theorem of Larsen, Shalev and Tiep \cite{LST}. In order to set the scene, let $G$ be a finite transitive permutation group and recall that $\Delta(G)$ is a normal subset of $G$. We define the \emph{derangement width} of $G$, denoted $w(G)$, to be the minimal positive integer $k$ such that $G = \Delta(G)^k$, where
\[
\Delta(G)^k = \{ x_1 \cdots x_k \,: \, \mbox{$x_i \in \Delta(G)$ for all $i$} \},
\]
setting $w(G) = \infty$ if $G \ne \Delta(G)^k$ for all $k$. Notice that if $G$ is simple then $G = \la \Delta(G) \ra$ and $w(G)$ is simply the diameter of the corresponding Cayley graph ${\rm Cay}(G,\Delta(G))$. 

There is an extensive literature on so-called \emph{width problems} for finite groups, and for finite simple groups and normal subsets in particular (we refer the reader to Liebeck's excellent survey article \cite{Lie_sur}). One of the main open problems in this area is a conjecture from the 1980s attributed to John Thompson, which asserts that every finite simple group $G$ has a conjugacy class $C$ such that $G = C^2$. This has turned out to be a very difficult problem, but there has been significant progress in recent years. For instance, through the work of several authors, the problem has been reduced to groups of Lie type defined over fields with at most $8$ elements. It is also worth noting that Thompson's conjecture immediately implies a famous conjecture of Ore from 1951, which asserts that every element in a finite simple group is a commutator. The proof of the latter was completed by Liebeck et al. in \cite{LOST} using character-theoretic methods.

It is therefore natural to study the derangement width of finite simple groups and the main result here is the following theorem from \cite{LST}.

\begin{theorem}[Larsen, Shalev \& Tiep]
Let $G$ be a finite simple transitive group.

\vspace{1mm}

\begin{itemize}\addtolength{\itemsep}{0.2\baselineskip}
\item[{\rm (i)}] If $|G|$ is sufficiently large, then $G = \Delta(G)^2$.
\item[{\rm (ii)}] If $G = A_n$, then $G = \Delta(G)^2$ for all $n$.
\end{itemize}
\end{theorem}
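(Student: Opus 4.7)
The approach is character-theoretic. For any $g \in G$, the Frobenius formula applied to the normal subset $\Delta := \Delta(G)$ (which satisfies $\Delta^{-1} = \Delta$) gives
\[
\#\{(x,y) \in \Delta \times \Delta : xy = g\} = \frac{1}{|G|}\sum_{\chi \in \mathrm{Irr}(G)} \frac{f_\chi(\Delta)^2\,\overline{\chi(g)}}{\chi(1)},
\]
where $f_\chi(\Delta) = \sum_{x \in \Delta} \chi(x)$. To prove $g \in \Delta^2$ for every $g \in G$, it suffices to show that the trivial-character contribution $|\Delta|^2/|G|$ strictly exceeds the modulus of the sum over $\chi \neq 1$. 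By Theorem \ref{t:FG0} this principal term is at least $\e^2 |G|$, so the entire task reduces to bounding character sums uniformly in $g$.

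The key identity is Parseval: column orthogonality gives $\sum_\chi |f_\chi(\Delta)|^2 = |G|\cdot|\Delta|$, so in particular $\sum_{\chi \neq 1}|f_\chi(\Delta)|^2 \leqs |G|^2$. The trivial character ratio bound $|\chi(g)|\leqs \chi(1)$ is too weak: it only finishes the argument when $\delta(G) > 1/2$. The crucial input is a strong character-ratio estimate for finite simple groups, which is the deep technical heart of the work of Larsen, Shalev and Tiep: there exist absolute constants $\alpha, C > 0$ such that $|\chi(g)|/\chi(1) \leqs C\chi(1)^{-\alpha}$ for every non-identity $g$ and every non-trivial $\chi \in \mathrm{Irr}(G)$, provided $|G|$ is large enough. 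Combined with Parseval this yields
\[
\Bigl|\sum_{\chi \neq 1} \frac{f_\chi(\Delta)^2\overline{\chi(g)}}{\chi(1)}\Bigr| \leqs \frac{C}{d(G)^{\alpha}}\sum_{\chi\neq 1} |f_\chi(\Delta)|^2 \leqs \frac{C|G||\Delta|}{d(G)^{\alpha}},
\]
where $d(G)$ denotes the minimal non-trivial character degree. Since $d(G) \to \infty$ along all families of simple groups and $\delta(G) \geqs \e$, the error is eventually smaller than the main term $|\Delta|^2/|G|$, proving (i).

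For (ii), part (i) already covers $A_n$ for $n$ beyond some explicit threshold $N_0$, because $d(A_n) = n-1 \to \infty$. For the finitely many $n < N_0$ there are only finitely many core-free subgroups $H < A_n$ up to conjugacy, so the claim can be checked either by direct computation with the character tables of $A_n$ or, more uniformly, by exploiting the Murnaghan--Nakayama rule to produce sharpened character-ratio bounds in $S_n$ (of Fomin--Lulov / Roichman type) that are strong enough to absorb the small-degree regime where the general Larsen--Shalev--Tiep bound has not yet ``switched on''. The principal obstacle in (i) is clearly the character-ratio input: obtaining bounds of the form $|\chi(g)|\leqs C\chi(1)^{1-\alpha}$ that are \emph{uniform} in $g$, not merely valid for a generic choice, requires the full Larsen--Shalev--Tiep apparatus together with careful case-by-case analysis of Deligne--Lusztig characters for the groups of Lie type.
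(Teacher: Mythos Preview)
This theorem is not proved in the present paper; it is quoted from \cite{LST}. (The paper does give an independent, stronger proof of part (ii) in Proposition~\ref{p:alt_width}, via the combinatorial results of Bertram, Brenner and Larsen--Tiep on products of cycle classes.) So there is no ``paper's own proof'' to compare against for part~(i). Nevertheless, your sketch contains a genuine error that would cause it to fail.

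The claimed bound $|\chi(g)|/\chi(1)\leqs C\chi(1)^{-\a}$ for \emph{every} non-identity $g$ and every non-trivial $\chi$ is false, and this is not a matter of adjusting constants. Take $G=A_n$, let $\chi$ be the standard character of degree $n-1$, and let $g$ be a $3$-cycle: then $\chi(g)=n-4$, so $|\chi(g)|/\chi(1)\to 1$ while $\chi(1)^{-\a}\to 0$. The same phenomenon occurs for groups of Lie type when $g$ has small support (e.g.\ a transvection) and $\chi$ has small degree. The known character--ratio bounds of Larsen--Shalev--Tiep type are \emph{not} uniform in $g$: they require $g$ to have large conjugacy class (equivalently small centraliser), and the exponent depends on that size. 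Since you must cover \emph{every} $g\in G$, including elements with tiny support, the Parseval-plus-uniform-ratio strategy cannot work as written.

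What \cite{LST} actually does is quite different: rather than working with the full normal set $\Delta(G)$, they identify \emph{specific} conjugacy classes $C,D$ of derangements (typically regular semisimple, chosen so that very few irreducible characters are nonzero on them) and prove $G\setminus\{1\}\subseteq CD$ via the Frobenius formula. The point is that the sum over $\chi$ then has only a bounded number of nonzero terms, and those can be controlled individually using Deligne--Lusztig theory. For alternating groups (as noted in Remark after Proposition~\ref{p:alt_width}) the argument is combinatorial rather than character-theoretic, using Bertram's theorem on squares of cycle classes together with an inductive step for the natural action.
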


It is conjectured in \cite{LST} that the condition on $|G|$ in part (i) is not needed.

\begin{conj}[Larsen, Shalev \& Tiep]\label{c:lst00}
We have $G = \Delta(G)^2$ for every finite simple transitive group $G$.
\end{conj}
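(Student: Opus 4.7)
The plan is to combine three ingredients: the asymptotic theorem of Larsen--Shalev--Tiep, which handles $|G|$ sufficiently large; the soluble-stabiliser case established in this paper; and a character-theoretic argument supplemented by direct computation for the remaining finite list of small groups with non-soluble core-free stabilisers.

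First I would set up the Frobenius machinery. For any core-free $H \leqs G$ and any target $g \in G$, the number of factorisations $g = xy$ with $x,y \in \Delta(G,H)$ is
\[
N(g) \;=\; \frac{1}{|G|}\sum_{\chi \in \mathrm{Irr}(G)} \frac{\chi(g)}{\chi(1)}\,\Sigma_\chi^{\,2},\qquad \Sigma_\chi := \sum_{x \in \Delta(G,H)}\chi(x).
\]
Since $\Delta(G,H)$ is closed under inversion, each $\Sigma_\chi$ is real, and the trivial character contributes the main term $|\Delta(G,H)|^2/|G|$; the goal is to dominate the non-trivial contributions. For $G$ of Lie type, I would apply the Larsen--Shalev--Tiep bounds $|\chi(g)/\chi(1)| \leqs \chi(1)^{-\alpha(g)}$ together with the Parseval identity $\sum_\chi \Sigma_\chi^{\,2} = |G|\cdot|\Delta(G,H)|$ and the lower bound $\delta(G) \geqs \varepsilon$ of Theorem~\ref{t:FG0}; this forces $N(g) > 0$ uniformly in $g$ once $|G|$ exceeds a threshold depending on $\alpha$, which is essentially the mechanism behind the LST theorem. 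When $H$ is soluble, the stronger bound $\delta(G) \geqs 89/325$ of Theorem~\ref{t:main2}, combined with the explicit Li--Zhang classification and a concentration of derangements on a small number of regular semisimple classes, should already close the argument; this is how the corresponding case in the paper is presumably handled.

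For the remaining finite list of small sporadic, alternating and low-rank Lie type groups with non-soluble core-free stabilisers, I would use the character table of $G$ together with the fusion of $H$-classes (both available in the \textsf{GAP} Character Table Library) to evaluate $N(g)$ directly on each $G$-class, reducing the verification to a finite enumeration of the same flavour as the proof of Theorem~\ref{t:main1}. The main obstacle lies in the intermediate regime: groups of Lie type of bounded rank over fields that are too large for exhaustive computation but too small for the Larsen--Shalev--Tiep bounds to cleanly beat the error term via Parseval. Here $\delta(G)$ may be close to (or, for some non-soluble stabilisers, smaller than) $89/325$, so a naive Cauchy--Schwarz bound on the error is not sharp enough; one is pushed into a case-by-case analysis organised by the Aschbacher class of $H$, producing for each target $g$ an explicit pair of derangement classes $(C_1,C_2)$ with $g \in C_1C_2$. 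This is where the bulk of the technical work would reside, and where a complete resolution of the conjecture beyond the present paper would require new ideas.
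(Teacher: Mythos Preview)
The statement you are attempting to prove is Conjecture~\ref{c:lst00}, and the paper does \emph{not} prove it: it is presented as an open conjecture of Larsen, Shalev and Tiep, and the paper only establishes special cases (alternating groups, sporadic groups, rank one groups of Lie type, primitive groups with soluble stabilisers, and linear groups under a mild hypothesis on the point stabiliser). There is therefore no ``paper's own proof'' to compare your proposal against.

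Your proposal is not a proof but an outline of a strategy, and you yourself identify the gap: the intermediate regime of Lie type groups of bounded rank over fields too large for direct computation and too small for the asymptotic character bounds of Larsen--Shalev--Tiep to dominate the Parseval error. This is precisely where the conjecture remains open. The paper's authors say explicitly that ``it seems difficult to approach this conjecture for groups of Lie type with the character-theoretic methods adopted in \cite{LST}'', and your sketch does not supply the missing idea --- the final paragraph concedes that ``a complete resolution of the conjecture beyond the present paper would require new ideas''. So what you have written is an accurate summary of the obstacles, not a proof; the Frobenius/Parseval framework you set up is standard and is indeed how the known cases are handled, but it does not close the gap you describe.
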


It seems difficult to approach this conjecture for groups of Lie type with the character-theoretic methods adopted in \cite{LST}, so we focus on some special cases of interest in Theorems \ref{t:main4} and \ref{t:main_new} below. Note that $\Delta(G)$ is inverse-closed, so $\Delta(G)^2$ always contains the identity element. In part (ii) of the next result, recall that the rank one groups of Lie type are the following:
\begin{equation}\label{e:rank1}
{\rm L}_2(q), \; q \geqs 4; \;\; {\rm U}_3(q),\; q \geqs 3; \;\; {}^2B_2(q), \; q \geqs 8; \;\; {}^2G_2(q)', \; q \geqs 3
\end{equation}

\begin{theoremm}\label{t:main4}
Let $G \leqs {\rm Sym}(\O)$ be a finite simple transitive group with point stabiliser $H$.

\vspace{1mm}

\begin{itemize}\addtolength{\itemsep}{0.2\baselineskip}
\item[{\rm (i)}] If $G$ is an alternating group, a sporadic group, or a rank one group of Lie type, then there exist conjugacy classes $C,D$ of derangements such that 
\begin{equation}\label{e:CDD}
G = \left\{ \begin{array}{ll}
C^2 \cup CD & \mbox{if $G = {\rm L}_2(7)$ and $H = S_4$} \\
\{1\} \cup CD & \mbox{otherwise.}
\end{array}\right.
\end{equation}

\item[{\rm (ii)}] If $G$ is primitive and $H$ is soluble, then $G = \Delta(G)^2$.
\end{itemize}
\end{theoremm}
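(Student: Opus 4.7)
The plan is to attack both parts of Theorem \ref{t:main4} using the Frobenius class multiplication formula. For conjugacy classes $C, D \subseteq G$ with representatives $x_0, y_0$ and any $g \in G$,
\[
|\{(x,y) \in C \times D : xy = g\}| = \frac{|C||D|}{|G|} \sum_{\chi \in \operatorname{Irr}(G)} \frac{\chi(x_0)\chi(y_0)\overline{\chi(g)}}{\chi(1)}.
\]
For part (i), showing $g \in CD$ for $g \neq 1$ reduces to proving $|S(g)| < 1$, where $S(g)$ is the sum over non-trivial characters. For part (ii), summing over all pairs of derangement classes yields
\[
|\{(x,y) \in \Delta(G)^2 : xy = g\}| = \frac{1}{|G|} \sum_{\chi \in \operatorname{Irr}(G)} \frac{\eta(\chi)^2 \overline{\chi(g)}}{\chi(1)}, \qquad \eta(\chi) := \sum_{x \in \Delta(G)} \chi(x),
\]
and the task is to show this is strictly positive for every $g \in G$. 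Since $\Delta(G)$ is inverse-closed, each $\eta(\chi)$ is real.

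For part (i), I would treat the three families separately. For $G = A_n$ with core-free $H$, take $C, D$ to be classes of appropriately long cycles (for example $n$- and $(n-1)$-cycles, whenever these lie in $A_n$), which are derangements in every faithful transitive action for elementary order-theoretic reasons; then bounds on characters of $S_n$ (Fomin--Lulov and Larsen--Shalev) yield $|S(g)| < 1$ for every $g \neq 1$ once $n$ is large, with the finitely many small $n$ verified in \textsf{GAP}. For sporadic $G$, the approach is entirely computational: for each core-free maximal subgroup $H$ (catalogued in the \textsc{Atlas}), I would use the \textsf{GAP} Character Table Library with the known fusion maps to enumerate the derangement classes and search for a pair $(C,D)$ whose structure constants are positive on every non-identity class. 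For rank one Lie type groups, the character tables of $L_2(q), U_3(q), {}^2B_2(q), {}^2G_2(q)'$ are explicit; I would choose $C, D$ to be regular semisimple classes from distinct maximal tori, for which $|\chi(x_0)|$ is bounded independently of $\chi(1)$, and deduce $|S(g)| < 1$ uniformly in $g$ for all but finitely many $q$ (handled computationally). The exceptional case $G = L_2(7)$, $H = S_4$ arises because the set of derangements partitions into only two classes, so no pair $(C,D)$ with $D \neq C^{-1}$ covers both the identity and every non-identity class, forcing the weaker $C^2 \cup CD$ description.

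For part (ii), the Li--Zhang classification \cite{LZ} reduces the claim to an explicit finite list of pairs $(G, H)$ with $G$ simple primitive and $H$ maximal soluble. The cases with $G$ alternating, sporadic, or rank one of Lie type are settled by part (i). For the remaining cases -- classical groups of rank at least $2$ and exceptional groups, all of small rank and small field size -- combine the density bound $\delta(G) \geq 89/325$ from Theorem \ref{t:main2} with Larsen--Shalev--Tiep style character bounds of the form $|\chi(g)| \leq \chi(1)^{1-\sigma}$ to verify
\[
\left|\sum_{\chi \neq 1_G} \frac{\eta(\chi)^2 \overline{\chi(g)}}{\chi(1)}\right| < \frac{|\Delta(G)|^2}{|G|}
\]
uniformly in $g$, yielding $g \in \Delta(G)^2$. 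In any small configurations where these asymptotic bounds are insufficient, the statement is checked directly in \textsf{GAP}.

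The principal obstacle is the rank one part of (i): the argument must hold uniformly across all $q$ and all core-free $H$, and the conjugacy classes carrying the largest character values -- the unipotent and small-support semisimple classes in $L_2(q)$ and $U_3(q)$ -- demand the most delicate choice of $(C,D)$. The sharpness exhibited by $L_2(7), H = S_4$ indicates this is an essential feature rather than a technical artefact, and confirms that the $C^2 \cup CD$ refinement in the statement cannot be avoided.
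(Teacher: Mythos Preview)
Your proposal has genuine gaps in both parts. For part~(i) with $G = A_n$, the claim that $n$- and $(n-1)$-cycles are derangements in every faithful transitive action is false: an $n$-cycle fixes a coset of $H$ whenever $H$ contains a conjugate $n$-cycle, which happens for instance when $H = (S_a \wr S_b) \cap G$ with $n = ab$, or when $n = p$ is prime and $H = {\rm AGL}_1(p) \cap G$. The paper (Proposition~\ref{p:alt_width}) does not attempt a universal choice; it splits according to whether $H$ acts intransitively, imprimitively, or primitively on $\{1,\dots,n\}$ and in each case appeals to the combinatorial covering results of Bertram, Brenner and Larsen--Tiep (packaged as Proposition~\ref{p:alt_cover}) to exhibit a specific cycle-type class $C$ of derangements with $G = \{1\} \cup C^2$. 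For the rank one groups the paper likewise avoids a direct character-table verification: it quotes the Guralnick--Malle covering theorems \cite[Theorem~7.1]{GM}, which already give $G = \{1\} \cup C^2$ for explicit semisimple classes $C$, and then checks via the Bereczky classification of maximal overgroups \cite{Ber} that at least one such class misses $H$.

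For part~(ii) there are two problems. First, the Li--Zhang list is not confined to small rank and small field: it contains infinite families in $q$ (and sometimes in rank), including Singer-torus normalisers in ${\rm L}_n(q)$ and ${\rm U}_n(q)$ for every prime $n$, Borel subgroups in ${\rm L}_3(q)$, ${\rm U}_3(q)$ and ${\rm Sp}_4(q)$, and torus normalisers in ${}^3D_4(q)$, ${}^2F_4(q)$, $E_6(q)$, ${}^2E_6(q)$, $E_8(q)$ for all $q$; so your ``small rank, small field'' reduction is incorrect and a finite \textsf{GAP} check cannot close the argument. Second, your $\eta(\chi)$-sum cannot be controlled with the stated inputs: the bound $\delta(G) \geqs 89/325$ governs only $\eta(1_G) = |\Delta(G)|$, and you offer no mechanism to bound $|\eta(\chi)|$ for nontrivial $\chi$ --- pointwise estimates $|\chi(g)| \leqs \chi(1)^{1-\sigma}$ do not help here, since $\eta(\chi)$ is a sum over $\Delta(G)$ rather than a single character value. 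The paper proceeds quite differently: in most of the infinite families it actually establishes $\delta(G) > 1/2$, after which the elementary pigeonhole Lemma~\ref{l:easy2} yields $G = \Delta(G)^2$ immediately; in the remaining cases it identifies specific derangement classes $C,D$ and invokes the covering theorems of Guralnick--Malle \cite{GM,GM2}, Lev (Proposition~\ref{p:lev}), or Malle--Saxl--Weigel (Proposition~\ref{p:msw}) to obtain $G = \{1\} \cup CD$ directly.
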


\begin{remk}\label{r:main4}
Let us record some comments on Theorem \ref{t:main4}.

\vspace{1mm}

\begin{itemize}\addtolength{\itemsep}{0.2\baselineskip}
\item[{\rm (a)}] It is easy to show that $G = \Delta(G)^2$ if $\delta(G) > 1/2$ (see Lemma \ref{l:easy2}), so there is a natural connection between Theorem \ref{t:main2} and part (ii) of Theorem \ref{t:main4}. This observation will be useful in several places (for example, see the proofs of  Theorems \ref{t:main_new} and \ref{t:width_gen} in Sections \ref{ss:width_lie} and \ref{ss:prim_width}, respectively, and also the proof of Proposition \ref{p:class_sol}(ii) in Section \ref{ss:class_sol}).

\item[{\rm (b)}] For groups of Lie type, we rely heavily on earlier work of Guralnick and Malle \cite{GM,GM2}, where character-theoretic methods are used to show that $G = \{1\} \cup CD$ for certain conjugacy classes $C$ and $D$. For example, in proving part (ii) for exceptional groups, we show that there exist classes $C,D$ of derangements with $G = \{1\} \cup CD$.

\item[{\rm (c)}] In part (i), if $G = A_n$ and $n \geqs 9$ then we combine work of Bertram \cite{Bertram}, Brenner \cite{Brenner1} and Larsen and Tiep \cite{LT} to prove that either $G = C^2$ for some conjugacy class $C$ of derangements, or $n \equiv 3 \imod{4}$ and $\O = \{1, \ldots, n\}$ is the natural permutation domain. In the latter case, \cite[Theorem 1]{LT} gives $G = \{1\} \cup D^2$ where $D$ is a class of $n$-cycles, and we conjecture that $G = C^2$ for the class $C$ of elements with cycle-type $(n-4,2^2)$. The latter assertion has been checked computationally for $n \leqs 23$.

\item[{\rm (d)}] If $G$ is a sporadic group, then we use a computational approach to show  that $G = CD$ for conjugacy classes $C,D$ of derangements. Moreover, we can take $C = D$ unless $(G,H)$ is one of the cases in Table \ref{tab:spor1} (see Section \ref{ss:width_spor}), and in the latter cases it is easy to check that $G = \{1\} \cup C^2$, where $C$ is the conjugacy class recorded in the third column of the table.

\item[{\rm (e)}] Suppose $G = {\rm L}_2(7)$ and $H$ is a maximal subgroup isomorphic to $S_4$ (there are two conjugacy classes of such subgroups). Then $\Delta(G) = C \cup D$, where $C$ and $D$ are the two conjugacy classes of elements of order $7$, and one can check that 
\[
C^2 = D^2 = \{ x \in G \,:\, |x| \ne 1,4\},\;\; CD = \{ x \in G \,:\, |x| \ne 2\},
\]
which means that $G \ne \{1\} \cup C^2$, $G \ne \{1\} \cup CD$ and $G = C^2 \cup CD$. In addition, if we consider the action of $G$ on $G/K$ for any proper subgroup $K$ of $H$, then it is easy to show that $G = E^2$ for some conjugacy class $E$ of derangements.

\item[{\rm (f)}] The property $G = \Delta(G)^2$ also extends to some almost simple groups. For instance, in Theorem \ref{t:sym} we generalise \cite[Theorem B]{LST} by showing that $G = \Delta(G)^2$ for every faithful transitive action of $G = S_n$ with $n \geqs 4$. However, it is worth noting that there exist almost simple primitive groups with infinite derangement width, even under the assumption that the point stabilisers are soluble. For example, if we take $G = {\rm Aut}({\rm L}_2(3^r)) = {\rm PGL}_2(3^r){:}\la \varphi \ra$ and $\O = \varphi^G$, where $r \geqs 5$ is a prime, then $H = {\rm PGL}_2(3) \times \la \varphi \ra$ is soluble and every derangement in $G$ is contained in ${\rm PGL}_2(3^r)$. In particular, we have $\Delta(G)^k \subseteq {\rm PGL}_2(3^r) < G$ for every positive integer $k$.
\end{itemize}
\end{remk}

It is easy to show that there are infinitely many simple transitive groups $G$ such that $G \ne \{ 1 \} \cup C^2$ for every conjugacy class $C$ of derangements. For example, if $G = {\rm L}_2(q)$, $q$ is even and $H$ is a Borel subgroup, then 
\[
\Delta(G) = \{ x \in G \,:\, \mbox{$x \ne 1$, $|x|$ divides $q+1$} \} = \bigcup_{i=1}^{k} x_i^G
\]
and one can check that $(x_i^G)^2 = \{ x \in G \,:\, |x| \ne 2 \}$ for all $i$ (see \cite[Theorem 2(ii)]{Garion}). However, in this case we can show that $G = \{1\} \cup (x_1^G)(x_2^G)$. And as far as we are aware, the special case $(G,H) = ({\rm L}_2(7), S_4)$ is the only one with $G \ne \{1\} \cup CD$ for all classes $C,D$ of derangements. This leads us to propose the following stronger form of Conjecture \ref{c:lst00}.

\begin{conj}\label{c:lst0}
Let $G$ be a finite simple transitive group with point stabiliser $H$. Then there exist conjugacy classes $C$ and $D$ of derangements such that \eqref{e:CDD} holds.
\end{conj}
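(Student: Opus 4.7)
Since Theorem \ref{t:main4}(i) already establishes the conjecture when $G$ is alternating, sporadic, or a rank one group of Lie type, my first move is to reduce to the primitive case and then focus on the remaining Lie type groups of rank at least two. The reduction is straightforward: if $H$ is core-free and $M$ is a maximal subgroup of $G$ containing $H$, then any element not conjugate into $M$ is not conjugate into $H$, so a pair of derangement classes for the action on $G/M$ serves as a pair of derangement classes for $G/H$, and the decomposition \eqref{e:CDD} transfers verbatim. The only subtlety is when $(G,M) = ({\rm L}_2(7),S_4)$ and $H$ is proper in $M$: in this case the desired decomposition for $G/H$ is delivered by Remark \ref{r:main4}(e). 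Thus it suffices to assume $H$ is maximal.

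For $G$ of Lie type of rank at least two, the plan is to deploy the character-theoretic approach via the Frobenius structure constant formula
\[
g \in c^G d^G \;\Longleftrightarrow\; \sum_{\chi \in \operatorname{Irr}(G)} \frac{\chi(c)\chi(d)\overline{\chi(g)}}{\chi(1)} > 0.
\]
The trivial character contributes $1$, so the goal is to choose $c$ (and, whenever possible, $d = c$) a regular semisimple derangement whose character ratios $|\chi(c)|/\chi(1)$ are small enough to make the remaining sum strictly less than $1$ in absolute value for every nontrivial $g$. This is precisely the setting where the Larsen--Shalev--Tiep character ratio estimates apply, combined with the explicit class-multiplication input from Guralnick--Malle \cite{GM,GM2} already exploited in Remark \ref{r:main4}(b).

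The detailed case analysis is then organised via Aschbacher's classification for classical groups and the Liebeck--Seitz theorem for exceptional groups. For each family of maximal subgroups $M$, one exhibits a regular semisimple class $c^G$ of derangements: the archetypal construction takes $c$ in a maximal torus whose order is divisible by a primitive prime divisor $\ell$ of $q^e-1$ (or $q^e+1$) not dividing $|M|$, which forces $c^G \cap M = \emptyset$ and simultaneously delivers sharp character bounds. When every such torus meets $M$ (for instance when $M$ is a parabolic or a generic subfield subgroup), one instead picks $c$ and $d$ in distinct tori of different Weyl-conjugacy types, with the two classes chosen so that the mixed product $c^G d^G$ covers $G \setminus \{1\}$.

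The principal obstacle is the exhaustive nature of this case analysis: every Aschbacher class, every maximal parabolic, every maximal rank and every novelty subgroup must be handled, and in each case one must verify both that a suitable derangement class (or pair) exists and that the associated character sum is strictly controlled. A secondary, more technical obstacle concerns small-rank and small-field groups of Lie type, where the asymptotic Larsen--Shalev--Tiep bounds are insufficient; these will have to be treated computationally, along the lines of the sporadic component of Theorem \ref{t:main4}(i), by direct inspection of the \textsf{GAP} character table library \cite{GAPCTL} and its stored fusion maps. Finally, one needs to confirm that $({\rm L}_2(7),S_4)$ is the unique exception in \eqref{e:CDD}: the expectation is that the character sum estimates for Lie type groups of rank at least two are strict enough to produce $G = \{1\} \cup C^2$ with $C = D$ a single class, thereby ruling out any further exceptional behaviour.
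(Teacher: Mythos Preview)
The statement you are attempting to prove is Conjecture~\ref{c:lst0}, which the paper explicitly leaves open. There is no proof in the paper to compare against: the authors only establish the conjecture in the cases covered by Theorem~\ref{t:main4}(i) (alternating, sporadic, rank one Lie type) and remark that their proof of Theorem~\ref{t:main4}(ii) verifies it for non-classical simple primitive groups with soluble point stabilisers. For classical groups of rank at least two with arbitrary point stabilisers the conjecture is genuinely open, and indeed the paper states just before Theorem~\ref{t:main4} that ``it seems difficult to approach this conjecture for groups of Lie type with the character-theoretic methods adopted in \cite{LST}''.

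Your reduction to the primitive case is correct and is a useful observation. Your proposed strategy thereafter --- choosing regular semisimple derangements in suitable tori and controlling the Frobenius structure-constant sum via Larsen--Shalev--Tiep bounds --- is the natural line of attack and is exactly what \cite{LST} does asymptotically. But what you have written is a research programme, not a proof: the ``exhaustive case analysis'' you describe is precisely the open content of the conjecture. In particular, Remark~\ref{r:psl} already records that even for $G = {\rm L}_n(q)$ with $H \leqs P_1$ or $P_{n-1}$ and $n$ small, the existing character bounds are not known to suffice, and the small-rank, small-field classical groups are not covered by any available computational or asymptotic result. Your final expectation that one can always take $C = D$ for rank at least two is also unsupported; the paper does not claim this and the method gives no reason to expect it uniformly.
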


In addition to the groups recorded in part (i) of Theorem \ref{t:main4}, our proof of Theorem \ref{t:main4}(ii) shows that Conjecture \ref{c:lst0} also holds for every non-classical simple primitive group with soluble point stabilisers (see Remark \ref{r:main4}).

By combining a result of Malle, Saxl and Weigel \cite[Theorem 2.1]{MSW} with the main theorems of Lev \cite{Lev} and Guralnick et al. \cite{GPPS}, we can prove a special case of Conjecture \ref{c:lst00} for linear groups of arbitrary rank. In the statement, $P_k$ denotes the stabiliser in $G$ of a $k$-dimensional subspace of the natural module.

\begin{theoremm}\label{t:main_new}
Let $G = {\rm L}_n(q)$ be a finite simple transitive group with point stabiliser $H$.

\vspace{1mm}

\begin{itemize}\addtolength{\itemsep}{0.2\baselineskip}
\item[{\rm (i)}] If $n=3$ and $q \geqs 3$, then $G = \{1\} \cup C^2$ for some conjugacy class $C$ of derangements.

\item[{\rm (ii)}] If $n \geqs 4$ and $H \not\leqs P_1,P_{n-1}$, then $G = \Delta(G)^2$.
\end{itemize}
\end{theoremm}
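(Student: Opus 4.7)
The plan combines the three ingredients flagged in the statement. For part (i), we use Theorem 2.1 of \cite{MSW}, which for $G = {\rm L}_3(q)$ with $q \geqs 3$ supplies explicit conjugacy classes $C$ satisfying $G = \{1\} \cup C^2$. The natural candidates consist of prime-order elements whose order is a primitive prime divisor (ppd) of $q^3-1$ (a Singer-type class) or of $q^2-1$; both primes exist for $q \geqs 3$ by Zsygmondy's theorem, with a handful of small $q$ needing a direct computational check. To secure $C \subseteq \Delta(G)$, we observe that an element of order a ppd of $q^3-1$ can only lie in the normaliser of a Singer cyclic torus, while an element of order a ppd of $q^2-1$ is confined to a reducible or ${\rm GL}_2(q)$-type maximal subgroup. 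Inspecting the list of maximal subgroups of ${\rm L}_3(q)$, one then verifies that for every core-free $H \leqs G$ at least one of the two classes avoids all $G$-conjugates of $H$, and the MSW result applied to that class yields the claim.

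For part (ii), the driver is the GPPS classification \cite{GPPS}. Let $\ell$ be a ppd of $q^{n-1}-1$, which exists by Zsygmondy outside a short explicit list of $(n,q)$-exceptions to be handled separately. By \cite{GPPS}, any maximal subgroup $M \leqs G = {\rm L}_n(q)$ containing an element of order $\ell$ is either conjugate to $P_1$ or $P_{n-1}$, or belongs to an explicit short list of exceptions (certain field-extension and imprimitive subgroups, plus a few sporadic cases). Under the hypothesis $H \not\leqs P_1, P_{n-1}$, we check case by case---switching to a ppd of $q^n-1$ where necessary---that the conjugacy class $C$ of order-$\ell$ elements does not meet $H$, so $C \subseteq \Delta(G)$. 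By Lev's main theorem \cite{Lev}, for $n \geqs 4$ such a regular semisimple class $C$ (together with a companion class $D$ produced in the same way) satisfies $CD \supseteq G \setminus \{1\}$. Since $\Delta(G)$ is inverse-closed we have $1 \in \Delta(G)^2$, whence $G = \Delta(G)^2$.

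The main obstacle is the two-layered bookkeeping of exceptions in part (ii): the small $(n,q)$ where no suitable ppd exists, and the non-parabolic exceptions in the GPPS classification---typically field-extension subgroups of type ${\rm GL}_{n/k}(q^k).k$ and a handful of small sporadic subgroups---that might still contain $\ell$-elements. Each exception must be reconciled either by a direct check that the class $C$ does not meet $H$, or by replacing the chosen ppd with one to which GPPS applies more restrictively. A smaller parallel issue appears in part (i) for the finitely many small $q$ where Zsygmondy leaves a gap, and these will need to be treated by computation.
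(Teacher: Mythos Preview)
Your outline has the right shape but two genuine gaps.

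For part (i), \cite[Theorem 2.1]{MSW} does not give $G=\{1\}\cup C^2$ for a single class; it gives $G=\{1\}\cup CD$ with $C\ne D$. The paper obtains $\{1\}\cup C^2$ from \cite[Theorem 7.3]{GM} instead, taking $|x|=(q^2+q+1)/d$ and a back-up class with $|y|=(q^2-1)/d$, then arguing via \cite{Ber} that for every maximal $H$ one of $x,y$ is a derangement. Your derangement analysis is close to this, but the covering input is misattributed.

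For part (ii) the difficulty is more serious. You invoke Lev for ``such a regular semisimple class $C$ (together with a companion class $D$)'' to get $G\setminus\{1\}\subseteq CD$, but Lev's theorem is about $CC^{-1}$ for a single class, and the paper's extracted version (its Proposition from \cite{Lev}) applies only to very specific classes: regular unipotents when $q=2$, and elements of shape $(A,J_{n_1}(\lambda_1),\ldots,J_{n_k}(\lambda_k))$ with distinct $\lambda_i$ when $q\geqslant 3$. It does not apply to the ppd-order semisimple elements you want to use. The paper's actual strategy is a genuine two-track argument: use the MSW classes $C=x^G$, $D=y^G$ (with $|x|,|y|$ as in \cite[Theorem 2.1]{MSW}) and run through the Aschbacher families via \cite{GPPS}, showing that for $H$ in $\mathcal{C}_1,\mathcal{C}_2,\mathcal{C}_4$--$\mathcal{C}_7,\mathcal{S}$ both $x$ and $y$ are derangements, so $G=\{1\}\cup CD$. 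But for $H\in\mathcal{C}_3$ (field extension) and parts of $\mathcal{C}_8$ (e.g.\ ${\rm Sp}_n(q)$, ${\rm O}_n^\varepsilon(q)$), the MSW elements \emph{can} lie in $H$ --- for instance $x$ of order $(q^{n/2}+1)/e$ embeds in ${\rm GL}_{n/2}(q^2)$ --- so the ppd argument breaks down. There the paper switches to Lev's class $E=z^G$ with $z$ carrying a large unipotent block $J_{n-2}$ (or $J_{n-3}$), and shows $z$ is a derangement because its unipotent Jordan form is incompatible with field-extension structure or with the parity constraints on Jordan blocks in symplectic/orthogonal groups. Your proposal does not anticipate this switch; relying solely on ppd-order (hence semisimple) classes cannot handle $\mathcal{C}_3$ and the relevant $\mathcal{C}_8$ cases, and your appeal to Lev for those classes is not valid.
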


\begin{remk}
Some comments on the statement of Theorem \ref{t:main_new}:

\vspace{1mm}

\begin{itemize}\addtolength{\itemsep}{0.2\baselineskip}
\item[{\rm (a)}] As recorded in Theorem \ref{t:main4}(i), if $G = {\rm L}_3(2) \cong {\rm L}_2(7)$ then there exist classes $C,D$ of derangements such that 
\[
G = \left\{ \begin{array}{ll}
C^2 \cup CD & \mbox{if $H = P_1$ or $P_2$} \\
\{1\} \cup CD & \mbox{otherwise.}
\end{array}\right.
\]

\item[{\rm (b)}] In part (ii), we work with a slightly stronger form of \cite[Theorem 2.4(i)]{LST} to show that either $G = \{1\} \cup CD$ for classes $C,D$ of derangements (in agreement with Conjecture \ref{c:lst0}), or $q = 2$, $H = {\rm Sp}_n(2)$ and $n \leqs 28$.
 
\item[{\rm (c)}] We refer the reader to Remark \ref{r:psl} for comments on the special case excluded in (ii), where $n \geqs 4$ and $H$ is contained in a $P_1$ or $P_{n-1}$ parabolic subgroup.
\end{itemize}
\end{remk}

Let $G \leqs {\rm Sym}(\O)$ be a finite primitive permutation group with socle $N$ and as before, let $\Delta(N)$ be the set of derangements contained in $N$. If $G$ is almost simple, then $N$ is simple and transitive, so Conjecture \ref{c:lst00} asserts that $N = \Delta(N)^2$. If we assume the veracity of this conjecture, then we can use the O'Nan-Scott theorem to establish the following generalisation.

\begin{theoremm}\label{t:width_gen}
Let $G \leqs {\rm Sym}(\O)$ be a finite primitive permutation group with socle $N$ and $|\O| \geqs 3$. If Conjecture \ref{c:lst00} holds, then $N = \Delta(N)^2$. 
\end{theoremm}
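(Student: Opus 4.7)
The plan is to invoke the O'Nan--Scott theorem and verify $N = \Delta(N)^2$ separately for each type of primitive socle, appealing to Conjecture \ref{c:lst00} whenever a non-abelian simple transitive component arises. Throughout, $N$ is transitive on $\Omega$ (as a non-trivial normal subgroup of the primitive group $G$).

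Suppose first that $N$ is regular on $\Omega$, which covers the affine (HA) and twisted wreath (TW) types. Here every non-identity element of $N$ is a derangement, so $\Delta(N) = N \setminus \{1\}$. Since $|N| = |\Omega| \geqs 3$, for any $x \in N$ we may choose $y \in N \setminus \{1,x\}$ and write $x = y \cdot (y^{-1}x)$ as a product of two non-identity elements, yielding $N = \Delta(N)^2$. If instead $G$ is almost simple, then $N$ is a non-abelian simple transitive permutation group, and Conjecture \ref{c:lst00} applies directly.

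For the product-action types, $\Omega = \Omega_0^m$ with $m \geqs 2$ and $N = N_0^m$ acts componentwise, where $N_0$ is transitive on $\Omega_0$; a tuple in $N$ is a derangement if and only if at least one coordinate is a derangement of $N_0$ on $\Omega_0$. Hence whenever $N_0 = \Delta(N_0)^2$ in the $\Omega_0$-action, any $(s_1, \ldots, s_m) \in N$ can be written as a product of two derangements by factoring $s_1 = a_1 b_1$ with $a_1, b_1 \in \Delta(N_0)$ and choosing arbitrary factorisations $s_i = a_i b_i$ for $i \geqs 2$. In the product action (PA) case $N_0$ is a non-abelian simple transitive group, so the required factorisation is given by Conjecture \ref{c:lst00}; in the compound diagonal variants (CD and HC) the hypothesis reduces to the simple diagonal case below.

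The remaining case is simple diagonal (SD, including HS): $N = T^k$ with $k \geqs 2$ and $T$ non-abelian simple, acting on the cosets of a full diagonal subgroup $D$. Here $(n_1, \ldots, n_k) \in N$ is a derangement precisely when the $n_i$ are not all $T$-conjugate. Given $(s_1, \ldots, s_k) \in N$, factor it as $(1, t, 1, \ldots, 1) \cdot (s_1, t^{-1}s_2, s_3, \ldots, s_k)$; the first tuple is a derangement for every $t \ne 1$, and the second fails to be a derangement only if $s_1, t^{-1}s_2, s_3, \ldots, s_k$ all lie in a common $T$-class, namely $s_1^T$. This forbids at most $|s_1^T|+1$ values of $t$, and since no conjugacy class in a non-abelian simple group can have size exceeding $|T|-2$ (a class of size $|T|-1$ would force $|T|=2$), a valid $t \in T \setminus \{1\}$ always exists. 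The main obstacle is precisely this diagonal case: here $N$ itself is not simple, Conjecture \ref{c:lst00} does not apply to $N$ directly, and the compound diagonal variants of the product-action discussion depend on this combinatorial argument going through.
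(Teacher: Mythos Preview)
Your proof is correct and follows the same O'Nan--Scott case division as the paper, but the individual cases are handled differently. For the regular and diagonal types, the paper does not construct explicit factorisations: instead it invokes the elementary observation (Lemma~\ref{l:easy2}) that $\delta(N)>1/2$ forces $N=\Delta(N)^2$, combined with the bounds $\delta(N)=1-|N|^{-1}$ in the regular case and $\delta(N)\geqs 2/3$ in the diagonal case (the latter already established in the proof of Theorem~\ref{t:main3}(i)). Your route avoids this density lemma entirely, replacing it with a direct counting argument in the diagonal case --- choosing $t\in T$ so that both $(1,t,1,\ldots,1)$ and its cofactor are derangements --- and a bare-hands factorisation in the regular case. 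This makes your argument more self-contained, at the cost of the extra combinatorics in the diagonal step; the paper's version is shorter precisely because it recycles machinery already set up for Theorem~\ref{t:main3}. The product-type reduction is essentially the same in both treatments.
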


Our final result concerns the generation properties of finite simple transitive groups. It is well known that every finite simple group is $2$-generated (the proof requires CFSG) and there is a vast literature on the properties and distribution of generating pairs. For example, a theorem of Guralnick and Kantor \cite{GK} states that every finite simple group $G$ has a conjugacy class $C$ with the property that for all nontrivial $x \in G$, there exists an element $y \in C$ such that $G = \la x,y \ra$ (in this situation, we call $C$ a \emph{witness}). The proof of this theorem uses probabilistic methods, based on fixed point ratio estimates. Using  similar methods, we prove that every finite simple transitive permutation group is generated by two derangements. Moreover, we can always find a generating pair of conjugate derangements.

\begin{theoremm}\label{t:main6}
Let $G$ be a finite simple transitive group. Then there exist conjugate derangements $x,y \in G$ such that $G = \la x,y \ra$.
\end{theoremm}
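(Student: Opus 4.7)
My plan follows the probabilistic approach used in the proof of the Guralnick--Kantor theorem \cite{GK}, adapted so as to force both generators into a chosen conjugacy class of derangements. Fix a conjugacy class $C \subseteq \Delta(G)$, pick $x \in C$, and consider the proportion of $y \in x^G = C$ for which $\langle x,y\rangle < G$. Any such pair must lie in a common maximal subgroup, and the usual double count over incidences $(y,M)$ with $\{x,y\} \subseteq M$ yields
\[
\frac{|\{y \in x^G : \langle x,y\rangle < G\}|}{|x^G|} \;\leqs\; \sum_{[M]} \frac{|x^G \cap M|^2}{|x^G|^2}\cdot\frac{|G|}{|N_G(M)|},
\]
where $[M]$ runs over the $G$-conjugacy classes of (core-free) maximal subgroups of $G$. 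If this quantity is strictly less than $1$ for some admissible choice of $C$ and $x \in C$, then $\langle x,y\rangle = G$ for some $y \in x^G$, and both $x$ and $y$ are derangements by construction.

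The existence of a non-empty class $C \subseteq \Delta(G)$ is immediate from Jordan's theorem, but to control the displayed sum we need $C$ to be reasonably large and reasonably ``spread out'' across maximal subgroups. I would then split on the CFSG as follows. For sporadic groups, a direct computation using the character tables and fusion maps in the \textsf{GAP} Character Table Library \cite{GAPCTL} — in the same spirit as the sporadic portions of Theorems \ref{t:main1} and \ref{t:main2} — finds a suitable class $C$ for every pair $(G,H)$. For $G = A_n$ with $n$ large, I would take $C$ to be a class of $n$-cycles when $n$ is odd, or a class of cycle type $(n-1,1)$-style elements appropriately modified when $n$ is even, using the analysis of derangements underpinning Theorem \ref{t:main1} to ensure that $C \subseteq \Delta(G)$ for any faithful transitive action; standard generation results for $A_n$ and explicit fixed-point counts on intransitive, imprimitive and primitive maximal subgroups then control the sum.

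For groups of Lie type, the natural candidate for $C$ is a class of regular semisimple elements whose centralising torus $T$ is not contained in any conjugate of the point stabiliser $H$; such tori, and hence such derangements, exist by standard structure theory, and in the low rank cases one can appeal to the explicit class lists used in the proof of Theorem \ref{t:main4}. The fixed-point-ratio estimates of Liebeck--Shalev and Burness then give bounds of the form $|x^G \cap M|/|x^G| \leqs |x^G|^{-c}$ for all maximal $M$, with $c>0$ depending only on the Lie rank, and combined with Liebeck's polynomial bound on the number of conjugacy classes of maximal subgroups of a group of Lie type this forces the displayed sum to be $o(1)$ for all but finitely many $(G,H)$.

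The main obstacle is the usual one for Guralnick--Kantor-type arguments: small simple groups of Lie type over small fields, where the asymptotic fpr bounds are too weak and the list of maximal subgroup classes too long for the probabilistic estimate to close. These cases I would handle individually, using the explicit lists of maximal subgroups and explicit knowledge of small-rank conjugacy classes to either exhibit a pair $(x,y)$ of conjugate derangements generating $G$ directly in \textsf{GAP} or \textsf{Magma}, or to refine the choice of $C$ so that the tail of the sum is controlled. A secondary delicate point is the intermediate range of $A_n$ where the asymptotic analysis feeding Theorem \ref{t:main1} has not yet kicked in; these $n$ can be cleared by a tailored combinatorial argument exploiting the action on $k$-subsets, together with a finite check.
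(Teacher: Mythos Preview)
Your probabilistic framework via fixed-point ratios is the right machinery, and it is what the paper uses. But you are missing the key organising idea, and one of your claims is simply false.

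The paper does \emph{not} try to choose, for each point stabiliser $H$, a class $C$ of derangements and then bound the sum. Instead it exploits the following observation (Lemma~\ref{l:us2}): if $x$ and $y$ are both witnesses (in the uniform spread sense) and no maximal overgroup of $x$ is $G$-conjugate to a maximal overgroup of $y$, then for \emph{every} transitive action one of $x,y$ is a derangement, and since that element is a witness we get $G=\langle z,z^g\rangle$. So the whole proof reduces to exhibiting, for each simple $G$, two witnesses $x,y$ whose sets $\mathcal{M}(x)$ and $\mathcal{M}(y)$ lie in disjoint conjugacy classes of maximal subgroups --- a statement that makes no reference to $H$ at all. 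This is what makes the argument uniform and tractable: one pair $(x,y)$ handles all actions simultaneously. Your plan, by contrast, commits you to choosing $C$ as a function of $H$ and then running the fpr estimate separately for every conjugacy class of maximal subgroups.

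Your alternating group sketch illustrates the problem concretely. You propose taking $C$ to be the class of $n$-cycles (for $n$ odd) and claim this gives $C\subseteq\Delta(G)$ for \emph{any} faithful transitive action. That is false: if $n=p$ is prime and $H={\rm AGL}_1(p)\cap G$, then $H$ contains $p$-cycles and they are not derangements. No single class can be a class of derangements for every action, since every element lies in some maximal subgroup. The paper handles $A_n$ by an explicit construction depending on whether $H$ is transitive or intransitive on $\{1,\ldots,n\}$, using primes $p,q$ in $(n/2,n-3]$; see Proposition~\ref{p:gen_an}. Similarly, your Lie-type plan quietly lets $C$ depend on $H$ (``torus not contained in any conjugate of $H$''), and your claimed uniform bound ${\rm fpr}(x,G/M)\leqs |x^G|^{-c}$ fails for subspace actions, which is exactly where the difficulty lies.
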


Let $G$ be a finite simple group and suppose that $C = x^G$ and $D = y^G$ are  witnesses, as defined above. It is easy to see that the conclusion in Theorem \ref{t:main6} follows if no maximal overgroup of $x$ is conjugate to a maximal overgroup of $y$. Indeed, if this property holds, then either $x$ or $y$ is a derangement and the result follows since $G = \la x,x^a \ra = \la y, y^b\ra$ for some $a,b \in G$. So in the proof of Theorem \ref{t:main6} we are often interested in identifying witnesses $C$ and $D$ with the property that the maximal overgroups of their respective representatives are severely restricted. Along the way, with this aim in mind, we prove that every conjugacy class of Singer cycles in a simple classical group is a witness (see Proposition \ref{p:singer}), which may be of independent interest.

\vs\vs

\noindent \textbf{Notation.} Our notation is standard. For a finite group $G$ and positive integer $n$, we write $C_n$, or just $n$, for a cyclic group of order $n$ and $G^n$ for the direct product of $n$ copies of $G$. An unspecified extension of $G$ by a group $H$ will be denoted by $G.H$; if the extension splits then we may write $G{:}H$. We use $[n]$ for an unspecified soluble group of order $n$. Throughout the paper, we adopt the standard notation for simple groups of Lie type from \cite{KL} (so for example, we write ${\rm L}_n(q) = {\rm L}_n^{+}(q)$ and ${\rm U}_n(q) = {\rm L}_n^{-}(q)$ for the groups ${\rm PSL}_n(q)$ and ${\rm PSU}_n(q)$, respectively). In addition, for positive integers $a$ and $b$, we use the familiar Kronecker delta symbol $\delta_{a,b}$ (so $\delta_{a,b} = 1$ if $a=b$, otherwise $\delta_{a,b} = 0$) and we write $(a,b)$ for the highest common factor of $a$ and $b$.

\vs

\noindent \textbf{Organisation.} Let us briefly outline the structure of the paper. We begin in Section \ref{s:prop} by studying the proportion of derangements in transitive actions of alternating and sporadic groups, culminating in a proof of Theorem \ref{t:main1}. We also establish part (i) of Theorem \ref{t:main3}, which gives a natural extension of Theorem \ref{t:FG0} to primitive groups. In Section \ref{s:width} we turn to the derangement width of simple transitive groups, proving part (i) of Theorem \ref{t:main4}. We also establish our main result on linear groups (Theorem \ref{t:main_new}) and we present a short proof of Theorem \ref{t:width_gen}, which assumes the veracity of Conjecture \ref{c:lst00}. Next, in Section \ref{ss:sol} we focus on primitive simple groups with soluble point stabilisers and our main goal is to prove Theorem \ref{t:main2}. Here we also complete the proofs of Theorems \ref{t:main3} and \ref{t:main4}. Finally, in Section \ref{s:gen} we study the $2$-generation of simple groups and we prove Theorem \ref{t:main6}.

\vs

\noindent \textbf{Acknowledgements.} Both authors thank two anonymous referees for their careful reading of an earlier version of the paper and for many helpful comments, corrections and suggestions. TCB thanks Bob Guralnick, Martin Liebeck, Frank L\"{u}beck, Gunter Malle, Eamonn O'Brien and Pham Tiep for helpful discussions. MF thanks Marco Barbieri and Kamilla Rekvényi for their help and support, as well as the School of Mathematics at the University of Bristol for hosting a 4-month research visit in 2024.

\section{Proportions}\label{s:prop}

In this section we prove Theorem \ref{t:main1} and part (i) of Theorem \ref{t:main3}. Recall that if $G \leqs {\rm Sym}(\O)$ is a finite transitive permutation group with point stabiliser $H$, then $\Delta(G)$ denotes the set of derangements in $G$ and we write
\[
\delta(G) = \delta(G,\O) = \delta(G,H) = \frac{|\Delta(G)|}{|G|}
\]
for the proportion of derangements in $G$. For easy reference we state the Fulman-Guralnick theorem, which is proved in the sequence of papers \cite{FG1, FG2, FG3, FG4}.

\begin{thm}\label{t:FG}
There exists an absolute constant $\e > 0$ such that $\delta(G) \geqs \e$ for every finite simple transitive group $G$.
\end{thm}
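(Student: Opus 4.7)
The plan is to proceed via the Classification of Finite Simple Groups, handling alternating, sporadic, and Lie type simple groups separately. In every case the starting point is the identity
\[
\delta(G) = \sum_{C \cap H = \emptyset} \frac{|C|}{|G|},
\]
summed over the $G$-conjugacy classes $C$, where $H$ is the core-free point stabiliser; the problem is thus to produce enough classes disjoint from $H$ and to lower-bound their total proportion. For $G = A_n$ acting naturally on $n$ points, the classical derangement formula yields $\delta(G) \to 1/e$, and a combinatorial analysis of the orbit structure of $H$ on $\{1,\ldots,n\}$ shows that every other transitive action gives a strictly larger proportion (this is essentially the content of Theorem \ref{t:main1}(i), which is proved later in the paper). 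For sporadic groups, the character tables and fusion data in the \textsf{GAP} Character Table Library reduce the claim to a finite check.

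The substantive case is groups of Lie type. The main tool is the theory of maximal tori: writing $G = \mathbf{G}^F$ for a connected reductive algebraic group $\mathbf{G}$ with Steinberg endomorphism $F$, the $G$-classes of maximal tori are parametrised by $F$-conjugacy classes $[w]$ in the Weyl group $W$, each torus $T_w$ has size of order $q^r$ (with $r$ the rank), and almost all of its elements are regular semisimple. The total proportion of elements of $G$ lying in some $G$-conjugate of $T_w$ is approximately $1/|C_W(wF)|$, a quantity bounded away from zero independently of $q$. Consequently, if one can exhibit some $w$ such that no $G$-conjugate of $T_w$ embeds into $H$, then the regular semisimple elements of $T_w$ form a union of derangement classes of total proportion bounded below by an absolute constant.

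To find such a torus one enumerates the possibilities for $H$ using Aschbacher's theorem in the classical case, and the explicit list of maximal subgroups in the exceptional case. For each family one exhibits a torus $T_w$ not embeddable into $H$; typically one chooses $w$ of large order (a Coxeter element or a power thereof), so that $T_w$ lies in very few maximal subgroups, together with complementary choices to cover the remaining Aschbacher classes. A uniform bound then follows by taking the minimum of the resulting estimates over the (bounded) list of families.

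The hard part is the case in which $H$ is itself a group of Lie type of rank close to $r$, for example a subfield subgroup, a classical subgroup embedded via its natural module, or the stabiliser of a tensor decomposition. Such an $H$ can contain conjugates of a large fraction of the tori $T_w$, and the naive torus argument collapses. The resolution, carried out in the papers \cite{FG1, FG2, FG3, FG4}, is a finer counting argument that blends the proportion of regular semisimple elements with explicit fixed point ratio estimates on $G/H$ and precise information on $[G:H]$; for the residual configurations in small rank or over small fields that escape all uniform estimates, one invokes direct machine computation. Assembling the alternating, sporadic, and Lie type bounds then yields an absolute constant $\varepsilon > 0$ valid across every finite simple transitive group.
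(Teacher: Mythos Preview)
The paper does not prove Theorem~\ref{t:FG} at all: it is merely stated for reference, with the proof attributed to the sequence of papers \cite{FG1,FG2,FG3,FG4} by Fulman and Guralnick. So there is no ``paper's own proof'' to compare against.

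Your proposal is a high-level outline of the strategy those papers pursue, and as such it is broadly accurate in spirit (CFSG reduction, maximal tori and regular semisimple elements for Lie type, Aschbacher's theorem to control $H$, direct computation for sporadics). However, it is not a proof: you explicitly defer the substantive cases---subfield subgroups, natural classical embeddings, tensor stabilisers, small rank and small $q$---to \cite{FG1,FG2,FG3,FG4}, which is precisely where all the work lies. The sentence ``this is essentially the content of Theorem~\ref{t:main1}(i), which is proved later in the paper'' is also slightly off: Theorem~\ref{t:main1}(i) is an asymptotic statement about $\alpha(A_n)$ and does not by itself deliver a uniform lower bound for all $n$; the Fulman--Guralnick argument for alternating groups instead combines the \L uczak--Pyber theorem with the intransitive case. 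In short, what you have written is a reasonable summary of the architecture of a long and technical argument, but since the paper itself treats this theorem as a black box, an outline at this level of detail is all that is appropriate here.
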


We begin by proving Theorem \ref{t:main1}, handling the alternating and sporadic groups in Sections \ref{ss:prop_alt} and \ref{ss:prop_spor}, respectively. We will establish Theorem \ref{t:main3}(i) in Section \ref{ss:prim}. 

\subsection{Alternating groups}\label{ss:prop_alt}

Let $G = A_n$ be an alternating group with $n \geqs 5$ and recall that $\a(G)$ is defined to be the minimal value of $\delta(G,H)$ over all proper subgroups $H$ of $G$. Here our goal is to prove that 
\[
\lim_{n \to \infty} \a(A_n) = \frac{1}{e}.
\]

First observe that we only need to consider $\delta(G,H)$ when $H$ is a maximal subgroup of $G$. In addition, by appealing to a well known theorem of \L{}uczak and Pyber \cite{LP}, we see that the proportion of elements in $G$ that are contained in a proper transitive subgroup tends to $0$ as $n$ tends to infinity. As a consequence, we may assume $H = (S_k \times S_{n-k}) \cap G$ for some $1 \leqs k < n/2$, which allows us to identify $\O = G/H$ with the set of $k$-element subsets of $[n] = \{1, \ldots, n\}$. Let $f(n,k) = 1 - \delta(G)$ be the proportion of elements in $G$ that fix a $k$-element subset of $[n]$.

An easy application of the inclusion-exclusion principle shows that the proportion of derangements in $S_n$ with respect to the natural action on $[n]$ is given by the expression
\[
\sum_{j=0}^n\frac{(-1)^j}{j!} = \frac{[n!/e]}{n!},
\]
where $[x]$ denotes the nearest integer to $x$. As a consequence, we deduce that   
\[
f(n,1) = 1 -  \frac{[n!/e]}{n!} + \frac{(-1)^n(n-1)}{n!}
\]
for all $n \geqs 5$ (see \cite[Corollary 2.6]{Boston}). Clearly, $f(n,1) \to 1 - 1/e$ as $n \to \infty$, so Theorem \ref{t:main1}(i) is a consequence of the following result.

\begin{prop}\label{p:alt_intrans}
We have $f(n,k) \leqs f(n,1)$ for all $1 \leqs k < n/2$ and all $n \geqs 5$.
\end{prop}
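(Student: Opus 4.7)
The natural first step is to convert the proposition into a purely combinatorial statement about partitions. An element $\sigma \in A_n$ fixes a $k$-subset of $[n]$ if and only if some sub-multiset of the cycle-length multiset of $\sigma$ sums to $k$; equivalently, $k$ belongs to the set $P(\sigma) \subseteq \{0, 1, \ldots, n\}$ of subset sums of the cycle lengths of $\sigma$. Since $P(\sigma)$ is symmetric under $j \leftrightarrow n-j$ (corresponding to $S \leftrightarrow [n] \setminus S$), we have $f(n,k) = f(n,n-k)$, which is consistent with the restriction $k < n/2$. Writing $z_\lambda = \prod_i i^{m_i} m_i!$ for a partition $\lambda$ with $m_i$ parts of size $i$ and calling $\lambda$ \emph{even} when $(-1)^{n-\ell(\lambda)} = 1$, where $\ell(\lambda)$ is the number of parts, we obtain
\[
f(n,k) \;=\; 2 \sum_{\lambda}\, \frac{1}{z_\lambda},
\]
the sum running over all even $\lambda \vdash n$ admitting a sub-multiset summing to $k$.

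With this setup the proposition becomes a weighted inequality between two classes of partitions. Let $A_k$ be the set of even $\lambda \vdash n$ with all parts at least $2$ and some sub-multiset summing to $k$, and let $B_k$ be the set of even $\mu \vdash n$ with a part equal to $1$ but no sub-multiset summing to $k$. The proposition reads $\sum_{\lambda \in A_k} 1/z_\lambda \le \sum_{\mu \in B_k} 1/z_\mu$, which I would establish by building an injection $\Phi \colon A_k \to B_k$ satisfying $z_{\Phi(\lambda)} \le z_\lambda$. The guiding construction is a local surgery: given $\lambda \in A_k$, fix the lexicographically smallest sub-multiset $M$ of parts summing to $k$, pick a part $\ell \in M$ and a part $\ell' \notin M$ (both exist since the parts outside $M$ sum to $n-k > k$, and both satisfy $\ell,\ell' \ge 2$ by definition of $A_k$), and replace the pair $(\ell,\ell')$ by $(1,\ell+\ell'-1)$. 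The resulting $\mu = \Phi(\lambda)$ carries a part equal to $1$ and has the same parity as $\lambda$, since $(\ell-1)+(\ell'-1) = 0 + (\ell+\ell'-2)$. A canonical tie-breaking rule in the choice of $\ell$ and $\ell'$ (for example, always choosing $\ell$ maximal in $M$ and $\ell'$ maximal outside $M$) makes $\Phi$ injective, and the weight comparison $z_\mu \le z_\lambda$ reduces to an elementary inequality in the defining product. Finally, the small cases $n \le N_0$ (for some modest $N_0$, say $N_0 = 11$) are verified by direct cycle-type enumeration, which also handles degenerate configurations where the surgery is not available.

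\textbf{Main obstacle.} The crucial difficulty is guaranteeing that $\mu = \Phi(\lambda)$ genuinely has \emph{no} sub-multiset summing to $k$. Although the witnessing sub-multiset $M$ is destroyed in $\mu$, two obstructions remain: $\lambda$ might have contained a second sub-multiset summing to $k$ that avoided both $\ell$ and $\ell'$, in which case this sub-multiset survives into $\mu$; and the new parts $1$ and $\ell+\ell'-1$ can combine with the remaining parts to produce fresh subset sums equal to $k$ (for instance, if $\ell+\ell'-1$ plus some parts outside $M$ again equal $k$). Ruling out both scenarios requires a case analysis on $k$ relative to the sizes of $\ell$, $\ell'$ and the other parts, and may force a more elaborate surgery in the exceptional regimes. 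Carrying out this case analysis while preserving injectivity of $\Phi$ and the even-parity condition keeping $\mu$ inside $A_n$ is the technical core of the proof.
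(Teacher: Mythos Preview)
Your reduction to the inequality $\sum_{\lambda\in A_k}1/z_\lambda\leqs\sum_{\mu\in B_k}1/z_\mu$ is correct, but the surgery you propose does not work as stated, and the obstacle you flag is not a detail to be cleaned up --- it is fatal to the specific map $\Phi$. Take $n=8$, $k=4$ and $\lambda=(2,2,2,2)\in A_4$. With your tie-breaking rule ($\ell$ maximal in $M$, $\ell'$ maximal outside $M$) you get $\ell=\ell'=2$ and $\mu=(3,2,2,1)$, which has both $\{3,1\}$ and $\{2,2\}$ summing to $4$, so $\mu\notin B_4$; in fact \emph{both} of your two listed obstructions occur simultaneously. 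Smaller failures already appear at $n=7$, $k=2$: for $\lambda=(3,2,2)$ your rule gives $\mu=(4,2,1)$, which still contains a part $2$. Repairing this requires not a tweak but a genuinely different construction, since whenever $\lambda$ has multiplicity in a part size you will typically leave behind an untouched copy that still hits $k$. It is far from clear that any weight-decreasing injection $A_k\hookrightarrow B_k$ exists at all.

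The paper's proof is entirely different and much shorter. Rather than comparing $f(n,k)$ to $f(n,1)$ directly, it bounds $c(n,k)=\max(a(n,k),b(n,k))$ (the larger of the even and odd fixing proportions) by a recursion in $n$: conditioning on the length $j$ of the cycle through the point $n$ yields
\[
c(n,k)\leqs\frac{1}{n}\Bigl(k+\sum_{j=k+1}^{n-k}c(n-j,k)\Bigr),
\]
and a short induction then gives $c(n,k)\leqs 0.63$ for all $k\geqs 2$ and $n\geqs 2k+1$ (with finitely many base cases checked numerically). Since $f(n,1)=1-[n!/e]/n!+(-1)^n(n-1)/n!>0.63$ for $n\geqs 7$, the inequality follows; $n=5,6$ are checked by hand. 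The advantage of this approach is that it sidesteps the combinatorics of subset sums entirely: one never needs to understand \emph{which} partitions admit a sub-multiset summing to $k$, only to bound their total weight by a uniform constant below $1-1/e$.
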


In order to prove Proposition \ref{p:alt_intrans}, we need to introduce some additional notation. For integers $n \geqs 5$ and $1 \leqs k \leqs n$ we define 
\[
a(n,k) = \frac{|A(n,k)|}{|A_n|},\; b(n,k) = \frac{|B(n,k)|}{|A_n|},\; c(n,k) = \max\{ a(n,k), b(n,k) \},
\]
where $A(n,k)$ (respectively, $B(n,k)$) is the set of even (respectively, odd) elements in $S_n$ fixing a $k$-element subset of $[n]$. Note that $c(k,k) = 1$ and $c(n,k) = c(n,n-k)$. By arguing as in the proof of \cite[Lemma 2(i)]{Dixon}, it is straightforward to show that
\begin{equation}\label{e:cnk}
c(n,k) \leqs \frac{1}{n}\left(k+\sum_{j=k+1}^{n-k}c(n-j,k)\right).
\end{equation}

\begin{lem}\label{l:an_1}
We have $c(n,1) \leqs 2/3$ for all $n \geqs 5$.
\end{lem}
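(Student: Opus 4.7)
My plan is to pin down $c(n,1)$ exactly by determining both $|A(n,1)| + |B(n,1)|$ and $|A(n,1)| - |B(n,1)|$, and then to reduce the inequality to a routine estimate on derangement numbers. Since fixing a $1$-element subset of $[n]$ is the same as fixing a point, $|A(n,1)| + |B(n,1)|$ is just the number of elements of $S_n$ with at least one fixed point, which equals $n! - D_n$, where $D_n = n!\sum_{k=0}^{n}(-1)^k/k!$ is the number of derangements.

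For the signed difference I would apply inclusion-exclusion to the events $A_i = \{\sigma \in S_n : \sigma(i) = i\}$, tracking signs. For a nonempty $S \subseteq [n]$, the intersection $\bigcap_{i \in S} A_i$ is naturally identified with $S_{[n]\setminus S}$, so its signed count vanishes whenever $n - |S| \geqs 2$ (since $S_m$ has equal numbers of even and odd permutations for $m \geqs 2$). Only the terms with $|S| \in \{n-1, n\}$ survive, and they combine to give
\[
|A(n,1)| - |B(n,1)| = (-1)^{n}\binom{n}{n-1} + (-1)^{n+1}\binom{n}{n} = (-1)^{n}(n-1).
\]
Combining the two identities, $\max\{|A(n,1)|, |B(n,1)|\} = \tfrac{1}{2}(n! - D_n + n - 1)$, and hence
\[
c(n,1) = \frac{n! - D_n + n - 1}{n!}.
\]

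The inequality $c(n,1) \leqs 2/3$ is then equivalent to $D_n \geqs n!/3 + n - 1$. At $n = 5$ this is the equality $44 = 44$, so the bound is sharp and matches the formula for $\a(A_n)$ recorded in the remark following Theorem \ref{t:main1}. For $n \geqs 6$, the standard alternating-series estimate $D_n \geqs n!/e - 1$ reduces the claim to $n!(1/e - 1/3) \geqs n$, which is easily verified. The only delicate point in the whole argument is the sign-sensitive inclusion-exclusion, but once one observes that only the terms with $|S| \in \{n-1,n\}$ survive the cancellation, the rest is bookkeeping.
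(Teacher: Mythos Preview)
Your proof is correct and essentially follows the same route as the paper: both compute $a(n,1)$ and $b(n,1)$ explicitly via the classical derangement formulae for $S_n$ and $A_n$, then verify the bound. The paper simply cites \cite[Corollary 2.6]{Boston} for the identity $\delta(A_n) = \delta(S_n) - (-1)^n(n-1)/n!$ and leaves the final check as ``easy'', whereas you derive the signed count $|A(n,1)| - |B(n,1)| = (-1)^n(n-1)$ directly by sign-weighted inclusion--exclusion and then make the estimate explicit; the content is the same, and your observation that equality holds at $n=5$ is a nice confirmation that the bound is sharp.
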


\begin{proof}
First observe that 
\[
a(n,1) = 1-\delta(A_n),\;\; b(n,1) = 2(1-\delta(S_n)) - a(n,1)
\]
with respect to the natural actions of $A_n$ and $S_n$ on $[n]$. By \cite[Corollary 2.6]{Boston} we have
\[
\delta(A_n) = \delta(S_n) - \frac{(-1)^n(n-1)}{n!},\;\; \delta(S_n) = \frac{[n!/e]}{n!}
\]
and it is easy to check that $c(n,1) \leqs 2/3$ for all $n \geqs 5$.
\end{proof}

\begin{lem}\label{l:an_2}
We have $c(n,2) \leqs 0.63$ for all $n \geqs 7$.
\end{lem}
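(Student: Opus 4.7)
The plan is to prove the bound by strong induction on $n$, combining the recursion \eqref{e:cnk} (with $k=2$) with direct computation of $c(n, 2)$ for small $n$.

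For the base of the induction, compute $c(n, 2)$ exactly for each $n$ with $7 \leqs n \leqs 11$. This is straightforward: a permutation $\sigma \in S_n$ fixes some $2$-element subset if and only if $\sigma$ has at least two fixed points or a $2$-cycle, so the permutations \emph{not} fixing any $2$-subset are precisely those whose cycle type has no part of size $2$ and at most one part of size $1$. Enumerating these cycle types and applying the sign formula $\operatorname{sgn}(\sigma) = (-1)^{n - \ell}$, where $\ell$ is the number of cycles of $\sigma$, yields exact expressions for $a(n, 2)$ and $b(n, 2)$, hence for $c(n, 2)$; in each of the five cases this confirms $c(n, 2) \leqs 0.63$. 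The same method also gives the small-$n$ values
\[
c(2, 2) = c(3, 2) = 1, \quad c(4, 2) = \tfrac{1}{2}, \quad c(5, 2) = \tfrac{3}{5}, \quad c(6, 2) = \tfrac{2}{3},
\]
which will be needed in the inductive step.

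For $n \geqs 12$, apply \eqref{e:cnk} with $k = 2$ to obtain
\[
c(n, 2) \leqs \frac{1}{n}\left( 2 + \sum_{i=2}^{n-3} c(i, 2) \right),
\]
and split the sum into the part $2 \leqs i \leqs 11$ (where the explicit values from the base cases are substituted) and the part $12 \leqs i \leqs n-3$ (where the inductive hypothesis $c(i, 2) \leqs 0.63$ applies). A short arithmetic calculation then shows that the right-hand side is bounded above by $0.63$ for every $n \geqs 12$.

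The main subtlety is that the target constant $0.63$ lies quite close to the asymptotic limit $\lim_{n \to \infty} c(n, 2) = 1 - 2 e^{-3/2} \approx 0.554$, so the induction has very little slack. In particular, the naive induction using only the trivial base values $c(i, 2)$ for $i \leqs 6$ does not close: since $c(2, 2) + \cdots + c(6, 2) = 113/30$, the resulting crude estimate is $c(n, 2) \leqs 0.63 + O(1/n)$ rather than the required $\leqs 0.63$. The additional base cases $7 \leqs n \leqs 11$ supply just enough slack to close the gap, and I expect their verification to be the main, albeit entirely routine, technical step of the proof.
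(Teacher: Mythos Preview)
Your proposal is correct and follows essentially the same approach as the paper: compute $c(n,2)$ directly for a finite initial range and then close the induction via the recursion~\eqref{e:cnk}. The only difference is the placement of the cutoff --- the paper takes base cases $7\leqs n\leqs 12$ and uses exact values of $c(m,2)$ for $2\leqs m\leqs 9$ in the inductive step (starting the induction at $n=13$), whereas you take base cases $7\leqs n\leqs 11$ and use exact values up to $m=11$ --- but this is purely a bookkeeping choice and both versions close with the same margin.
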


\begin{proof}
First we compute $c(3,2) = 1$, $c(4,2) = 1/2$, $c(5,2) = 3/5$, $c(6,2) = 2/3$ and
\[
c(7,2) = \frac{38}{63}, \;\; c(8,2) = \frac{7}{12}, \;\; c(9,2) = \frac{3691}{6480}.
\]
In addition, we check that $c(n,2) \leqs 0.63$ for $n=10,11,12$. 
Now assume $n \geqs 13$ and suppose we have $c(m,2) \leqs 0.63$ for all $7 \leqs m \leqs n-1$. Then \eqref{e:cnk} gives
\[
c(n,2) \leqs \frac{1}{n}\left(2+\sum_{m=2}^9c(m,2) + \sum_{j=3}^{n-10}c(n-j,2)\right) \leqs \frac{1}{n}\left(\frac{341233}{45360}+\frac{63}{100}(n-12)\right)< 0.63
\]
and the result follows.
\end{proof}

\begin{lem}\label{l:an_3}
We have $c(n,k) \leqs 0.63$ for all $3 \leqs k < n/2$.
\end{lem}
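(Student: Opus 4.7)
The plan is to proceed by induction on $k \geqs 3$, with the base cases $k=1,2$ already handled by Lemmas \ref{l:an_1} and \ref{l:an_2}. For each fixed $k$, I would then perform an inner induction on $n > 2k$, using the recursion \eqref{e:cnk} as the main tool, supplemented by the symmetry $c(m, k) = c(m, m-k)$ whenever $m < 2k$.

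A useful preliminary observation is that applying \eqref{e:cnk} at $n = 2k$ gives an empty sum, yielding $c(2k, k) \leqs 1/2$. This tames the boundary term at $m = 2k$, which cannot be handled by the symmetry trick. For the inductive step at general $(n, k)$ I would split
\[
c(n,k) \leqs \frac{1}{n}\left(k + c(k,k) + \sum_{m=k+1}^{2k-1} c(m,k) + c(2k,k) + \sum_{m=2k+1}^{n-k-1} c(m,k)\right)
\]
and bound each group separately: $c(k,k) = 1$ trivially; $c(2k,k) \leqs 1/2$ as above; for $k+1 \leqs m \leqs 2k-1$ rewrite $c(m,k) = c(m,m-k)$ and apply Lemma \ref{l:an_1} when $m-k=1$, Lemma \ref{l:an_2} when $m-k=2$, and the outer inductive hypothesis on $k$ when $3 \leqs m-k \leqs k-1$ (a range that is empty only for $k=3$); and finally, each term in $\sum_{m=2k+1}^{n-k-1} c(m,k)$ is $\leqs 0.63$ by the inner induction. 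Summing, the first $k+1$ terms contribute at most $0.63k + O(1)$, while the tail contributes at most $0.63(n - 3k - 1)$, so the total is of the form $0.63 - C/n$ for some positive $C$, yielding the desired inequality once $n$ is sufficiently large relative to $k$.

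For the remaining small values of $n$ (roughly $2k < n \leqs 3k$, where the tail sum is empty or very short and the inductive step does not close on its own), I would verify $c(n,k) \leqs 0.63$ by direct substitution into \eqref{e:cnk} using the known initial values. The only case needing extra care is $k=3$, where the range $[k+3, 2k-1]$ is empty and the outer induction gives no middle terms: here one computes $c(6,3) = 3/8$ by hand (classifying the cycle types of $S_6$-elements that admit a $3{+}3$ partition) and plugs this into the recursion directly to verify $c(n,3) \leqs 0.63$ for each $n \in \{7,8,9,10\}$, after which the inner induction on $n$ closes as described.

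The main obstacle is bookkeeping rather than mathematics: ensuring that the constant contribution $c(k,k)=1$ plus $c(2k,k)\leqs 1/2$ plus the two terms $c(k+1,1), c(k+2,2)$ (which for $k=4$ are as large as $2/3$) is compensated by the slack produced by bounding the remaining middle terms by $0.63$ via the outer induction. The bound $c(2k,k) \leqs 1/2$ coming from an empty sum in \eqref{e:cnk} is what provides enough room to close the induction uniformly in $k \geqs 4$, while the case $k=3$ requires the explicit computation of $c(6,3)$ to get started.
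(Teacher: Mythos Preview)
Your approach is correct and essentially the same as the paper's: outer induction on $k$ with $k=3,4$ as explicit base cases (the paper computes $c(4,3),\ldots,c(8,3)$ directly and says $k=4$ is analogous), then inner induction on $n$ via \eqref{e:cnk}, using $c(m,k)=c(m,m-k)$ to invoke Lemmas~\ref{l:an_1}, \ref{l:an_2} and the already-proven smaller-$k$ cases. One simplification worth noting: for $k\geqs 5$ the paper does not single out $c(2k,k)\leqs 1/2$ but simply bounds every $c(m,k)$ with $m\geqs k+2$ by $0.63$, obtaining $c(n,k)\leqs \frac{1}{n}\bigl(k+\tfrac{5}{3}+\tfrac{63}{100}(n-2k-2)\bigr)=\tfrac{63}{100}-\tfrac{39k-61}{150n}$, which is already $<0.63$ for every $n$ once $k\geqs 2$, so your anticipated separate small-$n$ verification is in fact unnecessary.
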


\begin{proof}
First assume $k=3$, so $n \geqs 7$ and we compute 
\[
c(4,3)=\frac{3}{4}, \; c(5,3)=\frac{3}{5},\; c(6,3)=\frac{3}{8},\; c(7,3) = \frac{18}{35},\; c(8,3) = 
\frac{25}{48}.
\]
Suppose $n \geqs 9$ and $c(m,3) \leqs 0.63$ for all $5 \leqs m \leqs n-1$. Then by applying the upper bound in \eqref{e:cnk}, we deduce that 
\[
c(n,3) \leqs \frac{1}{n}\left(3+1+\frac{3}{4} + \sum_{j=4}^{n-5}c(n-j,3)\right) \leqs \frac{1}{n}\left(\frac{19}{4}+\frac{63}{100}(n-8)\right)< 0.63.
\]
The case $k=4$ is very similar.

Finally, let us assume $k \geqs 5$. By Lemmas \ref{l:an_1} and \ref{l:an_2} we have $c(k+1,k) = c(k+1,1) \leqs 2/3$ and $c(k+2,k) = c(k+2,2) \leqs 0.63$. Let us assume $c(m,k) \leqs 0.63$ for all $k+2 \leqs m \leqs n-1$. Then \eqref{e:cnk} gives
\[
c(n,k) \leqs \frac{1}{n}\left(k+1+\frac{2}{3}+\frac{63}{100}(n-2k-2)\right) = \frac{63}{100}-\frac{39k-61}{150n}<0.63
\]
and the result follows.
\end{proof}

We are now in a position to prove Proposition \ref{p:alt_intrans}. As explained above, this completes the proof of Theorem \ref{t:main1}(i).  

\begin{proof}[Proof of Proposition \ref{p:alt_intrans}]
The cases $n \in \{5,6\}$ can be checked directly, so let us assume $n \geqs 7$. Then Lemmas \ref{l:an_2} and \ref{l:an_3} imply that $f(n,k) \leqs 0.63$ for all $2 \leqs k < n/2$, while we have 
\[
f(n,1) = 1-\frac{[n!/e]}{n!}+\frac{(-1)^n(n-1)}{n!} > 0.63.
\]
The result follows.
\end{proof}

It seems difficult to prove a non-asymptotic version of Theorem \ref{t:main1}(i), but computations with the low degree alternating groups lead us to propose the following conjecture. In particular, this would imply that $13/45$ is the optimal constant for alternating groups in Theorem \ref{t:FG}. 

\begin{con}\label{c:1}
Let $G = A_n$ be a finite simple transitive group with point stabiliser $H$.

\vspace{1mm}

\begin{itemize}\addtolength{\itemsep}{0.2\baselineskip}
\item[{\rm (i)}] We have $\delta(G) \geqs 13/45$, with equality if and only if $G=A_8$ and $H = {\rm AGL}_{3}(2)$.
\item[{\rm (ii)}] If $n \geqs 9$, then 
\[
\delta(G) \geqs \frac{[n!/e]}{n!} - \frac{(-1)^n(n-1)}{n!},
\]
with equality if and only if $H = A_{n-1}$.
\end{itemize}
\end{con}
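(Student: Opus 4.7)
The plan is to reduce to maximal subgroups $H$ and then analyse them via the O'Nan--Scott theorem. Since $\delta(G,K) \leqs \delta(G,H)$ whenever $H \leqs K$ (a fixed point on $G/H$ maps to a fixed point on $G/K$ under the natural projection $xH \mapsto xK$), the minimum of $\delta(G,H)$ over core-free $H$ is attained at a maximal subgroup. Each maximal subgroup of $A_n$ with $n \geqs 9$ is either intransitive of the form $(S_k\times S_{n-k})\cap G$ with $1\leqs k<n/2$; transitive imprimitive of the form $(S_a\wr S_b)\cap G$ with $ab=n$ and $a,b\geqs 2$; or primitive. Note that once (ii) is established, (i) follows for $n\geqs 9$ because $\delta(A_n,A_{n-1})>13/45$ for all $n\geqs 9$; the small cases $5\leqs n\leqs 8$ then only require direct verification, for instance via the \textsf{GAP} Character Table Library, to identify $(A_8,{\rm AGL}_3(2))$ as the unique minimiser realising $13/45$.

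For intransitive $H$, Proposition \ref{p:alt_intrans} already gives $\delta(G,H)\geqs \delta(A_n,A_{n-1})$, with equality only at $k=1$. The substantive task is therefore to show that for every transitive maximal $H$ with $n \geqs 9$ one has the \emph{strict} inequality $\delta(G,H) > \delta(A_n,A_{n-1})$. This is plausible because the permutation character $1_H^G$ has mean $1$ by Burnside, but when $H$ is transitive on $[n]$ the cycle types of elements of $H$ are severely restricted and the fixed-point distribution concentrates more sharply at $0$ than in the natural action. For the transitive imprimitive case, my approach would be to compute the permutation character of $(S_a\wr S_b)\cap G$ via the wreath-product cycle index, and to show that the proportion of elements of $A_n$ whose cycle type is compatible with a block system of size $a$ is much smaller than $1-\delta(A_n,A_{n-1})\approx 1-1/e$, using the known bounds on the number of elements of $S_n$ preserving a given uniform partition. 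For primitive $H$, Mar\'{o}ti's bound gives $|H|\leqs n^{1+\lceil\log_2 n\rceil}$, and combined with a quantitative form of the \L{}uczak--Pyber theorem one expects that the proportion of elements of $A_n$ lying in some conjugate of $H$ is $o(1)$, yielding $\delta(G,H)\to 1$ and hence the desired strict inequality for $n$ sufficiently large.

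The main obstacle is to close the gap between the regime where the asymptotic estimates are effective and the moderate range (say $9 \leqs n \leqs 30$) in which they may not yet be sharp. The cleanest resolution would be to upgrade \L{}uczak--Pyber to an explicit bound valid for all $n\geqs 9$, perhaps by stratifying the transitive maximal subgroups by isomorphism type and deploying fixed-point-ratio estimates in the style of \cite{GM,GM2} to control $|g^G\cap H|/|H|$ uniformly across cycle types $g^G$. In practice the proof would likely split: for moderate $n$, direct computation of permutation characters using the classification of primitive groups of small degree together with the \textsf{GAP} library; for large $n$, the asymptotic argument sketched above. The hardest step is almost certainly the transitive imprimitive family with $a$ and $b$ both large and of comparable size, since $|S_a\wr S_b|$ remains substantial and the combinatorics of cycle types compatible with a block system requires a careful analysis to beat the threshold $\delta(A_n,A_{n-1})$ rather than merely an absolute positive constant.
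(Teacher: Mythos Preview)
The statement you are attempting to prove is labelled as a \emph{Conjecture} in the paper, not a theorem. The paper does not supply a proof; on the contrary, immediately before stating it the authors write that ``it seems difficult to prove a non-asymptotic version of Theorem~\ref{t:main1}(i), but computations with the low degree alternating groups lead us to propose the following conjecture.'' So there is no proof in the paper to compare your proposal against.

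Your proposal is a reasonable strategic outline and correctly identifies the structure of the problem: the reduction to maximal $H$ is valid (your inequality $\delta(G,K)\leqs\delta(G,H)$ for $H\leqs K$ is correct), the intransitive case is indeed handled by Proposition~\ref{p:alt_intrans} with strict inequality for $k\geqs 2$, and the division of the transitive maximals into imprimitive and primitive types is the natural next step. However, what you have written is explicitly a plan rather than a proof: you repeatedly signal this with ``my approach would be'', ``one expects'', ``would likely split'', and you openly flag the main obstacle as making the \L{}uczak--Pyber estimate effective in the moderate range of $n$. That obstacle is real and is precisely why the authors left the statement as a conjecture. In particular, the known quantitative versions of \L{}uczak--Pyber do not give constants sharp enough to beat $1-1/e$ for all $n\geqs 9$, and for the imprimitive families $(S_a\wr S_b)\cap G$ with $a,b$ both moderate the cycle-index computation you allude to does not obviously yield a clean uniform bound below $1-1/e$. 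Your sketch is a plausible line of attack, but it does not close the conjecture, and you should not present it as a proof.
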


\subsection{Sporadic groups}\label{ss:prop_spor}

Next we use a computational approach to study the proportion of derangements for transitive actions of sporadic groups, working with \textsf{GAP} (version 4.13.0) and {\sc Magma} (version V2.28-8). 

Let $G \leqs {\rm Sym}(\O)$ be a finite simple transitive sporadic group with point stabiliser $H$ and let $\chi = 1^G_H$ be the corresponding permutation character, so we have 
\[
\Delta(G) = \{ x \in G \,:\, \chi(x) = 0\} = \{ x \in G \,:\, \mbox{$x^G \cap H$ is empty}\}.
\]
We can compute $\delta(G)$ if we know the sizes of the conjugacy classes in $G$, together with the \emph{fusion map} from $H$-classes to $G$-classes. The latter map describes the embedding of each $H$-class in $G$ and so we can use it to read off the conjugacy classes of derangements in $G$. As explained in the proof of Proposition \ref{p:spor1} below, in almost all cases the relevant information we need concerning conjugacy classes can be accessed via the \textsf{GAP} Character Table Library \cite{GAPCTL} and this allows us to compute $\delta(G)$ (and subsequently $\a(G)$) in a few seconds.

The following result completes the proof of Theorem \ref{t:main1} (note that in the final column of Table \ref{tab:spor}, we give $\a(G)$ to $3$ significant figures).

\begin{prop}\label{p:spor1}
Let $G$ be a finite simple transitive sporadic group with point stabiliser $H$.  

\vspace{1mm}

\begin{itemize}\addtolength{\itemsep}{0.2\baselineskip}
\item[{\rm (i)}] We have $\delta(G) \geqs \a(G)$, where $\a(G)$ is recorded in Table \ref{tab:spor}. In addition, $\delta(G) = \a(G)$ if and only if $H$ is conjugate to the maximal subgroup of $G$ listed in the second column of the table.
\item[{\rm (ii)}] We have $\delta(G) \geqs 2197/7425$, with equality if and only if $G = {\rm McL}$ and $H = 2.A_8$.
\end{itemize}
\end{prop}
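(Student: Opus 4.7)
The proof is computational. First I would reduce to maximal subgroups: if $K \leqs H \leqs G$ are both core-free, then any element whose conjugate lies in $K$ also has that conjugate in $H$, so $\Delta(G,H) \subseteq \Delta(G,K)$ and hence $\delta(G,H) \leqs \delta(G,K)$. Consequently $\alpha(G)$ is attained on some maximal subgroup of $G$, and it suffices to compute $\delta(G,M)$ for every conjugacy class of maximal subgroups $M$.

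For a fixed maximal subgroup $M$, a $G$-conjugacy class $C$ consists of derangements if and only if $C \cap M$ is empty, which is detected by the fusion map $\Phi_M \colon \mathrm{cl}(M) \to \mathrm{cl}(G)$: the derangement classes are precisely the $G$-classes not in the image of $\Phi_M$. Thus
\[
\delta(G,M) = \frac{1}{|G|}\sum_{C \notin \mathrm{Im}\,\Phi_M}|C|.
\]
For almost every sporadic group $G$ and every conjugacy class of maximal subgroup $M$, the character tables of $G$ and $M$ together with $\Phi_M$ are stored in the \textsf{GAP} Character Table Library \cite{GAPCTL}. I would therefore iterate over the list of maximal subgroups (which can be retrieved from the library or from the ATLAS), compute $\delta(G,M)$ using the formula above, and take the minimum to determine $\alpha(G)$, recording both the value and the witnessing maximal subgroup in Table \ref{tab:spor}.

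The main obstacle occurs for the Baby Monster $B$ and the Monster $M$, where not every maximal subgroup has its character table and fusion map stored in \cite{GAPCTL}. For these two groups I would proceed case by case. Where a permutation character $1^G_M$ is available in the library, $\delta(G,M)$ is read off directly as the sum of $|C|/|G|$ over the $G$-classes on which $1^G_M$ vanishes. When only partial information is available, a crude lower bound for $\delta(G,M)$ can be obtained by treating every $G$-class whose intersection with $M$ is not ruled out as a non-derangement; in these cases the resulting lower bound is comfortably above the candidate global minimum $2197/7425 \approx 0.296$, so such classes do not affect the outcome.

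Finally, part (ii) follows by comparing the values $\alpha(G)$ computed in part (i) across all $26$ sporadic simple groups and observing that the minimum is $2197/7425$, attained uniquely at $(G,H) = ({\rm McL}, 2.A_8)$.
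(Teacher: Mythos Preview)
Your approach is essentially the paper's: reduce to maximal subgroups, compute $\delta(G,M)$ from the fusion maps in the \textsf{GAP} Character Table Library, and fall back on crude element-order bounds for the handful of maximal subgroups of $\mathbb{B}$ and $\mathbb{M}$ whose data is incomplete.

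One point needs sharpening. For part (i) you must pin down $\alpha(G)$ \emph{exactly} for each $G$, so for the Monster the crude lower bounds on $\delta(G,M)$ for the fifteen or so maximal subgroups lacking stored fusion data must exceed the candidate value $\gamma = \alpha(\mathbb{M}) \approx 0.537$ (attained at $H = 2^{1+24}.{\rm Co}_1$), not merely the global minimum $2197/7425 \approx 0.296$ that you quote. The paper does this via the spectrum bound
\[
\delta(G,M) \geqs \frac{|\{x \in G : |x| \notin \omega(M)\}|}{|G|},
\]
computing $\omega(M)$ from explicit permutation representations in the Web Atlas; in one case, $M = 2^{10+16}.\Omega_{10}^{+}(2)$, the bare order-divisibility bound only yields $\delta(G,M) > 1/2$, and a short extra argument (using $\omega(\Omega_{10}^{+}(2))$ to exclude a few more element orders) is needed to push past $\gamma$. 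Your outline would need this refinement to deliver part (i) in full.
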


{\scriptsize
\begin{table}
\[
\begin{array}{llll} \hline
G & H & \a(G) & \\ \hline
{\rm M}_{11} & {\rm M}_{10} & 23/66 & 0.348 \\
{\rm M}_{12} & {\rm M}_{11} &107/288 & 0.371 \\
{\rm M}_{22} & A_7 & 119/352 & 0.338 \\
{\rm M}_{23} & {\rm M}_{22} & 877/2415 & 0.363 \\
{\rm M}_{24} & {\rm M}_{23} & 1699/4608 & 0.368 \\
{\rm J}_{1} & D_6 \times D_{10} & 573/1463 & 0.391 \\
{\rm J}_{2} & 3.A_6.2 & 979/2100 & 0.466 \\
{\rm J}_{3} & (3 \times A_6).2 & 971/2907 & 0.334 \\
{\rm McL} & 2.A_8 & 2197/7425 & 0.295 \\
{\rm HS} & {\rm U}_{3}(5).2 & 13301/42240 & 0.314 \\
{\rm He} & 2^2.{\rm L}_{3}(4).S_3 & 23423/46648 & 0.502 \\
{\rm Ru} & 2^{1+4+6}.S_5 & 26967/65975 & 0.408 \\
{\rm Co}_{3} & {\rm McL}.2 & 47621/149040 & 0.319 \\
{\rm Co}_{2} & 2^{1+8}{:}{\rm Sp}_{6}(2) & 350303/759000 & 0.461 \\
{\rm Fi}_{22} & \O_7(3) & 934573/2365440 & 0.395 \\
{\rm Fi}_{23} &  {\rm P\O}_{8}^{+}(3).S_3 & 4624523/11561088 & 0.400 \\
{\rm Suz} & G_2(4) & 6579421/13471920 & 0.488 \\
{\rm O'N} & 4.{\rm L}_{3}(4).2 & 5402647/14286195 & 0.378 \\
{\rm HN} & 2^{1+8}.(A_5 \times A_5).2 & 13680272/24688125 & 0.554 \\ 
{\rm Th} & 2^{1+8}.A_9 & 13838827/39073671 & 0.354 \\
{\rm Ly} & 2.A_{11} & 23715556/63400425 & 0.374 \\ 
{\rm Co}_{1} & 2^{1+8}.\O_8^{+}(2) & 7948916279/15664849200 & 0.507 \\
{\rm J}_{4} & 2^{3+12}.(S_5 \times {\rm L}_{3}(2)) & 20243299027/43786049417 & 0.462 \\
{\rm Fi}_{24}' & {\rm Fi}_{23} & 765137684779/1654006894848 & 0.462 \\
\mathbb{B} & 2.{}^2E_6(2).2 & 94738750847635861/167684218416000000 & 0.564 \\
\mathbb{M} & 2^{1+24}.{\rm Co}_1 & 26707770823339783801504/49722462258718251877875 & 0.537 \\
\hline
\end{array}
\]
\caption{The values of $\a(G)$ for sporadic simple groups}
\label{tab:spor}
\end{table}
}

\begin{proof}
Part (ii) follows immediately from the information in Table \ref{tab:spor}, so we just need to consider part (i). As before, we may assume $G$ is primitive, which means that $H$ is a maximal subgroup of $G$.

To begin with, let us assume $G$ is not the Monster group $\mathbb{M}$. Then in each case, the character tables of $G$ and $H$ are available in the \textsf{GAP} Character Table Library \cite{GAPCTL}. In addition, the corresponding fusion map from $H$-classes to $G$-classes is also available, with the single exception of the case where $G = \mathbb{B}$ is the Baby Monster and $H = (2^2 \times F_4(2)){:}2$. Putting the latter case to one side for now, we can use the fusion map to determine the $G$-classes comprising $\Delta(G)$, which in turn allows us to compute $\delta(G)$ precisely. We can then read off the minimum over all maximal subgroups, which gives the value of $\a(G)$ recorded in Table \ref{tab:spor}. 

Now assume $G = \mathbb{B}$ and $H = (2^2 \times F_4(2)){:}2$. Here we use the function \textsf{PossibleClassFusions} to produce a list of $64$ candidate fusion maps and one checks that each candidate map produces the same permutation character $\chi = 1^G_H$. So once again we can calculate $\delta(G)$ precisely and the result follows.

Finally, let us assume $G = \mathbb{M}$ is the Monster. By the main theorem of \cite{DLP}, there are $46$ conjugacy classes of maximal subgroups of $G$. For $31$ of these classes, we can access the character tables of $G$ and a representative $H$ via the \textsf{GAP} function \textsf{NamesOfFusionSources}, as well as the corresponding fusion maps. So in each of these cases we can compute $\delta(G)$ precisely, just as we did above. In particular, if $\mathcal{M}_1$ denotes this specific collection of maximal subgroups, then 
\[
\min\{\delta(G,H) \,:\, H \in \mathcal{M}_1\} = \frac{26707770823339783801504}{49722462258718251877875} = \gamma,
\]
with equality if and only if $H = 2^{1+24}.{\rm Co}_1$. So in order to complete the proof, we need to show that $\delta(G,H) > \gamma$ for all of the remaining maximal subgroups $H$. To do this, let $\omega(H)$ be the \emph{spectrum} of $H$, which is simply the set of element orders in $H$, and consider the crude lower bound 
\begin{equation}\label{e:easy}
\delta(G) \geqs \frac{|\{ x \in G \,:\, |x| \not\in \omega(H)\}|}{|G|}.
\end{equation}

First assume $H$ is one of the following maximal subgroups:
\[
{\rm L}_2(13){:}2,\, {\rm L}_2(19){:}2,\, {\rm L}_2(29){:}2, \,  {\rm U}_3(4){:}4,\,  ({\rm L}_2(11) \times {\rm L}_2(11)).4, \, 11^2{:}(5 \times 2A_5), 
\]
\[
7^2{:}{\rm SL}_2(7), \, 3^8.{\rm P\O}_{8}^{-}(3).2, \, 3^{3+2+6+6}.({\rm L}_3(3) \times {\rm SD}_{16}),\, 3^{2+5+10}.({\rm M}_{11} \times 2S_4),
\]
\[
(3^2{:}2 \times {\rm P\Omega}_8^{+}(3)).S_4, \, 2^{3+6+12+18}.({\rm L}_3(2) \times 3S_6), \, 2^{2+11+22}.({\rm M}_{24} \times S_3).
\]
In each of these cases, a permutation representation of $H$ is given in the Web Atlas \cite{WebAt} and with the aid of {\sc Magma} \cite{magma} it is easy to determine $\omega(H)$. We can then verify the desired bound via \eqref{e:easy}. For example, if $H = 2^{3+6+12+18}.({\rm L}_3(2) \times 3S_6)$ then the Web Atlas provides a representation of $H$ on $1032192$ points and we can use {\sc Magma} to compute
\[
\omega(H) = \left\{ \begin{array}{c}
1, 2, 3, 4, 5, 6, 7, 8, 10, 12, 14, 15, 16, 20,21, 24, \\
  28, 30, 32, 35, 40, 42, 48, 56, 60, 70, 84, 105
 \end{array} \right\}.
\]
And then by inspecting the character table of $G$, the bound in \eqref{e:easy} yields 
\[
\delta(G) \geqs \frac{74617454008173302577265307}{105784031083359216398221125} > \gamma
\]
and the result follows.

Finally, let us assume $H = 2^{5+10+20}.(S_3 \times {\rm L}_5(2))$ or $2^{10+16}.\Omega_{10}^{+}(2)$. In these two cases, the Web Atlas does not provide a permutation representation of $H$ and so we need to modify the argument. In the first case, one can check that the trivial bound  
\begin{equation}\label{e:crude1}
\delta(G) \geqs \frac{|\{x \in G \,:\, \mbox{$|H|$ is indivisible by $|x|$}\}|}{|G|}
\end{equation}
is sufficient, so we may assume $H = 2^{10+16}.\Omega_{10}^{+}(2)$. Here the same bound yields $\delta(G) > 1/2$, but we need to work a little bit harder to force $\delta(G) > \gamma$.

To do this, we first observe that
\[
\{n \in \omega(G) \,:\, n \nmid |H|\} = \left\{ \begin{array}{c}
11, 13, 19, 22, 23, 26, 29, 33, 38, 39, 41, 44, 46, 47,52, \\
55, 57, 59, 66, 69, 71, 78, 87, 88, 92, 94, 95, 104, 110 
\end{array}
\right\} = A
\]
In addition, with the aid of {\sc Magma}, we compute
\[
\omega(\Omega_{10}^{+}(2)) = \{ 1, 2, 3, 4, 5, 6, 7, 8, 9, 10, 12, 14, 15, 17, 18, 20, 21, 24, 30, 31, 42, 45,51, 60 \}.
\]
Since $H/N = \Omega_{10}^{+}(2)$ for some normal $2$-subgroup $N$, it follows that every element in $G$ of order $93$, $105$ and $119$ is a derangement. So if we now set $B = A \cup \{93,105,119\}$, then 
\[
\delta(G) \geqs \frac{|\{ x\in G \,:\, |x| \in B\}|}{|G|} > \gamma
\]
and the proof is complete.
\end{proof}

\begin{rem}
It is easy to extend the above analysis to all almost simple sporadic groups. Let $G \leqs {\rm Sym}(\O)$ be such a group with point stabiliser $H$ and assume $G$ is non-simple. Then $\delta(G) \geqs 2516/7425$, with equality if and only if $G = {\rm McL}.2$ and $H = 2.S_8$.
\end{rem}

\subsection{Primitive groups}\label{ss:prim}

In this section we extend our analysis of $\delta(G)$ to primitive permutation groups and we prove Theorem \ref{t:main3}(i). The proof of Theorem \ref{t:main3}(ii) requires Theorem \ref{t:main2} and the details will be presented in Section \ref{ss:prim_sol}.

Let $G \leqs {\rm Sym}(\O)$ be a finite primitive permutation group with socle $N$. Recall that $N$ is the subgroup of $G$ generated by the minimal normal subgroups of $G$, and recall that the primitivity of $G$ implies that $N$ is a direct product $N = T_1 \times \cdots \times T_k$, where each $T_i$ is isomorphic to a fixed simple group $T$. The O'Nan-Scott theorem (see \cite{LPS}) describes the primitive groups in terms of the structure and action of the socle, which leads to the following division into five families (recall that a transitive subgroup of $G$ is \emph{regular} if every nontrivial element is a derangement):

\vspace{1mm}

\begin{itemize}\addtolength{\itemsep}{0.2\baselineskip}
\item[{\rm (a)}] Affine: $N = (C_p)^k$ is abelian and regular, $p$ prime, $k \geqs 1$, $G \leqs {\rm AGL}_k(p)$
\item[{\rm (b)}] Twisted wreath: $N$ is non-abelian and regular
\item[{\rm (c)}] Almost simple: $N = T$ is non-abelian simple, $G \leqs {\rm Aut}(T)$
\item[{\rm (d)}] Diagonal type: $N = T^k$ is non-abelian, $k \geqs 2$, $G \leqs N.({\rm Out}(T) \times S_k)$
\item[{\rm (e)}] Product type: $N = T^k$ is non-abelian, $k = ab$ with $a \geqs 1$, $b \geqs 2$, $G \leqs L \wr S_b$ with $L$ primitive of type (c) or (d).
\end{itemize}

\begin{proof}[Proof of Theorem \ref{t:main3}(i)]	
Let $G \leqs {\rm Sym}(\O)$ be a finite primitive permutation group with socle $N = T^k$, where $T$ is simple. The possibilities for $G$ and $N$ are described by the O'Nan-Scott theorem and we will refer to the five types briefly described above.
Our goal is to prove that $\delta(N) \geqs \e$, where $\e>0$ is the constant in Theorem \ref{t:FG}.

Of course, if $G$ is an affine group or a twisted wreath product, then $N$ is regular and thus   
\[ 
\delta(N)=\frac{\Delta(N)}{|N|}=1 - \frac{1}{|N|} \geqs \frac{1}{2}.
\]
And if $G$ is almost simple then $N$ is a simple transitive permutation group on $\O$ and thus Theorem \ref{t:FG} implies that $\delta(N) \geqs \e$. 

Next assume $G$ is a diagonal type group, so $k \geqs 2$. Let 
\[
D = \{(t, \ldots , t) \in N \,:\, t \in T \}
\]
be the diagonal subgroup of $N$ and note that we may identify $\O$ with the set $N/D$ of right cosets of $D$ in $N$. Given $x = (x_1 ,\ldots, x_k) \in N$ and $\omega = D(t_1,\ldots,t_k) \in \O$, we have $\omega^x = \omega$ if and only if $(t_1x_1t_1^{-1}, \ldots, t_kx_kt_k^{-1}) \in D$. Therefore, $x$ is a derangement if and only if at least two of the components of $x$ are not $T$-conjugate and thus
\[
\delta(N) = \frac{|N| - \left(\sum_{i=1}^m |t_i^T|^k\right)}{|N|} = 1 - \sum_{i=1}^m|C_T(t_i)|^{-k} \geqs 1 - \sum_{i=1}^m|C_T(t_i)|^{-2},
\]
where $\{t_1, \ldots, t_m\}$ is a complete set of representatives of the conjugacy classes in $T$. Since $|C_T(t_i)| \geqs 3$ for all $i$ (recall that a finite group with a self-centralising involution is soluble; see \cite[Proposition 4.2]{BBW}, for example), we conclude that 
\[
\delta(N) \geqs 1 - \frac{1}{3}\sum_{i=1}^m |C_T(t_i)|^{-1} = \frac{2}{3}.
\]

Finally, let us assume $G$ is a product type group. Here $G \leqs L \wr S_b$, where $L \leqs {\rm Sym}(\Gamma)$ is a primitive group with socle $S = T^a$, $b \geqs 2$ and $L$ is either almost simple or diagonal type. Then $N = S^b = T^{ab}$ and $G$ acts on $\O = \Gamma^b$ with the product action. In particular, if $x = (x_1, \ldots, x_b) \in N$, where each $x_i$ is contained in $S$, then 
\[
(\gamma_1, \ldots, \gamma_b)^{x} = (\gamma_1^{x_1}, \ldots, \gamma_b^{x_b})
\]
for all $\omega = (\gamma_1, \ldots, \gamma_b) \in \O$. It follows that $x$ is a derangement on $\O$ if and only if at least one $x_i$ is a derangement on $\Gamma$, whence
\begin{equation}\label{e:sharp}
\delta(N) > \frac{|\Delta(S,\Gamma)||S|^{b-1}}{|S|^b} = \delta(S,\Gamma).
\end{equation}
If $L$ is almost simple, then Theorem \ref{t:FG} implies that $\delta(N) \geqs \e$. And if $L$ is a diagonal type group, then our previous argument gives $\delta(N) \geqs 2/3$.
\end{proof}

\section{Derangement width}\label{s:width}

Let $G \leqs {\rm Sym}(\O)$ be a finite transitive permutation group, let $k$ be a positive integer and recall that 
\[
\Delta(G)^k = \{ x_1 \cdots x_k \,:\, \mbox{$x_i \in \Delta(G)$ for all $i$} \}.
\]
In \cite{LST}, Larsen, Shalev and Tiep prove that $G = \Delta(G)^2$ for all sufficiently large simple transitive groups, and they propose the following conjecture (this is stated as Conjecture \ref{c:lst00} in Section \ref{s:intro}).

\begin{con}\label{c:lst}
We have $G = \Delta(G)^2$ for every finite simple transitive group $G$.
\end{con}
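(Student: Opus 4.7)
The plan is to follow the CFSG strategy already exploited in the excerpt. Theorem \ref{t:main4}(i) handles alternating, sporadic, and rank one Lie type groups, Theorem \ref{t:main4}(ii) handles every primitive action with soluble point stabiliser, and Theorem \ref{t:main_new} treats almost every action of ${\rm L}_n(q)$. Thus the residual task is to handle the faithful transitive actions of the remaining simple groups of Lie type --- classical or exceptional, rank at least two, with insoluble non-parabolic point stabiliser $H$ --- and for each such action to produce a pair of conjugacy classes $C, D \subseteq \Delta(G)$ with $G = \{1\} \cup CD$. Since $\Delta(G)$ is inverse-closed, one has $1 \in \Delta(G)^2$ automatically, so this suffices; and if $\delta(G) > 1/2$ then Lemma \ref{l:easy2} already delivers the conclusion, so we may also assume $\delta(G) \leqslant 1/2$.

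For the Lie-type cases, my approach would be character-theoretic via the Frobenius formula
\[
\#\{(x,y) \in C \times D : xy = z\} = \frac{|C||D|}{|G|}\sum_{\chi \in {\rm Irr}(G)} \frac{\chi(x)\chi(y)\overline{\chi(z)}}{\chi(1)}.
\]
The idea is to take $C = D$ to be a class of ``generic'' regular semisimple elements --- ideally a Singer-like class whose centraliser is a maximal torus $T$ with the property that no proper overgroup of $T$ in $G$ is contained in any $G$-conjugate of $H$, so that $C \subseteq \Delta(G)$ is automatic. The character ratio bounds of Larsen-Shalev-Tiep, namely $|\chi(x)/\chi(1)| \leqslant c\,\chi(1)^{-\epsilon}$ at such $x$ for absolute $c,\epsilon > 0$, then let one isolate the trivial character contribution $|C|^2/|G|$ and bound the remaining sum in absolute value; this should force the Frobenius count to be strictly positive for every $z \neq 1$, giving $G \setminus \{1\} \subseteq C^2$.

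The main obstacle will be uniformity in small rank and small characteristic. Over $\mathbb{F}_2$ and $\mathbb{F}_3$ (and for small rank), the Larsen-Shalev-Tiep character bounds are typically too weak to dominate the error, and simultaneously it becomes harder to produce a regular semisimple class avoiding every $G$-conjugate of $H$: one must rule out containment of $T$ in every Aschbacher class and every subgroup in the $\mathcal{S}$-class of almost simple maximal subgroups. The key technical input to establish is therefore a uniform ``derangement torus'' lemma: for every simple group of Lie type $G$ and every core-free $H \leqslant G$, there exists a regular semisimple class $C \subseteq \Delta(G)$ with $|\chi(x)/\chi(1)| \leqslant c\chi(1)^{-\epsilon}$ for absolute $c,\epsilon>0$. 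I would aim to prove this by combining the subgroup structure results of Liebeck-Seitz and Aschbacher with primitive prime divisor arguments in the spirit of \cite{GPPS}, which guarantee the existence of tori whose order is divisible by primes dividing $|G|$ but not $|H|$.

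The finite list of small-rank and small-field exceptions that resist the uniform approach would then be settled directly using the \textsf{GAP} Character Table Library, mirroring the computational treatment of sporadic groups in the excerpt. I expect the single hardest case to be $(G,H) = ({\rm L}_n(2), {\rm Sp}_n(2))$, which is already flagged as the obstruction in Theorem \ref{t:main_new}, and where one may have to exhibit derangement classes through a bespoke argument exploiting the natural $2$-modular structure rather than generic character bounds.
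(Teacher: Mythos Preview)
The statement you are attempting to prove is Conjecture \ref{c:lst}, which the paper presents as an \emph{open problem}, not a theorem. There is no proof of it in the paper to compare against; immediately after stating the Larsen--Shalev--Tiep theorem the authors write that ``it seems difficult to approach this conjecture for groups of Lie type with the character-theoretic methods adopted in \cite{LST}'', and they deliberately restrict themselves to the partial results recorded in Theorems \ref{t:main4} and \ref{t:main_new}.

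Your proposal is a research strategy rather than a proof, and it runs directly into the obstacle the authors flag. The ``derangement torus'' lemma you posit --- that for every core-free $H$ one can find a regular semisimple class $C \subseteq \Delta(G)$ with uniformly strong character ratios --- is not established and is essentially the heart of the difficulty. Moreover, your bookkeeping of the residual cases is off: Theorem \ref{t:main4}(ii) only covers \emph{primitive} actions with \emph{soluble} stabiliser, so for classical groups other than ${\rm L}_n(q)$ (unitary, symplectic, orthogonal) and for exceptional groups of rank at least two, every action with insoluble maximal stabiliser remains; and for ${\rm L}_n(q)$ with $n \geqslant 4$ the excluded case $H \leqslant P_1, P_{n-1}$ is parabolic, not non-parabolic as you write. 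Remark \ref{r:psl} records that this last case is open for $4 \leqslant n \leqslant 6$ (all $q$) and for $7 \leqslant n \leqslant 32$ with $q \leqslant 7^{481}$ --- an infinite family that your computational fallback cannot absorb. Your character-ratio approach is precisely the one from \cite{LST} that already yields the asymptotic result but is known not to close these gaps.
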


This conjecture is proved for alternating groups in \cite[Theorem B]{LST}. In this section we establish a strong form of Conjecture \ref{c:lst} for all sporadic groups and all rank one groups of Lie type, and we revisit the problem for alternating groups. In particular, we prove part (i) of Theorem \ref{t:main4}. We also establish an extension of \cite[Theorem B]{LST} for symmetric groups (see Theorem \ref{t:sym}) and we prove Theorem \ref{t:main_new}, which settles Conjecture \ref{c:lst} for all the linear groups ${\rm L}_n(q)$ under a mild additional assumption on the point stabilisers (see Theorem \ref{t:psl}). At the end of the section, we present a proof of Theorem \ref{t:width_gen}.

\subsection{Sporadic groups}\label{ss:width_spor}

Here we establish a strong form of Conjecture \ref{c:lst} for sporadic groups. We will need the following result, which will also be useful later on. 

\begin{lem}\label{l:frob}
Let $G$ be a finite group with conjugacy classes $C_i = g_i^G$ for $i=1,2$. For an element $x \in G$, let $N(x)$ be the number of solutions to the equation $x = y_1y_2$ with $y_i \in C_i$. Then
\[ 
N(x)=\frac{|C_1||C_2|}{|G|}\sum_{\chi \in {\rm Irr}(G)}\frac{\chi(g_1)\chi(g_2)\overline{\chi(x)}}{\chi(1)},
\]
where ${\rm Irr}(G)$ is the set of complex irreducible characters of $G$.
\end{lem}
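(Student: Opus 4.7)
The plan is to pass to the group algebra $\mathbb{C}[G]$ and exploit the fact that class sums are central. For each conjugacy class $C = g^G$, let $\widehat{C} = \sum_{z \in C} z \in \mathbb{C}[G]$. The key observation is that $N(x)$ is exactly the coefficient of $x$ in the product
\[
\widehat{C_1}\widehat{C_2} \;=\; \sum_{(y_1,y_2)\in C_1 \times C_2} y_1 y_2 \;=\; \sum_{z \in G} N(z)\, z,
\]
and since $\widehat{C_1}\widehat{C_2}$ lies in $Z(\mathbb{C}[G])$, the function $N$ is automatically constant on $G$-conjugacy classes.

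Next, I would extract $N(x)$ by testing against the irreducible characters. Recall that for any irreducible representation with character $\chi$, each $\widehat{C_i}$ acts as the scalar $\omega_\chi(\widehat{C_i}) = |C_i|\chi(g_i)/\chi(1)$. Taking the character of both sides of the displayed identity above therefore gives, for every $\chi \in \mathrm{Irr}(G)$,
\[
\sum_{z \in G} N(z)\,\chi(z) \;=\; \chi(1)\,\omega_\chi(\widehat{C_1})\,\omega_\chi(\widehat{C_2}) \;=\; \frac{|C_1||C_2|\chi(g_1)\chi(g_2)}{\chi(1)}.
\]

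To invert this system, I would multiply both sides by $\overline{\chi(x)}$, sum over $\chi \in \mathrm{Irr}(G)$, and interchange the order of summation. The column orthogonality relation $\sum_{\chi} \chi(z)\overline{\chi(x)} = |C_G(x)|\,\delta_{z^G,\,x^G}$ collapses the inner sum, and combined with $N$ being constant on $x^G$ together with $|x^G|\cdot|C_G(x)|=|G|$, the left-hand side becomes $|G|\cdot N(x)$. Dividing through by $|G|$ yields the stated formula. There is no real obstacle — the argument is the classical Frobenius computation and all ingredients (centrality of class sums, central characters, and column orthogonality) are standard.
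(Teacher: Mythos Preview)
Your argument is correct and is precisely the standard Frobenius computation via class sums, central characters, and column orthogonality. The paper does not actually give a proof: it simply cites this as a well-known special case of the Frobenius formula following from the orthogonality relations (referring to \cite[Theorem 30.4]{JL}), so your write-up supplies exactly the details that the paper omits.
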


\begin{proof}
This is a special case of a well known formula of Frobenius, which follows from the  familiar orthogonality relations satisfied by the character table of $G$. See \cite[Theorem 30.4]{JL}, for example.
\end{proof}

\begin{rem}
Note that $G = C_1C_2$ if and only if $N(x_i) > 0$ for all $i$, where $\{x_1, \ldots, x_k\}$ is a complete set of representatives of the conjugacy classes in $G$.
\end{rem}

The following result establishes a strong form of Theorem \ref{t:main4} for sporadic groups.

\begin{prop}\label{p:spor_width}
Let $G$ be a transitive sporadic simple group with point stabiliser $H$.

\vspace{1mm}

\begin{itemize}\addtolength{\itemsep}{0.2\baselineskip}
\item[{\rm (i)}] We have $G = CD$, where $C$ and $D$ are conjugacy classes of derangements.
\item[{\rm (ii)}] Moreover, either $G = C^2$ for some class $C$ of derangements, or $(G,H)$ is one of the cases in Table \ref{tab:spor1} and 
\[
G = CD = \{1\} \cup C^2 = \{1\} \cup D^2
\]
for the classes $C$ and $D$ of derangements indicated in the table.
\end{itemize}
\end{prop}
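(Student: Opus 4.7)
The plan is a computational one, closely mirroring the proof of Proposition \ref{p:spor1} but now applying Frobenius' formula (Lemma \ref{l:frob}) to test when $C \cdot D = G$ for conjugacy classes $C$ and $D$ of derangements. First I would reduce to the case where $H$ is a maximal subgroup of $G$: if $H \leqs M < G$ with $M$ maximal, then any derangement in $(G,M)$ is also a derangement in $(G,H)$, so an identity $G = CD$ for classes of $M$-derangements transfers verbatim.

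Next, for each pair $(G,H)$ with $H$ maximal and $G$ not the Monster, I would access the character table of $G$, the character table of $H$, and the fusion map from $H$-classes to $G$-classes via the \textsf{GAP} Character Table Library \cite{GAPCTL}. The Baby Monster case $H = (2^2 \times F_4(2)){:}2$ is handled using \textsf{PossibleClassFusions} as in the proof of Proposition \ref{p:spor1}: all candidate fusions yield the same permutation character, so $\Delta(G)$ is determined. With $\Delta(G)$ expressed as an explicit union of $G$-classes, I would first loop over single classes $C \subseteq \Delta(G)$ and, for each representative $x$ of a conjugacy class of $G$, use Lemma \ref{l:frob} to compute $N(x)$ for the pair $(C,C)$; if $N(x) > 0$ for every $x$, then $G = C^2$ and we are done. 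If no single class works, I would loop over ordered pairs $(C,D)$ of distinct derangement classes and apply the same test to identify a witness with $G = CD$; the pairs $(G,H)$ for which no single-class solution exists are precisely those recorded in Table \ref{tab:spor1}, and for each of these the relation $G = \{1\} \cup C^2$ is verified by the same Frobenius calculation applied to $(C,C)$.

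The genuinely delicate case is the Monster $\mathbb{M}$. For the $31$ conjugacy classes of maximal subgroups accessible through \textsf{NamesOfFusionSources}, the argument above goes through without modification. For the remaining $15$ maximal subgroups one does not have the fusion map in the library, but, as in the proof of Proposition \ref{p:spor1}, a very large explicit subset of $\Delta(G)$ can be read off from element orders alone (any class whose order does not lie in $\omega(H)$, or fails to divide $|H|$, consists entirely of derangements). For each such $H$ one then selects one or two specific classes $C \subseteq \Delta(G)$ of the Monster --- typically classes of large order coming from maximal cyclic subgroups --- and verifies $G = C^2$ via Lemma \ref{l:frob} using the character table of $\mathbb{M}$.

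The main obstacles are computational rather than conceptual. The first is the sheer size of the character tables of $\mathbb{M}$, $\mathbb{B}$ and $\mathrm{Fi}_{24}'$: here Frobenius' formula is still tractable class-by-class, but a blind loop over all pairs of classes is prohibitive, so one should restrict attention to classes of large size (which dominate the sum and are most likely to satisfy $N(x)>0$). The second is that for the Monster's remaining $15$ maximal subgroups one has to choose the candidate classes $C$ cleverly, using the order/spectrum arguments above to certify that $C$ lies in $\Delta(G)$ without having the fusion map in hand. Once the computation is organised this way, the exceptional list in Table \ref{tab:spor1} (where $G = C^2$ fails) turns out to involve only a short finite list of small cases, which are then checked exhaustively.
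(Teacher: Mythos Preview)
Your reduction to maximal $H$ is correct for part (i) but leaves a genuine gap in part (ii). The implication runs only one way: if $(G,M)$ admits a derangement class $C$ with $G=C^2$ then so does every $(G,H)$ with $H\leqs M$, but if $(G,M)$ is exceptional this says nothing about proper subgroups $H<M$. Indeed, Table \ref{tab:spor1} lists non-maximal subgroups of ${\rm M}_{23}$ (the second row: $2^4{:}A_6$, $2^4{:}S_5$ in two classes, $2^4.(15{:}4)$), and your procedure, which loops only over maximal $H$, would never discover them. The paper handles this by, for each exceptional maximal pair $(G,M)$, constructing $M$ explicitly in {\sc Magma} and recursively inspecting its maximal subgroups $K<M$ to test whether $K$ still meets every class in $\mathcal{C}(G)=\{C:G=C^2\}$; this descent terminates quickly and produces exactly the extra entries for ${\rm M}_{23}$.

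A minor point on the Monster: your plan of treating each of the $15$ ``hard'' maximal subgroups separately would work, but the paper does something slicker. It precomputes $\mathcal{C}(G)$, observes that the unique class $C$ of elements of order $41$ lies in $\mathcal{C}(G)$, and notes from \cite{DLP} that only two maximal subgroups ($3^8.{\rm P\O}_8^-(3).2$ and $41{:}40$) contain elements of order $41$. Neither of these contains elements of order $19$, and the unique class $D$ of order $19$ also lies in $\mathcal{C}(G)$. Hence for every maximal $H$ at least one of $C,D$ is a derangement class with square equal to $G$, disposing of all $46$ classes of maximal subgroups at once.
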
 

{\scriptsize
\begin{table}
\[
\begin{array}{llll} \hline
G & H & C & D \\ \hline
{\rm M}_{11} & S_5 & \texttt{11A} & \texttt{11B} \\ 
{\rm M}_{22} & 2^4{:}A_6, \, 2^4{:}S_5 & \texttt{11A} & \texttt{11B} \\
{\rm M}_{23} & {\rm M}_{22}, \, {\rm L}_3(4).2, \, 2^4{:}A_7, \, {\rm M}_{11}, \, 2^4{:}(3 \times A_5).2 & \texttt{23A} & \texttt{23B} \\
& 2^4{:}A_6, \, 2^4{:}S_5 \mbox{ (two classes)}, \, 2^4.(15{:}4) & & \\
{\rm M}_{24} &  {\rm M}_{12}.2 & \texttt{23A} & \texttt{23B} \\ \hline
\end{array}
\]
\caption{The pairs $(G,H)$ in Proposition \ref{p:spor_width}(ii)}\label{tab:spor1}
\end{table}
}

\begin{proof}
Let $\mathcal{C}(G)$ be the set of conjugacy classes $C$ in $G$ with $G = C^2$. Using the  \textsf{GAP} Character Table Library \cite{GAPCTL} and Lemma \ref{l:frob}, it is easy to  determine all of the conjugacy classes in $\mathcal{C}(G)$. For example, if $G = \mathbb{M}$ is the Monster, then $146$ of the $194$ conjugacy classes in $G$ are contained in $\mathcal{C}(G)$.

First assume $G \ne \mathbb{B},\mathbb{M}$. In each of these cases, we can use the \textsf{GAP} function \textsf{Maxes} to access the character table of every maximal subgroup $H$ of $G$, together with the fusion map from $H$-classes to $G$-classes. This allows us to determine all the conjugacy classes of derangements for the action of $G$ on $G/H$ and it is a routine exercise to check whether or not one of these classes is contained in $\mathcal{C}(G)$. We find that there is always at least one such class, unless $(G,H)$ is one of the following:
\begin{equation}\label{e:spec}
\begin{array}{rl}
{\rm M}_{11}{:} & H = S_5 \\
{\rm M}_{22}{:} & \mbox{$H = 2^4{:}A_6$ or $2^4{:}S_5$} \\
{\rm M}_{23}{:} & \mbox{$H = {\rm M}_{22}$, ${\rm L}_3(4).2$, $2^4{:}A_7$, ${\rm M}_{11}$ or $2^4{:}(3 \times A_5).2$} \\
{\rm M}_{24}{:} & \mbox{$H ={\rm M}_{12}.2$} 
\end{array}
\end{equation}

In each of these cases, we first check that we can always find two classes of derangements $C$ and $D$ with $G = CD$, which is easy to verify using \textsf{GAP} and Lemma \ref{l:frob}. In addition, we check that $G = \{1\} \cup C^2 = \{1\} \cup D^2$.

So to complete the proof for $G \ne \mathbb{B}, \mathbb{M}$, we just need to inspect the special cases $(G,H)$ in \eqref{e:spec} in order to determine whether or not $H$ has a proper subgroup $K$ that meets every class in $\mathcal{C}(G)$. To do this, we use {\sc Magma} to construct $G = {\rm M}_n$ and $H$ in the natural permutation representation on $n$ points and we inspect the maximal subgroups of $H$. In this way, we deduce that if $G = {\rm M}_{11}$, ${\rm M}_{22}$ or ${\rm M}_{24}$, then every proper subgroup of $H$ fails to intersect at least one of the classes in $\mathcal{C}(G)$. Now assume $G = {\rm M}_{23}$. Here $G$ has four maximal subgroups 
\[
H \in \{ {\rm M}_{22}, \, {\rm L}_3(4).2, \, 2^4{:}A_7, 2^4{:}(3 \times A_5).2\}
\]
with a maximal subgroup $K<H$ meeting every class in $\mathcal{C}(G)$. These second maximal subgroups of $G$ are recorded in the second row for $G = {\rm M}_{23}$ in Table \ref{tab:spor1}, up to conjugacy in $G$. In turn, one checks that if $J<K$ is maximal then $J \cap C$ is empty for at least one class $C \in \mathcal{C}(G)$, so no further examples arise.

Next assume $G = \mathbb{B}$ is the Baby Monster. As above, we can use the \textsf{GAP} function \textsf{Maxes} to access the character table of each maximal subgroup $H$ of $G$. And as explained in the proof of Proposition \ref{p:spor1}, we can also compute the corresponding permutation character $1^G_H$, which allows us to check that there is always at least one class of derangements in $\mathcal{C}(G)$.
		
Finally, suppose $G = \mathbb{M}$ is the Monster. Recall that $G$ has $46$ conjugacy classes of maximal subgroups (see \cite{DLP}). Now $G$ has a unique class $C$ of elements of order $41$ and our earlier computation shows that $C \in \mathcal{C}(G)$. By inspecting \cite[Table 1]{DLP}, we deduce that the only maximal subgroups intersecting $C$ are $3^8.{\rm P\O}_{8}^{-}(3).2$ and $41{:}40$. In addition, $G$ has a unique class $D$ of elements of order $19$ and we note that $D \in \mathcal{C}(G)$, so the result follows since neither $3^8.{\rm P\O}_{8}^{-}(3).2$ nor $41{:}40$ contains elements in $D$.
\end{proof}

\subsection{Alternating groups}\label{ss:width_alt}

The main result of this section is the following proposition, which establishes a stronger version of \cite[Theorem B]{LST}. In particular, this gives Theorem \ref{t:main4} for alternating groups.
	
\begin{prop}\label{p:alt_width}
Let $G = A_n$ be a simple transitive group with point stabiliser $H$.

\vspace{1mm}

\begin{itemize}\addtolength{\itemsep}{0.2\baselineskip}
\item[{\rm (i)}] There exists a conjugacy class $C$ of derangements such that $G = \{1\} \cup C^2$. 

\item[{\rm (ii)}] Moreover, either $G = C^2$, or one of the following holds:

\vspace{1mm}

\begin{itemize}\addtolength{\itemsep}{0.2\baselineskip}
\item[{\rm (a)}] $(G,H) = (A_5,A_4)$, $(A_5,S_3)$ or $(A_8, 2^4{:}(S_3\times S_3))$. 
\item[{\rm (b)}] $n \geqs 27$, $n \equiv 3 \imod{4}$ and $H = A_{n-1}$.
\end{itemize}

\item[{\rm (iii)}] If $n \geqs 6$ then $G = CD$ for classes $C,D$ of derangements.
\end{itemize}
\end{prop}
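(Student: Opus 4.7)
The plan is to combine three classical results on products of conjugacy classes in $A_n$ and $S_n$: Bertram's theorem \cite{Bertram}, Brenner's work on squares of conjugacy classes \cite{Brenner1}, and a recent result of Larsen and Tiep \cite{LT} giving $A_n = \{1\} \cup C^2$ for $C$ the class of $n$-cycles when $n \equiv 3 \imod{4}$. We split according to the size of $n$, and for $n \geqs 9$ we treat the natural action $H = A_{n-1}$ separately from the rest.

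First, for $5 \leqs n \leqs 8$, we handle each transitive action of $A_n$ by direct computation in \textsf{GAP}. For each maximal subgroup $H$, the character tables of $A_n$ and $H$, together with the fusion map from $H$-classes to $G$-classes, are available in the Character Table Library \cite{GAPCTL}, and this pinpoints the conjugacy classes of derangements. For each such class $C$, Lemma \ref{l:frob} decides whether $G = C^2$ or $G = \{1\} \cup C^2$ with $1 \notin C^2$. This verifies parts (i) and (iii) in this range and identifies the three exceptional pairs in (ii)(a).

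For $n \geqs 9$, when $H = A_{n-1}$ and $n \equiv 3 \imod{4}$, the two $A_n$-classes of $n$-cycles are mutually inverse, so $1 \notin C^2$ for either class, and we invoke \cite[Theorem 1]{LT} to get $G = \{1\} \cup C^2$; this is the exceptional case (ii)(b). In all remaining sub-cases with $n \geqs 9$ (that is, $H \neq A_{n-1}$, or $H = A_{n-1}$ with $n$ even or $n \equiv 1 \imod{4}$), we select a \emph{real} class $C$ of derangements and show $G = C^2$ via Bertram--Brenner methods. The cycle type of $C$ is chosen so that $C$ cannot lie in any $G$-conjugate of $H$: for intransitive $H$ a transitive cycle type such as $(n)$ (when $n$ is odd) or $(n/2,n/2)$ (when $n$ is even) suffices, while for imprimitive and primitive $H$ a class of cycle type $(p, n-p)$ with $p$ a prime in the range $(n/2, n)$ (which exists by Bertrand's postulate) provides a derangement class whose square, by Brenner's results, covers $A_n$.

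For part (iii): if $G = C^2$ we take $D = C$; in the exceptional case (ii)(b) we take $C$ and $D$ to be the two $A_n$-classes of $n$-cycles, so that $1 \in C \cdot C^{-1} = CD$ while $G \setminus \{1\} \subseteq CD$ by Larsen--Tiep; the $(A_8, 2^4{:}(S_3 \times S_3))$ case is settled directly in the small-$n$ computation. The main obstacle is the natural-action case with $n \equiv 3 \imod{4}$, where the reality obstruction on $n$-cycles is intrinsic and forces the exception in (ii)(b); a secondary technical point is to verify that the cycle types chosen in the non-natural case are both incompatible with every conjugate of $H$ and amenable to Brenner-style control on $C^2$.
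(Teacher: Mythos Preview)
Your overall architecture mirrors the paper's, but two concrete parity/reality errors break the argument for large $n$, and your computational range is too small to absorb the fallout.

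\textbf{Odd $n$, imprimitive or primitive $H$.} An element of cycle type $(p,n-p)$ has sign $(-1)^{(p-1)+(n-p-1)}=(-1)^n$, so for \emph{odd} $n$ it lies in $S_n\setminus A_n$. Thus your proposed class does not even live in $G$ in half the cases. The paper instead uses a single long cycle $D_\ell$ (an $(n-2)$-cycle, $(n-3)$-cycle or $(n-5)$-cycle as appropriate) together with Bertram's theorem $G=D_\ell^2$ for $3n/4\leqs\ell\leqs n$, and appeals to Jones's theorem to see that no proper primitive subgroup contains such a cycle.

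\textbf{Intransitive $H$, $k\geqs 2$, $n\equiv 3\imod{4}$.} Here you propose the $A_n$-class of $n$-cycles, asserting it is real. It is not: for $n\equiv 3\imod{4}$ the two $A_n$-classes of $n$-cycles are interchanged by inversion, so $1\notin C^2$ and you cannot conclude $G=C^2$. Larsen--Tiep gives only $G=\{1\}\cup C^2$, which suffices for (i) but not for (ii) (you would wrongly place all such $(G,H)$ into the exceptional list). The paper fixes this by using the real class $D_{n-2}$ of $(n-2)$-cycles when $k\geqs 3$, and the class of shape $(4,n-5)$ when $k=2$; both are derangements and Bertram/Brenner give $G=C^2$.

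\textbf{Range.} Even where your cycle types are legitimate, Brenner's hypothesis $\ell_1+\ell_2>\tfrac{3}{4}n+3$ fails for $(p,n-p)$ and for $(n/2,n/2)$ when $n\leqs 12$, and the $A_n$-class refinement of Bertram for $n$-cycles when $n\equiv 1\imod 4$ (needed to pass from the $S_n$-class to a single $A_n$-class) is Larsen--Tiep, not Bertram. The paper handles $5\leqs n\leqs 18$ computationally precisely to bypass these boundary issues; your $n\leqs 8$ window is too narrow.

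In short, the cycle-type choices need to be parity-aware and reality-aware. Replacing $(p,n-p)$ by a single long odd cycle in the imprimitive/primitive cases, and replacing the $n$-cycle by $D_{n-2}$ or a Brenner two-cycle class in the $n\equiv 3\imod 4$ intransitive cases (with $k\geqs 2$), recovers the paper's proof.
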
 

\begin{rem}
\mbox{ }
\begin{itemize}\addtolength{\itemsep}{0.2\baselineskip}
\item[{\rm (a)}] The special cases in part (ii)(a) of Proposition \ref{p:alt_width} are genuine exceptions in the sense that $G \ne C^2$ for every class $C$ of derangements. For the two cases with $G = A_5$ we have
\[
G = \{1\} \cup CD = \{1\} \cup C^2 = \{1\} \cup D^2,
\]
where $C$ and $D$ are the two classes of elements of order $5$. And for $G = A_8$ we get 
\begin{equation}\label{e:CD}
G = CD = \{1\} \cup C^2 = \{1\} \cup D^2
\end{equation}
if we take $C$ and $D$ to be the two classes of $7$-cycles.

\item[{\rm (b)}] We expect that the case recorded in (ii)(b) is not a genuine exception. Here \cite[Theorem 1]{LT} gives $G = \{1\} \cup D^2$ where $D$ is a class of $n$-cycles, and we conjecture that $G = C^2$ for the class $C$ of elements with cycle-type $(n-4,2^2)$. We have used {\sc Magma} \cite{magma} to check the latter claim computationally for $n \in \{7,11,15,19,23\}$, which explains why we include the condition $n \geqs 27$.

\item[{\rm (c)}] For $n \geqs 11$, the proof of \cite[Theorem B]{LST} combines a technical result \cite[Proposition 7.1]{LST} with the main theorem of \cite{Bertram} to show that either $G = C^2$ for some $S_n$-class $C$ of derangements in $G$, or $n$ is even and $\O = \{1, \ldots, n\}$. In the latter case, the authors apply an inductive argument to show that $G = \Delta(G)^2$.  
\end{itemize}
\end{rem}
	
Our proof of Proposition \ref{p:alt_width} relies heavily on Proposition \ref{p:alt_cover} below, which combines results from \cite{Bertram, Brenner1, LT}. In order to state the proposition, let $\ell \leqs n$ be a positive integer and let $C_\ell$ be the $S_n$-class of the $\ell$-cycle $x=(1, \ldots, \ell)$. And if $\ell$ is odd, set $D_{\ell} = x^{A_n}$ and note that $D_{\ell} = C_{\ell}$ unless $\ell =n$ or $n-1$.

\begin{prop}\label{p:alt_cover}
Let $G = A_n$ with $n \geqs 5$.

\vspace{1mm}

\begin{itemize}\addtolength{\itemsep}{0.2\baselineskip}
\item[{\rm (i)}] If $\frac{3}{4}n \leqs \ell \leqs n$, then $G = C_{\ell}^2$.
\item[{\rm (ii)}] If $n \geqs 9$ and $n \equiv 1 \imod{4}$, then $G = D_n^2$.
\item[{\rm (iii)}] Let $D = x^{S_n}$, where $x = (1, \ldots, \ell_1)(\ell_1+1, \ldots, \ell_1+\ell_2)$ with $\ell_1,\ell_2 \geqs 2$ and $\ell_1+\ell_2 > \frac{3}{4}n+3$. Then $G = D^2$. 
\end{itemize}
\end{prop}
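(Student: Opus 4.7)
The plan is to establish each of the three parts by invoking an appropriate classical result on products of conjugacy classes in alternating groups, after a short parity check to ensure that the relevant product of classes actually lies in $A_n$.

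For part (i), I would appeal to Bertram's theorem \cite{Bertram}, whose conclusion is precisely that for $\tfrac{3}{4}n \leqs \ell \leqs n$ every element of $A_n$ is a product of two $\ell$-cycles in $S_n$. Before invoking it one observes that an $\ell$-cycle has sign $(-1)^{\ell-1}$, so the product of two $\ell$-cycles is always even; hence $C_\ell^2 \subseteq A_n$ regardless of the parity of $\ell$, and Bertram's theorem then yields $C_\ell^2 = A_n$.

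For part (ii), I would apply \cite[Theorem 1]{LT} of Larsen and Tiep. Their method is the character-theoretic approach encoded in Frobenius's formula (Lemma \ref{l:frob}): for any $z \in A_n$, the number of ways to write $z$ as a product of two $n$-cycles drawn from $D_n$ is controlled by a character sum whose leading term is the contribution of the trivial character, and whose remainder is bounded using sharp estimates for $|\chi(x)|/\chi(1)$ when $x$ is an $n$-cycle. Under the parity hypothesis $n \equiv 1 \imod{4}$, this sum is strictly positive for every $z \in A_n$, which is exactly the distinction between $A_n = D_n^2$ here and the weaker $A_n = \{1\} \cup D_n^2$ that one gets when $n \equiv 3 \imod{4}$. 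I would therefore just quote the relevant case of Larsen--Tiep directly.

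For part (iii), I would invoke Brenner's work \cite{Brenner1} on products of two $S_n$-classes of permutations with cycle type consisting of two non-trivial cycles; the numerical threshold $\ell_1+\ell_2 > \tfrac{3}{4}n+3$ matches the hypothesis of Brenner's covering statement for such classes. Once again the sign $(-1)^{\ell_1+\ell_2-2}$ of $x$ is irrelevant since $D^2 \subseteq A_n$ always, and Brenner's conclusion gives $D^2 = A_n$. No serious obstacle is anticipated: the genuine content lives in the three cited papers, and the task here is to locate the precise statements and to perform the parity verifications that ensure the products land inside $A_n$; the only delicate point is in (ii), where one must keep track of whether $1 \in D_n^2$, but this is settled by the Larsen--Tiep character estimate evaluated at the identity.
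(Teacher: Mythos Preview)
Your proposal is correct and matches the paper's proof essentially verbatim: the paper simply cites Bertram \cite{Bertram} for (i), Larsen--Tiep \cite[Theorem 5(i)]{LT} for (ii), and Brenner \cite{Brenner1} for (iii). The only discrepancy is that you point to \cite[Theorem 1]{LT} rather than Theorem 5(i); since Theorem 1 in that paper yields only $A_n = \{1\} \cup D_n^2$ in general, make sure you locate the precise statement in \cite{LT} that handles the identity element under the hypothesis $n \equiv 1 \imod 4$.
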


\begin{proof}
Part (i) is the main theorem of \cite{Bertram}, while part (ii) is a recent result of Larsen and Tiep \cite[Theorem 5(i)]{LT}. Finally, part (iii) is due to Brenner \cite{Brenner1}. 
\end{proof}
	
We are now ready to prove Proposition \ref{p:alt_width}.

\begin{proof}[Proof of Proposition \ref{p:alt_width}]
The groups with $5 \leqs n \leqs 18$ can be checked using {\sc Magma} \cite{magma}. To do this, we first construct the character table of $G$ and we use the Frobenius formula (see Lemma \ref{l:frob}) to determine the set $\mathcal{C}(G)$ of conjugacy classes $C$ with $G = C^2$. Next we construct a representative $H$ of each conjugacy class of maximal subgroups of $G$ and it is straightforward to check that every class in $\mathcal{C}(G)$ meets $H$ if and only if $(G,H)$ is one of the cases recorded in part (ii)(a) of Proposition \ref{p:alt_width}. And in each of these cases, it is routine to check that if $K<H$ is any maximal subgroup, then there exists a class $C \in \mathcal{C}(G)$ such that $C \cap K$ is empty. For the remainder, we will assume $n \geqs 19$.

Suppose $H$ acts intransitively on $[n] = \{1, \ldots, n\}$, so $H \leqs (S_{k} \times S_{n-k}) \cap G$ for some positive integer $k \leqs n/2$. If $n$ is even then by applying parts (i) and (iii) of Proposition \ref{p:alt_cover} we deduce that $G = D_{n-3}^2$ if $k \geqs 4$ and $G = D^2$ if $k \leqs 3$, where $D = x^G$ and $x = (1, \ldots, 4)(5, \ldots, n)$. If $n \equiv 1 \imod{4}$, then Proposition \ref{p:alt_cover}(ii) gives $G = D_n^2$, so we may assume $n \equiv 3 \imod{4}$. If $k \geqs 3$ then every $(n-2)$-cycle in $G$ is a derangement and we have $G = D_{n-2}^2$ by Proposition \ref{p:alt_cover}(i). And if $k=2$ then Proposition \ref{p:alt_cover}(iii) implies that $G = D^2$, where $D = x^G$ and $x = (1,\ldots,4)(5, \ldots, n-1)$. Finally, suppose $k=1$. As above, let $C = x^G$ with $x = (1, \ldots, n)$ and set $D = (x^{-1})^G \ne C$. Then \cite[Theorem 1]{LT} implies that \eqref{e:CD} holds.

Next assume $H$ is imprimitive, so $H \leqs (S_a \wr S_b) \cap G$, where $n=ab$ and $a,b \geqs 2$. If $n$ is odd then $a \geqs 3$ and every $(n-2)$-cycle is a derangement because it does not preserve a partition of $[n]$ into $b$ blocks of size $a$. The result follows since $G = D_{n-2}^2$ by Proposition \ref{p:alt_cover}(i). Now assume $n$ is even and set $\ell=5$ if $a=3$, and $\ell=3$ otherwise. Then every $(n-\ell)$-cycle is a derangement and the desired conclusion holds  since $G = D_{n-\ell}^2$ (note that if $a=3$ then $n \geqs 21$ and thus $n-5 \geqs 3n/4$).

Finally, let us assume $H$ acts primitively on $[n]$. Fix an odd integer $\ell$ such that $3n/4 \leqs \ell \leqs n-3$. Then Proposition \ref{p:alt_cover}(i) gives $G = D_{\ell}^2$ and the main theorem of \cite{Jones} implies that every $\ell$-cycle in $G$ is a derangement (indeed, no proper primitive subgroup of $G$ contains an $\ell$-cycle). The result follows. 
\end{proof}

\subsection{Symmetric groups}\label{ss:sym}

Here we apply work of Brenner \cite{Brenner1} to prove a strong form of  \cite[Theorem B]{LST} for symmetric groups.

\begin{thm}\label{t:sym}
Let $G = S_n$ be a transitive group on a set $\O$ with point stabiliser $H$, where $n \geqs 4$. 

\vspace{1mm}

\begin{itemize}\addtolength{\itemsep}{0.2\baselineskip}
\item[{\rm (i)}] We have $G = \Delta(G)^2$. 
\item[{\rm (ii)}] Moreover, if $\O \ne \{1, \ldots, n\}$ then there exist conjugacy classes $C$ and $D$ of derangements such that $G = C^2 \cup CD$.
\end{itemize}
\end{thm}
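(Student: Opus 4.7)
My plan is to prove (ii) directly and observe that (i) is then immediate when $\Omega \ne \{1,\ldots,n\}$. For the natural-action case of (i), where $\Delta(G)$ consists of the fixed-point-free permutations in $S_n$, I would appeal to \cite[Theorem B]{LST} to cover the even coset via $A_n = (\Delta(G) \cap A_n)^2$, and handle $S_n \setminus A_n$ by a short counting argument: given an odd $\sigma \in S_n$, writing $\sigma = d_1 d_2$ with $d_1, d_2 \in \Delta(G)$ reduces to finding a fixed-point-free even $d_1$ such that $d_1^{-1}\sigma$ is also fixed-point-free, i.e.\ an even permutation avoiding both the identity pattern and the $\sigma$-pattern, which is straightforward by inclusion--exclusion for $n$ not too small.

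For (ii), I seek two $S_n$-conjugacy classes $C$ and $D$ of opposite parity, both contained in $\Delta(G)$, such that $C^2 = A_n$ and $CD = S_n \setminus A_n$; this clearly yields $G = C^2 \cup CD$. Following the proof of Proposition \ref{p:alt_width}, I would take $C$ and $D$ to be classes of bi-cycles with cycle type $(\ell_1, \ell_2)$, where $\ell_1, \ell_2 \geqslant 2$ and $\ell_1 + \ell_2 = n$, and invoke Proposition \ref{p:alt_cover}(iii) (Brenner's theorem) to obtain $C^2 = D^2 = A_n$ whenever $n \geqslant 13$. The equality $\ell_1 + \ell_2 = n$ forces both classes into $\Delta(G)$ for the natural action on $[n] = \{1, \ldots, n\}$, so the only remaining issue is to arrange $C \cap H^g = D \cap H^g = \emptyset$ for all $g \in G$, where $H$ is the point stabiliser. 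Since $\Omega \ne \{1,\ldots,n\}$, $H$ is not conjugate to $S_{n-1}$, and I would proceed by the standard case analysis, according as $H$ acts intransitively, imprimitively, or primitively on $[n]$: for intransitive $H \leqslant S_k \times S_{n-k}$ the condition $\{\ell_1, \ell_2\} \cap \{k, n-k\} = \emptyset$ suffices; for imprimitive $H$ the block structure rules out most cycle types; and for primitive $H$ the theorem of Jones \cite{Jones} ensures that no proper primitive subgroup of $S_n$ contains an $\ell$-cycle in a suitable range, and this is readily adapted to bi-cycles.

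The hard part is the assertion $CD = S_n \setminus A_n$: while Brenner's theorem supplies $C^2 = D^2 = A_n$, the claim that a product of two opposite-parity derangement classes covers all of $S_n \setminus A_n$ is not a formal consequence. I would establish this either by a direct appeal to further classical results of Brenner on products of $S_n$-conjugacy classes (see \cite{Brenner1}), or by a character-theoretic argument via Lemma \ref{l:frob}, showing that the principal-character contribution dominates by controlling $\chi(c)\chi(d)$ for the chosen cycle types through the Murnaghan--Nakayama rule. Finally, the small cases $n \leqslant 12$ lie outside the range of Proposition \ref{p:alt_cover}(iii) and are verified computationally in \textsf{Magma} by enumerating the conjugacy classes of maximal subgroups of $S_n$ and certifying suitable pairs $(C, D)$ via Lemma \ref{l:frob}.
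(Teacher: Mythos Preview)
Your plan for (ii) contains a fatal parity obstruction. You want $C$ and $D$ to be $S_n$-classes of bi-cycles with cycle type $(\ell_1,\ell_2)$, $\ell_1+\ell_2=n$, and of \emph{opposite} parity. But the sign of any permutation with cycle type $(\ell_1,\ell_2)$ is $(-1)^{(\ell_1-1)+(\ell_2-1)}=(-1)^{n}$, which depends only on $n$. Hence every such bi-cycle lies in the same coset of $A_n$, and no choice of $(\ell_1,\ell_2)$ with $\ell_1+\ell_2=n$ can produce a pair $C,D$ of opposite parity. Your scheme therefore cannot yield $CD\subseteq S_n\setminus A_n$, and the decomposition $G=C^2\cup CD$ collapses.

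The paper sidesteps this by not insisting that $C$ and $D$ consist of derangements on $[n]$; for $\Omega\ne[n]$ that constraint is irrelevant. It takes $C=x^G$ with $x$ an $\ell$-cycle and $D=y^G$ with $y$ an $(\ell+1)$-cycle (or the variants $x=(1,2,3)(4,\ldots,n-1)$, $y=(1,2,3)(4,\ldots,n)$), so that $C$ and $D$ automatically have opposite parity. Crucially, Brenner's Theorems 2.02 and 4.02 in \cite{Brenner1} give \emph{both} $A_n=C^2$ and $S_n\setminus A_n=CD$ directly (Proposition~\ref{p:brenner}), so the ``hard part'' you flag is already handled by the cited results and no Murnaghan--Nakayama estimate is needed. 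One then chooses $\ell$ so that $\ell$-cycles and $(\ell+1)$-cycles are derangements on the given $\Omega$, via the usual intransitive/imprimitive/primitive trichotomy. For the natural action (part (i) only), the paper uses the elementary Lemma~\ref{l:sym}: every $x\in S_n$ can be right-multiplied by an $n$-cycle into a derangement, so $G=\Delta(G)\cdot C_n\subseteq\Delta(G)^2$. This is cleaner than the inclusion--exclusion sketch you propose for the odd coset.
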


Set $L = A_n$ and observe that if $C$ and $D$ are conjugacy classes in $G = S_n$, then $C^2 \subseteq L$ and either $CD \subseteq L$ or $CD \subseteq G \setminus L$, according to whether or not the elements comprising $C$ and $D$ have the same parity. In particular, $G \ne \{1\} \cup C^2$ and $G \ne \{1\} \cup CD$, so the conclusion $G = C^2 \cup CD$ in part (ii) of Theorem \ref{t:sym} is essentially best possible. Clearly, in order to prove Theorem \ref{t:sym} we may assume that $H$ is maximal in $L$ or $G$. (Note that $G$ may contain a non-maximal subgroup $H$ that is not contained in a core-free maximal subgroup of $G$, such as $H = {\rm AGL}_3(2)$ in $G = S_8$. But in this situation, $H$ is always maximal in $L$.)

\begin{rem}
We expect that the conclusion $G = C^2 \cup CD$ for classes $C,D$ of derangements is still valid when $\O = \{1, \ldots, n\}$. Indeed, we conjecture that we can take $C = x^G$ and $D = y^G$, where $x = (1, \ldots,n)$ and $y = (1,2)(3, \ldots, n)$. We have used {\sc Magma} to check this for $4 \leqs n \leqs 20$.
\end{rem}

It will be convenient to handle the low degree groups computationally.

\begin{lem}\label{l:sym_small}
The conclusion to Theorem \ref{t:sym} holds for $n \leqs 15$.
\end{lem}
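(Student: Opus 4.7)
The plan is to verify both conclusions of Theorem \ref{t:sym} for each $4 \leqs n \leqs 15$ by direct computation in {\sc Magma} \cite{magma}. A preliminary observation is that $K \leqs H$ implies $\Delta(G, K) \supseteq \Delta(G, H)$, so if the conclusion of Theorem \ref{t:sym}(ii) holds with respect to $G/H$ then it also holds with respect to $G/K$ for every subgroup $K$ of $H$ (classes of derangements for $H$ remain classes of derangements for $K$, while their products cover $G$ a fortiori). Hence I may assume that $H$ is maximal in $G = S_n$, or else that $H \leqs A_n$ and $H$ is maximal in $A_n$; representatives of the conjugacy classes of such subgroups are readily available via the {\sc MaximalSubgroups} command.

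For each pair $(G,H)$ the strategy is to build the character table of $G$ together with the permutation character $\pi = 1_H^G$, and read off the set $\Gamma$ of $G$-classes on which $\pi$ vanishes --- these are precisely the conjugacy classes comprising $\Delta(G)$. In the case $\O \ne \{1, \ldots, n\}$ (equivalently, $H \ne S_{n-1}$), for each ordered pair $(C,D) \in \Gamma \times \Gamma$ I will use the Frobenius formula (Lemma \ref{l:frob}) to identify the subsets $C^2$ and $CD$ of $G$, and then test whether $C^2 \cup CD = G$; the goal is to exhibit at least one such pair in every case, which gives Theorem \ref{t:sym}(ii) (and therefore also (i), since $C^2 \cup CD \subseteq \Delta(G)^2$). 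For the natural action ($H = S_{n-1}$) only part (i) needs to be verified, which reduces to checking, again via Lemma \ref{l:frob}, that every $G$-conjugacy class meets $CD$ for some $C, D \in \Gamma$.

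The main obstacle is simply the proliferation of conjugacy classes of maximal subgroups for $n$ near $15$ — the intransitive subgroups $(S_k \times S_{n-k}) \cap G$, the imprimitive subgroups $(S_a \wr S_b) \cap G$ for each factorisation $n = ab$, and a handful of primitive maximals — but this is handled in a routine fashion by looping over the output of {\sc MaximalSubgroups} applied to both $S_n$ and $A_n$. Each individual case is settled in a few seconds, and the entire verification for $n \leqs 15$ should finish in a few minutes of CPU time.
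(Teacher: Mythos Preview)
Your computational strategy is essentially the same as the paper's, but the reduction to maximal subgroups has a genuine logical gap for part~(ii). You correctly observe that if $G = C^2 \cup CD$ for derangement classes on $G/H$ then the same classes work on $G/K$ for any $K \leqs H$. The problem is that for $H = S_{n-1}$ you only verify part~(i), and this does not deliver part~(ii) for proper subgroups $K < S_{n-1}$ whose \emph{only} core-free maximal overgroup in $S_n$ is $S_{n-1}$ itself (and which are not contained in $A_n$).

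Such $K$ exist. Take $n = 7$ and let $K \cong {\rm PGL}_2(5)$ act transitively on $\{1,\ldots,6\}$ and fix the point $7$; this is a primitive maximal subgroup of $S_6$. A generator of the split torus gives a $4$-cycle on $\{1,\ldots,6\}$, so $K$ contains odd permutations and hence $K \not\leqs A_7$. Since $K$ is transitive on $\{1,\ldots,6\}$ and $|K| = 120$, a glance at the maximal subgroups of $S_7$ (namely $A_7$, $S_6$, $S_5 \times S_2$, $S_4 \times S_3$, $7{:}6$) shows that the point stabiliser $S_6$ is the unique maximal subgroup of $S_7$ containing $K$. The theorem asserts $G = C^2 \cup CD$ for derangement classes on $G/K$ (since $K \ne S_{n-1}$), but your scheme only supplies $G = \Delta(G)^2$ inherited from $H = S_6$.

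The fix is easy, and it is exactly what the paper does: also verify the stronger conclusion $G = C^2 \cup CD$ for derangement classes in the case $H = S_{n-1}$. The paper records that this succeeds for every $4 \leqs n \leqs 15$ (indeed for $n \leqs 20$), so with that single amendment your argument goes through.
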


\begin{proof}
This is a straightforward computation. First we use {\sc Magma} \cite{magma} to construct the character table of $G$ and we then apply Lemma \ref{l:frob} to determine the set $\mathcal{C}'(G)$ of pairs of classes $(C,D)$ such that $G = C^2 \cup CD$. We then construct a representative $H$ of each conjugacy class of core-free maximal subgroups in $A_n$ and $G$, and in each case it is easy to check that there is at least one pair $(C,D) \in \mathcal{C}'(G)$ such that $C \cap H$ and $D \cap H$ are both empty (including the special case $\O = \{1, \ldots, n\}$). 
\end{proof}

For the remainder, we may assume $n \geqs 16$. The main ingredient in the proof of Theorem \ref{t:sym} is the following. The statement is a combination of special cases of two results due to Brenner \cite{Brenner1}.

\begin{prop}\label{p:brenner}
Let $G = S_n$ with $n \geqs 16$ and set $C = x^G$ and $D=y^G$, where

\vspace{1mm}

\begin{itemize}\addtolength{\itemsep}{0.2\baselineskip}
\item[{\rm (i)}] $x = (1, \ldots, \ell)$ and $y = (1,\ldots, \ell+1)$, with $n-4 \leqs \ell < n$; or

\item[{\rm (ii)}] $x = (1,2,3)(4,5, \ldots, n-1)$ and $y = (1,2,3)(4,5,\ldots, n)$.
\end{itemize}
Then $G = C^2 \cup CD$.
\end{prop}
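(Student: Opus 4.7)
My plan is to reduce the proposition directly to Brenner's covering theorems in \cite{Brenner1}, since the statement is explicitly flagged as a combination of special cases from that paper; so the work is essentially one of locating and quoting the right results.

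The first step is a parity check. In case (i), the $\ell$-cycle $x$ and the $(\ell+1)$-cycle $y$ have opposite signs, so $C^{2} \subseteq A_n$ and $CD \subseteq S_n \setminus A_n$. The same holds in case (ii), since the $(3,n-4)$-cycle $x$ has sign $(-1)^{n-5}$ while the $(3,n-3)$-cycle $y$ has sign $(-1)^{n-4}$. Consequently the target equality $G = C^{2} \cup CD$ splits into the two independent assertions
\[
C^{2} = A_n \quad \text{and} \quad CD = S_n \setminus A_n.
\]
Note that $1 \in C^{2}$ is automatic, since $x$ is $S_n$-conjugate to $x^{-1}$, so we really do need the full equality $C^{2} = A_n$ (including the identity) on the left.

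The second step is to quote Brenner. In case (i), the hypotheses $n \geqs 16$ and $n-4 \leqs \ell < n$ place $x$ and $y$ within the regime of long cycles treated in \cite{Brenner1}, where it is shown that squares of $S_n$-classes of sufficiently long cycles cover $A_n$, and that mixed products $CD$ of long cycles of adjacent lengths and opposite parities cover $S_n \setminus A_n$. In case (ii), the relevant result of Brenner concerns class products when the common cycle type consists of a fixed short cycle together with a long cycle; applied to the partitions $(3,n-4)$ and $(3,n-3)$ with $n \geqs 16$, it yields both $C^{2} = A_n$ and $CD = S_n \setminus A_n$.

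The only nontrivial step is to locate the correct statements within \cite{Brenner1} and verify the (mild) numerical thresholds they impose; the bound $n \geqs 16$ is well within their range of validity. I do not anticipate any conceptual obstacle beyond this bookkeeping, and in particular no direct class-product computation in $S_n$ is required.
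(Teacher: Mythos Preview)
Your proposal is correct and follows essentially the same route as the paper: split by parity into $C^2 = A_n$ and $CD = S_n \setminus A_n$, then cite Brenner. The paper's proof is just slightly more concrete, pointing to Theorems~2.02 and~4.02 of \cite{Brenner1} and noting that the relevant hypothesis is the trivial bound $t + c \leqs 3n/2$ (where an arbitrary element moves $t$ points in $c$ nontrivial orbits), together with $n \geqs 16$; this is exactly the ``bookkeeping'' you anticipated.
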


\begin{proof}
Suppose $x \in G$ has $n-t$ fixed points and $c$ nontrivial orbits on $\{1, \ldots, n\}$. Then $t+c \leqs 3n/2$ and the condition $n \geqs 16$ allows us to appeal to Theorems 2.02 and 4.02 in \cite{Brenner1}, which immediately give $L = C^2$ and $G \setminus L = CD$ in both (i) and (ii).
\end{proof}

We will also need the following elementary lemma.

\begin{lem}\label{l:sym}
Let $G = S_n$ with $n \geqs 4$. Then for all $x \in G$, there exists an $n$-cycle $y \in G$ such that $xy$ has no fixed points on $\{1, \ldots, n\}$.
\end{lem}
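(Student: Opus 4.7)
The plan is to translate this into a Hamilton cycle problem on a directed graph. Set $\sigma = x^{-1}$. Writing an $n$-cycle $y \in S_n$ as a cyclic sequence $(a_1, a_2, \ldots, a_n)$ so that $y(a_i) = a_{i+1}$ (indices modulo $n$), we have $(xy)(a_i) = x(a_{i+1})$, so $xy$ fixes $a_i$ if and only if $a_{i+1} = x^{-1}(a_i) = \sigma(a_i)$. Thus finding an $n$-cycle $y$ with $xy$ a derangement is equivalent to finding a directed Hamilton cycle in the digraph $\Gamma$ on vertex set $\{1, \ldots, n\}$ whose edge set is
\[
E(\Gamma) = \{(a,b) : a \ne b,\ b \ne \sigma(a)\}.
\]
That is, $\Gamma$ is obtained from the complete loopless digraph $K_n$ by deleting the (at most $n$) edges coming from the functional graph of $\sigma$.

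The second step is to bound the degrees. For each vertex $a$, the out-neighbourhood in $\Gamma$ is $\{1,\ldots,n\} \setminus \{a,\sigma(a)\}$, which has size $n-2$ if $\sigma(a) \ne a$ and $n-1$ otherwise; symmetrically, the in-degree of $b$ is $n-2$ or $n-1$ depending on whether $\sigma^{-1}(b)=b$. In particular, both the minimum in-degree and the minimum out-degree of $\Gamma$ are at least $n-2$, which is $\geqs n/2$ precisely when $n \geqs 4$. It is also easy to verify strong connectedness directly: for any vertices $u \ne v$ with $(u,v) \notin E(\Gamma)$ (so $v = \sigma(u)$), any $w \ne u,v$ yields a path $u \to w \to v$ in $\Gamma$, so $\Gamma$ has diameter at most $2$.

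At this point I would invoke the classical theorem of Ghouila-Houri, which states that a strongly connected digraph on $n$ vertices with minimum in-degree and minimum out-degree at least $n/2$ contains a directed Hamilton cycle. Applied to $\Gamma$, this immediately produces the required $n$-cycle $y$ and finishes the proof.

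The only real obstacle is choosing what to cite: if the authors prefer not to invoke Ghouila-Houri as a black box, the same conclusion can be obtained by an elementary rotation-extension argument exploiting the fact that at most $n$ edges of $K_n$ are missing, or, at worst, by a short case analysis on the cycle structure of $\sigma$ (separating the cases where $\sigma$ has a fixed point, an involution, or only cycles of length $\geqs 3$, and writing down an explicit $n$-cycle in each case). I expect the digraph reformulation to be the conceptual heart of the argument; everything else is routine.
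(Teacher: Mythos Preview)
Your argument is correct: the reformulation as a directed Hamilton cycle problem is clean, the degree and connectivity checks are fine, and Ghouila--Houri's theorem applies on the nose for $n\geqslant 4$. The route, however, is genuinely different from the paper's. The paper fixes the $n$-cycle in advance, taking $y=(n,n-1,\ldots,1)$, and instead conjugates $x$: since the set of derangements is normal, it suffices to find a conjugate $z$ of $x$ with $i^{z}\ne i+1$ for $1\leqslant i\leqslant n-1$ and $n^{z}\ne 1$, and then $zy$ is fixed-point-free. The existence of such a $z$ is asserted to follow ``by considering the disjoint cycle decomposition of $x$''. So the paper's proof is precisely the ``short case analysis on the cycle structure of~$\sigma$'' that you list as a fallback, whereas your main line replaces that hand construction with a single appeal to Ghouila--Houri. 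Your approach trades a black-box citation for a shorter and more uniform argument; the paper's approach is entirely self-contained but leaves the reader to supply the explicit labelling of cycles.
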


\begin{proof}
We may assume $x$ is nontrivial. Since the set of derangements on $\{1, \ldots, n\}$ is a normal subset of $G$, it suffices to show that some conjugate of $x$ has the desired property. By considering the disjoint cycle decomposition of $x$, it is easy to see that there exists a conjugate $z$ of $x$ such that $n^{z} \ne 1$ and $i^{z} \ne i+1$ for all $i \in \{1, \ldots, n-1\}$. Then $z(n, \ldots, 1)$ has no fixed points on $\{1, \ldots, n\}$ and the result follows. 
\end{proof}

We are now in a position to prove Theorem \ref{t:sym}.

\begin{proof}[Proof of Theorem \ref{t:sym}]
In view of Lemma \ref{l:sym_small}, we may assume $n \geqs 16$. Also recall that we may assume $H$ is maximal in $L = A_n$ or $G = S_n$. Set $[n] = \{1, \ldots, n\}$.

First assume $H$ is intransitive on $[n]$, which means that we may assume $H = S_k \times  S_{n-k}$ with $1 \leqs k < n/2$ and we identify $\O$ with the set of $k$-element subsets of $[n]$. If $k=1$ then Lemma \ref{l:sym} implies that $G = \Delta(G)C$, where $C$ is the class of $n$-cycles, and the result follows since every $n$-cycle is a derangement. And for $k \geqs 2$ we can define $C$ and $D$ as in part (i) of Proposition \ref{p:brenner} with $\ell=n-1$. Then the elements in $C$ and $D$ are derangements, and the proposition gives $G = C^2 \cup CD$, as required.

Next suppose $H$ is transitive and imprimitive on $[n]$. Here we may assume $H = S_a \wr S_b$ for integers $a$ and $b$ with $n = ab$ and $a \geqs 2$. If $a \geqs 3$ then every $(n-2)$-cycle and every $(n-1)$-cycle is a derangement, and Proposition \ref{p:brenner} yields $G = C^2 \cup CD$, where $C$ and $D$ are defined as in part (i) of the proposition, with $\ell = n-2$. And if $a=2$ then the same conclusion holds if we define $C$ and $D$ as in part (ii) of Proposition \ref{p:brenner}.

Finally, let us assume $H$ acts primitively on $[n]$. By the main theorem of \cite{Jones}, every $(n-4)$-cycle and every $(n-3)$-cycle is a derangement. So the result follows by defining $C$ and $D$ as in part (i) of Proposition \ref{p:brenner} with $\ell=n-4$.
\end{proof}

\subsection{Groups of Lie type}\label{ss:width_lie}

In this section we consider the derangement width of simple groups of Lie type. We begin by stating our main result for rank one groups (see \eqref{e:rank1}), which completes the proof of Theorem \ref{t:main4}(i). Recall  that ${}^2G_2(3)'$ is isomorphic to ${\rm L}_2(8)$, so we assume $q \geqs 27$ when considering $G = {}^2G_2(q)$ in Theorem \ref{t:rankone}, and also in Proposition \ref{p:rankone} below.

\begin{thm}\label{t:rankone}
Let $G$ be a finite transitive simple rank one group of Lie type with point stabiliser $H$. Then there exist conjugacy classes $C$ and $D$ of derangements such that
\[
G = \left\{ \begin{array}{ll}
C^2 & \mbox{if $G = {}^2B_2(q)$ or ${}^2G_2(q)$} \\
\{1\} \cup C^2 & \mbox{if $G = {\rm U}_3(q)$} \\
\{1\} \cup CD & \mbox{if $G = {\rm L}_2(q)$ and $(G,H) \ne ({\rm L}_2(7),S_4)$} \\
C^2 \cup CD & \mbox{if $(G,H) = ({\rm L}_2(7),S_4)$.} 
\end{array}\right.
\]
In particular, we have $G = \Delta(G)^2$.
\end{thm}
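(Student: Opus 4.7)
The plan is to argue separately for each of the four families in \eqref{e:rank1}, exploiting the well-understood classifications of maximal subgroups of rank one groups (Dickson for ${\rm L}_2(q)$, Mitchell--Hartley for ${\rm U}_3(q)$, Suzuki for ${}^2B_2(q)$, and Kleidman for ${}^2G_2(q)$) together with character-theoretic class-multiplication results, including those in \cite{GM,GM2}. In each case these classifications constrain the element orders of any core-free subgroup $H$ so severely that derangement classes can be produced simply by picking elements in an appropriate maximal torus of $G$ whose orders avoid the element orders in $H$.

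For $G = {\rm L}_2(q)$, I would set $C = x^G$ and $D = y^G$ with $|x|$ and $|y|$ the largest divisors of $(q-1)/(2,q-1)$ and $(q+1)/(2,q-1)$ respectively. Inspection of Dickson's list shows that, apart from the single pair $(G,H) = ({\rm L}_2(7),S_4)$ dealt with in Remark \ref{r:main4}(e), at least one of these classes is contained in $\Delta(G)$ and, by swapping the roles of $x$ and $y$ if necessary, we may arrange for both $C$ and $D$ to be derangements. Applying the Frobenius formula (Lemma \ref{l:frob}) with the standard irreducible characters of ${\rm L}_2(q)$ then yields $G = \{1\} \cup CD$, and the exceptional case is verified by the explicit calculation in Remark \ref{r:main4}(e).

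For ${\rm U}_3(q)$, ${}^2B_2(q)$ and ${}^2G_2(q)$ I would work with a Singer-type maximal torus, of order $(q^2-q+1)/(3,q+1)$, $q \pm \sqrt{2q}+1$ and $q \pm \sqrt{3q}+1$ respectively. A Zsigmondy-prime argument shows that for all but a small handful of $q$, a generator $x$ of such a torus has no maximal overgroup other than the torus normaliser, so $x$ is a derangement for every faithful transitive action of $G$. The results of \cite{GM,GM2} applied to $C = x^G$ then give $G \setminus \{1\} \subseteq C^2$. For the Suzuki and small Ree groups the Weyl group of the relevant torus has order $2$ and inverts its generator, so $C$ is a real class, $1 \in C^2$, and we obtain $G = C^2$; for ${\rm U}_3(q)$ the Singer class is not real in general, leaving $G = \{1\} \cup C^2$ as stated.

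The hard part will be managing the small-$q$ cases where Zsigmondy or subfield arguments fail, and where exceptional maximal subgroups in the classifications above may contain the putative Singer element or otherwise trap any chosen class (for instance $q \in \{4,5,7,9,11\}$ for ${\rm L}_2(q)$, $q \in \{3,4,5\}$ for ${\rm U}_3(q)$, $q = 8$ for ${}^2B_2(q)$, and $q = 27$ for ${}^2G_2(q)$). For these finitely many pairs $(G,H)$, I would fall back on the computational approach used in Proposition \ref{p:spor_width}: construct $G$ and its character table in {\sc Magma}, enumerate the derangement classes for each maximal $H$, and verify via Lemma \ref{l:frob} that the claimed product covers $G$ (or $G \setminus \{1\}$, as appropriate). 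The final conclusion $G = \Delta(G)^2$ then follows immediately from the displayed case-by-case equations.
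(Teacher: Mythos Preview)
Your overall shape is right, but there is a recurring logical gap: for a fixed point stabiliser $H$ you need the classes $C,D$ appearing in the displayed formula to consist of derangements \emph{simultaneously}, and your chosen classes do not always achieve this.

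For ${\rm L}_2(q)$ you take $|x|=(q-1)/(2,q-1)$ and $|y|=(q+1)/(2,q-1)$ and assert that ``by swapping the roles of $x$ and $y$ if necessary, we may arrange for both $C$ and $D$ to be derangements''. That is false: if $H$ is a Borel subgroup then $x$ lies in $H$, while if $H=D_{(2,q-1)(q+1)}$ then $y$ lies in $H$; in neither case are both classes derangements. The paper avoids this by proving $G=C^2=D^2$ separately (for $q$ odd), so that a \emph{single} derangement class suffices and one simply takes $C=D$. For $q$ even this matters more: if $H$ is a Borel then the only derangements are elements of order dividing $q+1$, but for any such class $E$ one has $E^2=\{g:|g|\ne2\}$, so $\{1\}\cup E^2\ne G$. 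The paper's fix is to take $A=z^G$ and $B=(z^2)^G$ with $|z|=q+1$, so that $A$ and $B$ lie in the \emph{same} torus (hence are derangements together) and a direct character calculation gives $G=\{1\}\cup AB$.

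For ${\rm U}_3(q)$, ${}^2B_2(q)$ and ${}^2G_2(q)$ you claim that a Singer-type generator $x$ ``is a derangement for every faithful transitive action of $G$''. This fails precisely when $H=N_G(\langle x\rangle)$ is the torus normaliser itself, which is a maximal subgroup in each case. The paper therefore carries a \emph{second} class $D=y^G$ from a different torus (orders $(q^2-1)/(3,q+1)$, $q-\sqrt{2q}+1$, $q-\sqrt{3q}+1$ respectively), proves $G=C^2=D^2$ (or $\{1\}\cup C^2=\{1\}\cup D^2$ for ${\rm U}_3$) via \cite{GM}, and then observes that no maximal subgroup meets both classes. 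Your $\pm$ notation hints at this, but the argument as written does not use it and the key sentence is incorrect.
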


The main ingredient in the proof of Theorem \ref{t:rankone} is the following result. Note that ${\rm L}_2(5) \cong {\rm L}_2(4)$, so we assume $q \geqs 7$ if $G = {\rm L}_2(q)$ with $q$ odd.

\begin{prop}\label{p:rankone}
Let $G$ be a rank one finite simple group of Lie type and set $C = x^G$ and $D = y^G$, where the order of $x$ and $y$ is given in Table \ref{tab:rank1}. 

\vspace{1mm}

\begin{itemize}\addtolength{\itemsep}{0.2\baselineskip}
\item[{\rm (i)}] If $G \in \{ {}^2B_2(q), {}^2G_2(q), {\rm L}_2(q) \, \mbox{{\rm ($q \geqs 7$ odd)}}\}$, then $G = C^2 = D^2$.
\item[{\rm (ii)}] If $G = {\rm U}_3(q)$ then $G = \{1\} \cup C^2$.
\item[{\rm (iii)}] If $G = {\rm L}_2(q)$ with $q$ even, then $G = C^2 = \{1\} \cup AB$, where $A = z^G$, $B = (z^2)^G$ and $|z| = q+1$.
\item[{\rm (iv)}] If $G = {\rm L}_2(7)$ then $G = A^2 \cup AB = B^2 \cup AB$, where $A$ and $B$ are the two classes of elements of order $7$.
\end{itemize}
\end{prop}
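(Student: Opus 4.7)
My plan is to apply the Frobenius character formula (Lemma~\ref{l:frob}) in each of the four families. For conjugacy classes $C = x^G$ and $D = y^G$, and any $z \in G$, the number of factorisations $z = uv$ with $u \in C$, $v \in D$ is
\[
N(z) = \frac{|C||D|}{|G|}\sum_{\chi \in \operatorname{Irr}(G)} \frac{\chi(x)\chi(y)\overline{\chi(z)}}{\chi(1)},
\]
and the conclusions $G = C^2$, $G = \{1\}\cup C^2$ and $G = \{1\}\cup AB$ each amount to showing $N(z)>0$ for every class representative $z$ in the claimed image. In every rank one case listed in Table~\ref{tab:rank1} the elements $x$ and $y$ should be regular semisimple elements lying in prescribed maximal tori, chosen so that their centralisers are the tori themselves; the specific orders recorded in the table correspond to cyclic tori of orders $(q\pm 1)/\gcd(\cdot,\cdot)$ for $\mathrm{L}_2(q)$ and $\mathrm{U}_3(q)$, and to the ``exotic'' tori of orders $q \pm \sqrt{2q}+1$ for ${}^2B_2(q)$, respectively $q \pm \sqrt{3q}+1$ for ${}^2G_2(q)$.

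The key estimate isolates the trivial character, which contributes the dominant positive term $|C||D|/|G|$, and then bounds
\[
\sum_{\chi \ne 1}\frac{|\chi(x)||\chi(y)||\chi(z)|}{\chi(1)} < 1
\]
for every relevant $z$. This is where the explicit character tables enter. For $\mathrm{L}_2(q)$ the table is elementary and consists of characters of degrees $1$, $q$, $q \pm 1$ whose values on split and nonsplit torus generators are sums of roots of unity of controlled size; the inequality then reduces to a direct computation closely parallel to Garion's analysis underlying \cite[Theorem 2]{Garion}. For $\mathrm{U}_3(q)$ I would appeal to the Simpson--Frame--Ennola table: on elements of the Coxeter torus of order $(q^2-q+1)/(3,q+1)$ almost every irreducible character vanishes, leaving only a short sum to estimate. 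For the Suzuki and Ree groups I would use the character tables of Suzuki and Ward, which show that on the chosen exotic torus elements the non-trivial character values are bounded by small absolute constants, independent of $q$.

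The asymmetric form $G = \{1\}\cup C^2$ or $G = \{1\}\cup AB$ in (ii) and (iii) reflects the fact that $1 \in C^2$ exactly when $C$ is real: the non-real Coxeter class for $\mathrm{U}_3(q)$, and the choice $B = (z^2)^G$ with $z$ of order $q+1$ for $\mathrm{L}_2(q)$ in even characteristic, are precisely what remove a spurious identity contribution. The exceptional case (iv) for $\mathrm{L}_2(7)$ can be handled either by inspecting the $6 \times 6$ character table directly or by a short \textsf{GAP} computation; the two classes $A,B$ of elements of order $7$ are fused by the outer automorphism, which accounts for the asymmetry $G = A^2 \cup AB = B^2 \cup AB$. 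As a side check, one must verify that the chosen $x$ and $y$ are genuine derangements: here the well-known classification of maximal subgroups of rank one groups, together with the observation that an element of prime torus order lies in a unique maximal torus, restricts the maximal overgroups of $x$ and $y$ to a short list which is incompatible with the prescribed stabilisers.

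The main obstacle is the quantitative character estimate for the very small values of $q$, where the generic bounds on $|\chi(t)|/\chi(1)$ no longer comfortably beat the trivial contribution. For these finitely many groups I would finish by direct computation in \textsf{GAP} using the \textsf{Character Table Library}, applying Lemma~\ref{l:frob} exactly as in the sporadic case treated in Proposition~\ref{p:spor_width}.
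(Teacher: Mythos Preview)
Your approach via the Frobenius formula and explicit character tables is correct in principle and would eventually yield the result, but it duplicates existing work. The paper's proof of parts (i) and (ii), as well as the claim $G = C^2$ in (iii), is simply to cite \cite[Theorem~7.1]{GM} (Guralnick--Malle), together with the observation that the relevant semisimple classes in ${}^2B_2(q)$ and ${}^2G_2(q)$ are real (Lemma~\ref{l:real}). The character-theoretic estimates you outline are essentially what Guralnick and Malle already carried out in proving that theorem, so invoking their result saves you the entire analysis for those families. The only place the paper does an explicit Frobenius-formula computation is the $G = \{1\}\cup AB$ assertion in part (iii), where it works through the character table of ${\rm L}_2(q)$ with $q$ even to show $N(g) \geqs 1$ for every nontrivial $g$; your plan for that part matches the paper's. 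Part (iv) for ${\rm L}_2(7)$ is handled by direct inspection, as you suggest.

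One misplaced ingredient: your ``side check'' that $x$ and $y$ are derangements does not belong here. Proposition~\ref{p:rankone} is a pure statement about products of conjugacy classes, with no reference to any permutation action; the derangement verification is carried out separately in the proof of Theorem~\ref{t:rankone}, where the maximal overgroups of $x$ and $y$ are analysed case by case.
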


{\scriptsize
\begin{table}
\[
\begin{array}{llll} \hline
G & \mbox{Conditions} & |x| & |y| \\ \hline
{}^2B_2(q) & q \geqs 8 & q+\sqrt{2q}+1 & q-\sqrt{2q}+1 \\
{}^2G_2(q) & q \geqs 27 & q+\sqrt{3q}+1 & q-\sqrt{3q}+1 \\
{\rm U}_3(q) & q \geqs 3 & (q^2-q+1)/(3,q+1) & (q^2-1)/(3,q+1) \\
{\rm L}_2(q) & \mbox{$q \geqs 5$ odd} & (q+1)/2 & (q-1)/2 \\
& \mbox{$q \geqs 4$ even} & q-1 &  \\ \hline
\end{array}
\]
\caption{The classes $C = x^G$ and $D = y^G$ in Proposition \ref{p:rankone}}\label{tab:rank1}
\end{table}
}

\begin{proof}
Parts (i) and (ii) follow immediately from \cite[Theorem 7.1]{GM}, noting that every semisimple conjugacy class in ${}^2B_2(q)$ and ${}^2G_2(q)$ is real (for example, this follows from Lemma \ref{l:real} in Section \ref{ss:exccc} below). Now assume $G = {\rm L}_2(q)$ with $q$ even. Here \cite[Theorem 7.1]{GM} gives $G = C^2$, so it just remains to show that $G = \{1\} \cup AB$, where $A$ and $B$ are the conjugacy classes of $z$ and $z^2$, respectively, and we have $|z| = |z^2| = q+1$.

Given a nontrivial element $g \in G$, let $N(g)$ be the number of solutions to the equation $g = ab$ with $a \in A$ and $b \in B$ and recall Lemma \ref{l:frob}, which states that
\[
N(g) = \frac{|A||B|}{|G|}\sum_{\chi \in {\rm Irr}(G)}\frac{\chi(z)\chi(z^2)\overline{\chi(g)}}{\chi(1)}
\]
where ${\rm Irr}(G)$ is the set of complex irreducible characters of $G$. By inspecting the character table of $G$ (see \cite[Section 38]{Dorn}, for example), it is a straightforward exercise to show that
\[
N(g) = \left\{ \begin{array}{ll}
q & \mbox{if $|g| = 2$} \\
q-1 & \mbox{if $|g|$ divides $q-1$} \\
\mbox{$q+1$ or $1$} & \mbox{if $|g|$ divides $q+1$} 
\end{array}\right.
\]
and we conclude that $|N(g)| \geqs 1$ for all $1 \ne g \in G$ (this can also be checked computationally, using the \textsf{GAP} package \textsf{Chevie} \cite{Chevie}). Therefore, $G = \{1\} \cup AB$ as claimed.
\end{proof} 

\begin{rem}\label{r:garion}
For $G = {\rm L}_2(q)$, Garion \cite{Garion} gives a complete classification of the conjugacy classes $C$ with $G = C^2$. In particular, \cite[Theorem 2(ii)]{Garion} shows that if $q$ is even and $C = x^G$ with $|x|$ dividing $q+1$, then $C^2 = \{ y \in G \,:\, |y| \ne 2\}$.
\end{rem}

\begin{proof}[Proof of Theorem \ref{t:rankone}]
As usual, we may assume $H$ is a maximal subgroup. For now let us assume $G \ne {\rm L}_2(q)$ if $q$ is even (we will handle this special case at the end of the proof) and define the classes $C = x^G$ and $D = y^G$ as in Table \ref{tab:rank1}. In view of Proposition \ref{p:rankone}, it suffices to show that either $x$ or $y$ is a derangement.

Suppose $G = {}^2B_2(q)$. Here $N_G(\la x \ra)$ is the unique maximal overgroup of $x$, so $x$ is a derangement unless $H = (q+\sqrt{2q}+1).4$. And in the latter case, $y$ is a derangement since $|H|$ is indivisible by $|y|$ and the result follows. An entirely similar argument handles the groups $G = {}^2G_2(q)$ with $q \geqs 27$.

Next assume $G = {\rm U}_3(q)$ with $q \geqs 3$, and note that $1 \not\in C^2 \cup D^2$ since neither $C$ nor $D$ are real classes. Suppose $q \not\in \{3,5\}$. Since $|x| = (q^2-q+1)/(3,q+1)$, it follows that $x$ is a Singer cycle and the main theorem of \cite{Ber} implies that $x$ is a derangement unless $H$ is a field extension subgroup of type ${\rm GU}_1(q^3)$. 
But in the latter case, $y$ is a derangement and once again the result follows. Finally, if $q=3$ then $x$ is a derangement unless $H = {\rm L}_2(7)$, in which case $y$ is a derangement. Similarly, if $q=5$ then $x$ is a derangement unless $H$ is isomorphic to $A_7$ (there are three conjugacy classes of such subgroups), and in the latter situation it is clear that $y$ is a derangement.

Now suppose $G = {\rm L}_2(q)$ with $q \geqs 7$ odd. If $q \geqs 11$, then \cite{Ber} implies that $x$ is a derangement unless $H = D_{q+1}$ is a field extension subgroup of type ${\rm GL}_1(q^2)$, in which case $y$ is a derangement. If $q=9$ then $x$ is a derangement unless $H = A_5$ (there are two classes of such subgroups). But in the latter case, $H$ does not contain any elements of order $|y|=4$, so $y$ is a derangement. Now assume $q=7$, so $|x| = 4$, $|y|=3$ and $C,D$ are the unique conjugacy classes in $G$ containing elements of order $4,3$, respectively. If $H = 7{:}3$ then $x$ is a derangement. However, if $H = S_4$ (there are two such classes), then $x$ and $y$ both have fixed points. Indeed, $\Delta(G) = A \cup B$ is the union of the two conjugacy classes of elements of order $7$. It is easy to check that 
\[
A^2 = B^2 = \{ z \in G \,:\, |z| \ne 1,4\},\;\; AB = \{ z \in G \,:\, |z| \ne 2\}
\]
and thus $G$ is not equal to $\{1\} \cup A^2$, $\{1\} \cup B^2$ nor $\{1\} \cup AB$, but we do get $G = A^2 \cup AB = B^2 \cup AB$ as required. Moreover, if we consider the action of $G$ on $G/K$, where $K$ is a proper subgroup of $H$, then we find that either $x$ or $y$ is a derangement, so we have $G = E^2$ for some conjugacy class $E$ of derangements.

To complete the proof, we may assume $G = {\rm L}_2(q)$ with $q \geqs 4$ even. Set $C = x^G$ and $A = z^G$, $B = (z^2)^G$, where $|x| = q-1$ and $|z|=q+1$, so  Proposition \ref{p:rankone}(iii) implies that $G = C^2 = \{1\} \cup AB$. By \cite{Ber}, $z$ and $z^2$ are derangements unless $H = D_{2(q+1)}$ is a field extension subgroup of type ${\rm GL}_1(q^2)$, and in the latter case we see that $x$ is a derangement. The result follows.
\end{proof}

To conclude this section, we briefly consider Conjecture \ref{c:lst} for groups of Lie type of rank two or more, focussing on the linear groups ${\rm L}_n(q)$ with $n \geqs 3$. Note that ${\rm L}_3(2) \cong {\rm L}_2(7)$, so we may assume $q \geqs 3$ in part (i) of Theorem \ref{t:psl} below. In part (ii), we write $P_k$ for the stabiliser of a $k$-dimensional subspace of the natural module for $G$.

The following result establishes Theorem \ref{t:main_new}, as stated in Section \ref{s:intro}.

\begin{thm}\label{t:psl}
Let $G = {\rm L}_n(q)$ be a transitive simple group on a set $\O$ with point stabiliser $H$ and assume $n \geqs 3$.

\vspace{1mm}

\begin{itemize}\addtolength{\itemsep}{0.2\baselineskip}
\item[{\rm (i)}] If $n=3$ and $q \geqs 3$, then $G = \{1\} \cup C^2$ for some conjugacy class $C$ of derangements.

\item[{\rm (ii)}] If $n \geqs 4$ and $H \not\leqs P_1,P_{n-1}$, then $G = \Delta(G)^2$. Moreover, if $q \geqs 3$ or $H \not\leqs {\rm Sp}_n(2)$, then 
$G = \{1\} \cup CD$ for classes $C,D$ of derangements.
\end{itemize}
\end{thm}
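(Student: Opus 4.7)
The natural candidate is the class $C = x^G$ of Singer cycles, that is, elements of order $(q^2+q+1)/(3,q-1)$ generating a cyclic maximal torus $T$. The result of Lev \cite{Lev}, or alternatively the Malle--Saxl--Weigel theorem \cite{MSW}, combined with a standard character sum estimate yields $G = \{1\} \cup C^2$ for such a class. By the main theorem of \cite{Ber}, a Singer cycle is a derangement in every faithful transitive action of $G$ unless $H$ is contained in $N_G(T)$, a field-extension subgroup of type $\mathrm{GL}_1(q^3)$. In this remaining situation I would switch $C$ to a class of semisimple elements of order $(q+1)(q-1)/(3,q-1)$ coming from a maximal torus of type $\mathrm{GL}_2(q) \times \mathrm{GL}_1(q)$; such an order is coprime to $|N_G(T)|$, so these elements are derangements whenever $H \leqs N_G(T)$, and Lev's theorem applies to give $G = \{1\} \cup C^2$ for this second choice, provided one checks the class is real and squarable.

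\textbf{Plan for part (ii) ($n \geqs 4$, $H \not\leqs P_1, P_{n-1}$).} Following the strategy behind \cite[Theorem 2.4(i)]{LST}, I would select $C = x^G$ and $D = y^G$ where $|x|$ is divisible by a primitive prime divisor $r$ of $q^n - 1$ and $|y|$ is divisible by a primitive prime divisor $s$ of $q^{n-1} - 1$. A slightly strengthened version of \cite[Theorem 2.4(i)]{LST}, proved by the same character-ratio method as in the original, will yield $G = \{1\} \cup CD$. To show that $C$ and $D$ consist of derangements for the specified actions, I would invoke the Guralnick--Penttila--Praeger--Saxl classification \cite{GPPS}, which enumerates the subgroups of $\mathrm{GL}_n(q)$ whose order is divisible by $r$ (respectively $s$). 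An element of order divisible by $r$ forces an irreducible action on the natural module, confining its overgroups to Singer normalisers and field-extension subgroups, while an element of order divisible by $s$ essentially forces a hyperplane stabiliser. A case-by-case inspection of the [GPPS] list, combined with the hypothesis $H \not\leqs P_1, P_{n-1}$, will show that at least one of $x, y$ is always a derangement, and in fact both are derangements outside a narrow exceptional range.

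\textbf{The main obstacle.} The technical heart of the argument is the residual case $q = 2$ with $H \leqs \mathrm{Sp}_n(2)$, where primitive prime divisor elements of order dividing $2^{n-1}-1$ can be absorbed into $\mathrm{Sp}_n(2)$, so the [LST]-type argument no longer supplies derangements in both factor classes. Using a zeta-function / Witt-type bound on primitive prime divisors one can show this genuinely can only occur when $n \leqs 28$, reducing the problem to a finite list of cases. For these I would complete the argument computationally in {\sc Magma}: for each such $(n, H)$ construct $\mathrm{L}_n(2)$ and its maximal subgroup $H$, identify the $G$-classes avoiding $H$ using the fusion map, and use Frobenius's formula (Lemma \ref{l:frob}) to verify that the product of two chosen classes of derangements covers every conjugacy class of $G$. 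This will establish the weaker conclusion $G = \Delta(G)^2$ in the excluded range, matching the theorem's distinction between the two strengths of conclusion.
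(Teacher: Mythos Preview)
Your plan for part~(i) is essentially the paper's argument: the paper uses \cite[Theorem~7.3]{GM} rather than Lev/MSW to get $G=\{1\}\cup C^2=\{1\}\cup D^2$ for the Singer class $C$ and the class $D$ of order $(q^2-1)/d$, then applies \cite{Ber} exactly as you do. (One small omission: $q=4$ needs a separate check because \cite{Ber} has low-rank exceptions.)

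For part~(ii) there is a genuine gap. Your description of the GPPS overgroup list is far too optimistic: ppd-elements can sit in many subgroup families beyond Singer normalisers and field-extension subgroups (see \cite[Examples~2.1--2.9]{GPPS}). More importantly, your claim that ``both $x,y$ are derangements outside a narrow exceptional range'' consisting only of $(q,H)=(2,\mathrm{Sp}_n(2))$ is false. If $H$ is a $\mathcal{C}_3$ field-extension subgroup of type $\mathrm{GL}_{n/k}(q^k)$, then $H$ contains a Singer cycle, so your element $x$ (order divisible by a ppd of $q^n-1$) need not be a derangement; likewise if $H$ is a $\mathcal{C}_8$ subgroup of type $\mathrm{Sp}_n(q)$ or $\mathrm{O}_n^{\varepsilon}(q)$ with $q\geqs 3$, then $H$ contains elements of order $(q^{n/2}+1)/e$, so again $x$ can have fixed points. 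In each of these families only one of your two classes consists of derangements, which is not enough for $G=\{1\}\cup CD$ with \emph{both} $C,D$ derangement classes.

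The paper's missing ingredient here is Lev's theorem \cite{Lev} (Proposition~\ref{p:lev}): for $q\geqs 3$ one has $G=E^2$ whenever $E$ is the class of an element of shape $(A,J_{n_1}(\lambda_1),\ldots,J_{n_k}(\lambda_k))$ with $A\in\mathrm{GL}_2(q)$ irreducible and the $\lambda_i$ distinct. The paper chooses such an $E$ tailored to each of the $\mathcal{C}_3$ and $\mathcal{C}_8$ cases and verifies that the Jordan form of a suitable power of the representative is incompatible with membership in $H$ (using the parity constraints on Jordan blocks in symplectic/orthogonal groups), hence $E$ consists of derangements and $G=E^2\subseteq\Delta(G)^2$. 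Without this switching mechanism your argument does not close. Finally, the paper's baseline source for $G=\{1\}\cup CD$ is \cite[Theorem~2.1]{MSW} (Proposition~\ref{p:msw}), with the sharpened \cite{LST} bound used only for $G=\mathrm{L}_n(2)$, $n\geqs 30$; and for $H=\mathrm{Sp}_n(2)$ with $8\leqs n\leqs 28$ the paper does not attempt a Frobenius-formula computation (which would be very heavy for $n$ near $28$) but simply checks $\delta(G)>1/2$ by comparing spectra, invoking Lemma~\ref{l:easy2}.
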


\begin{rem}\label{r:psl}
Suppose $n \geqs 4$ and $H$ is contained in a $P_1$ or $P_{n-1}$ parabolic subgroup. If $n \geqs 33$, then the two classes $C,D$ in \cite[Theorem 2.4(i)]{LST} are derangements and we have $G = \{1\} \cup CD$. And the same conclusion holds if $n \geqs 7$ and $q> 7^{481}$ by \cite[Theorem 2.4(ii)]{LST}. So the open cases here are when $4 \leqs n \leqs 6$ (for all $q$ at most some unspecified constant) and when $7 \leqs n \leqs 32$ with $q \leqs 7^{481}$.
\end{rem}

We will need the following lemma in the case where $G = {\rm L}_n(2) = {\rm GL}_n(2)$ and $n \geqs 30$. For an integer $2 \leqs k < n/2$, let $z_k \in G$ be a regular semisimple element of the form $z_k = {\rm diag}(A,B)$, where $A \in {\rm GL}_k(2)$ and $B \in {\rm GL}_{n-k}(2)$ are Singer cycles, so we have
\begin{equation}\label{e:zk}
|z_k| = {\rm lcm}(2^k-1, 2^{n-k}-1).
\end{equation}

\begin{lem}\label{l:lst_new}
Let $G = {\rm L}_n(2)$ with $n \geqs 30$ and let $x,y \in G$ be regular semisimple elements of the form $x = z_2$ and $y = z_3$. Then $G = \{1\} \cup CD$, where $C = x^G$ and $D = y^G$.
\end{lem}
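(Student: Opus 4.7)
\medskip

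\noindent The natural approach is to apply the Frobenius formula of Lemma \ref{l:frob}. For $1 \ne g \in G$, let $N(g)$ denote the number of pairs $(a,b) \in C \times D$ with $ab = g$. Then
\[
N(g) \;=\; \frac{|C||D|}{|G|}\!\left(1 + \sum_{1\ne\chi \in {\rm Irr}(G)} \frac{\chi(x)\chi(y)\overline{\chi(g)}}{\chi(1)} \right),
\]
so it suffices to show that the error sum $\Sigma(g)$ has $|\Sigma(g)| < 1$ for every nontrivial $g \in G$. This is exactly the template used in \cite[Theorem 2.4(i)]{LST}; the content here is that the elements $x = z_2$ and $y = z_3$ are so close to being Singer cycles that the estimate goes through already at $n = 30$.

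\medskip

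\noindent The plan is to combine three standard inputs. First, because $x$ and $y$ are regular semisimple with centralisers cyclic of orders $3(2^{n-2}-1)$ and $7(2^{n-3}-1)$, the number of irreducible constituents of the Deligne--Lusztig virtual characters supported on $x$, $y$ is small; concretely, one gets a uniform bound of the shape $|\chi(x)| \le c_x$ and $|\chi(y)| \le c_y$ for small absolute constants $c_x, c_y$ (using $|\chi(s)|^2 \le k(C_G(s))$ for regular semisimple $s$, together with the fact that the centralisers are products of two cyclic Singer factors). Second, invoke the Larsen--Shalev--Tiep character bound $|\chi(g)| \le \chi(1)^{\alpha(g)}$, where the exponent $\alpha(g) < 1$ depends on the support of $g$ on the natural module. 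Third, use the sharp bounds on the Witten-type zeta function $\zeta^G(s) = \sum_{1 \ne \chi} \chi(1)^{-s}$ for $G = {\rm GL}_n(2)$ available in the literature (for example, from \cite{LST} or Liebeck--Shalev), which for $n \geqs 30$ and $s$ slightly greater than $0$ already give very small values.

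\medskip

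\noindent Putting these together yields
\[
|\Sigma(g)| \;\leqs\; c_x c_y \sum_{1\ne\chi} \frac{\chi(1)^{\alpha(g)}}{\chi(1)} \;\leqs\; c_x c_y \cdot \zeta^G\bigl(1 - \alpha(g)\bigr),
\]
and one verifies that the right-hand side is strictly less than $1$ when $n \geqs 30$. The classes of elements $g$ that need separate attention are those with the largest centralisers (e.g.\ transvections and short-support elements), since these force $\alpha(g)$ closest to $1$; for these one instead appeals to the refined estimates in \cite[Section 2]{LST} that exploit the explicit form of the character degrees of ${\rm GL}_n(2)$, together with the observation that $x$ and $y$ lie in different Coxeter tori.

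\medskip

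\noindent The main obstacle is ensuring the bounds are tight enough to cover the boundary range $n \in \{30,31,32\}$ simultaneously, particularly for $g$ of very small support (unipotent elements with a single nontrivial Jordan block), where $\alpha(g)$ is closest to $1$ and the zeta-function estimate alone is marginal. I would handle these by separating off the small-support elements and using a direct Deligne--Lusztig computation of $\chi(x)$, $\chi(y)$, $\chi(g)$ for the relevant characters (regular/semisimple characters parametrised by pairs of Singer tori), which makes the cancellation in $\Sigma(g)$ explicit and avoids the loss incurred by triangle-inequality estimates.
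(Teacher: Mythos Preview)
Your overall template (Frobenius formula, then bound the error sum using character estimates borrowed from \cite{LST}) is the right one, and it is exactly the template the paper follows. However, your main estimate
\[
|\Sigma(g)| \leqs c_xc_y\cdot \zeta^G(1-\a(g))
\]
is too crude to reach $n=30$. For $g$ of small support (say a transvection) one has $\a(g)$ very close to $1$, so $1-\a(g)$ is of order $1/n$ and $\zeta^G(1-\a(g))$ is not small; this is precisely why \cite[Theorem~2.4(i)]{LST} only gets $n\geqs 33$ by such means, and your separate treatment of small-support $g$ via ``a direct Deligne--Lusztig computation'' is not specific enough to close the gap.

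What you are missing is the structural fact that makes the problem finite rather than asymptotic: for the particular elements $x=z_2$ and $y=z_3$, one has $\chi(x)\chi(y)=0$ for every $\chi\in{\rm Irr}(G)$ except the trivial character and seven explicitly listed unipotent characters $\chi_2,\ldots,\chi_8$ (this is the content of \cite[(2.3)]{LST}). Hence the entire error sum collapses to $\sum_{i=2}^{8}\chi_i(x)\chi_i(y)\overline{\chi_i(g)}/\chi_i(1)$, and since $|\chi_i(x)\chi_i(y)|\leqs 1$ it suffices to check the single explicit inequality $\sum_{i=2}^{8}|\chi_i(g)|/\chi_i(1)<1$ for all nontrivial $g$. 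The paper then plugs in the bounds from \cite[Lemma~2.3]{LST} and \cite[Theorem~1.6(i)]{GLT} with $q=2$ and verifies numerically that the sum is below $0.781+0.177<1$ for all $n\geqs 30$. No zeta-function or $\a(g)$-exponent machinery is needed once this reduction is made.
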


\begin{proof}
This follows from \cite[Theorem 2.4(i)]{LST} if $n \geqs 33$. By slightly tweaking some of the estimates in the proof of this result, we will show that the same argument also works for  $n = 30,31$ and $32$.

In view of Lemma \ref{l:frob}, we need  
\[
\left| \sum_{\chi \in {\rm Irr}(G)} \frac{\chi(x)\chi(y)\overline{\chi(g)}}{\chi(1)}\right| > 0 
\]
for each non-identity element $g \in G$. As explained in the proof of \cite[Theorem 2.4(i)]{LST}, it suffices to show that
\[
\sum_{i=2}^8 \frac{|\chi_i(g)|}{\chi_i(1)} < 1,
\]
where the $\chi_i$ are the non-principal unipotent characters of $G$ listed in \cite[(2.3)]{LST}. By repeating the argument in the proof of \cite[Theorem 2.4(i)]{LST}, setting $q=2$ and applying \cite[Lemma 2.3]{LST} and \cite[Theorem 1.6(i)]{GLT}, we get
\[
\sum_{i=2}^4 \frac{|\chi_i(g)|}{\chi_i(1)} < \frac{2^{n-1}+4}{2^n-2} + 0.1254 + \frac{1.76}{2^{(4n-15)/n}} < 0.781
\]
for all $n \geqs 30$. As in \cite[(2.8)]{LST}, the combined contribution from the four remaining unipotent characters is at most 
\[
4\cdot 2^{(21-n)/2} < 0.177,
\]
so 
\[
\sum_{i=2}^8 \frac{|\chi_i(g)|}{\chi_i(1)} < 0.781 + 0.177 < 1
\]
and the result follows.
\end{proof}

\vs

First we handle the case $n=3$.

\begin{lem}\label{l:n3}
The conclusion to Theorem \ref{t:psl} holds for $n=3$.
\end{lem}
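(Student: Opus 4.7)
The plan is to exhibit, for every maximal subgroup $H$ of $G = {\rm L}_3(q)$ with $q \geqs 3$, a conjugacy class $C$ of derangements satisfying $G = \{1\} \cup C^2$. Set $d = (3,q-1)$ and let $x, y \in G$ be generators of the two non-split cyclic maximal tori, of orders $(q^2+q+1)/d$ (a Singer cycle) and $(q^2-1)/d$, respectively. Put $C_1 = x^G$ and $C_2 = y^G$.

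First I would establish $G = \{1\} \cup C_i^2$ for $i \in \{1,2\}$. For $q$ sufficiently large, this follows from the character-theoretic methods of Guralnick and Malle (see \cite{GM, GM2}) applied to these regular semisimple classes: the Frobenius sum
\[
\sum_{\chi \in {\rm Irr}(G)} \frac{\chi(x_i)^2\,\overline{\chi(g)}}{\chi(1)}
\]
is strictly positive for every non-identity $g \in G$, as can be verified from the generic character table of ${\rm L}_3(q)$, conveniently available through \cite{Chevie}. For the remaining small values of $q$ I would verify the identity directly using Lemma \ref{l:frob}, in the same spirit as Propositions \ref{p:spor_width} and \ref{p:rankone}.

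Next I would inspect the maximal subgroups of $G$, whose conjugacy classes are classical. By the main theorem of \cite{Ber}, the Singer cycle $x$ lies in no maximal subgroup other than $N_G(\langle x \rangle)$; consequently, $C_1$ consists of derangements unless $H$ is $G$-conjugate to this Singer normaliser. When $H = N_G(\langle x \rangle)$ has order $3(q^2+q+1)/d$, a short arithmetic computation shows that $\gcd(|y|, |H|)$ divides $d$, so no element of $H$ has order $|y|$ and $C_2$ consists of derangements.

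The main obstacle is the bookkeeping in small rank over small fields: for $q \in \{3,4,5,7,8,9\}$ there are sporadic maximal subgroups (such as $\mathrm{L}_2(7) < \mathrm{L}_3(4)$-style embeddings and almost simple $\mathrm{U}_3$- or $A_6$-type subgroups) where the arithmetic separation of $|y|$ from $|H|$ can degrade, and where one must re-check that $G = \{1\} \cup C_i^2$ holds for the candidate class. For each such $q$ I would construct $G$ in {\sc Magma}, compute $\{C \,:\, G = \{1\} \cup C^2\}$ via Lemma \ref{l:frob}, run through the maximal subgroup representatives and their fusion into $G$, and certify that at least one such class avoids $H$. This mirrors the computational arguments used earlier for sporadic and alternating groups.
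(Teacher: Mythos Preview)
Your approach is essentially the same as the paper's: both use the Singer cycle class $C = x^G$ with $|x| = (q^2+q+1)/d$ and the class $D = y^G$ with $|y| = (q^2-1)/d$, invoke \cite{GM} for $G = \{1\} \cup C^2 = \{1\} \cup D^2$, and apply \cite{Ber} to force $x$ to be a derangement unless $H$ is the Singer normaliser, in which case $y$ is a derangement.

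Two minor points where the paper is cleaner. First, \cite[Theorem~7.3]{GM} already gives $G = \{1\} \cup C^2 = \{1\} \cup D^2$ for \emph{all} $q \geqs 3$, so your hedge ``for $q$ sufficiently large'' and the proposed small-$q$ verification of this identity are unnecessary. Second, your statement that \cite{Ber} shows the Singer cycle lies only in $N_G(\langle x\rangle)$ is not literally correct for $q = 4$: there the Singer cycle has order $7$ and can lie in a maximal ${\rm L}_2(7)$. The paper handles this by treating $q = 4$ separately (simply noting that $|x| = 7$, $|y| = 5$, and no maximal subgroup of ${\rm L}_3(4)$ has order divisible by $35$), rather than folding it into a general computational sweep. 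Your proposed small-$q$ check would of course catch this, but it is worth being precise about where the exception to \cite{Ber} occurs.
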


\begin{proof}
Here $G = {\rm L}_3(q)$ with $q \geqs 3$ and we set $d = (3,q-1)$. By \cite[Theorem 7.3]{GM} we have $G = \{1\} \cup C^2 = \{1\} \cup D^2$, where $C = x^G$ and $D = y^G$ with $|x| = (q^2+q+1)/d$ and $|y| = (q^2-1)/d$. Note that 
$x$ is a Singer cycle. 

For $q \ne 4$, the main theorem of \cite{Ber} implies that $x$ is a derangement unless $H$ is a field extension subgroup of type ${\rm GL}_1(q^3)$. But in the latter case, $y$ is clearly a derangement and the result follows. Finally, if $q=4$ then $|x| = 7$, $|y| = 5$ and one checks that no maximal subgroup of $G$ has order divisible by $35$, so in each case either $x$ or $y$ is a derangement.
\end{proof}

Now assume $n \geqs 4$. The following result is part of \cite[Theorem 2.1]{MSW}.

\begin{prop}\label{p:msw}
Let $G = {\rm L}_n(q)$ with $n \geqs 4$ and set $d = (n,q-1)$ and $e=(2,q-1)$. Then $G = \{1\} \cup CD$, where $C = x^G$, $D = y^G$ and
\[
|x| = \left\{ \begin{array}{ll}
(q^{n/2}+1)/e & \mbox{if $n$ is even} \\
(q^n-1)/d(q-1) & \mbox{if $n$ is odd,}
\end{array}\right.
\;\;\;
|y| = \left\{ \begin{array}{ll}
(q^{n-1}-1)/d & \mbox{if $n$ is even} \\
q^{(n-1)/2}+1 & \mbox{if $n$ is odd.}
\end{array}\right.
\]
\end{prop}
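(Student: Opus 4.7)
The natural approach is to verify the proposition via the Frobenius character formula of Lemma \ref{l:frob}. For every nontrivial $g \in G$ we need
\[
N(g) = \frac{|C||D|}{|G|}\sum_{\chi \in \mathrm{Irr}(G)}\frac{\chi(x)\chi(y)\overline{\chi(g)}}{\chi(1)} > 0,
\]
and this will follow if we can show that the contribution from the non-principal characters is strictly less than $1$ in absolute value, that is,
\[
S(g) := \left|\sum_{1 \neq \chi \in \mathrm{Irr}(G)}\frac{\chi(x)\chi(y)\overline{\chi(g)}}{\chi(1)}\right| < 1.
\]
The choices of $x$ and $y$ are tailored so that this sum has few summands and the summands are small. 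Specifically, $x$ and $y$ are regular semisimple elements lying in maximal tori $T_x$ and $T_y$ whose structure is as ``cyclic as possible'': for $n$ odd, $x$ generates (modulo the centre) a Coxeter torus of order $(q^n-1)/(q-1)$, while for $n$ even one instead takes $x$ from a torus of order $q^{n/2}+1$ (up to scalars), and analogously for $y$.

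The first step is to invoke Deligne--Lusztig theory to discard most irreducible characters. By the standard character formula, if $\chi$ lies in a Lusztig series $\mathcal{E}(G, s)$ with $s$ semisimple such that no $G$-conjugate of $s$ lies in the dual torus of $T_x$, then $\chi(x) = 0$; an analogous statement holds for $y$. Since $T_x$ and $T_y$ are chosen in different $W$-orbits of maximal tori (this is the point of the coprime-ish splitting between $q^{n/2}+1$ or $(q^n-1)/(d(q-1))$ versus $q^{(n-1)/2}+1$ or $(q^{n-1}-1)/d$), the only characters contributing to $S(g)$ lie in the unipotent series or in a very small number of Lusztig series indexed by semisimple elements of $T_x^*$ and $T_y^*$.

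Next, I would bound the surviving contributions in three groups: the nontrivial unipotent characters, the semisimple/rational characters coming from $T_x$ and $T_y$, and any residual ``mixed'' characters. For the unipotent characters one has explicit formulas (as in the argument used in Lemma \ref{l:lst_new} and \cite[(2.3)]{LST}): the Steinberg and degree-$(q^n-1)/(q-1)$ characters are handled directly, and the remaining unipotent characters have degrees growing like $q^{c}$ for $c$ roughly quadratic in $n$, with values bounded by results such as \cite[Theorem 1.6]{GLT} and Gluck-type bounds $|\chi(g)|/\chi(1) \leqslant C q^{-r}$ for some positive $r$. For the semisimple characters attached to $T_x$, the character degree is $|G:T_x|_{p'}$, which is very large, while $|\chi(x)|$ and $|\chi(y)|$ are bounded (usually by $|W(T_x)|$ or a small constant), so each contributes at most $O(q^{-N})$; there are at most $|T_x|$ such characters. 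Summing everything shows $S(g) < 1$ for all sufficiently large $q$ (in terms of $n$) and for $n$ away from a small list of exceptions.

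The hard part is in handling the cases where the trivial character estimates are not quite sharp enough: small values of $n$, small $q$, and choices of $g$ for which $|\chi(g)|/\chi(1)$ is close to $1$ (typically $g$ unipotent or contained in a proper parabolic with large unipotent radical). These obstructive $g$ are precisely the elements making the sum over unipotent characters large, and they are treated in \cite{MSW} by a more delicate analysis of unipotent character values (using the Hecke algebra formulas and Lusztig's Jordan decomposition of characters) combined with a direct computational check for the finite list of remaining small pairs $(n,q)$. Since the statement in Proposition \ref{p:msw} is cited from \cite[Theorem 2.1]{MSW}, one can invoke their argument verbatim; what is new here is simply the observation that the resulting pair $(C,D)$ can be chosen to consist of derangements in the relevant primitive actions, which is used in Theorem \ref{t:psl}(ii).
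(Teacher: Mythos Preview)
The paper does not give a proof of this proposition at all: it simply records the result as a direct quotation of \cite[Theorem~2.1]{MSW}. Your sketch of a Frobenius--Deligne--Lusztig argument is broadly in the spirit of what Malle, Saxl and Weigel actually do, but it is not needed here, and several steps as written are too loose to stand on their own (for instance, your bound only yields $S(g)<1$ ``for all sufficiently large $q$'', whereas the proposition is asserted for every $n\geqslant 4$ and every $q$; the residual small cases are where the real work lies, and you defer that work back to \cite{MSW} anyway).

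Your closing remark contains a misreading of the proposition's role. There is nothing ``new here'' about derangements: Proposition~\ref{p:msw} is a verbatim restatement of part of \cite[Theorem~2.1]{MSW} and says nothing about whether $C$ and $D$ consist of derangements. The derangement analysis is carried out separately, in the proof of Theorem~\ref{t:psl}, by showing (via \cite{GPPS} and inspection of maximal subgroups) that for the relevant point stabilisers $H$ the specific elements $x$ and $y$ of the given orders cannot lie in $H$.
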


Thompson's conjecture for the linear groups ${\rm L}_n(q)$ was first proved by Lev in \cite{Lev}, where he established several additional results on conjugacy classes and their products. We will use Proposition \ref{p:lev} below, which is a special case of \cite[Theorem 1]{Lev}. In order to state this result, we need to introduce some notation.

Suppose $n \geqs 3$ and $n_1, \ldots, n_k$ are positive integers such that $n = 2+\sum_{i}n_i$. For a positive integer $m$ and scalar $\l \in \mathbb{F}_q^{\times}$, let
\[
J_m(\l) = \left( \begin{array}{ccccc}
\l & 1 & & &  \\
& \l & 1 & &  \\
& & \ddots & \ddots &  \\
& & &  \l & 1 \\
& & & & \l
\end{array}\right) \in {\rm GL}_m(q)
\]
be a standard $m \times m$ Jordan block with eigenvalue $\l$ (for $\l = 1$ we will often write $J_m$, rather than $J_m(1)$). Then for an irreducible matrix $A \in {\rm GL}_2(q)$, let  
\[
x = (A, J_{n_1}(\l_1), \ldots, J_{n_k}(\l_k)) \in {\rm PGL}_n(q)
\]
denote the image of the block-diagonal matrix ${\rm diag}(A, J_{n_1}(\l_1), \ldots, J_{n_k}(\l_k)) \in {\rm GL}_n(q)$. Note that $x \in {\rm L}_n(q)$ if $\prod_i \l_i^{n_i} = \det(A)^{-1}$.

\begin{prop}\label{p:lev}
Let $G = {\rm L}_n(q)$ with $n \geqs 3$ and set $C = x^G$, where $x \in G$ is defined
\[
x = \left\{ \begin{array}{ll}
(J_n) & \mbox{if $q=2$} \\
(A, J_{n_1}(\l_1), \ldots, J_{n_k}(\l_k)) & \mbox{if $q \geqs 3$,}
\end{array}\right.
\]
and the $\l_i$ are distinct elements of $\mathbb{F}_{q}^{\times}$. Then $G = C^2$.
\end{prop}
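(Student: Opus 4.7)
Since the statement is presented as a special case of \cite[Theorem 1]{Lev}, my plan is to follow Lev's strategy, which proceeds by an explicit matrix factorisation rather than through the Frobenius character sum in Lemma \ref{l:frob}. The goal is to show that every $g \in G$ can be written as $g = x_1 x_2$ with $x_1, x_2 \in C$. I would first lift the problem from $G = \mathrm{L}_n(q)$ to $\mathrm{SL}_n(q)$: choose a representative $\tilde{x} \in \mathrm{SL}_n(q)$ of $x$ and show that every $\tilde{g} \in \mathrm{SL}_n(q)$ admits a factorisation $\tilde{g} = \tilde{x}_1 \tilde{x}_2$ with each $\tilde{x}_i$ a $\mathrm{GL}_n(q)$-conjugate of $\tilde{x}$. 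Since $\det(\tilde{x}_1 \tilde{x}_2) = \det(\tilde{x})^2$, the determinant is automatically compatible, and projecting to $\mathrm{L}_n(q)$ then gives the desired factorisation.

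For $q \geqs 3$, the leverage is the block structure of $x$: an irreducible $2 \times 2$ block $A$ together with Jordan blocks $J_{n_i}(\lambda_i)$ having pairwise distinct eigenvalues. I would proceed by induction on $n$, using a generalised eigenspace decomposition compatible with the block structure of $\tilde{x}_1 \tilde{x}_2$. The distinct-eigenvalue condition ensures that the block decomposition is rigid, so the irreducible $2 \times 2$ piece can be handled using the very flexible theory of products of $2 \times 2$ matrices with prescribed trace and determinant, and the remaining matching of Jordan blocks reduces to a smaller instance of the same problem. The role of the irreducible block $A$ is precisely to absorb the arithmetic constraint ($\det \tilde{g}$) which the scalar blocks alone cannot balance.

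For $q = 2$ with $x = (J_n)$ a regular unipotent element, eigenvalue separation is unavailable and one must instead exploit the fact that $J_n$ is a cyclic (non-derogatory) matrix, whose centraliser in $\mathrm{GL}_n(2)$ has the minimum possible order. The relevant input is a theorem of Sourour, refined in \cite{Lev}, which states that every element of $\mathrm{GL}_n(q)$ can be written as a product of two cyclic matrices with prescribed compatible spectra; applied with all eigenvalues equal to $1$, this realises each $g \in \mathrm{SL}_n(2) = \mathrm{L}_n(2)$ as a product of two conjugates of $J_n$. The main obstacle I anticipate lies in this $q=2$ case: the factorisation must be controlled purely through the nilpotent Jordan structure, and one must either invoke Sourour-type theorems as a black box or carry out an explicit induction that peels off an invariant line from $\tilde{g}$ while preserving its compatibility with the Jordan shape of $J_n$.
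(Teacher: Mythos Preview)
Your proposal misses the actual structure of the paper's argument and, in doing so, takes on far more work than is needed.  The paper does not reprove Lev's factorisation theorem: it simply cites \cite[Theorem~1]{Lev}, which gives $G = CC^{-1}$ for the classes in question, and then observes that $C = C^{-1}$ by \cite[Theorem~1]{Newman}, the point being precisely that the presence of the irreducible $2\times 2$ block $A$ forces the class to be real.  For $q=2$ the class of regular unipotents is again covered directly by \cite[Theorem~1]{Lev} (see also \cite[p.~1243]{Lev}), so the whole proof is two citations.  The reality step via Newman is the key idea you do not mention at all.

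Your direct inductive scheme for $q\geqs 3$ also has a genuine gap.  You propose to decompose an \emph{arbitrary} $\tilde g\in{\rm SL}_n(q)$ along a ``generalised eigenspace decomposition compatible with the block structure of $\tilde x_1\tilde x_2$'', but $\tilde g$ is given and $\tilde x_1,\tilde x_2$ are what you are trying to construct; there is no reason $\tilde g$ admits any invariant subspace matching the fixed block shape of $\tilde x$, so the induction as stated does not get off the ground.  There is a second, more subtle gap: you factor $\tilde g$ as a product of ${\rm GL}_n(q)$-conjugates of $\tilde x$ and then ``project to ${\rm L}_n(q)$'', but a ${\rm GL}_n(q)$-class can split into several ${\rm SL}_n(q)$-classes, so the images need not lie in $x^G$.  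Controlling this splitting is exactly the sort of thing the irreducible block $A$ and the Newman reference are there to handle.  Your $q=2$ plan via Sourour-type results is closer to what Lev actually does, but in the paper this is absorbed into the single citation of \cite[Theorem~1]{Lev} rather than reproved.
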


\begin{proof}
If $q \geqs 3$ then \cite[Theorem 1]{Lev} implies that $G = CC^{-1}$ and the result follows since $C = C^{-1}$ by \cite[Theorem 1]{Newman} (recall that $A \in {\rm GL}_2(q)$ is irreducible). Similarly, if $q=2$ then $C$ is the unique conjugacy class of cyclic matrices in $G$ all of whose eigenvalues are contained in $\mathbb{F}_2$ and once again the result follows from \cite[Theorem 1]{Lev} (also see \cite[p.1243]{Lev}).
\end{proof}

The following elementary observation will also be useful.

\begin{lem}\label{l:easy2}
Let $G$ be a finite permutation group with $\delta(G) > 1/2$. Then $G = \Delta(G)^2$.
\end{lem}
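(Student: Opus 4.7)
The plan is to deduce this from a standard pigeonhole argument based on the fact that $\Delta(G)$ is inverse-closed. Indeed, an element $x \in G$ has no fixed points on $\Omega$ if and only if $x^{-1}$ does, so $\Delta(G) = \Delta(G)^{-1}$.

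Fix an arbitrary element $g \in G$; I need to produce $x,y \in \Delta(G)$ with $g = xy$. Writing $y = x^{-1}g$, this is equivalent to finding some $x$ lying in the intersection
\[
\Delta(G) \cap g\Delta(G)^{-1} = \Delta(G) \cap g\Delta(G).
\]
Both $\Delta(G)$ and its left translate $g\Delta(G)$ are subsets of $G$ of the same size, and by hypothesis each has cardinality strictly greater than $|G|/2$. The elementary inequality $|A \cap B| \geqs |A| + |B| - |G|$ for subsets $A, B \subseteq G$ then forces this intersection to be non-empty. Any element $x$ of the intersection yields the required factorisation $g = x \cdot (x^{-1}g) \in \Delta(G)^2$.

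There is no real obstacle; the argument is essentially a one-line pigeonhole, and the only two ingredients used are that derangements form an inverse-closed normal subset of $G$ and that $|\Delta(G)| > |G|/2$.
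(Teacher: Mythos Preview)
The proposal is correct and is essentially identical to the paper's proof: both use that $\Delta(G)$ is inverse-closed and apply the pigeonhole observation that two subsets of $G$ each of size exceeding $|G|/2$ must intersect. The paper phrases this via the contrapositive (if $x \notin \Delta(G)^2$ then $x\Delta(G)^{-1}$ and $\Delta(G)$ are disjoint, forcing $|\Delta(G)| \leqs |G|/2$), but the content is the same.
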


\begin{proof}
If $x \not\in \Delta(G)^2$, then $x\Delta(G)^{-1} \cap \Delta(G)$ is empty and thus $|\Delta(G)| \leqs |G|/2$, which is a contradiction.
\end{proof}

We are now ready to complete the proof of Theorem \ref{t:psl} (recall that this is stated as Theorem \ref{t:main_new} in Section \ref{s:intro}).

\begin{proof}[Proof of Theorem \ref{t:psl}]
In view of Lemma \ref{l:n3}, we may assume $n \geqs 4$. In addition, we may assume $H \ne P_1,P_{n-1}$ is a maximal subgroup of $G$. The groups ${\rm L}_6(2)$ and ${\rm L}_7(2)$ can be handled using {\sc Magma} and in both cases we check that $G = C^2$ for some conjugacy class $C$ of derangements. So for the remainder, we may assume $(n,q) \ne (6,2), (7,2)$. 

Write $q=p^f$, where $p$ is a prime. Recall that if $e \geqs 2$ is an integer, then 
a prime divisor $r$ of $q^e-1$ is a \emph{primitive prime divisor} if $q^i-1$ is indivisible by $r$ for all $i<e$. By a theorem of Zsigmondy \cite{Zsig}, such a divisor exists unless $(e,q) = (6,2)$, or if $e=2$ and $q$ is a Mersenne prime. Define the conjugacy classes $C = x^G$ and $D = y^G$ as in Proposition \ref{p:msw} and set 
\[
m = \min\{|x|,|y|\}.
\]
Since $(n,q) \ne (6,2), (7,2)$ it follows that $|x|$ and $|y|$ are divisible by primitive prime divisors of $p^{fn}-1$ and $p^{f(n-1)}-1$, respectively, so the maximal overgroups of $x$ and $y$ are described in \cite{GPPS} and we can work through the possibilities arising in \cite[Examples 2.1-2.9]{GPPS}. In doing so, it will be convenient to adopt the standard notation $\C_1 \cup \cdots \cup \C_8 \cup \mathcal{S}$ for the nine collections of maximal subgroups of $G$ arising in Aschbacher's subgroup structure theorem \cite{Asch}, appealing to \cite{KL} for a detailed description of the subgroups arising in each collection. We will often refer to the \emph{type} of $H$, which gives a rough description of its structure (our usage is consistent with \cite[p.58]{KL}). Let $V$ be the natural module for $G$.

First assume $H \in \mathcal{C}_1$, so $H = P_k$ is a maximal parabolic subgroup with $2 \leqs k \leqs n-2$ and we may identify $\O$ with the set of $k$-dimensional subspaces of $V$. Since $x$ acts irreducibly on $V$ we have $x \in \Delta(G)$. Similarly, $y$ fixes a decomposition $V = U \oplus W$, where $U$ is $1$-dimensional and $y$ acts irreducibly on $W$. This means that $U$ and $W$ are the only proper nonzero subspaces of $V$ fixed by $y$, so $y$ is also a derangement on $\O$ and we conclude by applying Proposition \ref{p:msw}.

Next suppose $H \in \mathcal{C}_2$. By \cite{GPPS}, either $x$ and $y$ are both derangements, or $H$ is of type ${\rm GL}_1(q) \wr S_n$ with $q \geqs 5$ and $(n,q) \ne (4,5)$ (see \cite[Example 2.3]{GPPS}, \cite[Table 3.5.A]{KL} and \cite[Table 8.8]{BHR}). Since $x$ and $y$ have at most $2$ composition factors on $V$, containment in (a conjugate of) $H$ implies that $m \leqs n^2(q-1)/4$ (see \cite[Remark 5.1(i)]{BGK}) but it is easy to check that $m > n^2(q-1)/4$, so $x$ and $y$ are derangements as required. 

Now assume $H \in \C_5$ is a subfield subgroup of type ${\rm GL}_n(q_0)$, where $q=q_0^k$ with $k$ a prime divisor of $f$. We have $H \leqs {\rm PGL}_n(q_0)$ and it is easy to see that $|{\rm PGL}_n(q_0)|$ is not divisible by a primitive prime divisor of $p^{f(n-\a)}-1$ for $\a=0$ or $1$, whence $x$ and $y$ are derangements and the result follows.

Next assume $H \in \C_6$. Then by inspecting \cite[Example 2.5]{GPPS} and \cite[Proposition 4.6.6]{KL} we deduce that $y$ is a derangement, and that $x$ has a fixed point only if $n = 2^k$ and $q =p \equiv 1 \imod{4}$ for some $k \geqs 2$. Since $n/2$ is even, we note that $|x| = (q^{n/2}+1)/(2,q-1)$ is odd. But $H \leqs 2^{2k}.{\rm Sp}_{2k}(2)$ and $|{\rm Sp}_{2k}(2)|$ is indivisible by $|x|$, so $x$ must also be a derangement and this completes the argument in this case.

By the main theorem of \cite{GPPS}, both $x$ and $y$ are derangements if $H$ is one of the tensor product subgroups in $\C_4 \cup \C_7$, so in order to complete the proof we may assume 
\[
H \in \C_3 \cup \C_8 \cup \mathcal{S}.
\]

Suppose $H \in \C_3$, in which case $H$ is a field extension subgroup of type ${\rm GL}_{n/k}(q^k)$ for some prime divisor $k$ of $n$. First assume $q \geqs 3$ and set $E = z^G$, where $z = (A,J_{n-2})$ and $A \in {\rm SL}_2(q)$ has order $q+1$. Then $G = E^2$ by Proposition \ref{p:lev} and we note that $z^{q+1}$ has Jordan form $(J_{n-2},J_1^2)$. But there are no such elements in $H$, so $z^{q+1}$, and hence $z$ itself, is a derangement and the result follows. Similarly, if $q=2$ then $G = E^2$ for the class $E$ of regular unipotent elements (see Proposition \ref{p:lev}) and once again the result follows since $E \cap H$ is empty.

Next assume $H \in \C_8$. There are several cases to work through:

\vspace{1mm}

\begin{itemize}\addtolength{\itemsep}{0.2\baselineskip}
\item[{\rm (a)}] $H$ is of type ${\rm U}_n(q_0)$ with $q = q_0^2$;
\item[{\rm (b)}] $H$ is of type ${\rm O}_n^{\e}(q)$ with $q$ odd;
\item[{\rm (c)}] $H$ is of type ${\rm Sp}_n(q)$ with $n$ even and $q \geqs 2$.
\end{itemize}

\vspace{1mm}

First consider case (a), where we have $H \leqs {\rm PGU}_n(q_0)$. We claim that $x$ and $y$ are derangements, where $x$ and $y$ are defined as in Proposition \ref{p:msw}. To see this, first observe that every semisimple element in $H$ has order at most $\a = q^{(n-1)/2} +(-1)^n$ by \cite[Lemma 2.15]{GMPS}. If $n$ is odd, then $m > \a$ and so the claim holds. Now assume $n$ is even, in which case $|y|>\a$ and thus $y$ is a derangement. And since $|x|$ is divisible by a primitive prime divisor of $p^{fn}-1$, but $|H|$ is not, we conclude that $x$ is also a derangement and the claim follows.

Finally, let us consider cases (b) and (c). Suppose $q=2$ and $H = {\rm Sp}_n(2)$, so $n \geqs 8$ is even. If $n \geqs 30$ then Lemma \ref{l:lst_new} gives $G = \{1\} \cup CD$, where $C$ and $D$ are both conjugacy classes of elements of order at least $2^{n-3}-1$ (see \eqref{e:zk}). And since $|z| \leqs 2^{n/2+1}$ for all $z \in H$ (see \cite[Theorem 2.16]{GMPS}), we deduce that $C$ and $D$ comprise derangements and the result follows. For $8 \leqs n \leqs 28$ we can use {\sc Magma} to determine all the conjugacy classes in $G$ and $H$, which allows us to show that 
\[
\delta(G) \geqs \frac{|\{ x \in G \,:\, \mbox{$|x| \not\in \omega(H)$}\}|}{|G|} > \frac{1}{2},
\]
where $\omega(H) = \{ |x| \,:\, x \in H\}$ is the spectrum of $H$. Therefore, $G = \Delta(G)^2$ by Lemma \ref{l:easy2}.

Now assume $q \geqs 3$ and write $\mathbb{F}_q^{\times} = \la \l \ra$. Consider the conjugacy class $E = z^G$, where $z \in G$ is defined as follows:
\[
z = \left\{\begin{array}{ll}
(A,J_1(\l^{-1}),J_{n-3}(1)) & \mbox{if $H$ is of type ${\rm O}_n(q)$ or ${\rm Sp}_n(q)$} \\
(B,J_{n-2}(1)) & \mbox{if $H$ is of type ${\rm O}_n^{\pm}(q)$.}
\end{array}\right.
\]
Here $A,B \in {\rm GL}_2(q)$ are irreducible, with $\det(A) = \l$ and $\det(B)=1$. By Proposition \ref{p:lev} we have $G = E^2$ and we claim that $z$ is a derangement.

To see this, first assume $H$ is of type ${\rm O}_n(q)$, so $n$ is odd. Now $z^{q^2-1}$ has Jordan form $(J_{n-3},J_1^3)$, but there are no such unipotent elements in $H$ since all even size unipotent Jordan blocks must occur with an even multiplicity (see \cite[Theorem 3.1(ii)]{LS_book}, for example). Therefore $z$ is a derangement and the result follows. Similarly, if $H$ is of type ${\rm Sp}_n(q)$ with $q \geqs 3$, then $z$ is a derangement since odd size Jordan blocks have even multiplicity in the Jordan form of any unipotent element of $H$ (see \cite[Theorem 3.1(ii)]{LS_book} and \cite[Lemma 6.2]{LS_book}). Finally, if $n$ is even and $H$ is of type ${\rm O}_{n}^{\e}(q)$, then $z$ is a derangement because $z^{q^2-1}$ has Jordan form $(J_{n-2},J_1^2)$, which is not compatible with the form of any unipotent element in $H$ since the even-size Jordan block $J_{n-2}$ has multiplicity $1$.

To complete the proof, we may assume $H \in \mathcal{S}$, so $H$ is almost simple with socle $H_0$. We claim that the elements $x$ and $y$ defined in Proposition \ref{p:msw} are derangements. Recall that $m = \min\{|x|,|y|\}$. 

For the groups with $4 \leqs n \leqs 12$, we can read off the possibilities for $H$ by inspecting the relevant tables in \cite[Chapter 8]{BHR} and it is straightforward to show that $H$ does not contain any elements of order $|x|$ or $|y|$. For example, if $n=6$ then $m = (q^3+1)/(2,q-1)$ and by inspecting \cite[Table 8.25]{BHR} we see that either
\[
H_0 \in \{ {\rm M}_{12}, A_7, {\rm L}_2(11), {\rm L}_3(4), {\rm U}_4(3) \},
\]
or $H_0 = {\rm L}_3(q)$ with $q$ odd. In the latter case, \cite[Theorem 2.16]{GMPS} gives $|z| \leqs q^2+q+1 < m$ for all $z \in {\rm Aut}(H_0)$, so $x$ and $y$ are derangements. Similarly, in the remaining cases one can check that $|z| \leqs 28$ for all $z \in {\rm Aut}(H_0)$, which reduces the problem to $q \in \{2,3\}$. Closer inspection of \cite[Table 8.25]{BHR} shows that $(q,H) = (3,{\rm M}_{12})$ is the only possibility and the result follows since $|z| \leqs 11< 14 =m$ for all $z \in H$. 

So for the remainder we may assume $n \geqs 13$. We now apply the main theorem of \cite{GPPS}, using the fact that $|x|$ and $|y|$ are divisible by primitive prime divisors of $q^n-1$ and $q^{n-1}-1$, respectively. We consider \cite[Examples 2.6-2.9]{GPPS} in turn. 

First observe that Ex. 2.6(b,c) and Ex. 2.8 do not arise since 
$n \geqs 13$. In Ex. 2.6(a), $H_0 = A_d$ is an alternating group and $V$ is the fully deleted permutation module over $\mathbb{F}_p$. However, this representation embeds $A_d$ in a symplectic or orthogonal group, so it does not arise. The relevant sporadic groups occurring in Ex. 2.7, as well as the cases in \cite[Table 7]{GPPS}, can all be eliminated by considering element orders, so it just remains to handle the cases recorded in \cite[Table 8]{GPPS}. Here $H_0$ is a simple classical group over $\mathbb{F}_t$, where $(t,q) = 1$, and these possibilities can also be eliminated by considering element orders. For example, if $H_0 = {\rm PSp}_d(t)$ with $t$ odd, then $n \geqs (t^{d/2}-1)/2$ and \cite[Theorem 2.16]{GMPS} gives $|z| \leqs (t^{d/2}+1)/(t-1)$ for all $z \in H$. But we have $m \geqs 2^{(n-1)/2}+1$ for all $q$, so $|z| < m$ and we conclude that $x$ and $y$ are derangements. Similarly, if $H_0 = {\rm L}_2(t)$ and $n = (t-1)/2$, then $t \geqs 27$ since $n \geqs 13$, and we have $|z| \leqs t+1$ for all $z \in H$. The result now follows since $m \geqs 2^{(n-1)/2}+1 > t+1$.
\end{proof}

\subsection{Primitive groups}\label{ss:prim_width}

Let $G \leqs {\rm Sym}(\O)$ be a finite primitive permutation group with socle $N$ and recall that the possibilities for $G$ and $N$ are described by the O'Nan-Scott theorem. Here our goal is to prove Theorem \ref{t:width_gen}, which states that $N = \Delta(N)^2$ if $|\O| \geqs 3$, under the assumption that Conjecture \ref{c:lst} holds. 

\begin{proof}[Proof of Theorem \ref{t:width_gen}]
We consider the various possibilities for $G$ in turn. If $G$ is almost simple, then $N$ is simple and transitive, so $N = \Delta(N)^2$ since we are assuming Conjecture \ref{c:lst} holds. If $G$ is an affine group, or a twisted wreath product, then $N$ is regular and we deduce that $\delta(N) = 1-|N|^{-1}$. So if $|N|>2$ then $\delta(N) > 1/2$ and thus $N = \Delta(N)^2$ by Lemma \ref{l:easy2}. And if $|N| = 2$ then $G = N = S_2$, $\O = \{1,2\}$, $\Delta(N) = \{(1,2)\}$ and $\Delta(N)^2 = \{1\} \ne N$, which explains why we assume $|\O| \geqs 3$ in Theorem \ref{t:width_gen}.

If $G$ is a diagonal type group, then our proof of Theorem \ref{t:main3}(i) at the end of Section \ref{s:prop} shows that $\delta(N) \geqs 2/3$ and thus $N = \Delta(N)^2$ by Lemma \ref{l:easy2}. Finally, let us assume $G \leqs L \wr S_b$ is a product type group, where $b \geqs 2$ and $L \leqs {\rm Sym}(\Gamma)$ is a primitive almost simple or diagonal type group with socle $S = T^a$. Then $N = S^b = T^{ab}$ and $G$ acts on $\O = \Gamma^b$ with the product action. Now 
\[
\Delta(N) = \{ (x_1, \ldots, x_b) \in S^b \,:\, \mbox{at least one $x_i$ is a derangement on $\Gamma$} \}
\]
and thus $\Delta(N)$ contains $\Delta(S) \times S^{b-1}$. The result now follows since we have already shown that $\Delta(S)^2 = S$.
\end{proof}

\section{Soluble stabilisers}\label{ss:sol}

Let $G$ be a (non-abelian) finite simple group and let $\mathcal{S}$ be the set of soluble maximal subgroups of $G$ (note that $\mathcal{S}$ may be empty). In addition, let $\mathcal{S}^{+}$ be the set of soluble subgroups of $G$ of the form $H = M \cap G$, where $M$ is a maximal subgroup of an almost simple group with socle $G$. Note that if 
$H = M \cap G \in \mathcal{S}^{+}$ then the solubility of ${\rm Out}(G)$ implies that $GM/G \cong M/H$ is soluble, and thus $M$ is also soluble. The subgroups in $\mathcal{S}$ and $\mathcal{S}^{+}$ were determined (up to conjugacy) by Li and Zhang in \cite{LZ} (in particular, we refer the reader to Tables 14-20 in \cite{LZ}).

The main result of this section is the following.

\begin{thm}\label{t:main_sol}
Let $G$ be a finite simple transitive group with soluble point stabiliser $H$.

\vspace{1mm}

\begin{itemize}\addtolength{\itemsep}{0.2\baselineskip}
\item[{\rm (i)}] If $H \in \mathcal{S}^{+}$, then $\delta(G) \geqs 89/325$, with equality if and only if $G = {}^2F_4(2)'$ and $H = 2^2.[2^8].S_3$.
\item[{\rm (ii)}] If $H \in \mathcal{S}$, then $G = \Delta(G)^2$.
\end{itemize}
\end{thm}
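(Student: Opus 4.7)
The plan is to work through the Li--Zhang classification \cite{LZ} of pairs $(G,H)$ with $G$ simple and $H \in \mathcal{S}^{+}$, splitting the analysis into four families: alternating groups, sporadic groups, exceptional groups of Lie type, and classical groups of Lie type. For sporadic groups, the conclusion of (i) is an immediate consequence of the stronger bound $\delta(G) \geqs 21/55$ stated in Remark \ref{r:main2}(a). For alternating groups, \cite{LZ} leaves only the family $G = A_p$ with $H = \mathrm{AGL}_1(p) \cap G$ once $n \geqs 17$; here the non-derangements of $G$ on $G/H$ are precisely the even permutations whose cycle-type matches that of some element of $H$, namely $(1^p)$, $(p)$, or $(d^{(p-1)/d},1)$ with $d \mid (p-1)/2$, so a direct counting argument (together with inspection of the finitely many residual low-degree cases) yields the bound comfortably.

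The bulk of the work lies in the Lie type cases. For exceptional groups the Li--Zhang list is finite, and we compute $\delta(G)$ in each case using either the \textsf{GAP} Character Table Library combined with fusion maps from $H$-classes to $G$-classes (exactly as in the proof of Proposition \ref{p:spor1}), or directly in \textsc{Magma} for the smallest examples; this identifies $({}^2F_4(2)', 2^2.[2^8].S_3)$ as the unique pair attaining $89/325$. For the classical groups, \cite{LZ} lists a bounded number of infinite families, primarily arising from normalisers of Singer tori, field extension structures on low-dimensional classical groups, and a few small-dimensional irreducible subgroups. For each such family we follow the strategy of Theorem \ref{t:psl}: using Zsigmondy's theorem, produce conjugacy classes in $G$ of semisimple elements whose orders do not divide $|H|$, yielding lower bounds on $\delta(G)$ that improve with the rank and with $q$, and then settle the finitely many residual small-parameter exceptions by direct character-table computation.

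For part (ii), Lemma \ref{l:easy2} immediately gives $G = \Delta(G)^2$ whenever $\delta(G) > 1/2$. Since (i) establishes only $\delta(G) \geqs 89/325 < 1/2$, there remains a finite list of pairs $(G,H)$ with $89/325 \leqs \delta(G) \leqs 1/2$ to be dealt with individually. For these we will either exhibit conjugacy classes $C, D$ of derangements with $G = \{1\} \cup CD$ via the Frobenius formula (Lemma \ref{l:frob}) applied to the character table of $G$, or verify the conclusion directly in \textsc{Magma}. Observe that many candidate pairs are already covered by Theorem \ref{t:rankone} (rank one groups) and Theorem \ref{t:psl} (linear groups with $H \not\leqs P_1, P_{n-1}$), shrinking the residual list substantially.

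The principal obstacle will be uniformity across the infinite families of classical groups in part (i): producing bounds sharp enough to beat $89/325$ while cleanly separating off the small-parameter exceptions demands careful bookkeeping against the Li--Zhang tables together with the main theorem of \cite{GPPS} to control the maximal overgroups of the chosen semisimple elements. A subsidiary difficulty in (ii) is that the almost-simple enlargement $M$ implicit in $H \in \mathcal{S}^{+}$ may force $H$ to be non-maximal in $G$, so the residual case list must be generated from \cite{LZ} together with the subgroups of soluble maximal subgroups of almost simple overgroups, rather than from the maximal subgroups of $G$ alone.
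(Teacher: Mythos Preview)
Your overall architecture matches the paper's---split by Li--Zhang into alternating, sporadic, exceptional, classical---but there is a factual error that undermines the exceptional case and, through it, both parts of the theorem.

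You write that ``for exceptional groups the Li--Zhang list is finite'' and propose to compute $\delta(G)$ case by case via \textsf{GAP} or \textsc{Magma}. This is false: \cite[Table 20]{LZ} contains infinite families. Already for the rank one groups ${}^2B_2(q)$ and ${}^2G_2(q)$ the Borel subgroup is soluble for all $q$, and for every exceptional $G$ there are soluble maximal torus normalisers $N_G(T)$ in $\mathcal{S}^{+}$ for infinitely many $q$ (for instance $T = \Phi_{30}$ or $\Phi_{15}$ in $E_8(q)$, or $T = (q^4-q^2+1)$ in ${}^3D_4(q)$). These cannot be handled by table lookup. The paper treats them with genuinely uniform arguments: for each torus type one identifies several centraliser orders $|C_G(x)|$ such that $(|H|,|C_G(x)|)$ is smaller than the Coxeter number $h$, invokes the bound $|x| \geqs h$ for regular semisimple $x$ (Lemma \ref{l:cox}) to conclude that all such $x$ are derangements, and then reads off the class counts from L\"ubeck's tables \cite{Lub} to bound $\delta(G)$ from below as a rational function of $q$. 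This is the missing engine in your plan for part (i).

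The same error infects your plan for part (ii). You assert that after invoking $\delta(G) > 1/2$ and peeling off rank one and linear groups, only finitely many pairs remain. But, for example, $G = {}^3D_4(q)$ with $H = (q^4-q^2+1).4$ is an infinite family with $H \in \mathcal{S}$ for which the paper's lower bound on $\delta(G)$ does not exceed $1/2$; likewise several torus-normaliser cases in $E_6^{\e}(q)$ and $E_8(q)$, and the type ${\rm GL}_1^{\e}(q) \wr S_3$ family in ${\rm L}_3^{\e}(q)$ with $\delta(G) \geqs (e-1)/3e$. The paper resolves these not by computation but by importing structural results of Guralnick--Malle \cite{GM,GM2}: for suitable regular semisimple classes $C,D$ (of specified centraliser orders) one has $G = \{1\} \cup CD$, and the order arguments above show $C,D$ consist of derangements on $G/H$. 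Your proposal mentions the Frobenius formula, but you need to recognise that for these infinite families the input to that formula comes from the generic character tables (via \textsf{Chevie}) and from \cite{GM,GM2}, not from individual stored tables.
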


Note that part (i) establishes a slightly stronger version of Theorem \ref{t:main2} (see Remark \ref{r:main2}(b)), while part (ii) is Theorem \ref{t:main4}(ii). In Section \ref{ss:prim_sol}, we will use part (i) to prove Theorem \ref{t:main3}(ii) on primitive groups with soluble point stabilisers. In particular, in this section we will complete the proofs of Theorems \ref{t:main2}, \ref{t:main3} and \ref{t:main4}.

\begin{rem}
Our proof of part (ii) of Theorem \ref{t:main_sol} shows that Conjecture \ref{c:lst0} holds for every non-classical simple primitive group $G$ with soluble point stabilisers. That is to say, in every case we will exhibit classes $C,D$ of derangements such that $G = \{1\} \cup CD$. 
\end{rem}

\subsection{Sporadic groups}

We begin the proof of Theorem \ref{t:main_sol} by assuming $G$ is a sporadic group. Note that in the following result, $H$ is an arbitrary soluble subgroup.

\begin{prop}\label{p:spor_sol}
Let $G$ be a transitive finite simple sporadic group with soluble point stabiliser $H$. Then $G = \Delta(G)^2$ and $\delta(G) \geqs 21/55$, with equality if and only if $G = {\rm M}_{11}$ and $H =  {\rm U}_3(2).2$ or $2.S_4$.
\end{prop}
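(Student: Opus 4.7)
The plan is to combine a simple monotonicity principle with explicit computations in the \textsf{GAP} Character Table Library \cite{GAPCTL}, reducing the statement to a finite check across the sporadic simple groups and their maximal subgroups.

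The preliminary observation is that both assertions are monotone in the point stabiliser: if $H \leqs K \leqs G$, then any element of $G$ conjugate to a member of $H$ is also conjugate to a member of $K$, so $\Delta(G,K) \subseteq \Delta(G,H)$. This yields $\delta(G,H) \geqs \delta(G,K)$ and $\Delta(G,H)^2 \supseteq \Delta(G,K)^2$. It is therefore enough, for each proper soluble $H \leqs G$, to produce an overgroup $K$ of $H$ such that $\delta(G,K) \geqs 21/55$ and $G = \Delta(G,K)^2$.

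The width assertion falls out of Proposition~\ref{p:spor_width}. That result produces conjugacy classes $C, D \subseteq \Delta(G,M)$ of derangements with $G = CD$ for every maximal subgroup $M$ of a sporadic $G$. Since every proper soluble $H$ is contained in some maximal $M$, monotonicity gives $\Delta(G,H)^2 \supseteq \Delta(G,M)^2 \supseteq CD = G$, so $G = \Delta(G,H)^2$. No further work is needed for this half of the proposition.

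For the bound $\delta(G) \geqs 21/55$, I would first enumerate the soluble maximal subgroups $M$ of each sporadic $G$ using the Li--Zhang classification in \cite{LZ}, and compute $\delta(G,M)$ exactly from the permutation character $1_M^G$ using the character tables and class fusion maps available in \textsf{GAP}, exactly as in the proof of Proposition~\ref{p:spor1}. This finite computation identifies the minimum value as $21/55$, attained precisely for $G = {\rm M}_{11}$ with $H = {\rm U}_3(2).2$ (equivalently, ${\rm M}_9{:}2$) or $H = 2.S_4$.

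The remaining subtlety is that a soluble $H$ need not be contained in a \emph{soluble} maximal subgroup of $G$. By Table~\ref{tab:spor}, however, this only matters when $\a(G) < 21/55$, since otherwise every maximal $M \supseteq H$ already satisfies $\delta(G,M) \geqs \a(G) \geqs 21/55$. The exceptional groups are the five Mathieu groups together with ${\rm J}_3$, ${\rm McL}$, ${\rm HS}$, ${\rm Co}_3$, ${\rm O'N}$, ${\rm Th}$ and ${\rm Ly}$. For each of these $G$ I would inspect the short list of maximal subgroups $M$ with $\delta(G,M) < 21/55$, and verify from the Atlas (or directly in \textsc{Magma}) that every soluble $H \leqs M$ is also contained in some other maximal subgroup $M'$ of $G$ with $\delta(G,M') \geqs 21/55$. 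I expect this bookkeeping, rather than any theoretical obstacle, to be the main work: the soluble subgroups in question are small relative to $M$ and each exceptional $G$ involves only a handful of bad maximal subgroups, so the verification is routine once set up.
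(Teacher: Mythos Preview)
Your argument for $G = \Delta(G)^2$ via Proposition~\ref{p:spor_width} and monotonicity is exactly the paper's approach.

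For the bound on $\delta(G)$, you and the paper agree on the reduction to the twelve sporadic groups with $\a(G) < 21/55$, but then diverge. The paper simply descends in the subgroup lattice: for each insoluble maximal $M$ with $\delta(G,M) \leqs 21/55$, it computes $\delta(G,K)$ directly for every maximal $K < M$ and verifies $\delta(G,K) > 21/55$ in all cases (for ${\rm Ly}$ and ${\rm Th}$ this is done via matrix representations and the crude spectrum bound $\delta(G,K) \geqs |\{x \in G : |x| \notin \omega(K)\}|/|G|$). This is mechanical once set up and immediately yields both the inequality and the strict equality characterisation.

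Your alternative --- finding, for every soluble $H \leqs M$, a second maximal overgroup $M'$ with $\delta(G,M') \geqs 21/55$ --- is riskier on two counts. First, there is no a priori reason such an $M'$ exists: a soluble subgroup can perfectly well sit inside a unique conjugacy class of maximal subgroups of $G$, and for groups like ${\rm Ly}$ (with $M = 2.A_{11}$) or ${\rm Th}$ (with $M = 2^{1+8}.A_9$) verifying otherwise for every maximal soluble subgroup of $M$ is not obviously easier than just computing $\delta(G,K)$ at the next level. Second, even when your $M'$ exists, monotonicity only gives $\delta(G,H) \geqs \delta(G,M') \geqs 21/55$, not strict inequality, so the ``only if'' direction of the equality claim is not yet secured; you would need $\delta(G,M') > 21/55$ in every case, which is an extra check. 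The paper's descent sidesteps both issues in one stroke, since the computed values at depth two are strictly above $21/55$.
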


\begin{proof}
The first claim $G = \Delta(G)^2$ follows from Proposition \ref{p:spor_width}. More precisely, either $G = C^2$ for some conjugacy class $C$ of derangements, or $(G,H) = ({\rm M}_{23}, 2^4.(15{:}4))$ and 
\[
G = \{1\} \cup C^2 = \{1\} \cup D^2 = CD,
\]
where $C$ and $D$ are the two conjugacy classes in $G$ of elements of order $23$.

Now let us turn to the lower bound on $\delta(G)$. It will be convenient to define
\begin{equation}\label{e:alphas}
\a_{\rm s}(G) = \min\{ \delta(G,H) \,:\, \mbox{$H < G$ is soluble}\}.
\end{equation}
Since $\a_s(G) \geqs \a(G)$, we only need to consider the groups with $\a(G) \leqs 21/55$. By inspecting Table \ref{tab:spor}, it follows that $G \in \mathcal{A} \cup \mathcal{B}$, where
\begin{align*}
\mathcal{A} & = \{ {\rm M}_{11}, {\rm M}_{12}, {\rm M}_{22}, {\rm M}_{23}, {\rm M}_{24}, {\rm J}_3, {\rm McL}, {\rm HS}, {\rm Co}_3, {\rm O'N} \} \\
\mathcal{B} & = \{ {\rm Ly}, {\rm Th}\}
\end{align*}

For the groups in $\mathcal{A}$ we can use {\sc Magma} to construct $G$ as a primitive permutation group of minimal degree, together with a set of representatives of the conjugacy classes of elements and maximal subgroups of $G$. For each maximal subgroup $H$, it is then straightforward to determine the set of derangements on $G/H$ and we can read off $\delta(G,H)$. In particular, if $H$ is soluble, then we find that $\delta(G,H) \geqs 21/55$, with equality if and only if $G = {\rm M}_{11}$ and $H = {\rm U}_3(2).2$ or $2.S_4$. And if $H$ is insoluble and $\delta(G,H) \leqs 21/55$, then we construct a set of representatives of the conjugacy classes of maximal subgroups $K$ of $H$ and in every case one can check that $\delta(G,K) > 21/55$. 

Finally, let us assume $G = {\rm Ly}$ or ${\rm Th}$, and let $H$ be a maximal subgroup of $G$. First we use \textsf{GAP} (as in the proof of Proposition \ref{p:spor1}) to show that $\delta(G,H) \leqs 21/55$ if and only if $(G,H) = ({\rm Ly}, 2.A_{11})$ or $({\rm Th}, 2^{1+8}.A_9)$.

Suppose $G = {\rm Ly}$. Here we use the {\sc Magma} function \textsf{MatrixGroup} to obtain $G$ as a subgroup of ${\rm GL}_{111}(5)$ and we construct $H = 2.A_{11}$ via the function \textsf{MaximalSubgroups}. We then construct a set of representatives of the conjugacy classes of maximal subgroups $K$ of $H$, and in each case we check that 
\[
\delta(G,K) \geqs \frac{|\{ x \in G \,:\, |x| \not\in \omega(K)\}|}{|G|} > \frac{21}{55},
\]
where $\omega(K) = \{ |x| \,:\, x \in K\}$ is the spectrum of $K$. The case $G = {\rm Th}$ is entirely similar, working with a $248$-dimensional matrix representation of $G$ over $\mathbb{F}_2$.
\end{proof}

\begin{rem}\label{r:depth}
We can compute $\a_s(G)$ precisely for all of the sporadic groups $G$ recorded in Table \ref{tab:alphas}. To explain how we do this, let $d$ be a positive integer and let $\mathcal{M}_d$ be a set of representatives of the conjugacy classes in $G$ of subgroups $K$ such that
\[
K = K_d < K_{d-1} < \cdots < K_{1} < K_0 = G,
\]
where $K_i$ is maximal in $K_{i-1}$ for all $i$. Setting $\b_0(G) = 1$, we define
\begin{align*}
\b_d(G) & = \min\{ \b_{d-1}(G), \, \delta(G,H) \,:\, \mbox{$H \in \mathcal{M}_d$ is soluble}\} \\
\gamma_d(G) & = \min\{1, \, \delta(G,H) \,:\, \mbox{$H \in \mathcal{M}_d$ is insoluble}\}
\end{align*}
and we note that $\a_{\rm s}(G) = \b_d(G)$, where $d \geqs 1$ is minimal such that $\b_d(G) \leqs \gamma_d(G)$. This observation allows us to compute $\a_{\rm s}(G)$ for each of the groups in Table \ref{tab:alphas} by proceeding as in the proof of Proposition \ref{p:spor_sol}, working with a permutation representation of $G$ and repeatedly applying the function \textsf{MaximalSubgroups} to descend deeper in to the subgroup lattice of $G$. 
\end{rem}

{\scriptsize
\begin{table}
\[
\begin{array}{lllll} \hline
G & \a_{\rm s}(G) & & & \\ \hline
A_5 & 1/3 & & {\rm M}_{11} & 21/55 \\
A_6 & 2/5 & & {\rm M}_{12} & 244/495 \\
A_7 & 17/35 & & {\rm M}_{22} & 401/693 \\
A_8 & 17/35 & & {\rm M}_{23} & 982/1771 \\
A_9 & 227/560 & & {\rm M}_{24} & 87749/161920 \\
A_{10} & 7141/12600 & & {\rm J}_1 & 573/1463 \\
A_{11} & 47063/69300 & & {\rm J}_2 & 3251/6048 \\
A_{12} & 66541/103950 & & {\rm J}_3 & 1398703/2511648 \\
A_{13} & 48632009/64864800 & & {\rm HS} & 108805/177408 \\
A_{14} & 1319450477/1816214400 & & {\rm McL} & 10673/18711 \\
A_{15} & 1728445871/2421619200 & & {\rm He } & 130507/217600 \\
A_{16} & 541166748751/697426329600 & & {\rm Suz} & 1030209/1601600 \\
& & & {\rm Ru} & 1137154/1781325 \\
& & & {\rm O'N} & 7129127/11111485 \\
& & & {\rm Co}_3 & 54763537/82627776 \\
& & & {\rm Co}_2 & 78264622699/112814456832 \\ 
& & & {\rm Fi}_{22} & 18609741/25625600 \\ \hline
\end{array}
\]
\caption{The values of $\a_s(G)$ for some alternating and sporadic groups}
\label{tab:alphas}
\end{table}
}

\subsection{Alternating groups}

\begin{prop}\label{p:alt_sol}
Let $G = A_n$ be a transitive finite simple alternating group with soluble point stabiliser $H$. 

\vspace{1mm}

\begin{itemize}\addtolength{\itemsep}{0.2\baselineskip}
\item[{\rm (i)}] We have $G = \Delta(G)^2$.

\item[{\rm (ii)}] If $5 \leqs n \leqs 16$, then $\delta(G) \geqs 1/3$, with equality if and only if $(G,H) = (A_5,D_{10})$.

\item[{\rm (iii)}] If $n \geqs 17$ and $H \in \mathcal{S}^{+}$, then $\delta(G) \geqs 4531887936311/5230697472000$.
\end{itemize}
\end{prop}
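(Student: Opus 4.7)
The plan is to handle the three parts separately. Part (i) is immediate from Proposition \ref{p:alt_width}: for every simple transitive action of $A_n$ that result produces a conjugacy class $C$ of derangements with $G = \{1\} \cup C^2$, and since $\Delta(G)$ is closed under inverses we have $1 = cc^{-1} \in \Delta(G)^2$ for any $c \in C$, so $G = \Delta(G)^2$ with no solubility hypothesis required.

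For part (ii), I would proceed computationally as in Remark \ref{r:depth}, using \textsc{Magma}. For each $n$ with $5 \leqs n \leqs 16$, construct $G = A_n$ and iteratively descend through its maximal subgroup lattice, recording $\delta(G, H)$ whenever a soluble subgroup $H$ is reached. Since $\delta(G, K) \geqs \delta(G, H)$ whenever $K \leqs H$, one may safely prune at soluble nodes, so the minimum over soluble $H$ is attained among the branches explored. Reading off the minimum gives $1/3$, attained uniquely at $(A_5, D_{10})$, where $D_{10}$ meets every conjugacy class of $A_5$ except the twenty $3$-cycles.

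For part (iii), the Li--Zhang classification \cite{LZ} forces $n = p$ prime and $H = \mathrm{AGL}_1(p) \cap A_p$ of order $p(p-1)/2$. The non-derangements on $A_p/H$ are precisely those elements of $A_p$ whose cycle type appears in $H$: the $p$-cycles (from the $p-1$ translations) and the types $(d^{(p-1)/d}, 1)$ for each divisor $d > 1$ of $(p-1)/2$ (from the maps $x \mapsto ax + b$ with $a$ of multiplicative order $d$). Since the powers $c, c^2, \ldots, c^{p-1}$ of the translation $c \in H$ include representatives of both $A_p$-classes of $p$-cycles (these two classes correspond to square and non-square exponents modulo $p$), every $p$-cycle in $A_p$ is a non-derangement; the remaining cycle types $(d^{(p-1)/d}, 1)$ have repeated parts, so their $S_p$-classes do not split in $A_p$. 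Summing the contributions and dividing by $|A_p| = p!/2$ yields the explicit formula
\[
1 - \delta(A_p, H) = \frac{2}{p!} + \frac{2}{p} + \sum_{\substack{d \mid (p-1)/2 \\ d > 1}} \frac{2}{d^{(p-1)/d} \, ((p-1)/d)!},
\]
which at $p = 17$ evaluates to $1 - 4531887936311/5230697472000$.

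The main obstacle is then verifying that $p = 17$ achieves the minimum. The $2/p$ term decreases strictly in $p$; the extremal summand $d = (p-1)/2$ contributes $4/(p-1)^2$ (using $(p-1)/d = 2$), also decreasing; and for $1 < d < (p-1)/2$ with $d \mid (p-1)/2$ we have $(p-1)/d \geqs 4$ (it is even and strictly exceeds $2$), so each such term is at most $2/(2^4 \cdot 4!) = 1/192$. This yields the uniform bound
\[
1 - \delta(A_p, H) \leqs \frac{2}{p} + \frac{4}{(p-1)^2} + \frac{\tau((p-1)/2) - 2}{192},
\]
which is strictly smaller than $1 - 4531887936311/5230697472000 \approx 0.1336$ for all primes $p \geqs 19$; the handful of remaining small primes may be checked by direct evaluation of the formula above.
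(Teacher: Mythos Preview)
Parts (i) and (ii), and the explicit formula in part (iii), are correct and agree with the paper's argument.

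The gap is in the final estimate of part (iii). Your displayed bound
\[
1 - \delta(A_p,H) \leqs \frac{2}{p} + \frac{4}{(p-1)^2} + \frac{\tau((p-1)/2)-2}{192}
\]
is \emph{not} below $0.1336$ for all primes $p \geqs 19$, because $\tau((p-1)/2)$ is unbounded. Concretely, for $p = 2017$ one has $(p-1)/2 = 1008 = 2^4\cdot 3^2\cdot 7$ with $\tau(1008) = 30$, so the right-hand side exceeds $28/192 \approx 0.146$; and $p = 2521$ with $(p-1)/2 = 1260$, $\tau = 36$, is worse still. There are infinitely many such primes, so no finite check of ``remaining small primes'' can close the argument as stated.

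The paper repairs this differently: writing $f(r) = p!\big/\big(((p-1)/r)!\,r^{(p-1)/r}\big)$, it shows that $f(r)$ is maximised at $r = (p-1)/2$, where $f(r) = 2p!/(p-1)^2$, and then bounds the number of summands by $|\Lambda| \leqs 2\sqrt{(p-1)/2}$. This yields
\[
1 - \delta(A_p,H) \leqs \frac{2}{p!} + \frac{2}{p} + \frac{8\sqrt{(p-1)/2}}{(p-1)^2},
\]
which is below $0.03$ for $p > 100$; the primes $17 \leqs p < 100$ are checked directly. Your approach can also be rescued cheaply: for each even $k \geqs 4$ there is at most one divisor $d$ of $(p-1)/2$ with $(p-1)/d = k$, so the sum over $1 < d < (p-1)/2$ is bounded by the absolute constant $\sum_{k \geqs 4,\ k\ \text{even}} 2/(2^k k!) < 0.006$, independent of $p$. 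Either refinement gives the result, but the crude $\tau$-count does not.
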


\begin{proof}
Part (i) is a special case of Proposition \ref{p:alt_width}. More precisely, either $G = C^2$ for some conjugacy class $C$ of derangements, or
\[
(G,H) \in \{ (A_5,A_4), \, (A_5,S_3), \, (A_8, 2^4{:}(S_3\times S_3)) \}
\]
and $G = \{1\} \cup CD$, where $C$ and $D$ are the two classes of elements order $5$ (for $G = A_5$) or $7$ (for $G = A_8$). 

Part (ii) can be checked using {\sc Magma} and the approach described in  Remark \ref{r:depth}. In particular, we can compute $\a_{\rm s}(G)$ precisely for all $5 \leqs n \leqs 16$ (see \eqref{e:alphas} and Table \ref{tab:alphas}).

For the remainder, we may assume $n \geqs 17$ and $H \in \mathcal{S}^{+}$. 
Then the solubility of $H$ implies that $H$ acts primitively on $\{1, \ldots, n\}$, which in turn means that $n = p^d$ is a prime power and $H = {\rm AGL}_d(p) \cap G$. Moreover, since $n \geqs 17$, it follows that $d=1$ and $H = {\rm AGL}_1(p) \cap G = p{:}(p-1)/2$. 

Suppose $x \in H$ has order $r$. Then either $r=p$ and $x$ is a $p$-cycle, or $r$ divides $(p-1)/2$ and $x$ has cycle-type $(r^{(p-1)/r},1)$ as an element of $G$. (Note that $H$ contains a Sylow $p$-subgroup of $G$, so every $p$-cycle in $G$ has fixed points on $\O$.) So if $N$ denotes the number of elements in $G$ with fixed points on $\O$, then
\[
N = 1 + (p-1)! + \sum_{r \in \Lambda} \frac{p!}{((p-1)/r)!r^{(p-1)/r}},
\]
where $\Lambda$ is the set of divisors $r \geqs 2$ of $(p-1)/2$. For $17 \leqs p<100$, it is now entirely straightforward to check that the lower bound in part (iii) is satisfied (with equality if $p=17$), so we may assume $p>100$.

For $r \in \Lambda$, set 
\[
f(r) = \frac{p!}{((p-1)/r)!r^{(p-1)/r}}
\]
We claim that $f(r)$ is maximal when $r = (p-1)/2$. To see this, simply observe that $((p-1)/r)! \geqs 2$ and $r^{(p-1)/r}$ is decreasing as a function of $r$, so we have $r^{(p-1)/r} \geqs ((p-1)/2)^2$ and the claim follows. Therefore, 
\[
f(r) \leqs 2\left(\frac{p!}{(p-1)^2}\right)
\]
and using the crude bound $|\Lambda| \leqs 2\sqrt{(p-1)/2}$ we deduce that
\[
N \leqs 1+(p-1)! + 2\sqrt{(p-1)/2} \cdot 2\left(\frac{p!}{(p-1)^2}\right).
\]
In turn, this yields 
\[
\delta(G) \geqs \frac{2\left(\frac{p!}{2} - 1 - (p-1)! - 2\sqrt{(p-1)/2} \cdot \frac{2(p!)}{(p-1)^2}\right)}{p!} = 1-\frac{2}{p!} - \frac{2}{p} - \frac{8\sqrt{(p-1)/2}}{(p-1)^2}
\]
and we conclude that $\delta(G) > 0.97$ for all $p > 100$. 
\end{proof}

\subsection{Exceptional groups}\label{ss:exccc}

Next we assume $G$ is a simple exceptional group of Lie type. Our main result is the following, which establishes strong forms of Theorems \ref{t:main2} and \ref{t:main4}(ii) for exceptional groups. Note that the possibilities for $G$ and $H$ are recorded in \cite[Table 20]{LZ}.

\begin{prop}\label{p:ex_sol}
Let $G$ be a transitive finite simple exceptional group of Lie type with soluble point stabiliser $H$. 

\vspace{1mm}

\begin{itemize}\addtolength{\itemsep}{0.2\baselineskip}
\item[{\rm (i)}] If $H \in \mathcal{S}^{+}$, then $\delta(G) \geqs 89/325$, with equality if and only if $G = {}^2F_4(2)'$ and $H = 2^2.[2^8].S_3$.
\item[{\rm (ii)}] If $H \in \mathcal{S}$, then $G = \{1\} \cup CD$ for classes $C,D$ of derangements.
\end{itemize}
\end{prop}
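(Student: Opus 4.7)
The approach is guided by the Li--Zhang classification of soluble maximal subgroups of exceptional groups of Lie type, recorded in \cite[Table 20]{LZ}. The list has a predictable structure: apart from a handful of small-rank and small-field exceptions (including $G = {}^2F_4(2)'$, $G_2(3)$, $G_2(4)$, ${}^3D_4(2)$, and the Ree and Suzuki groups over small fields), every $H \in \mathcal{S}^{+}$ is (contained in) the normalizer $N_G(T)$ of a maximal torus of Coxeter or Singer type, with $|N_G(T)/T|$ a small cyclic group. I will exploit this dichotomy, treating the small exceptions computationally and the torus-normalizer families by counting and character-theoretic arguments.

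First I would dispose of the small groups using the character tables and fusion maps in the \textsf{GAP} Character Table Library, exactly as in the proofs of Propositions \ref{p:spor1} and \ref{p:spor_sol}. For each soluble subgroup $H$ appearing here, computing the permutation character $1^G_H$ yields $\delta(G,H)$ exactly. Direct calculation gives $\delta(G) = 89/325$ at $(G,H) = ({}^2F_4(2)', 2^2.[2^8].S_3)$ and $\delta(G) > 89/325$ in every other small case. Simultaneously, applying Lemma \ref{l:frob} to well-chosen pairs of classes of derangements (typically a Coxeter torus class paired with a second regular semisimple class of coprime order) verifies $G = \{1\} \cup CD$, handling part (ii) for the small cases; where $\delta(G) > 1/2$, Lemma \ref{l:easy2} already gives $G = \Delta(G)^2$ as a sanity check.

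For the generic families $H \leqslant N_G(T)$, the elements of $G$ with fixed points on $G/H$ are exactly those conjugate into $H$, so
\[
1 - \delta(G) = \frac{|\bigcup_{x \in G} H^x|}{|G|} \leqslant \frac{|H|^2}{|N_G(H)| \cdot |G|} \cdot |G| = \frac{|H|}{|N_G(H)|},
\]
and one checks using the standard factorisations of $|G|$ into cyclotomic pieces, together with a Zsigmondy-type estimate on $|T|$, that the right-hand side is far below $1 - 89/325$. For part (ii), I would take $C = x^G$ and $D = y^G$ with $x,y$ regular semisimple elements lying in two maximal tori $T_1, T_2$ whose $G$-conjugacy classes are incompatible with $H$ (so $x,y$ are automatic derangements), and then invoke the Guralnick--Malle results \cite{GM,GM2} on products of conjugacy classes to produce $G = \{1\} \cup CD$; for the rank-one twisted groups this is precisely the machinery already used in the proof of Theorem \ref{t:rankone}, and the higher-rank exceptional groups are covered by the analogous theorems in \cite{GM,GM2}.

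The main obstacle will be the uniform verification of part (ii) across the torus-normalizer families: the Frobenius-type character sums in \cite{GM,GM2} must be combined with a careful choice of two classes of derangements that are simultaneously (a) disjoint from every conjugate of the given $H$ and (b) satisfy the character bound $\sum_{\chi \ne 1}|\chi(x)\chi(y)|/\chi(1) < 1$. A secondary, more routine, difficulty is guaranteeing that no small case sneaks in below the threshold $89/325$; this relies on an exhaustive \textsf{GAP}/\textsc{Magma} check over every entry in \cite[Table 20]{LZ}, which is feasible because the list is short and the orders involved are modest.
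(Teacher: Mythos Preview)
Your union-bound argument for the torus-normaliser families is vacuous. You write
\[
1 - \delta(G) \leqs \frac{|H|}{|N_G(H)|},
\]
but when $H$ is a maximal subgroup of the simple group $G$ we have $N_G(H) = H$, so the right-hand side equals $1$ and you have proved nothing. The paper's approach is entirely different: rather than bounding the union of conjugates of $H$ from above, it identifies specific families of regular semisimple elements $x$ for which $(|H|,|C_G(x)|)$ is strictly less than the Coxeter number $h$ of $G$, and then invokes Lemma~\ref{l:cox} (every regular semisimple element has order at least $h$) to conclude that such $x$ are derangements. The contribution of these classes to $\delta(G)$ is then computed explicitly from L\"ubeck's tables \cite{Lub}, giving concrete lower bounds case by case. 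In several instances (e.g.\ $E_8(q)$ with $H$ a $\Phi_{30}$-torus normaliser) the gcd argument alone is not enough and one must further embed the torus $T = C_G(x)$ in a maximal-rank subgroup $M$ to rule out the possibility that $x$ lies in a small subgroup of $T$ meeting $H$.

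Your dichotomy also omits a case. The soluble $H \in \mathcal{S}^{+}$ for exceptional groups are not all torus normalisers or small-field exceptions: there are genuine infinite families of soluble \emph{parabolic} subgroups (the Borel subgroups of ${}^2B_2(q)$, ${}^2G_2(q)$, and $G_2(q)$ with $q = 3^m$), handled in the paper via the decomposition $\chi = 1 + {\rm St}$ of the permutation character, as well as a handful of subgroups of type ${\rm SU}_3(2)^k$ in $F_4(2)$, ${}^2E_6(2)$ and $E_8(2)$. Your outline for part~(ii) via \cite{GM,GM2} is broadly in line with what the paper does, but the real work lies in matching specific derangement classes to the specific theorems in those references, which your proposal does not address.
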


We begin by recording two preliminary lemmas. Recall that a conjugacy class $x^G$ is \emph{real} if $x^{-1} \in x^G$. 

\begin{lem}\label{l:real}
Let $G$ be a finite simple exceptional group of Lie type over $\mathbb{F}_q$ and assume $G \ne E_6(q), {}^2E_6(q), G_2(2)'$. Then every semisimple conjugacy class in $G$ is real.
\end{lem}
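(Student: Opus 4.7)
The plan is to derive the reality statement from the standard criterion that, for a connected reductive algebraic group $\mathbf{G}$ over $\overline{\mathbb{F}}_q$ with Steinberg endomorphism $F$, a semisimple element $x \in \mathbf{G}^F$ contained in an $F$-stable maximal torus $\mathbf{T}$ is $\mathbf{G}^F$-conjugate to $x^{-1}$ if and only if some element $w$ of the $F$-fixed relative Weyl group $W(\mathbf{T})^F = (N_{\mathbf{G}}(\mathbf{T})/\mathbf{T})^F$ satisfies $w(x) = x^{-1}$. Since every semisimple element of $\mathbf{G}^F$ lies in some $F$-stable maximal torus, it suffices to show that $-1 \in W^F$; this element then automatically acts as inversion on every $F$-stable maximal torus via its action on the character lattice.

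I would then run through the excepted types case by case. For the split untwisted groups $G_2(q)$, $F_4(q)$, $E_7(q)$ and $E_8(q)$, it is a classical fact (see, for example, the tables in Bourbaki, \emph{Groupes et alg\`ebres de Lie}, Chap.\ VI) that the longest element $w_0$ of the Weyl group equals $-1$; since $F$ acts on $W$ trivially in the split case, we get $-1 \in W^F$ as required. For the twisted families ${}^2B_2(q)$, ${}^2G_2(q)$, ${}^2F_4(q)$, and ${}^3D_4(q)$, the underlying untwisted Weyl groups (of types $B_2$, $G_2$, $F_4$, $D_4$ respectively) all contain $-1$; and since $-1$ is central in $W$, it commutes with the nontrivial graph automorphism entering the definition of $F$, so once again $-1 \in W^F$.

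The final step is the descent from $\mathbf{G}^F$ to the simple group $G$. Taking $\mathbf{G}$ to be simply connected, $G$ is a quotient of $\mathbf{G}^F$ by a subgroup of its finite centre, and the relation $gxg^{-1} = x^{-1}$ passes to the quotient, so reality of semisimple classes descends. The main technical point — and the heart of why the excluded types $E_6(q)$ and ${}^2E_6(q)$ genuinely fail — is the verification that $-1$ lies in $W^F$. For $E_6$ one has $-w_0 \neq 1$: the composition $-w_0$ equals the order-two diagram automorphism of $E_6$, so $-1 \notin W$ in the untwisted case, and in the twisted ${}^2E_6$ case the interplay of $w_0$ with the graph automorphism in $F$ still fails to produce $-1$ in $W^F$. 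The case $G_2(2)'$ is excluded because $G_2(2)' \cong \mathrm{U}_3(3)$ is really a unitary group, outside the scope of the Weyl-group argument above.
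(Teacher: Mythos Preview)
Your approach is essentially the same as the paper's: both rely on the fact that the Weyl group contains $-1$ (equivalently, $w_0 = -1$), which is exactly the hypothesis of the Tiep--Zalesski result the paper invokes. Your case analysis of the root systems is correct.

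However, there is a gap in your descent step. You claim that, taking $\mathbf{G}$ simply connected, the simple group $G$ is a quotient of $\mathbf{G}^F$ by a central subgroup. This fails for ${}^2G_2(3)'$ and ${}^2F_4(2)'$: for these types the algebraic group has trivial centre (being both simply connected and adjoint), yet $\mathbf{G}^F = {}^2G_2(3)$ and ${}^2F_4(2)$ are not simple. The simple group is the \emph{derived subgroup}, of index $3$ and $2$ respectively, not a central quotient. Reality of $x$ in $\mathbf{G}^F$ does not obviously descend to the derived subgroup, since the conjugating element may lie outside it. The paper handles exactly these two groups by direct inspection; you correctly flagged the analogous issue for the excluded group $G_2(2)'$ but missed it for ${}^2G_2(3)'$ and ${}^2F_4(2)'$, which the lemma does cover.
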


\begin{proof}
The groups ${}^2G_2(3)'$ and ${}^2F_4(2)'$ can be checked directly. In each of the remaining cases, the result follows from \cite[Proposition 3.1]{TZ} since the Weyl group of $G$ contains a central involution.
\end{proof}

In the next lemma, $G$ is a simple group of Lie type over $\mathbb{F}_q$, where $q = p^f$ and $p$ is a prime, and $h$ is the \emph{Coxeter number} of $G$. The latter is defined by 
\[
h = \frac{\dim \bar{G}}{{\rm rank}\, \bar{G}} - 1,
\]
where $\bar{G}$ is the ambient simple algebraic group defined over the algebraic closure of $\mathbb{F}_q$. So for example, the Coxeter number of $E_8(q)$ is $248/8-1 = 30$. Also recall that an element $x \in G$ is \emph{regular semisimple} if the connected component of $C_{\bar{G}}(x)$ is a maximal torus of $\bar{G}$. This is equivalent to the condition that $|C_G(x)|$ is indivisible by $p$. 

For the proof of the following lemma, we thank Martin Liebeck for drawing our attention to \cite[Section 4]{HK}, which plays a key role in the argument. 

\begin{lem}\label{l:cox}
Let $G \ne {\rm Sp}_4(2)', G_2(2)', {}^2G_2(3)'$ be a finite simple group of Lie type with Coxeter number $h$ and suppose $x \in G$ is semisimple. Then $x$ is regular only if $|x| \geqs h$.
\end{lem}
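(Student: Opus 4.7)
The plan is to work inside the ambient simple algebraic group $\bar G$ (where $G = \bar G^F$ for a Steinberg endomorphism $F$), since regularity of a semisimple $x \in G$ is equivalent to $C_{\bar G}(x)^\circ$ being a maximal torus; equivalently, $\alpha(x) \ne 1$ for every root $\alpha \in \Phi$ once $x$ is placed inside some $F$-stable maximal torus $\bar T$. Using the isomorphism $\bar T \cong Y \otimes \bar{\mathbb{F}}_p^{\times}$, with $Y = Y(\bar T)$ the cocharacter lattice, and setting $n = |x|$ (which is coprime to $p$), the element $x$ is represented by a class $y_0/n + Y \in Y \otimes \mathbb{Q}/Y$, and regularity becomes
\[
\langle \alpha, y_0 \rangle \not\equiv 0 \pmod{n} \quad \text{for every } \alpha \in \Phi.
\]

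Next I would invoke the extended affine Weyl group action on $Y \otimes \mathbb{R}$, which preserves the regularity predicate and has the fundamental alcove as a fundamental domain, to move the representative $y = y_0/n$ into the interior of that alcove. Writing the base of simple roots as $\alpha_1, \ldots, \alpha_r$ and the highest root as $\tilde\alpha = \sum_{i=1}^r n_i \alpha_i$, the defining inequalities of the open alcove are $\langle \alpha_i, y \rangle > 0$ for each $i$ and $\langle \tilde\alpha, y \rangle < 1$. After clearing denominators, integrality together with positivity forces $\langle \alpha_i, y_0 \rangle \geqs 1$, and hence
\[
n \; > \; \langle \tilde\alpha, y_0 \rangle \; = \; \sum_{i=1}^{r} n_i \, \langle \alpha_i, y_0 \rangle \; \geqs \; \sum_{i=1}^{r} n_i \; = \; h-1,
\]
using the classical identity $1 + \sum_{i=1}^{r} n_i = h$ relating the marks of the highest root to the Coxeter number. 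Since $n$ is an integer, this gives $n \geqs h$, as required.

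The main obstacle is making the alcove argument rigorous across all isogeny types of $\bar G$ and for twisted Frobenius endomorphisms $F$: one must check that $y_0$ can be taken in the (co)root lattice (after, if necessary, passing to a simply-connected cover where the marks of $\tilde\alpha$ genuinely sum to $h-1$), and that the $F$-twist is compatible with the affine Weyl action so that the fundamental alcove reduction still applies. The precise parametrisation of semisimple conjugacy classes by orbits of the extended affine Weyl group on $Y \otimes \mathbb{Q}/Y$, including the $F$-twisted variant, is exactly what is developed in Section~4 of \cite{HK}, which I would cite rather than redo. The three excluded small groups ${\rm Sp}_4(2)'$, $G_2(2)'$ and ${}^2G_2(3)'$ arise precisely because their exceptional isomorphisms with $A_6$, ${\rm U}_3(3)$ and ${\rm L}_2(8)$ respectively produce regular semisimple elements of order strictly less than $h$, and in these low-order cases the claim would be checked directly against the character table.
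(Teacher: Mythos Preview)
Your proposal is correct and follows essentially the same line as the paper's proof: both lift to the simply connected algebraic group, place the element in a maximal torus, invoke \cite[Section~4]{HK} for the alcove-style parametrisation of centralisers, and deduce $|x| > \sum_i m_i = h-1$ from positivity of the simple-root pairings together with the bound coming from the highest root. Your exposition is in fact more explicit about the alcove geometry than the paper's, which simply quotes from \cite[p.~315]{HK} the existence of non-negative integers $b_i$ (your $\langle \alpha_i, y_0\rangle$) with $\sum_i b_i m_i \leqs d$ and the regularity criterion $b_i>0$, $d>\sum_i b_i m_i$; one small point you do not mention is that the paper also disposes of ${}^2F_4(2)'$ by direct inspection.
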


\begin{proof}
The claim is trivial if $h=2$ and it can be checked directly for $G = {}^2F_4(2)'$, so we may assume $h \geqs 3$ and $G \ne {}^2F_4(2)'$. Set $|x| = d$.

Let $\bar{G} = X_{{\rm sc}}$ and $\bar{L} = X_{{\rm ad}}$ be the corresponding simply connected and adjoint simple algebraic groups defined over the algebraic closure of $\mathbb{F}_q$ and write $G = O^{p'}(\bar{L}_{\s})$ for a suitable Steinberg endomorphism $\s$ of $\bar{L}$. We may assume that $x$ is the image of $y \in \bar{G}$ under the natural map $\bar{G} \to X_{{\rm ad}}$. Embed $y$ in a maximal torus $\bar{T}$ of $\bar{G}$ and let $\{\a_1, \ldots, \a_r\}$ be a set of simple roots for the corresponding root system of $\bar{G}$. Let $\a_0 = \sum_i m_i\a_i$ be the highest root and note that $\sum_im_i = h-1$. 

The structure of $C_{\bar{G}}(y)$ is described in \cite[Section 4]{HK}. In particular, from the discussion on \cite[p.315]{HK} it follows that there exist integers $b_i \geqs 0$ such that $\sum_i b_im_i \leqs d$ with the property that $C_{\bar{G}}(y)$ is a maximal torus only if each $b_i$ is positive and $d> \sum_i b_im_i$. Therefore, $x$ is regular only if 
\[
d \geqs 1+\sum_i m_i = h
\]
and the result follows.
\end{proof}

It is convenient to use computational methods to prove Proposition \ref{p:ex_sol} for some of the small exceptional groups. 

\begin{lem}\label{l:ex_small}
The conclusion to Proposition \ref{p:ex_sol} holds if $G$ is one of the following:
\[
{}^2G_2(3)', \, G_2(2)', \, G_2(3), \, {}^2F_4(2)', \, {}^3D_4(2), \, {}^3D_4(3).
\]
\end{lem}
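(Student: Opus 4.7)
My plan is to handle each of the six groups computationally, since they are all small enough that their character tables and lists of maximal subgroups are readily accessible in the \textsf{GAP} Character Table Library \cite{GAPCTL} and explicit permutation representations of low degree exist in \cite{WebAt}. The first step is to read off from \cite[Tables 14--20]{LZ} the complete list of soluble subgroups $H$ that arise either as maximal subgroups of $G$ (for part (ii)) or as intersections $M \cap G$ for some maximal subgroup $M$ of an almost simple group with socle $G$ (for part (i)). Since ${\rm Out}(G)$ is very small in all six cases (for instance, ${}^3D_4(3)$ has outer automorphism group of order $3$, and ${}^2F_4(2)'$ has outer automorphism group of order $2$), the additional subgroups in $\mathcal{S}^{+} \setminus \mathcal{S}$ are few in number and easy to enumerate.

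For part (i), I would compute $\delta(G,H)$ precisely for each $H$ in $\mathcal{S}^{+}$. Working in \textsf{GAP}, the character table of $G$ and the relevant fusion map from $H$-classes to $G$-classes are available, which yields the permutation character $1^G_H$ and hence the set $\Delta(G,H)$ of $G$-classes of derangements. In the few cases where no stored fusion map exists (notably some of the subgroups in $\mathcal{S}^{+}\setminus \mathcal{S}$), I would use \textsf{PossibleClassFusions} and verify that all candidate maps yield the same permutation character, as was done for $\mathbb{B}$ in Proposition \ref{p:spor1}. The key arithmetic check is that $\delta(G,H) \geqslant 89/325$ in every case, with equality precisely once, when $G = {}^2F_4(2)'$ and $H = 2^2.[2^8].S_3$; this follows by inspection of the resulting rational numbers.

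For part (ii), given a soluble maximal subgroup $H$ in $\mathcal{S}$, I would search for a pair of conjugacy classes $(C,D)$ of derangements such that $G = \{1\} \cup CD$. By Lemma \ref{l:frob}, for each nontrivial $g \in G$, the number $N(g)$ of factorisations $g = cd$ with $c \in C$, $d \in D$ is a character-theoretic sum that \textsf{GAP} evaluates in a fraction of a second. The list of derangement classes for the action on $G/H$ was already computed in part (i), so it suffices to iterate over ordered pairs of derangement classes and test positivity of $N(g)$ on a set of representatives for the nontrivial classes. In all six groups, the number of derangement classes is small enough that this search is routine; moreover, in several cases the class of regular semisimple elements whose order is the Coxeter number (or one of the torus orders $q^2 \pm q + 1$, $q^4 - q^2 + 1$, etc., as relevant to the group) gives a single class $C$ with $G = \{1\} \cup C^2$, so $C = D$ works.

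The most delicate case will be $G = {}^2F_4(2)'$, since this is exactly the extremal example in (i) and its soluble subgroup structure is somewhat intricate; here particular care is needed to ensure that the recorded equality case is correct and that no further soluble $H \in \mathcal{S}^{+}$ yields a smaller value of $\delta(G)$. The only other potential obstacle is ${}^3D_4(3)$, whose order $|G| \approx 2 \cdot 10^{11}$ is the largest among the six, but all computations are still well within range using the stored character table in \cite{GAPCTL}; no explicit construction of $G$ is required because everything needed (class sizes, element orders, fusion maps into $G$) is already tabulated.
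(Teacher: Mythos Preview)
Your proposal is correct and takes essentially the same approach as the paper: both arguments are direct computational verifications that compute $\delta(G,H)$ for each $H \in \mathcal{S}^{+}$ via the permutation character, confirm the equality case at ${}^2F_4(2)'$, and then use the Frobenius formula (Lemma \ref{l:frob}) to exhibit derangement classes $C,D$ with $G = \{1\} \cup CD$. The only difference is cosmetic --- the paper works in {\sc Magma} with an explicit permutation representation of minimal degree, whereas you propose working in \textsf{GAP} via stored character tables and fusion maps --- but the underlying method is identical.
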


\begin{proof}
We can use {\sc Magma} \cite{magma} to handle these cases, working with a primitive permutation representation of $G$ of minimal degree. First we construct the character table of $G$ and we use Lemma \ref{l:frob} to identify all the pairs of classes $(C,D)$ with $G = \{1\} \cup CD$. Then for each $H \in \mathcal{S}^{+}$ we determine the set of derangements for the action of $G$ on $G/H$ and we check that $\delta(G,H) \geqs 89/325$, with equality if and only if $G = {}^2F_4(2)'$ and $H = 2^2.[2^8].S_3$. For example, if $G = {}^2G_2(3)' \cong {\rm L}_2(8)$ then 
$\delta(G,H) \geqs 3/7$, with equality if and only if $H = D_{18}$. Finally, in every case we identify two classes $C,D$ of derangements with $G = \{1\} \cup CD$. 
\end{proof}

We now partition our analysis of the remaining exceptional groups according to the structure of the point stabiliser $H$.

\begin{lem}\label{l:ex_par}
The conclusion to Proposition \ref{p:ex_sol} holds if $H$ is a parabolic subgroup of $G$.
\end{lem}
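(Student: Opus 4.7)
The plan is first to reduce to a short list of $(G,H)$ pairs and then to invoke earlier results. If $P = UL$ is a Levi decomposition of the parabolic $H$ in the ambient algebraic group, then solubility of $H$ forces $L'=1$, so $H$ must be a Borel subgroup. A Borel subgroup is maximal in $G$ only when $G$ has Lie rank one, which among the exceptional groups means $G = {}^2B_2(q)$ (with $q = 2^{2k+1} \geqs 8$) or $G = {}^2G_2(q)$ (with $q = 3^{2k+1} \geqs 27$, since ${}^2G_2(3)'$ has already been dispatched in Lemma \ref{l:ex_small}). A cross-check against \cite[Table 20]{LZ} confirms that no other soluble parabolic stabilisers arise for the remaining exceptional groups, the possible small-$q$ anomalies being absorbed by Lemma \ref{l:ex_small}.

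For the rank-one cases, part (ii) follows at once from Proposition \ref{p:rankone}(i), which provides classes $C = x^G$ and $D = y^G$ with $|x| = q + \sqrt{\ell q}+1$ and $|y| = q - \sqrt{\ell q}+1$ (where $\ell = 2$ for Suzuki groups and $\ell = 3$ for Ree groups) satisfying $G = C^2 = D^2$. Since $|H| = q^r(q-1)$ (with $r=2$ or $3$) is coprime to each of $|x|$ and $|y|$, every element of $C \cup D$ is a derangement on $G/H$, and hence $G = \{1\} \cup CD$.

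For part (i), every non-identity element of the cyclic torus $T_{\pm}$ of order $q \pm \sqrt{\ell q}+1$ is a derangement by the same coprimality, and $|N_G(T_{\pm})| = 4|T_{\pm}|$, which yields
$$
\delta(G) \;\geqs\; \frac{|T_+|-1}{4|T_+|} + \frac{|T_-|-1}{4|T_-|}.
$$
This bound is easily seen to exceed $89/325$ already for the smallest admissible parameters ($q=8$ gives $28/65$, and $q=27$ gives roughly $9/37 + 9/38$), and the expression tends to $1/2$ as $q \to \infty$. The main obstacle in the overall argument is the reduction step: one must carry out a careful inventory of Levi factors of all maximal parabolics in exceptional groups in order to be certain that no soluble case outside the above short list has been overlooked. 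Once the reduction is complete, the derangement-width and proportion conclusions are essentially inherited from the rank-one machinery already developed in Section \ref{ss:width_lie}.
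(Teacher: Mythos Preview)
Your reduction step has a genuine gap. Proposition~\ref{p:ex_sol}(i) requires $H \in \mathcal{S}^{+}$, not $H \in \mathcal{S}$: that is, $H = M \cap G$ for $M$ maximal in some almost simple overgroup of $G$, and $H$ itself need \emph{not} be maximal in $G$. Your argument ``a Borel subgroup is maximal in $G$ only when $G$ has Lie rank one'' therefore throws away legitimate cases. Concretely, you miss the infinite family $G = G_2(q)$ with $q = 3^m \geqs 9$ and $H = [q^6]{:}(q-1)^2$ the Borel subgroup: here $H \in \mathcal{S}^{+} \setminus \mathcal{S}$ because the graph automorphism of $G_2$ in characteristic $3$ fuses the two maximal parabolics, so the Borel becomes $M \cap G$ for a maximal parabolic $M$ of $G.2$. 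The paper handles this case by identifying derangements among regular semisimple elements with $|C_G(x)| = q^2 \pm q + 1$ or $(q+1)^2$. You also miss the sporadic case $G = F_4(2)$, $H = [2^{22}]{:}S_3^2$, which is not a Borel subgroup at all (the Levi involves $S_3$-factors, soluble but not tori) and is not covered by Lemma~\ref{l:ex_small}; your claim that ``solubility of $H$ forces $L' = 1$'' is an algebraic-group statement that fails for small $q$ at the finite level.

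There is also a smaller slip in your rank-one computation: for $G = {}^2G_2(q)$ the tori $T_{\pm}$ of order $q \pm \sqrt{3q}+1$ satisfy $|N_G(T_{\pm})| = 6|T_{\pm}|$, not $4|T_{\pm}|$ (the latter holds for ${}^2B_2(q)$). With the correct index $6$ your displayed lower bound still clears $89/325$, but the numbers you quote for $q=27$ are off. Compare the paper's treatment, which for ${}^2G_2(q)$ also includes the contribution from the torus of order $q+1$ to obtain the exact value $\delta(G,H) = (q^3-2q^2-1)/2(q^3+1)$.
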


\begin{proof}
Suppose $H \in \mathcal{S}^{+}$ is a parabolic subgroup of $G$, in which case the possibilities for $G$ and $H$ are recorded in parts (ii) and (iii) of \cite[Lemma 5.4]{Bur21}.

We begin by assuming $(G,H)$ is one of the cases listed in \cite[Lemma 5.4(ii)]{Bur21}. In view of Lemma \ref{l:ex_small}, we can assume $G = F_4(2)$ and $H = [2^{22}]{:}S_3^2$. Here $H$ is non-maximal in $G$, but $H.2$ is maximal in $L = {\rm Aut}(G) = G.2$, so $H \in \mathcal{S}^{+} \setminus \mathcal{S}$ and we just need to bound $\delta(G,H)$. To do this, we first use {\sc Magma} to construct $G$ as a permutation group of degree $139776$ and then we obtain $H$ by constructing the maximal subgroups of ${\rm Aut}(G)$. In the usual manner, working with the conjugacy classes in $G$ and $H$, we compute $\delta(G,H) = 5166407/7309575$.

To complete the proof of the lemma, it remains to consider the three infinite families recorded in \cite[Table 2]{Bur21}:

\vspace{1mm}

\begin{itemize}\addtolength{\itemsep}{0.2\baselineskip}
\item[(a)] $G = {}^2B_2(q)$ and $H = [q^2]{:}(q-1)$, where $q = 2^{2m+1} \geqs 8$. 
\item[(b)] $G = {}^2G_2(q)$ and $H = [q^3]{:}(q-1)$, where $q  = 3^{2m+1} \geqs 27$.
\item[(c)] $G = G_2(q)$ and $H = [q^6]{:}(q-1)^2$, where $q=3^m \geqs 9$.
\end{itemize}

\vspace{1mm}

In cases (a) and (b), Theorem \ref{t:rankone} implies that $G = C^2$ for some conjugacy class $C$ of derangements. Also note that $H \in \mathcal{S}^{+}\setminus \mathcal{S}$ in (c), so in all three cases it just remains to show that $\delta(G,H) > 89/325$.

First consider case (a). Let $\chi = 1^G_H$ be the corresponding permutation character and note that $\chi = 1 + {\rm St}$, where $1$ and ${\rm St}$ denote the trivial and Steinberg characters of $G$, respectively (for example, see \cite[p.416]{LLS}). By inspecting the character table of $G$ (see \cite[Theorem 13]{Suz}), we deduce that $x \in G$ is a derangement if and only if $x$ is a regular semisimple element with $|C_G(x)| = q \pm \sqrt{2q}+1$. As a consequence, 
\[
|\Delta(G,H)| = \frac{1}{4}(q+\sqrt{2q}) \cdot \frac{|G|}{q+\sqrt{2q}+1} + \frac{1}{4}(q-\sqrt{2q}) \cdot \frac{|G|}{q-\sqrt{2q}+1} = \frac{1}{2}q^3(q-1)^2
\]
and thus
\[
\delta(G,H) = \frac{q(q-1)}{2(q^2+1)} \geqs \frac{28}{65}
\]
for all $q \geqs 8$.

Case (b) is very similar. Once again we have $\chi = 1 + {\rm St}$ and we compute
\[
|\Delta(G,H)| = \frac{1}{6}(q+\sqrt{3q}) \cdot \frac{|G|}{q+\sqrt{3q}+1} + \frac{1}{6}(q-\sqrt{3q}) \cdot \frac{|G|}{q-\sqrt{3q}+1} + \frac{1}{6}(q-3) \cdot \frac{|G|}{q+1},
\]
which yields 
\[
\delta(G,H) = \frac{q^3-2q^2-1}{2(q^3+1)} \geqs \frac{2278}{4921}.
\]

Finally, we turn to case (c). If $x \in G$ is a regular semisimple element with $|C_G(x)| = q^2 \pm q +1$ or $(q+1)^2$, then $(|H|,|C_G(x)|) \leqs 4$ and thus $x$ is a derangement by Lemma \ref{l:cox} (note that $h=6$). By inspecting \cite{Lub}, we deduce that
\[
\delta(G,H) \geqs \frac{1}{12}\left(\frac{2(q-1)(q+2)}{q^2+q+1}+\frac{2(q+1)(q-2)}{q^2-q+1}+\frac{q(q-4)}{(q+1)^2}\right) > \frac{89}{325}
\]
for all $q \geqs 9$.
\end{proof}

Next we consider the groups where $H$ is the normaliser of a maximal torus. These cases require a detailed analysis and we divide the proof into two separate lemmas.

In the proof of the following result, we thank Gunter Malle for his assistance with a \textsf{Chevie} \cite{Chevie} computation arising when $G = {}^2F_4(q)$ in Case 4 of the proof.

\begin{lem}\label{l:ex_tor}
The conclusion to Proposition \ref{p:ex_sol} holds if $H = N_G(T)$ is the normaliser of a maximal torus and $G \ne F_4(q), E_6^{\e}(q), E_8(q)$.
\end{lem}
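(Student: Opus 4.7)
The plan is to establish both conclusions of Proposition \ref{p:ex_sol} for the remaining possibilities $G \in \{{}^2B_2(q), {}^2G_2(q), G_2(q), {}^3D_4(q), {}^2F_4(q), E_7(q)\}$ (excluding the finite list of small cases dispatched in Lemma \ref{l:ex_small}) by exploiting a uniform derangement criterion for torus-normaliser stabilisers. For each such $G$, the soluble torus normalisers $H = N_G(T)$ form a short explicit list read off from \cite[Table 20]{LZ}, indexed by certain conjugacy classes $[w]$ of the Weyl group $W$ of $G$ with $C_W(w)$ soluble.

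The key observation is that $T$ is the unique maximal torus of $H$ up to $H$-conjugacy, so a regular semisimple element $x \in G$ is a derangement on $G/H$ if and only if the unique maximal torus $T_x$ containing $x$ is not $G$-conjugate to $T$. Combined with Lemma \ref{l:cox}, for every class $[T']$ of maximal tori with $[T'] \ne [T]$, every $y \in T'$ with $|y| \geqs h$ is regular semisimple, and $y^G$ consists entirely of derangements. Summing these contributions yields the explicit bound
\[
\delta(G) \;\geqs\; \sum_{[T'] \ne [T]} \frac{|T'| - |\{y \in T' \,:\, |y| < h\}|}{|N_G(T')|},
\]
in which $|N_G(T')| = |T'| \cdot |C_W(w')|$ has a closed form depending only on $q$ and $[w']$. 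I would then check, case by case, that the right-hand side exceeds $89/325$ for all $q$ not already handled by Lemma \ref{l:ex_small}; the point is that the asymptotic value is close to $1$, so only finitely many small $q$ need to be inspected, and these I would verify computationally using the Lübeck data \cite{Lub} or \textsf{Chevie} \cite{Chevie}.

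For the product-of-classes conclusion $G = \{1\} \cup CD$, I would select two regular semisimple classes $C = x_1^G$, $D = x_2^G$ with $x_i$ lying in distinct torus classes $[T_i] \ne [T]$, chosen so that $|x_i|$ is divisible by a primitive prime divisor of $q^{e_i}-1$ forcing, via \cite{GPPS}, any maximal overgroup of $x_i$ to be conjugate into $N_G(T_i)$; by Step 1 these are derangements, and Lemma \ref{l:real} ensures the classes are real in every relevant family. The character-theoretic estimates of Guralnick and Malle \cite{GM, GM2} on products of real regular semisimple classes then deliver $G = \{1\} \cup CD$. The hardest case will be $G = E_7(q)$, where $W(E_7)$ has $60$ conjugacy classes and the combinatorics of tori with soluble $N_G(T)$ is the most intricate; I would attempt to bypass the full enumeration by isolating just two or three torus contributions (for example the classes of orders $\Phi_7(q)$ and $\Phi_{14}(q)/(2,q-1)$, whose normalisers are small) whose combined contribution to the sum above already exceeds $89/325$ uniformly in $q$, and by using the same tori to supply the classes $C$ and $D$ in the product-of-classes argument.
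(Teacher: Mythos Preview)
There are two genuine gaps. First, $E_7(q)$ does not arise: inspecting \cite[Table 5.2]{LSS} (or \cite[Table 20]{LZ}) shows that no torus normaliser in $E_7(q)$ lies in $\mathcal{S}^+$, so the case you single out as hardest is vacuous. The groups that actually occur are ${}^2B_2(q)$, ${}^2G_2(q)$, $G_2(q)$ with $p=3$, ${}^2F_4(q)$ and ${}^3D_4(q)$; moreover, for $G_2(q)$ one has $H\in\mathcal{S}^+\setminus\mathcal{S}$, so only part (i) is required there.

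Second, both your ``key observation'' and your displayed inequality are faulty as stated. A regular semisimple element $y$ with $C_G(y)=T'$ and $[T']\ne[T]$ can still lie in $N_G(T)$, namely in a coset $Tw$ with $w\ne 1$, and then $y$ is \emph{not} a derangement even though its centraliser torus has a different type. (For ${}^2B_2(q)$ and ${}^2F_4(q)$ this happens to be harmless because $p=2$ divides $|N_G(T)/T|$, forcing every element outside $T$ to be non-semisimple, but it needs checking for the other families.) In addition, your formula replaces the count of \emph{regular} elements in $T'$ by $|\{y\in T':|y|\geqs h\}|$, which uses the converse of Lemma~\ref{l:cox}; that lemma only gives regular $\Rightarrow |y|\geqs h$, so for tori such as $(q\pm 1)^2$ in $G_2(q)$ you are over-counting and the inequality is not a valid lower bound. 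The paper avoids both issues via the simpler criterion: if $x$ is regular semisimple and $(|H|,|C_G(x)|)<h$, then any hypothetical $x\in H$ would have $|x|$ dividing this gcd, contradicting Lemma~\ref{l:cox}, so $x$ is a derangement. For each $(G,H)$ one then selects two or three explicit centraliser orders meeting this coprimality condition, reads off the class counts from \cite{Lub}, \cite{DM} or \cite{Shinoda}, and checks $\delta(G)>89/325$. For part (ii) the paper appeals directly to \cite[Theorems 7.1, 7.3]{GM} together with Lemma~\ref{l:real}, plus a short \textsf{Chevie} verification for ${}^3D_4(q)$, rather than routing through \cite{GPPS}.
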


\begin{proof}
The possibilities for $H$ can be read off from \cite[Table 5.2]{LSS} and we divide the proof into a number of separate cases. Note that for the rank one groups ${}^2B_2(q)$ and ${}^2G_2(q)$ we have already shown that $G = \{1\} \cup CD$ for classes $C,D$ of derangements in Theorem \ref{t:rankone}.

\vs

\noindent \emph{Case 1. $G = {}^2B_2(q)$}

\vs

Here $q \geqs 8$ and $H$ is either $D_{2(q-1)}$ or $(q \pm \sqrt{2q}+1){:}4$. If $H = D_{2(q-1)}$ then every semisimple element $x \in G$ with $|C_G(x)| = q \pm \sqrt{2q}+1$ is a derangement (since $(|H|,|C_G(x)|) = 1$) and thus
\[
\delta(G,H) \geqs \frac{1}{4}\left(\frac{q+\sqrt{2q}}{q+\sqrt{2q}+1} + \frac{q-\sqrt{2q}}{q-\sqrt{2q}+1}\right) \geqs \frac{28}{65}.
\]
Similarly, if $H = (q +\e\sqrt{2q}+1){:}4$ then every semisimple element $x \in G$ with $|C_G(x)| = q-1$ or $q-\e\sqrt{2q}+1$ is a derangement (since $(|H|,|C_G(x)|) = 1$), whence
\[
\delta(G,H) \geqs \frac{1}{2}\left(\frac{q-2}{q-1} + \frac{q-\e\sqrt{2q}}{2(q-\e\sqrt{2q}+1)}\right) \geqs \frac{22}{35}.
\]

\vs

\noindent \emph{Case 2. $G = {}^2G_2(q)$, $q \geqs 27$}

\vs

This is very similar to the previous case (note that the group ${}^2G_2(3)'$ was  handled in Lemma \ref{l:ex_small}). 
If $H = (q+1).6$ then each element $x \in G$ with $|C_G(x)| = q \pm \sqrt{3q}+1$ is a derangement (once again, we have $(|H|,|C_G(x)|) = 1$) and thus
\[
\delta(G,H) \geqs \frac{1}{6}\left(\frac{q+\sqrt{3q}}{q+\sqrt{3q}+1} + \frac{q-\sqrt{3q}}{q-\sqrt{3q}+1}\right) \geqs \frac{225}{703}.
\]
Similarly, if $H = (q+\e\sqrt{3q}+1).6$ and $|C_G(x)| = q-1$ or $q-\e\sqrt{3q}+1$, then we have $|x|>2$ and $(|H|,|C_G(x)|) \leqs 2$, so $x$ is a derangement and we deduce that  
\[
\delta(G,H) \geqs \frac{1}{6}\left(\frac{3(q-3)}{q-1} + \frac{q-\e\sqrt{3q}}{q-\e\sqrt{3q}+1}\right) \geqs \frac{153}{247}.
\]

\vs

\noindent \emph{Case 3. $G = G_2(q)$}

\vs

Next assume $G = G_2(q)$. By inspecting \cite[Table 5.2]{LSS} we see that  $p=3$, $q \geqs 9$ and $H \in \mathcal{S}^{+} \setminus \mathcal{S}$, so we just need to verify the lower bound $\delta(G) > 89/325$.

For $H = (q-\e)^2.D_{12}$ it is clear that every regular semisimple element $x \in G$ with $|C_G(x)| = q^2 \pm q +1$ is a derangement and by inspecting \cite{Lub} we deduce that
\[
\delta(G,H) \geqs \frac{1}{6}\left(\frac{q^2+q}{q^2+q+1}+\frac{q^2-q}{q^2-q+1}\right) \geqs \frac{2187}{6643}
\]
for all $q \geqs 9$. Similarly, if $H = (q^2+\e q+1).6$ then each $x \in G$ with $|C_G(x)| = q^2-1$ or $q^2-\e q+1$ is a derangement and it is easy to check that this gives $\delta(G,H)>89/325$.

\vs

\noindent \emph{Case 4. $G = {}^2F_4(q)$, $q \geqs 8$}

\vs

Next suppose $G = {}^2F_4(q)$ with $q \geqs 8$ (see Lemma \ref{l:ex_small} for $G = {}^2F_4(2)'$). If $C$ is a conjugacy class of elements of order $q^2+ \sqrt{2q^3}+q+ \sqrt{2q}+1$, then \cite[Theorem 7.3]{GM} implies that 
$G = C^2$ (noting that $C$ is real by Lemma \ref{l:real}). And by arguing as in the proof of \cite[Theorem 7.3]{GM}, using the \textsf{GAP} package \textsf{Chevie} \cite{Chevie}, one can show that $G = D^2$ when $D$ is a class of elements of order $q^2- \sqrt{2q^3}+q- \sqrt{2q}+1$.

First assume $H = (q+1)^2.{\rm GL}_2(3)$. If $x \in G$ is a regular semisimple element with 
\[
|C_G(x)| \in \{ q^2-q+1, \, (q-1)(q \pm \sqrt{2q}+1), \, q^2+ \sqrt{2q^3}+q+\sqrt{2q}+1\}
\]
then $x$ is a derangement (in each case, $(|H|,|C_G(x)|) = 1$ or $3$) and by inspecting \cite[Table IV]{Shinoda} we deduce that
\begin{align*}
\delta(G,H) \geqs & \; \frac{(q-2)(q+1)}{6(q^2-q+1)} + \frac{(q-2)(q+\sqrt{2q})}{8(q-1)(q+\sqrt{2q}+1)} + \frac{(q-2)(q-\sqrt{2q})}{8(q-1)(q-\sqrt{2q}+1)} \\
& + \frac{q^2+ \sqrt{2q^3}+q+\sqrt{2q}}{12(q^2+ \sqrt{2q^3}+q+\sqrt{2q}+1)} > \frac{89}{325}
\end{align*}
for all $q \geqs 8$. In addition, $G = C^2$ for the class $C$ of derangements defined above.

Similarly, if $H = (q+\e \sqrt{2q}+1)^2.(4 \circ {\rm GL}_2(3))$ then every regular semisimple element $x \in G$ with $|C_G(x)| = q^2-q+1$, $q^2-1$ or $q^2+ \sqrt{2q^3}+q+ \sqrt{2q}+1$ is a derangement and the desired bound $\delta(G,H) > 89/325$ quickly follows. And finally, if $H = (q^2+ \e \sqrt{2q^3}+q+ \e \sqrt{2q}+1).12$, then each $x \in G$ with $|C_G(x)| = q^2-q+1$, $q^2-1$ or $q^2- \e \sqrt{2q^3}+q-\e \sqrt{2q}+1$ is a derangement and we deduce that $\delta(G,H) > 89/325$. Note that in each of these cases, either $C$ or $D$ is a class of derangements and we have $G = C^2 = D^2$.

\vs

By inspecting \cite[Table 5.2]{LSS}, it just remains to consider the groups $G = {}^3D_4(q)$ in order to complete the proof of the lemma.

\vs

\noindent \emph{Case 5. $G = {}^3D_4(q)$}

\vs

In view of Lemma \ref{l:ex_small}, we may assume $q \geqs 4$. We refer the reader to \cite{DM} for information on the semisimple conjugacy classes in $G$ (for example, the number of semisimple classes with a given centraliser structure can be read off from \cite[Table 4.4]{DM}). By \cite[Theorem 7.3]{GM} and Lemma \ref{l:real} we have $G = C^2$, where $C$ is a class of elements of order $q^4-q^2+1$. 

Suppose $H = (q^4-q^2+1).4$. If $x \in G$ is regular semisimple with $|C_G(x)| = (q^3-\e)(q-\e)$, $(q^3+\e)(q-\e)$ or $(q^2+\e q+1)^2$, then $(|H|,|C_G(x)|) \leqs 4$ and thus $x$ is a derangement by Lemma \ref{l:cox}. By adding up the contribution to $\delta(G)$ from these elements, we get
\begin{align*}
\delta(G,H) \geqs & \frac{1}{24}\left( \frac{q^4+2q^3-q^2-2q}{(q^2+q+1)^2} + \frac{q^4-2q^3-q^2+2q}{(q^2-q+1)^2} + \frac{2(q^4-4q^3+2q^2-2q+12)}{(q^3-1)(q-1)} \right. \\
& \left. \hspace{8mm} + \frac{2(q^4-2q^3+2q^2-4q)}{(q^3+1)(q+1)} + \frac{6(q^4-2q)}{(q^3-1)(q+1)} + \frac{6(q^4-2q^3)}{(q^3+1)(q-1)}\right) > \frac{89}{325}
\end{align*}
for all $q \geqs 4$. In addition, if we let $A = x^G$ and $B = y^G$, where $|C_G(x)| = (q^2+q+1)^2$ and $|C_G(y)| = (q^2-q+1)^2$, then we can use \textsf{Chevie} \cite{Chevie} to show that $G = \{1\} \cup AB$. In particular, $G = \Delta(G)^2$.

Finally, let us assume $H = (q^2+\e q+1)^2.{\rm SL}_2(3)$. First note that each element $x \in G$ with $|C_G(x)| = q^4-q^2+1$ is a derangement (since $(|H|,|C_G(x)|) = 1$) and we have $G = C^2$ as noted above. In addition, elements with $|C_G(x)| = (q^2-\e q+1)^2$ are also derangements since $(|H|,|C_G(x)|) = 1$ or $3$. It follows that 
\[
\delta(G,H) \geqs \frac{1}{24}\left(\frac{6(q^4-q^2)}{q^4-q^2+1} + \frac{q^4+2\e q^3-q^2-2\e q}{(q^2+\e q+1)^2}\right) \geqs \frac{11345}{40729} > \frac{89}{325}
\]
for all $q \geqs 4$.
\end{proof}

\begin{lem}\label{l:ex_tor2}
The conclusion to Proposition \ref{p:ex_sol} holds if $H = N_G(T)$ is the normaliser of a maximal torus.
\end{lem}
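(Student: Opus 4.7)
The plan is to extend the case-by-case argument of Lemma \ref{l:ex_tor} to the remaining exceptional groups $G \in \{F_4(q), E_6^\e(q), E_8(q)\}$; no new conceptual ingredients are needed but the enumeration is longer. First I would read off from \cite[Table 5.2]{LSS} the short list of soluble torus normalisers $H = N_G(T)$ in these groups; in each case $H$ has the form $T.W_0$ with $T$ a maximal torus whose order is a product of cyclotomic polynomials evaluated at $q$ and $W_0$ a soluble subgroup of the relative Weyl group of $T$, and (as in Lemma \ref{l:ex_par}) we may replace Lemma \ref{l:ex_small} by explicit computation for finitely many small $q$.

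For the numerical bound $\delta(G,H) \geqs 89/325$, I would exhibit for each pair $(G,H)$ a family of other maximal tori $T'$ of $G$ with $\gcd(|H|,|T'|)$ uniformly bounded by a small constant, so that every regular semisimple $x \in T'$ has $\gcd(|H|,|C_G(x)|)$ also bounded; by Lemma \ref{l:cox}, which forces $|x| \geqs h$ with $h = 12, 12, 30$ for $F_4, E_6, E_8$ respectively, such an $x$ cannot lie in a conjugate of $H$. Counting the corresponding contributions via the class-number data in \cite{Lub} for $F_4$ and $E_6^\e$, supplemented by standard root-datum calculations for $E_8$, the total derangement proportion becomes a rational function of $q$ whose Coxeter-torus component alone tends to $1/|W_0|$ as $q \to \infty$, comfortably exceeding $89/325$ in every admissible case; small values of $q$ can be handled by adding further centraliser types to the sum or by direct computation.

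For the class-product statement $G = \{1\} \cup CD$, I would take $C = x^G$ with $x$ of Coxeter-torus order, so that $|x|$ is divisible by $\Phi_h(q)$, namely $q^4 - q^2 + 1$ for $F_4$, $\Phi_9(q)$ or $\Phi_{18}(q)$ for $E_6^\e$, and $\Phi_{30}(q)$ for $E_8$. When $G = F_4(q)$ or $E_8(q)$, Lemma \ref{l:real} shows that $C$ is real and then \cite[Theorem 7.3]{GM} gives $G = C^2$, so we can take $D = C$; the class $C$ consists of derangements because $\Phi_h(q)$ is coprime to $|H|$ in each admissible case. For $G = E_6^\e(q)$, where Lemma \ref{l:real} does not apply to these classes, I would instead combine \cite[Theorem 7.3]{GM} with a \textsf{Chevie} \cite{Chevie} computation, in the style of Case 4 of Lemma \ref{l:ex_tor}, to produce a second derangement class $D$ with $G = \{1\} \cup CD$.

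The main obstacle I expect is the $E_8(q)$ case: the Weyl group is very large, numerous maximal tori must be enumerated to obtain a clean bound uniformly in $q$, and precise control of semisimple class-numbers for each centraliser type relies on fairly detailed Deligne--Lusztig or computer-algebra data. A secondary difficulty is $E_6^\e(q)$, where non-reality of Coxeter classes forces us to produce the complementary class $D$ explicitly rather than taking $D = C$, and a handful of very small $q$ (where the asymptotic Coxeter-torus estimate is not yet sharp) may need to be closed computationally.
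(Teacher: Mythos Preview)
Your overall strategy for bounding $\delta(G,H)$ is sound and matches the paper's: use Lemma \ref{l:cox} together with a collection of maximal tori $T'$ with $\gcd(|H|,|T'|)$ small, and read off class counts from \cite{Lub}. The paper also notes (and you do not) that for $G = F_4(q)$ the relevant torus normalisers lie in $\mathcal{S}^+ \setminus \mathcal{S}$, so part (ii) is not required there; your proposed $C^2$ argument for $F_4$ is therefore unnecessary, though not wrong.

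There is, however, a genuine gap in your treatment of part (ii) for $G = E_8(q)$. You propose taking $C$ to be a class of Coxeter-torus order $\Phi_{30}(q)$ and assert that ``$\Phi_h(q)$ is coprime to $|H|$ in each admissible case''. But one of the three soluble torus normalisers in $E_8(q)$ listed in \cite[Table 5.2]{LSS} is precisely $H = \Phi_{30}{:}30$, so elements of order $\Phi_{30}$ lie in $H$ and are certainly not derangements on $G/H$. Your single Coxeter class cannot serve for all three cases simultaneously. The paper resolves this by invoking \cite[Theorem 7.7]{GM} (not Theorem 7.3) with classes of orders $\Phi_{20}$ and $\Phi_{24}$; these two cyclotomic values are coprime to $|H|$ for all three possible $H$ (namely $(q^4-q^2+1)^2.(12 \circ {\rm GL}_2(3))$, $\Phi_{30}{:}30$ and $\Phi_{15}{:}30$), and Theorem 7.7 yields $G = \{1\} \cup CD$ directly. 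For $E_6^{\e}(q)$ the paper likewise uses \cite[Theorem 7.7]{GM} with the pair $(\Phi_8, \Phi_9/e)$ or $(\Phi_8, \Phi_{18}/e')$, avoiding the ad hoc \textsf{Chevie} computation you anticipate.
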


\begin{proof}
In view of Lemma \ref{l:ex_tor}, we may assume $G = F_4(q)$, $E_6^{\e}(q)$ or $E_8(q)$. By inspecting \cite[Table 5.2]{LSS}, one of the following holds:

\vspace{1mm}

\begin{itemize}\addtolength{\itemsep}{0.2\baselineskip}
\item[{\rm (a)}] $G = F_4(q)$, $q$ even and $H \in \mathcal{S}^{+} \setminus \mathcal{S}$;
\item[{\rm (b)}] $G = E_6^{\e}(q)$ and $T = \frac{1}{e}(q^2+\e q+1)^3$, where $e = (3,q-\e)$;
\item[{\rm (c)}] $G = E_8(q)$ and $T = (q^4-q^2+1)^2$ or $q^8+\e q^7-\e q^5-q^4-\e q^3+\e q+1$ with $\e = \pm$.
\end{itemize}

Recall Lemma \ref{l:cox}, which states that $|x| \geqs h$ for every regular semisimple element $x \in G$, where $h$ is the Coxeter number of $G$ (so $h = 12,12,30$ in cases (a),(b),(c) above). We refer the reader to \cite{Lub} for a convenient source of detailed information on the semisimple conjugacy classes in $G$. We will write $\Phi_i$ for the $i$-th cyclotomic polynomial in $\mathbb{Z}[q]$. 

\vs

\noindent \emph{Case 1. $G = F_4(q)$, $q$ even}

\vs

Here $H \in \mathcal{S}^{+} \setminus \mathcal{S}$, so it suffices to verify the bound  $\delta(G) > 89/325$. If $q=2$ then we have $H = 7^2.(3 \times {\rm SL}_2(3)) = N_G(P)$, where $P$ is a Sylow $7$-subgroup of $G$, and with the aid of {\sc Magma} it is easy to check the result. For the remainder, we may assume $q \geqs 4$. 

Suppose $H = (q-1)^4.W$, where $W$ is the Weyl group of $G$ (this is a soluble group of order $1152$). According to \cite[Table 5.2]{LSS}, we may assume $q \geqs 8$. Let $x \in G$ be a regular semisimple element with 
\[
|C_G(x)| \in \{
\Phi_8, \, \Phi_{12}, \, \Phi_2^2\Phi_6, \, \Phi_2^2\Phi_4, \, \Phi_3^2, \, \Phi_6^2\}.
\]
Since $(|H|,|C_G(x)|) \leqs 9$, Lemma \ref{l:cox} implies that $x$ is a derangement and by inspecting \cite{Lub} we deduce that 
\begin{align*}
\delta(G,H) \geqs & \; \frac{q^4}{8\Phi_8} + \frac{q^2(q^2-1)}{12\Phi_{12}} + \frac{(q+1)(q^3-3q^2+4q-4)}{18\Phi_2^2\Phi_6} + \frac{q^3(q-2)}{32\Phi_2^2\Phi_4} \\
 & + \frac{(q-1)(q^3+3q^2-2q-8)}{72\Phi_3^2} + \frac{q(q-1)(q^2-q-6)}{72\Phi_6^2} > \frac{89}{325}
\end{align*}
for all $q \geqs 8$.

Next assume $H = (q+1)^4.W$ with $q \geqs 4$. Here each regular semisimple element $x \in G$ with $|C_G(x)| = \Phi_8$, $\Phi_{12}$, $\Phi_1^2\Phi_3$, $\Phi_1^2\Phi_4$, $\Phi_3^2$ or $\Phi_6^2$ is a derangement (as above, $(|H|,|C_G(x)|) <h$ in each case) and by counting these elements we get 
$\delta(G,H) > 89/325$ for all $q \geqs 8$. For $q=4$, we also claim that each $x \in G$ with $|C_G(x)| = \Phi_1\Phi_2\Phi_6 = 195$ is a derangement. To see this, first note that $C_G(x) = T$ is a cyclic maximal torus (see \cite{DF}) and we have $(|H|,|C_G(x)|) = 15 = q^2-1$. We can embed $T$ in a subgroup $M = {\rm Sp}_2(4) \times {\rm Sp}_6(4)$ of $G$ and we observe that $C_M(S) = 
3 \times {\rm GU}_3(4)$, where $S$ is the unique subgroup of $T$ of order $15$. So if $x$ has fixed points, then $T < C_M(x)$, which is absurd. This justifies the claim. By including the contribution from these elements, it is easy to check that $\delta(G,H) > 89/325$.

Next assume $H = (q^2+q+1)^2.(3 \times {\rm SL}_2(3))$ with $q \geqs 4$. Here we find that every regular semisimple element $x \in G$ with $|C_G(x)| = \Phi_8$, $\Phi_{12}$, $\Phi_2^2\Phi_6$, $\Phi_2^2\Phi_4$, $\Phi_6^2$ or $\Phi_1\Phi_2\Phi_6$ is a derangement (since $(|H|,|C_G(x)|)<h$) and we get $\delta(G,H) > 89/325$ by adding up the contribution from these elements.

Similarly, if $H = (q^2-q+1)^2.(3 \times {\rm SL}_2(3))$ and $q \geqs 4$, then each element with $|C_G(x)| = \Phi_8$, $\Phi_{12}$, $\Phi_1^2\Phi_3$, $\Phi_1^2\Phi_4$ or $\Phi_3^2$ is a derangement and this yields 
$\delta(G,H) > 89/325$ for $q \geqs 16$. If $q \in \{4,8\}$ and $|C_G(x)| = \Phi_1\Phi_2\Phi_3$, then we get $(|H|,|C_G(x)|) = 9$, so these regular semisimple elements are also derangements and this yields $\delta(G,H) > 89/325$ as required.

The remaining two cases with $G = F_4(q)$ can be handled in a very similar fashion. If $H = (q^2+1)^2.(4 \circ {\rm GL}_2(3))$ then each regular semisimple element $x \in G$ with $|C_G(x)| = \Phi_8$, $\Phi_{12}$, $\Phi_1^2\Phi_3$, $\Phi_2^2\Phi_6$, $\Phi_3^2$, $\Phi_6^2$ or $\Phi_1\Phi_2\Phi_3$ is a derangement (indeed, in each case $(|H|,|C_G(x)|) \leqs 3$) and the desired bound follows. Similarly, if $H = (q^4-q^2+1).12$ then the elements $x \in G$ with $|C_G(x)| = \Phi_8$, $\Phi_1^2\Phi_3$, $\Phi_2^2\Phi_6$, $\Phi_3^2$, $\Phi_6^2$ or $\Phi_1\Phi_2\Phi_3$ are derangements and once again we conclude that $\delta(G,H) > 89/325$.

\vs

\noindent \emph{Case 2. $G = E_6(q)$}

\vs

Next assume $G = E_6(q)$ and $H = (q^2+q+1)^3/e.3^{1+2}.{\rm SL}_2(3)$, where $e = (3,q-1)$. First note that every element in $G$ of order $\Phi_8$ and $\Phi_9/e$ is a derangement (indeed, $|H|$ is indivisible by these two numbers), whence \cite[Theorem 7.7]{GM} implies that there exist classes $C,D$ of derangements with $G = \{1\} \cup CD$. Therefore, it remains to show that $\delta(G)>89/325$.

If $q=2$ then $H$ is the normaliser of a Sylow $7$-subgroup and it is easy to verify the desired bound using {\sc Magma}. For $q \geqs 3$ it will be convenient to work in the quasisimple group $L = Z.G$, where $Z = Z(L)$ has order $e$. Then $H = J/Z$ with $J = (q^2+q+1)^3.3^{1+2}.{\rm SL}_2(3)$ and we note that $\delta(G) = \delta(L,J)$.

Let $x \in L$ be a regular semisimple element. If $|C_L(x)| = \Phi_9$ then 
$(|J|,|C_L(x)|) = 1$ and by inspecting \cite{Lub} we see that there are at least 
$\Phi_1\Phi_3(q^3+2)/9$ distinct $L$-classes of such elements. Next assume $|C_L(x)| = \Phi_1\Phi_2\Phi_8$, in which case $C_L(x) = T$ is a cyclic maximal torus (see \cite{DF}) and $(|J|, |C_L(x)|)$ divides $q^2-1$. So if $x$ has fixed points on $L/J$, then $x \in S<T$, where $S$ is the unique subgroup of $T$ of order $q^2-1$. But $T$ is contained in a subgroup $M<G$ of type $(q^2-1) \times \O_{8}^{-}(q)$ and we deduce that $T < C_M(x)$, which is absurd since  $C_L(x) = T$. Therefore, $x$ is a derangement on $L/J$ and by inspecting \cite{Lub} we read off that $L$ has at least $\Phi_1^2\Phi_2^2\Phi_4/8$ distinct classes of such elements.

A similar argument shows that each $y \in L$ with $|C_L(y)| = \Phi_1\Phi_2\Phi_4\Phi_6$ is a derangement. To see this, first note that $C_L(y) = T$ is a cyclic maximal torus and $(|J|,|C_L(y)|)$ divides $(q-1)(q^3+1)$. Let $S$ be the unique subgroup of $T$ of order $(q-1)(q^3+1)$ and note that $y \in S$ if $y$ has fixed points. Now $T$ is contained in a subgroup $M$ of type $(q-1) \times \O_{10}^{+}(q)$ and we observe that $T<C_M(S)$. So if $y$ has fixed points, then $T<C_M(y)$ and we reach a contradiction as before. Therefore, $y$ is a derangement and \cite{Lub} indicates that $L$ has at least $q^3\Phi_1^2\Phi_2/12$ classes of regular semisimple elements of this form. 

Putting this together, we deduce that
\[
\delta(G) \geqs \frac{\Phi_1\Phi_3(q^3+2)}{9\Phi_9}+\frac{\Phi_1^2\Phi_2^2\Phi_4}{8\Phi_1\Phi_2\Phi_8}+\frac{q^3\Phi_1^2\Phi_2}{12\Phi_1\Phi_2\Phi_4\Phi_6} > \frac{89}{325}
\]
for all $q \geqs 3$.

\vs

\noindent \emph{Case 3. $G = {}^2E_6(q)$}

\vs

Next assume $G = {}^2E_6(q)$ and $H = (q^2-q+1)^3/e.3^{1+2}.{\rm SL}_2(3)$, where $e = (3,q+1)$ and $q \geqs 3$ (see \cite[Table 5.2]{LSS}). This is essentially  identical to the previous case and so we only give brief details.

First we note that $|H|$ is indivisible by $\Phi_8$ and $\Phi_{18}/e$, so by appealing to \cite[Theorem 7.7]{GM} we deduce that $G = \{1\} \cup CD$, where $C$ and $D$ are conjugacy classes of derangements. 

As in Case 2, it is convenient to work in the quasisimple group $L = Z.G$ and we set $H = J/Z$, where $Z = Z(L)$ has order $e$. By arguing as above, we can show that every regular semisimple element $x \in L$ with $|C_L(x)| = \Phi_{18}$, $\Phi_1\Phi_2\Phi_8$ or $\Phi_1\Phi_2\Phi_3\Phi_4$ is a derangement, and then by inspecting \cite{Lub} we conclude that 
\[
\delta(G) \geqs \frac{\Phi_2\Phi_6( q^3-2 )}{9\Phi_{18}} + \frac{\Phi_1^2\Phi_2^2\Phi_4}{8\Phi_1\Phi_2\Phi_8} + \frac{q^3\Phi_1\Phi_2^2}{12\Phi_1\Phi_2\Phi_3\Phi_4} > \frac{89}{325}
\]
for all $q \geqs 3$. 

\vs

\noindent \emph{Case 4. $G = E_8(q)$}

\vs

Finally, let us assume $G = E_8(q)$, in which case $H$ is one of the following:
\[
(q^4-q^2+1)^2.(12 \circ {\rm GL}_2(3)),\;\; (q^8\pm q^7 \mp q^5-q^4 \mp q^3 \pm q +1){:}30.
\]
First observe that every element in $G$ of order $\Phi_{20}$ or $\Phi_{24}$ is a derangement (since $|H|$ is indivisible by these numbers), so \cite[Theorem 7.7]{GM} implies that $G = \{1\} \cup CD$ for classes $C,D$ of derangements. Therefore, we just need to verify the bound $\delta(G) > 89/325$ and we will consider the three possibilities for $H$ in turn.

\vs

\noindent \emph{Case 4(a). $H = (q^4-q^2+1)^2.(12 \circ {\rm GL}_2(3))$}

\vs

Let $x \in G$ be a regular semisimple element and recall that $|x| \geqs 30$ by Lemma \ref{l:cox}. First observe that $(|H|,|C_G(x)|) = 1$ if $|C_G(x)| = \Phi_{i}$ with $i \in \{20,24,15,30\}$, so $x$ is a derangement and by inspecting \cite{Lub}, we see that
\[
\frac{(q^2+1)( q^6-2q^4+3q^2-4)}{20\Phi_{20}} + \frac{q^4(q^4-1)}{24\Phi_{24}} + \frac{q(q^4-1)}{30}\left(\frac{q^3-q^2+1}{\Phi_{15}} + \frac{q^3+q^2-1}{\Phi_{30}}\right)
\]
is the contribution to $\delta(G,H)$ from these elements.

Next we claim that $x$ is a derangement if  
\[
|C_G(x)| \in \{ \Phi_1^2\Phi_7, \, \Phi_2^2\Phi_{14}, \, \Phi_1\Phi_2\Phi_7, \, \Phi_1\Phi_2\Phi_{14} \}.
\]
To see this, first assume $|C_G(x)| = \Phi_1^2\Phi_7$ and set $T = C_G(x)$ and $d = (|H|,|T|)$. Then $d$ divides $(q-1)^2$ and we note that $T = (q-1) \times (q^7-1)$ is contained in a subgroup $M$ of type ${\rm SL}_2(q) \times E_7(q)$. If $x$ has fixed points, then $|x|$ divides $d$. But $T<C_M(S)$ for every subgroup $S<T$ of order dividing $d$, so $T < C_M(x)$ and we reach a contradiction. The other three cases are very similar and we omit the details (in every case, we can embed $T = C_G(x)$ in a subgroup of type ${\rm SL}_2(q) \times E_7(q)$). By inspecting \cite{Lub}, we see that the contribution to $\delta(G,H)$ from these elements is at least
\[
\frac{q(q^6-1)}{28}\left(\frac{q-2}{\Phi_1^2\Phi_7} + \frac{q}{\Phi_2^2\Phi_{14}} +\frac{q}{\Phi_1\Phi_2\Phi_7} +\frac{q-2}{\Phi_1\Phi_2\Phi_{14}}\right).
\]

Putting all of this together, we deduce that $\delta(G,H) > 89/325$ if $q \geqs 7$, so we may assume $q \leqs 5$.  

We handle the remaining groups with $q \leqs 5$ by identifying some additional derangements. If $|C_G(x)| = \Phi_1\Phi_2\Phi_9$, then $(|H|,|C_G(x)|) < 30$ and thus $x$ is a derangement by Lemma \ref{l:cox}. And the same conclusion holds if $|C_G(x)| = \Phi_1\Phi_2\Phi_{18}$ and $q \leqs 4$. Therefore, we can add
\[
\frac{q(q^3+2)(q^3-1)(q-1)}{36\Phi_1\Phi_2\Phi_9} + (1-\delta_{5,q})\frac{(q^3+2)(q^3+1)(q+1)(q-2)}{36\Phi_1\Phi_2\Phi_{18}}
\]
to our previous lower bound and this is good enough to force $\delta(G,H) > 89/325$ if $q \in \{3,4,5\}$. Finally, if $q =2$ and $|C_G(x)| = q^{8}-1$, then $(|H|,|C_G(x)|) = 3$ and so we can include an additional contribution of $q^4(q^4-2)/16(q^8-1)$. One can now check that the desired bound holds for $q=2$.

\vs

\noindent \emph{Case 4(b). $H= \Phi_{30}{:}30 = (q^8+ q^7 - q^5-q^4 - q^3 + q +1){:}30$}

\vs

We claim that each regular semisimple element $x \in G$ with 
\begin{equation}\label{e:list000}
|C_G(x)| \in \{ \Phi_{20},\, \Phi_{24}, \, \Phi_1\Phi_2\Phi_4\Phi_8,\, \Phi_1\Phi_2\Phi_4\Phi_{12},\, \Phi_1^2\Phi_7, \, \Phi_2^2\Phi_{14}, \, \Phi_1\Phi_2\Phi_7, \, \Phi_1\Phi_2\Phi_{14}\}
\end{equation}
is a derangement. To see this, set $T = C_G(x)$ and observe that $d = (|H|,|T|)$ divides $30$ in every case. Therefore, in view of Lemma \ref{l:cox}, we immediately reduce to the case where $d = |x| = 30$. In particular, $x$ is a derangement if $|C_G(x)| = \Phi_{20}$ or $\Phi_{24}$ since $d \leqs 5$. 

In each of the remaining cases, we can embed $T$ in a maximal rank subgroup $M$ of $G$ with the property that $T<C_M(S)$ for every order $d$ subgroup $S$ of $T$. So if $x$ has fixed points, then $T<C_M(x) \leqs C_G(x) = T$, which is absurd. 

For example, if $|T| = \Phi_1\Phi_2\Phi_4\Phi_8$ then $d$ divides $q^4-1$ and we can embed the cyclic group $T$ in a subgroup $M$ of type ${\rm SL}_9(q)$ such that $T<C_M(S)$, where $S$ is the unique order $d$ subgroup of $T$. Similarly, if $|T| = \Phi_1\Phi_2\Phi_4\Phi_{12}$ then $T$ is cyclic, $d$ divides $q^4-1$ and we proceed by embedding $T$ in $M = {\rm SU}_5(q^2)$. And in each of the four remaining cases, we can argue by embedding $T$ in a subgroup $M$ of type ${\rm SL}_2(q) \times E_7(q)$.

Using \cite{Lub} to calculate the total number of regular semisimple elements $x \in G$ with $|C_G(x)|$ as in \eqref{e:list000}, we deduce that $\delta(G,H) > 89/325$ for all $q \geqs 3$. Finally, for $q=2$ it is easy to check that $(|H|,|C_G(x)|)<30$ for all
\[
|C_G(x)| \in \{ \Phi_{15},\, \Phi_1\Phi_2\Phi_9, \, \Phi_1\Phi_2\Phi_{18},\, \Phi_1\Phi_2\Phi_4\Phi_5, \, \Phi_1\Phi_2\Phi_4\Phi_{10} \}
\]
and we get $\delta(G,H) > 89/325$ by including the additional contribution from these elements.

\vs

\noindent \emph{Case 4(c). $H= \Phi_{15}{:}30 = (q^8-q^7+q^5-q^4+q^3-q+1){:}30$}

\vs

This is entirely similar to the previous case, the only difference being that the elements with $|C_G(x)| = \Phi_{30}$ are derangements, rather than those with $|C_G(x)| = \Phi_{15}$. We omit the details.
\end{proof}

We are now ready to complete the proof of Proposition \ref{p:ex_sol}. Recall that this completes the proofs of Theorems \ref{t:main2} and \ref{t:main4}(ii) for exceptional groups.

\begin{proof}[Proof of Proposition \ref{p:ex_sol}]
In view of Lemmas \ref{l:ex_par} and \ref{l:ex_tor}, we may assume $H$ is neither a parabolic subgroup, nor the normaliser of a maximal torus. The relevant cases are labelled (a)-(f) in the proof of \cite[Proposition 7.1]{Bur21}, and we have already handled (a)-(c) in Lemma \ref{l:ex_small}.

First assume $(G,H) = (F_4(2), 3.{\rm U}_3(2)^2.3.2)$ or $({}^2E_6(2), 3.{\rm U}_3(2)^3.3^2.S_3)$. In both cases, the character table of $G$ is available in \cite{GAPCTL} and one can check that the crude bound in \eqref{e:crude1} implies that $\delta(G,H) > 1/2$. Using Lemma \ref{l:frob}, it is also easy to check that $G = C^2$, where $C$ is a conjugacy class of elements of order $17$.

Finally, let us assume $G = E_8(2)$ and $H = 3^2.{\rm U}_{3}(2)^4.3^2.{\rm GL}_2(3)$ is of type ${\rm SU}_3(2)^4$. As before, let $\Phi_i$ be the $i$-th cyclotomic polynomial evaluated at $q=2$. If $x \in G$ is any regular semisimple element with 
\[
|C_G(x)| \in \{ \Phi_{20}, \, \Phi_{24}, \, \Phi_{15}, \, \Phi_{30}, \, \Phi_2^2\Phi_{14},\, \Phi_1\Phi_2\Phi_7, \, \Phi_1\Phi_2\Phi_9, \, \Phi_1\Phi_2\Phi_{18}, \, \Phi_1\Phi_2\Phi_4\Phi_8 \}
\] 
then $(|H|,|C_G(x)|) < 30$ and thus $x$ is a derangement by Lemma \ref{l:cox}. By inspecting \cite{Lub}, we can calculate the total number $N$ of such elements and we check that $\delta(G,H) \geqs N/|G| > 89/325$. In addition, \cite[Theorem 7.7]{GM} implies that $G = \{1\} \cup CD$, where $C$ and $D$ are classes of elements of order $\Phi_{20} = 205$ and $\Phi_{24} = 241$, respectively. 
\end{proof}

\subsection{Classical groups}\label{ss:class_sol}

In order to complete the proof of Theorem \ref{t:main_sol}, we may assume $G$ is a classical group and so the possibilities for $G$ and $H$ are listed in Tables 16-19 of \cite{LZ}. Our main result is the following and we note that  this completes the proofs of Theorems \ref{t:main2} and \ref{t:main4}.

\begin{prop}\label{p:class_sol}
Let $G$ be a transitive finite simple classical group with soluble point stabiliser $H$. 

\vspace{1mm}

\begin{itemize}\addtolength{\itemsep}{0.2\baselineskip}
\item[{\rm (i)}] If $H \in \mathcal{S}^{+}$, then $\delta(G) > 89/325$. 
\item[{\rm (ii)}] If $H \in \mathcal{S}$, then $G = \Delta(G)^2$.
\end{itemize}
\end{prop}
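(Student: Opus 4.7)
The plan is to follow the same broad template already used for exceptional groups in Proposition \ref{p:ex_sol}: exploit the explicit Li--Zhang classification \cite{LZ} of soluble subgroups in $\mathcal{S}$ and $\mathcal{S}^{+}$, isolate a small number of low-rank exceptions to be treated computationally, and handle each infinite family by identifying several conjugacy classes of regular semisimple derangements whose combined density already exceeds $89/325$. For part (ii), Lemma \ref{l:easy2} reduces most of the work to verifying $\delta(G) > 1/2$, while in the residual cases we will combine this with a character-theoretic result to exhibit classes $C,D$ of derangements with $G = \{1\} \cup CD$ (this is where we typically invoke results of Guralnick--Malle \cite{GM,GM2} or the linear group results in Theorem \ref{t:psl}).

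First I would scan Tables 16--19 of \cite{LZ} and split the point stabilisers $H$ into four broad types: (1) parabolic subgroups (forcing $G$ to have small rank, e.g.\ ${\rm L}_n(q)$, ${\rm U}_n(q)$, ${\rm PSp}_n(q)$ and $\mathrm{P}\Omega_n^\e(q)$ in low dimension); (2) normalisers $N_G(T)$ of maximal tori, typically cyclic or extraspecial in type, e.g.\ Singer-like tori in $\mathcal{C}_3$; (3) normalisers of $\mathcal{C}_2$ imprimitive configurations such as $(q-1)^{n-1}{:}S_n$ or wreath products with $S_b$; and (4) $\mathcal{C}_6$ normalisers of symplectic-type $r$-groups. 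The sporadic low-rank cases (e.g.\ ${\rm L}_2(q)$ with small $q$, ${\rm L}_3(q)$, ${\rm U}_3(q)$, ${\rm PSp}_4(q)$, ${\rm U}_4(q)$, $\mathrm{P}\Omega_8^+(q)$ for small $q$) should be collected into a single computational lemma handled in \textsc{Magma} in exactly the manner of Lemma \ref{l:ex_small}: construct the character table of $G$, apply Lemma \ref{l:frob} to enumerate pairs $(C,D)$ with $G=\{1\}\cup CD$, and directly compute $\delta(G,H)$ for each conjugacy class of $H \in \mathcal{S}^{+}$.

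For each infinite family, I would then argue as in Lemmas \ref{l:ex_tor}--\ref{l:ex_tor2}. Using Zsigmondy's theorem \cite{Zsig}, I pick regular semisimple elements $x_1,\ldots,x_k \in G$ whose centraliser orders are products of cyclotomic factors $\Phi_i(q)$ with $i$ large enough that $(|H|,|C_G(x_j)|)$ is bounded by a small constant (typically smaller than any plausible order of $x_j$, so that Lemma \ref{l:cox}-type torus embedding arguments force $x_j$ to be a derangement: embed the cyclic torus $C_G(x_j)$ in a standard maximal-rank subgroup $M$ with $T < C_M(S)$ for every small subgroup $S\le T$, then a fixed point would give $T<C_M(x_j)=T$). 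The counts of such classes and centraliser orders are available from \cite{Lub} (or, for classical groups, from the well-known parametrisations of semisimple classes in terms of Frobenius-conjugacy classes of tori), and the resulting lower bound $\sum_j |x_j^G|/|G|$ is then shown by elementary polynomial estimates in $q$ to exceed $89/325$ once $q$ (or the rank) exceeds a small explicit threshold. The remaining small-$q$ cases are swept into the preceding computational lemma.

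The hard part will be the $\mathcal{C}_2$ subgroups of type $(q\pm 1)^{n-1}{:}S_n$ and their variants, together with parabolic cases in rank two or three, because here $|H|$ is divisible by almost every small cyclotomic factor and the torus-embedding trick that worked so cleanly in the exceptional case has less room; one must push the argument down to primitive prime divisors of $q^n - 1$ or $q^{n-1} - 1$ and carefully control centralisers of ``close to Singer'' elements, much in the spirit of the proof of Theorem \ref{t:psl} via Proposition \ref{p:msw} and \cite{GPPS}. For part (ii), note that in every case where we establish $\delta(G) > 1/2$ Lemma \ref{l:easy2} gives $G = \Delta(G)^2$ for free; only the finitely many small cases where $\delta(G) \leqslant 1/2$ (identified in the computational step) need a direct construction of classes $C,D$ of derangements with $G = \{1\} \cup CD$, which is checked with Lemma \ref{l:frob} on the character table. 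Once all four subgroup types and the computational lemma are in place, parts (i) and (ii) follow by a finite case analysis, with the only equality case $({}^2F_4(2)', 2^2.[2^8].S_3)$ in Theorem \ref{t:main_sol}(i) being strictly excluded here since $G$ is classical.
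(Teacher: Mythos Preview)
Your plan is broadly correct and matches the paper's template: split by the Li--Zhang classification, handle a finite list of small cases computationally, and for each infinite family exhibit enough regular semisimple derangements (via coprimality of $|H|$ and $|C_G(x)|$, or a torus-embedding argument) to force $\delta(G)>89/325$, invoking Lemma~\ref{l:easy2} for part~(ii) whenever $\delta(G)>1/2$ and otherwise checking $G=\{1\}\cup CD$ directly via Lemma~\ref{l:frob}.

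The main point on which you misjudge the shape of the argument is the difficulty of the $\mathcal{C}_2$ cases. You anticipate needing GPPS-type machinery for $(q\pm1)^{n-1}{:}S_n$ in arbitrary rank, but in fact the Li--Zhang tables force every infinite classical family with $H\in\mathcal{S}^+$ to live in dimension $n\leqslant 8$: the non-parabolic infinite families are exactly ${\rm GL}_1^\e(q^n)$ with $n$ prime, ${\rm GL}_1^\e(q)\wr S_n$ with $n\in\{3,4\}$, the $\mathcal{C}_6$ subgroup $3^{1+2}.{\rm Sp}_2(3)$ in ${\rm L}_3^\e(q)$, a ${\rm GU}_3(2)$ subfield in ${\rm U}_3(q)$, two ${\rm Sp}_4(q)$ families, and two ${\rm P\O}_8^+(q)$ families. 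The paper therefore organises the proof not by Aschbacher class but simply as parabolic versus non-parabolic, and within each block treats the handful of infinite families one at a time with explicit class-counting (often reading the permutation character directly, e.g.\ $\chi=1+{\rm St}$ for the Borel cases in ${\rm L}_3(q)$, ${\rm U}_3(q)$, ${\rm Sp}_4(q)$). The arguments are consequently more elementary and more explicit than your plan suggests; no large-rank analysis is needed at all. Your four-way split by Aschbacher type would work, but it obscures the fact that the problem is essentially finite-dimensional.
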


Throughout this section, $G$ will denote a finite simple classical group over $\mathbb{F}_q$, where $q=p^f$ and $p$ is a prime. We will write $V$ for the natural module and we set $n = \dim V$. In view of isomorphisms between some of the low dimensional groups, we may assume $G$ is one of the following:
\begin{equation}\label{e:class_list}
{\rm L}_n(q)  \, (n \geqs 2), \; {\rm U}_n(q) \, (n \geqs 3), \; 
{\rm PSp}_n(q) \, (n \geqs 4), \; {\rm P\O}_{n}^{\e}(q) \, (n \geqs 7)
\end{equation}
In addition, we may exclude the following groups
\begin{equation}\label{e:omit}
{\rm L}_2(q) \, (q \leqs 9), \, {\rm L}_4(2), \, {\rm U}_3(3), \, {\rm Sp}_4(2)', \, {\rm PSp}_4(3)
\end{equation}
due to the existence of the following isomorphisms (see \cite[Proposition 2.9.1]{KL}, for example):
\[
\def\arraystretch{1.5}
\begin{array}{c}
{\rm L}_2(4) \cong {\rm L}_2(5) \cong A_5,\; {\rm L}_2(7) \cong {\rm L}_3(2),\; {\rm L}_2(8) \cong {}^2G_2(3)',\;  {\rm L}_2(9) \cong {\rm Sp}_4(2)' \cong A_6, \\
{\rm L}_4(2) \cong A_8, \; {\rm U}_3(3) \cong G_2(2)', \; {\rm PSp}_4(3) \cong {\rm U}_4(2)
\end{array}
\]

\subsubsection{Two-dimensional linear groups}

\begin{lem}\label{l:psl2_sol}
The conclusion to Proposition \ref{p:class_sol} holds when $G = {\rm L}_2(q)$.
\end{lem}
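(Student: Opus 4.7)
By the isomorphisms in \eqref{e:omit} we may assume $q \geqs 11$. Part (ii) is then immediate from Theorem \ref{t:rankone}: since $(G,H) \neq ({\rm L}_2(7), S_4)$, there exist conjugacy classes $C,D$ of derangements with $G = \{1\} \cup CD$, whence $G = \Delta(G)^2$.

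For part (i), I would appeal to Dickson's classification of the subgroups of ${\rm L}_2(q)$ (conveniently tabulated in \cite[Tables 16--19]{LZ}), which implies that every soluble $H \in \mathcal{S}^{+}$ is contained in a soluble subgroup of $G$ of one of the following four types: (a) the Borel subgroup $B = [q]{:}C_{(q-1)/d}$, where $d = (2,q-1)$; (b) a split-torus normaliser $D_{2(q-1)/d}$; (c) a non-split-torus normaliser $D_{2(q+1)/d}$; or (d) $S_4$ or $A_4$, occurring only under specific congruence conditions on $q$. For each type I would compute or estimate $\delta(G,H)$ directly. In case (a), the permutation character is $1^{G}_{B} = 1 + {\rm St}$, so the derangements on $G/B$ are precisely the non-identity elements lying in a non-split torus; counting via the $d|G|/(2(q+1))$ non-split tori, each contributing $(q+1)/d-1$ non-identity elements, yields
\[
\delta(G,B) = \frac{q+1-d}{2(q+1)} \geqs \frac{5}{12}
\]
for $q \geqs 11$. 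Cases (b) and (c) are analogous in spirit: using that unipotent elements together with every non-identity element of the opposite torus are derangements produces a lower bound comfortably exceeding $1/2$. In case (d), $|H| \leqs 24$ is bounded while $|G| \to \infty$, so a trivial bound in the style of \eqref{e:easy} gives $\delta(G,H) \to 1$, leaving only finitely many small $q$ (where $S_4$ or $A_4$ appears, e.g. $q \in \{11,23\}$) to be settled directly from the character table, as in the proof of Lemma \ref{l:ex_small}.

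The main obstacle is the careful treatment of subgroups $H \in \mathcal{S}^{+} \setminus \mathcal{S}$, which arise as $M \cap G$ for $M$ a maximal subgroup of an almost simple overgroup such as ${\rm PGL}_2(q)$ or ${\rm P\Gamma L}_2(q)$; these include strictly smaller soluble groups such as $D_{q-1}$, $D_{q+1}$ and $A_4 < S_4$. For every such $H$ the set $\Delta(G,H)$ only grows relative to the maximal case, so the estimates above transfer a fortiori, but some bookkeeping is required to ensure that every conjugacy class of soluble $H$ is accounted for. Any residual small-$q$ cases that are not covered by the asymptotic estimates can be verified computationally using the descent-through-the-subgroup-lattice strategy of Remark \ref{r:depth}, which in this low-rank setting terminates after only a few layers.
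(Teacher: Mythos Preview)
Your proposal is correct and follows essentially the same approach as the paper: reduce to $q\geqs 11$, dispose of part (ii) via Theorem~\ref{t:rankone}, and for part (i) run through the short list of soluble $H\in\mathcal{S}^{+}$ case by case, computing or bounding $\delta(G,H)$ directly from the character table of ${\rm L}_2(q)$. Your Borel computation matches the paper's exactly, and your treatment of the torus-normaliser and $A_4/S_4$ cases is a faithful sketch of what the paper does in full.

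One small organisational difference: the paper separates the subfield subgroup of type ${\rm GL}_2(3)$ (arising when $q=3^k$ for $k$ an odd prime, with $H={\rm L}_2(3)\cong A_4$) as its own case, whereas you absorb it into case (d) by isomorphism type. Your argument there---$|H|\leqs 24$ is bounded, so $\delta(G,H)\to 1$ and only finitely many $q$ need direct verification---covers this case as well, but it is worth flagging explicitly since it is a distinct family in the subgroup-structure tables and your parenthetical examples $q\in\{11,23\}$ suggest only the $\mathcal{C}_6$ occurrence.
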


\begin{proof}
As explained above, we may assume $q \geqs 11$. By Theorem \ref{t:rankone} we have $G = \Delta(G)^2$, so we just need to show that $\delta(G,H) > 89/325$. 
The possibilities for $H$ are recorded in \cite[Table 8.1]{BHR} and we refer the reader to \cite[Section 38]{Dorn} for the character table of $G$. Set $d = (2,q-1)$. 

First assume $H = P_1$ is a Borel subgroup, which allows us to identify $\O=G/H$ with the set of $1$-dimensional subspaces of $V$. Then $x \in G$ is a derangement if and only if it acts irreducibly on $V$, so either $x$ is semisimple and $|C_G(x)| = (q+1)/d$, or $q \equiv 3 \imod{4}$ and $x$ is an involution. As a consequence, we deduce that 
\begin{equation}\label{e:psld}
\delta(G,H) = \frac{q-1+\delta_{2,p}}{2(q+1)} \geqs \frac{5}{12}
\end{equation}
for all $q \geqs 11$ and the result follows.

Next assume $H$ is of type ${\rm GL}_1(q) \wr S_2$, so $H = D_{d(q-1)}$. Here $x \in G$ is a derangement if and only if $x$ is semisimple with $|C_G(x)| = (q+1)/d$, or if $|x| = p$ is odd. So for $p=2$ we see that \eqref{e:psld} holds, while we get 
\[
\delta(G,H) \geqs \frac{q^2-1}{|G|} + \frac{q-3}{2(q+1)} = \frac{q^2+q+4}{2q(q+1)} > \frac{1}{2}
\]
if $p >2$. Similarly, if $H$ is of type ${\rm GL}_1(q^2)$ then $H = D_{d(q+1)}$ and $x \in G$ is a derangement if and only if $x$ is semisimple with $|C_G(x)| = (q-1)/d$, or if $|x| = p$ is odd. So for $p=2$ we get
\[
\delta(G,H) = \frac{q-2}{2(q-1)} \geqs \frac{7}{15}
\]
for all $q \geqs 16$, and for $p>2$ and $q \geqs 11$ we compute
\[
\delta(G,H) \geqs \frac{q^2-1}{|G|} + \frac{q-5}{2(q-1)} = \frac{q^2-q-4}{2q(q-1)} \geqs \frac{53}{110}.
\]

Next assume $q = 3^k$ and $H$ is a subfield subgroup of type ${\rm GL}_2(3)$, where $k$ is an odd prime. Here $H = {\rm L}_2(3) \cong A_4$ and $x \in G$ is a derangement if and only if $|x| > 3$, so we get
\[
\delta(G,H) = \frac{1}{|G|}\left(|G| - \frac{|G|}{q+1} - q^2\right) = \frac{q(q-3)}{q^2-1} \geqs \frac{81}{91}.
\]

Finally, suppose $q = p \geqs 11$ and $H$ is of type $2^{1+2}_{-}.{\rm O}_{2}^{-}(2)$, so $H \in \{A_4,S_4\}$, with $H = S_4$ if and only if $q \equiv \pm 1 \imod{8}$  (see \cite[Table 8.1]{BHR} for the precise conditions on $q$ required for maximality). Here the set of nontrivial elements with fixed points comprise the unique classes of elements of order $2$ and $3$, plus one class of elements of order $4$ if $H = S_4$. This implies that
\[
\delta(G,H) \geqs \frac{1}{|G|}\left(|G| - 1 - \frac{5}{2}q(q+1)\right) = \frac{q^3-5q^2-6q-2}{q(q^2-1)} > \frac{1}{2}
\]
for all $q \geqs 13$ (with equality if $q \equiv 1 \imod{24}$, for example). If $q = 11$, then $H = A_4$ and we compute $\delta(G) = 32/55$.
\end{proof}

For the remainder, we may assume $G \ne {\rm L}_2(q)$. We now divide the possibilities for $H$ according to whether or not $H$ is a parabolic subgroup. The cases where $H$ is parabolic will be handled in Section \ref{sss:parab} and the possibilities that arise are recorded in \cite[Lemma 5.4]{Bur21}; they comprise a handful of ``sporadic" cases, involving certain low-dimensional groups defined over small fields (see Table \ref{tab:parab1}), together with three infinite families $(G,H)$ where $G = {\rm L}_3^{\e}(q)$ or ${\rm Sp}_4(q)$ and $H$ is a Borel subgroup. The remaining non-parabolic subgroups are listed in \cite[Lemma 6.2]{Bur21} and they will be treated in Section \ref{sss:nparab}.

\subsubsection{Parabolic subgroups}\label{sss:parab}

In this section we handle the case where $G \ne {\rm L}_2(q)$ and $H$ is a parabolic subgroup. We fix our notation for parabolic subgroups: 

\vspace{1mm}

\begin{itemize}\addtolength{\itemsep}{0.2\baselineskip}
\item[{\rm (a)}] We write $P_k$ for the stabiliser in $G$ of a $k$-dimensional totally singular subspace of $V$ (if $G = {\rm L}_n(q)$ then all subspaces of $V$ are totally singular). 

\item[{\rm (b)}] If $G = {\rm L}_n(q)$ and $1 \leqs k < n/2$, then $P_{k,n-k}$ denotes the stabiliser of a flag $0 < U < W< V$, where $\dim U = k$ and $\dim W = n-k$.

\item[{\rm (c)}] For $G = {\rm P\O}_8^{+}(q)$, we define $P_{1,3,4}$ to be the image of the parabolic subgroup
\[
[q^{11}]{:}\left(\frac{q-1}{d}\right)^2.\frac{1}{d}{\rm GL}_2(q).d^2 < \O_8^{+}(q),
\]
where $d= (2,q-1)$ and $\frac{1}{d}{\rm GL}_2(q)$ is the unique subgroup of ${\rm GL}_2(q)$ with index $d$.

\item[{\rm (d)}] If $G = {\rm Sp}_4(q)$ with $q \geqs 4$ even, then $P_{1,2} = [q^4]{:}(q-1)^2$ is a Borel subgroup of $G$ (that is to say, it is the normaliser of a Sylow $2$-subgroup of $G$).
\end{itemize}

None of the subgroups $H$ in (b), (c) or (d) are maximal in $G$, but they are of the form $H = M \cap G$ for some maximal subgroup $M$ of an almost simple group with socle $G$.

The possibilities for $G$ and $H$ with $H \in \mathcal{S}^{+}$ are recorded in \cite[Lemma 5.4]{Bur21} and we begin by handling the cases arising in part (i) of this lemma. These are the cases listed in Table \ref{tab:parab1}, recalling that we may (and do) exclude the classical groups in \eqref{e:omit}.

\begin{lem}\label{l:parab1}
The conclusion to Proposition \ref{p:class_sol} holds if $(G,H)$ is one of the cases in Table \ref{tab:parab1}. 
\end{lem}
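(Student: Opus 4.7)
The plan is to verify both assertions computationally in \textsc{Magma}, case by case, since Table \ref{tab:parab1} contains only finitely many sporadic pairs $(G,H)$ involving low-dimensional classical groups over small fields. For each such pair, I would first construct $G$ in a convenient faithful representation of low degree (for the classical groups in question, a natural matrix or minimal-degree primitive permutation representation is available) and then realise the maximal parabolic $H$ either directly from its Levi decomposition or by extracting a representative of the appropriate conjugacy class via \textsf{MaximalSubgroups}. Using \textsf{ClassesOfConjugacyClasses} of $G$ and the fusion map from $H$-classes to $G$-classes, I would then determine the set $\Delta(G,H)$ of derangements explicitly and read off $\delta(G,H)$ as a rational number, verifying that it strictly exceeds $89/325$ in every case (this handles part (i)).

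For part (ii), whenever $H$ is maximal in $G$ I would construct the character table of $G$ and apply the Frobenius formula of Lemma \ref{l:frob} to the set $\mathcal{C}(G)$ of conjugacy classes $C$ with $G = C^2$, exactly as in the proof of Proposition \ref{p:spor_width}. It suffices to exhibit, in each case, a class of derangements $C$ with $G = C^2$, or failing that, a pair of classes $C,D$ of derangements with $G = \{1\} \cup CD$; both checks reduce to inspecting which classes in $\mathcal{C}(G)$ (respectively, which pairs) are disjoint from $H$, and can be carried out almost instantaneously. As a fallback, in any borderline situation the trivial bound $\delta(G,H) > 1/2$ together with Lemma \ref{l:easy2} already yields $G = \Delta(G)^2$, and the computed values of $\delta(G,H)$ in part (i) are comfortably above $1/2$ in all but possibly a handful of entries of Table \ref{tab:parab1}.

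The main obstacle is purely bookkeeping: making sure that every entry of Table \ref{tab:parab1} is covered, including the non-maximal parabolics $P_{k,n-k}$ in ${\rm L}_n(q)$, the flag stabiliser $P_{1,3,4}$ in ${\rm P\O}_{8}^{+}(q)$, and the Borel $P_{1,2}$ in ${\rm Sp}_4(q)$ with $q$ even, since these do not appear directly as maximal subgroups of $G$ and must be obtained as $M \cap G$ for an appropriate maximal subgroup $M$ of an almost simple overgroup. For these non-maximal cases I would first locate the relevant maximal $M < {\rm Aut}(G)$ via \textsf{MaximalSubgroups}(${\rm Aut}(G)$) and then intersect with $G$, after which the same calculation of $\delta(G,H)$ and of a suitable class product proceeds as before. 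Once every row has been ticked off, both (i) and (ii) follow.
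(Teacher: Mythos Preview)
Your approach is essentially the same computational strategy as the paper's, and it would work in all but one case. The genuine gap is the pair $(G,H) = ({\rm L}_3(2), P_1)$ or $({\rm L}_3(2), P_2)$. Here $G \cong {\rm L}_2(7)$, $H \cong S_4$, and $\Delta(G)$ consists precisely of the two classes $C,D$ of elements of order $7$. One computes $C^2 = D^2 = \{x \in G : |x| \ne 1,4\}$ and $CD = \{x \in G : |x| \ne 2\}$, so neither $G = C^2$ nor $G = \{1\} \cup CD$ holds; moreover $\delta(G,H) \approx 0.285 < 1/2$, so your Lemma~\ref{l:easy2} fallback does not apply either. All three of your proposed tests therefore fail simultaneously. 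The paper resolves this by observing that $G = C^2 \cup CD$ (equivalently, by appealing to Proposition~\ref{p:rankone}(iv)), which still gives $G = \Delta(G)^2$. You need to add this extra check, or at least flag that when the first two tests fail you will verify $G = \Delta(G)^2$ directly from the full product $\Delta(G)\cdot\Delta(G)$.

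A minor bookkeeping point: the Borel $P_{1,2}$ in ${\rm Sp}_4(q)$ with $q$ even is not in Table~\ref{tab:parab1}; it is handled separately in Lemma~\ref{l:sp4}. The non-maximal parabolics you actually need to construct here are $P_{k,n-k}$ in ${\rm L}_n(q)$ and $P_{1,3,4}$ in ${\rm P\O}_8^{+}(q)$, and for these only part~(i) is required since they lie in $\mathcal{S}^{+}\setminus\mathcal{S}$.
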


{\scriptsize
\begin{table}
\[
\begin{array}{lll} \hline
G & \mbox{Type of $H$} & \delta(G) \\ \hline
{\rm L}_3(2) & P_1, \, P_2,\, P_{1,2}  & 0.285,\, 0.285,\, 0.619 \\
{\rm L}_3(3) & P_1, \, P_2,\, P_{1,2}  & 0.307,\, 0.307,\, 0.682 \\
{\rm L}_4(3) & P_2, \, P_{1,3}  & 0.507,\, 0.623 \\
{\rm L}_5(2) & P_{2,3}  & 0.679 \\
{\rm L}_5(3) & P_{2,3} & 0.731  \\
{\rm L}_6(2) & P_{2,4} & 0.797  \\
{\rm L}_6(3) & P_{2,4} & 0.848 \\
{\rm U}_4(2) & P_1 &  0.422 \\
{\rm U}_4(3) & P_1 & 0.485  \\
{\rm U}_5(2) & P_1 & 0.455  \\
{\rm Sp}_6(2) & P_2 & 0.453  \\
{\rm PSp}_6(3) & P_2 & 0.496  \\
\O_7(3) & P_2 & 0.524 \\
\O_{8}^{+}(2) & P_2, \, P_{1,3,4} & 0.613, \, 0.804 \\
{\rm P\O}_8^{+}(3) & P_2, \, P_{1,3,4} & 0.656, \, 0.839 \\ \hline
\end{array}
\]
\caption{The groups $(G,H)$ in Lemma \ref{l:parab1}}
\label{tab:parab1}
\end{table}
}

\begin{proof}
This is a straightforward {\sc Magma} computation. In each case, we construct $G$ and $H$ in terms of a permutation representation of $G$ of minimal degree, and by computing conjugacy classes we determine the set $\Delta(G)$ of derangements with respect to the action of $G$ on $G/H$. This allows us to compute $\delta(G)$, which is recorded in the final column of Table \ref{tab:parab1} to $3$ significant figures. If $\delta(G) > 1/2$ then $G = \Delta(G)^2$ by Lemma \ref{l:easy2}. Otherwise, we use {\sc Magma} to construct the character table of $G$ and we apply  Lemma \ref{l:frob} to check that either $G = {\rm L}_3(2)$ and $H \in \{P_1,P_2\}$, or there exist classes $C,D$ of derangements such that $G = \{1\} \cup CD$. Note that in the former case we have $G \cong {\rm L}_2(7)$, $H \cong S_4$ and thus $G = C^2 \cup CD$ for the two classes $C,D$ of elements of order $7$ (see Proposition \ref{p:rankone}(iv)).
\end{proof}

To complete the proof for parabolic subgroups, it just remains to consider the three infinite families recorded in \cite[Lemma 5.4(ii)]{Bur21}:

\vspace{1mm}

\begin{itemize}\addtolength{\itemsep}{0.2\baselineskip}
\item[{\rm (a)}] $G = {\rm L}_3(q)$, $H = P_{1,2}$ and $q \geqs 4$;
\item[{\rm (b)}] $G = {\rm U}_3(q)$, $H = P_1$ and $q \geqs 4$;
\item[{\rm (c)}] $G = {\rm Sp}_4(q)$, $H = P_{1,2}$ and $q \geqs 4$ even.
\end{itemize}

\vspace{1mm}

Notice that in each case, $H = N_G(P)$ is a Borel subgroup, where $P$ is a Sylow $p$-subgroup of $G$. Also note that $H$ is non-maximal in $G$ in cases (a) and (c).

\begin{lem}\label{l:psl3}
The conclusion to Proposition \ref{p:class_sol} holds if $G = {\rm L}_3(q)$ and $H = P_{1,2}$.
\end{lem}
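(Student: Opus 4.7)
The plan is as follows. Since $H = P_{1,2}$ is a Borel subgroup of $G = {\rm L}_3(q)$, I identify $\O = G/H$ with the set of complete flags $0 < U < W < V$ in the natural $3$-dimensional module $V$. An element $x \in G$ has a fixed point on $\O$ if and only if any lift of $x$ to ${\rm SL}_3(q)$ can be placed in upper triangular form over $\mathbb{F}_q$, equivalently its characteristic polynomial splits over $\mathbb{F}_q$. Hence the derangements fall into two families according to the factorisation of the characteristic polynomial: type (a), elements with characteristic polynomial $(X-\l)(X^2+aX+b)$ where the quadratic factor is irreducible over $\mathbb{F}_q$; and type (b), the Singer cycles of order $(q^2+q+1)/d$ with irreducible cubic characteristic polynomial, where $d = (3,q-1)$.

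A short verification confirms that both types are genuinely derangements. In case (a), the unique $x$-invariant subspaces of dimensions $1$ and $2$ form a direct sum decomposition of $V$, so they cannot both appear in a complete flag. In case (b), $x$ acts irreducibly on $V$, so no proper invariant subspace exists at all. Conversely, if the characteristic polynomial of $x$ splits over $\mathbb{F}_q$, then $x$ admits a Jordan basis over $\mathbb{F}_q$ and hence fixes the induced flag.

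It then suffices to count only the elements of type (a), which already gives enough. Lifting to ${\rm SL}_3(q)$, such an element has eigenvalues $\l \in \mathbb{F}_q^{\times}$ and a Frobenius-conjugate pair $\mu, \mu^q$ with $\mu \in \mathbb{F}_{q^2} \setminus \mathbb{F}_q$, subject to the determinant condition $\l \cdot N(\mu) = 1$, where $N \colon \mathbb{F}_{q^2}^{\times} \to \mathbb{F}_q^{\times}$ is the norm map. The centraliser in ${\rm SL}_3(q)$ has order $q^2-1$, and the conjugacy classes are parametrised by the $q(q-1)/2$ Frobenius orbits of such $\mu$ (with $\l$ determined by $\mu$). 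Summing class sizes and descending to ${\rm PSL}_3(q) = {\rm SL}_3(q)/Z$ (with $|Z| = d$) yields a type (a) contribution of exactly $\frac{q}{2(q+1)}|G|$ to $|\Delta(G)|$.

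Therefore
\[
\delta(G) \geqs \frac{q}{2(q+1)} \geqs \frac{2}{5} > \frac{89}{325}
\]
for all $q \geqs 4$, which is the required bound. Part (ii) of Proposition \ref{p:class_sol} is vacuous in this case because $H = P_{1,2}$ is non-maximal in $G$, so the proof is complete. No substantial obstacle is anticipated; the only subtlety is the bookkeeping when descending from ${\rm SL}_3(q)$ to ${\rm PSL}_3(q)$ modulo the scalar centre, ensuring that each ${\rm PSL}_3(q)$-element of type (a) has exactly $d$ preimages of type (a) in ${\rm SL}_3(q)$.
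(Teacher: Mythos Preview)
Your proof is correct. Both you and the paper ultimately identify the derangements as precisely the regular semisimple elements whose characteristic polynomial does not split over $\mathbb{F}_q$, i.e., those with centraliser order $(q^2-1)/d$ (your type (a)) or $(q^2+q+1)/d$ (your type (b)). The paper reaches this by decomposing the permutation character as $\chi = 1 + {\rm St} + 2\psi$ and reading off zeros from the character table of $G$; you reach it directly from the flag interpretation of $G/P_{1,2}$ and the observation that an element fixes a complete flag if and only if it is triangularisable over $\mathbb{F}_q$. The paper then counts both families and obtains the exact value (giving $\delta(G) \geqs 24/35$), whereas you count only the type (a) classes to get the weaker but still sufficient bound $\delta(G) \geqs q/(2(q+1)) \geqs 2/5$. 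Your route is more elementary in that it avoids character tables entirely; the paper's route yields the sharp value of $\delta(G)$.
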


\begin{proof}
Set $d = (3,q-1)$, $e = (q^2+q+1)/d$ and note that $H \in \mathcal{S}^{+} \setminus \mathcal{S}$, so it suffices to show that $\delta(G) > 89/325$. 

Let $\chi$ be the corresponding permutation character and note that the character table of $G$ is available in the literature (see \cite[Table 2]{SF}). As observed in the proof of \cite[Lemma 5.6]{Bur21}, we have 
\[
\chi = 1 + {\rm St} + 2\psi
\]
where ${\rm St} = \chi_{q^3}$ is the Steinberg character and $\psi = \chi_{q(q+1)}$ in the notation of \cite{SF}. From the character table, we deduce that $x \in G$ is a derangement if and only if $x$ is regular semisimple with $|C_G(x)| = (q^2-1)/d$ or $e$. Since there are $(e-1)/2-(q-1)/d-(3-d)/2$ classes of the first type, and $(e-1)/3$ classes of the second, it follows that 
\[
\delta(G) = \left(\frac{1}{2}(e-1)-\frac{q-1}{d}-\frac{3-d}{2}\right)\frac{d}{q^2-1}+\frac{1}{3}(e-1)\frac{1}{e} \geqs \frac{24}{35}
\]
for all $q \geqs 4$.
\end{proof}

\begin{lem}\label{l:psu3}
The conclusion to Proposition \ref{p:class_sol} holds if $G = {\rm U}_3(q)$ and $H = P_{1}$.
\end{lem}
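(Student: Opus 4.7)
The plan parallels the proof of Lemma \ref{l:psl3} for ${\rm L}_3(q)$. Since $P_1$ is a maximal soluble subgroup of $G = {\rm U}_3(q)$, we have $H \in \mathcal{S}$, and Theorem \ref{t:rankone} already furnishes a conjugacy class $C$ of derangements with $G = \{1\} \cup C^2$; in particular $G = \Delta(G)^2$. It therefore suffices to establish the bound $\delta(G) > 89/325$.

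The action of $G$ on $\Omega = G/P_1$ is the natural $2$-transitive action on the $q^3+1$ isotropic $1$-spaces of the Hermitian module $V$ over $\mathbb{F}_{q^2}$, so the permutation character is $\chi = 1 + {\rm St}$, where ${\rm St}$ is the Steinberg character of degree $q^3$. Thus $x \in G$ is a derangement if and only if $x$ has no $\mathbb{F}_{q^2}$-rational eigenvector spanning an isotropic line of $V$. I would run through the conjugacy classes of $G$ to show that the non-derangements comprise: the identity; every unipotent $u \ne 1$ (contained in some $G$-conjugate of the unipotent radical of $P_1$); every mixed element $x = su$ with $u \ne 1$ (whose unipotent part produces a fixed isotropic line inside an eigenspace of $s$); every non-regular semisimple element (whose $2$-dimensional eigenspace is a non-degenerate Hermitian plane carrying $q+1$ isotropic lines); and every regular semisimple element with centraliser the ``medium'' torus $T_2$ of order $(q^2-1)/d$ (with $d = (3,q+1)$), whose eigenvalues $\alpha, \beta, \beta^{-q}$ for $\alpha \in \mu_{q+1}$ and $\beta \in \mathbb{F}_{q^2}^\times \setminus \mu_{q+1}$ give exactly two fixed isotropic lines $\la e_\beta \ra$ and $\la e_{\beta^{-q}} \ra$ spanning a hyperbolic plane.

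The derangements are therefore exactly the regular semisimple elements whose centraliser is either the ``diagonal'' torus $T_1$ of order $(q+1)^2/d$ (three distinct eigenvalues in $\mu_{q+1}$, giving three pairwise orthogonal eigenlines, forced to be anisotropic because $V$ has Witt index $1$), or the Coxeter torus $T_3$ of order $(q^2-q+1)/d$ (eigenvalues $\alpha, \alpha^{-q}, \alpha^{q^2}$ forming a single ${\rm Frob}_{q^2}$-orbit over $\mathbb{F}_{q^6}$, so that no eigenline is $\mathbb{F}_{q^2}$-rational). A torus/Weyl count with $|N_G(T_1)/T_1| = 6$ and $|N_G(T_3)/T_3| = 3$, together with the observation that the proportion is unchanged upon passing from ${\rm SU}_3(q)$ to $G = {\rm PSU}_3(q)$ (both the class count and the class size scale by $d$), would then yield the closed formula
\[
\delta(G) = \frac{(q+1)^2 - 3(q+1) + 2d}{6(q+1)^2} + \frac{q^2 - q + 1 - d}{3(q^2 - q + 1)}.
\]
A short derivative check shows that each summand is non-decreasing in $q$, so for each value of $d \in \{1,3\}$ the infimum is attained at the smallest admissible $q$: the overall minimum is $\tfrac{2}{25} + \tfrac{4}{13} = 126/325$, attained at $(q,d) = (4,1)$ and comfortably exceeding $89/325$; moreover $\delta(G) \to 1/2$ as $q \to \infty$.

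The main obstacle is the form-theoretic eigenspace bookkeeping, particularly the Coxeter case, where one must verify that ${\rm Frob}_{q^2}$ cycles the three $\mathbb{F}_{q^6}$-eigenlines transitively so that no $\mathbb{F}_{q^2}$-rational invariant $1$-space exists at all; the remainder is a routine Weyl-group count. A cleaner alternative would invoke the Deligne--Lusztig sign formula ${\rm St}(x) = \epsilon_G \epsilon_{C_G(x)} |C_G(x)|_p$ to identify derangements via the $\mathbb{F}_q$-rank signs of the three maximal tori, but the direct geometric approach has the advantage of being entirely self-contained.
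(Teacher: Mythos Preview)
Your approach is essentially the same as the paper's: both identify $\chi = 1 + {\rm St}$, characterise the derangements as the regular semisimple elements lying in the diagonal and Coxeter tori, count classes, and arrive at the identical value $126/325$ at $q=4$. The paper simply cites the Simpson--Frame character table to read off the derangement classes, whereas you give the underlying eigenvalue/form argument, which is more self-contained.

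One small point worth noting: when $d=3$ there is an exceptional class in $G$, namely the image of ${\rm diag}(1,\omega,\omega^2)$, whose centraliser in $G$ has order $(q+1)^2$ rather than $(q+1)^2/d$ (the three ${\rm SU}_3$-conjugates $\hat{x},\omega\hat{x},\omega^2\hat{x}$ coincide, so $C_G(x)$ picks up an extra factor of $3$). Your verbal description of the derangements as having centraliser ``$T_1$ of order $(q+1)^2/d$'' misses this, and the parenthetical justification ``both the class count and the class size scale by $d$'' is not literally true for this class. However, your actual computation is done in ${\rm SU}_3(q)$ and then transferred by the (correct) observation that $\delta$ is unchanged on quotienting by the centre, so the formula you obtain is right and agrees with the paper's term $\delta_{3,d}\cdot (q+1)^{-2}$. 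The paper makes this exceptional class explicit; you absorb it silently into the ${\rm SU}_3$ count.
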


\begin{proof}
Set $d = (3,q+1)$, $e = (q^2-q+1)/d$ and let $\chi$ be the permutation character. Then $\chi = 1+{\rm St}$, where ${\rm St}$ is the Steinberg character, and by inspecting the character table of $G$ (see \cite[Table 2]{SF}) we deduce that $x \in G$ is a derangement if and only if $|C_G(x)| = (q+1)^2/d$ or $e$, or if $|x| = d = 3$ and $|C_G(x)| = (q+1)^2$. Therefore,
\[
\delta(G) = \frac{1}{6}(e-1)\frac{d}{(q+1)^2} + \frac{1}{3}(e-1)\frac{1}{e} + \delta_{3,d}\frac{1}{(q+1)^2} \geqs \frac{126}{325}
\]
for all $q \geqs 4$. Finally, Theorem \ref{t:rankone} implies that $G = \{1\} \cup C^2$ for a conjugacy class $C$ of derangements, whence $G = \Delta(G)^2$.
\end{proof}

\begin{lem}\label{l:sp4}
The conclusion to Proposition \ref{p:class_sol} holds if $G = {\rm Sp}_4(q)$ and $H = P_{1,2}$.
\end{lem}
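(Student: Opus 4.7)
The plan is as follows. Since $H = P_{1,2}$ is non-maximal in $G$, we have $H \in \mathcal{S}^{+} \setminus \mathcal{S}$, so Lemma \ref{l:sp4} only asserts the bound $\delta(G) > 89/325$ and it suffices to establish this inequality.

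Since $q$ is even, $H$ is a Borel subgroup of $G$, and every unipotent element of $G$ lies in a Sylow $2$-subgroup, hence in a conjugate of $H$. More generally, writing $x = su$ for the Jordan decomposition of an arbitrary $x \in G$ (with $s$ of odd order and $u$ a $2$-element commuting with $s$), one checks that $x$ lies in some Borel iff $s$ does, and that a semisimple element $s$ lies in a Borel iff it is $\mathbb{F}_q$-diagonalisable on the natural module $V$, iff $s$ is $G$-conjugate into the split maximal torus $T_1 \cong C_{q-1}^2$. Therefore the derangements are precisely those $x \in G$ whose semisimple part has an eigenvalue outside $\mathbb{F}_q$.

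To count them, I would enumerate the four conjugacy classes of non-split maximal tori $T_2, T_3, T_4, T_5$ of $G$, of orders $q^2-1$, $(q+1)^2$, $q^2-1$ and $q^2+1$ respectively, with Weyl normalisers $W_i = N_G(T_i)/T_i$ of orders $4, 8, 4, 4$ (these can be read off from the five conjugacy classes in the Weyl group $W(C_2)$ of order $8$, with $T_2, T_4$ corresponding to the two reflection classes, $T_3$ to $-1$, and $T_5$ to the Coxeter element). For a regular semisimple $x$ with centraliser exactly $T_i$, $i \geqs 2$, the element $x$ is a derangement and contributes $|T_i|_{\mathrm{reg}}/(|W_i||T_i|)$ to $\delta(G)$. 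A short inclusion-exclusion on the vanishing loci of the root characters, using that for $q$ even the only element of order dividing $2$ in a cyclic group of odd order is the identity, yields
\[
|T_2|_{\mathrm{reg}} = |T_3|_{\mathrm{reg}} = |T_4|_{\mathrm{reg}} = q(q-2), \qquad |T_5|_{\mathrm{reg}} = q^2.
\]

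Summing the four contributions gives
\[
\delta(G) \;\geqs\; \frac{q(q-2)}{2(q^2-1)} + \frac{q(q-2)}{8(q+1)^2} + \frac{q^2}{4(q^2+1)},
\]
whose limit as $q \to \infty$ is $\tfrac{7}{8}$ and whose value at $q = 4$ exceeds $\tfrac{1}{2}$, so the required bound $89/325$ is comfortably satisfied for every even $q \geqs 4$. The main obstacle is simply the careful identification of the four types of non-split torus together with their Weyl normalisers and their loci of regular elements; this is routine for $\mathrm{Sp}_4$ but must be done with some care, and the case $q = 4$ can alternatively be verified directly in \textsc{Magma}.
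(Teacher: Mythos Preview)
Your argument is correct, and it is in the same spirit as the paper's proof, but you do more work than is needed. The paper simply observes that for a regular semisimple element $x$ with $|C_G(x)| = (q+1)^2$ or $q^2+1$ one has $(|H|,|C_G(x)|)=1$ (since $|H| = q^4(q-1)^2$), so such $x$ are derangements; invoking Enomoto's class count then gives
\[
\delta(G) \;\geqs\; \frac{q(q-2)}{8(q+1)^2} + \frac{q^2}{4(q^2+1)} \;\geqs\; \frac{117}{425} \;>\; \frac{89}{325}
\]
for all $q \geqs 4$. You instead give a full characterisation of the derangements (semisimple part not $\mathbb{F}_q$-diagonalisable), which lets you also harvest the two tori of order $q^2-1$ and obtain the sharper bound
\[
\delta(G) \;\geqs\; \frac{q(q-2)}{2(q^2-1)} + \frac{q(q-2)}{8(q+1)^2} + \frac{q^2}{4(q^2+1)}.
\]
Both approaches establish the lemma. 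Yours gives a stronger numerical conclusion (indeed $\delta(G)>\tfrac12$ already at $q=4$), at the cost of justifying that $x=su$ lies in a Borel if and only if $s$ does; the paper's coprimality argument avoids this step entirely and is quicker, which is all that is required here.
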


\begin{proof}
Here $q \geqs 4$ and $H$ is non-maximal in $G$, so we just need to establish the bound $\delta(G) > 89/325$. If $x \in G$ is a regular semisimple element with $|C_G(x)| = (q+1)^2$ or $q^2+1$, then $(|H|,|C_G(x)|) = 1$ and thus $x$ is a derangement. The number of distinct conjugacy classes of such elements is recorded in \cite[Table IV-1]{Eno2} and we deduce that
\[
\delta(G) \geqs \frac{q(q-2)}{8(q+1)^2} + \frac{q^2}{4(q^2+1)} \geqs \frac{117}{425}> \frac{89}{325}
\]
for all $q \geqs 4$.
\end{proof}

\subsubsection{Non-parabolic subgroups}\label{sss:nparab}

In order to complete the proof of Proposition \ref{p:class_sol}, we may assume $H$ is a non-parabolic subgroup. The possibilities for $G$ and $H$ are described in  \cite[Lemma 6.2]{Bur21} and we begin by handling the ``sporadic" cases recorded in Table \ref{tab:nparab}, which involve certain low dimensional groups with $n \leqs 16$ and $q \leqs 4$. Note that in the third column of the table we indicate whether or not $H$ is a maximal subgroup of $G$ (this follows from the information in \cite{BHR,KL}).

\begin{lem}\label{l:nparab1}
The conclusion to Proposition \ref{p:class_sol} holds if $(G,H)$ is one of the cases in Table \ref{tab:nparab}.
\end{lem}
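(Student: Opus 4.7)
The plan is to handle these cases by direct computation in \textsf{Magma}, following exactly the strategy used in Lemma \ref{l:parab1}. In each row of Table \ref{tab:nparab}, the group $G$ has small dimension ($n \leqs 16$) and is defined over a small field ($q \leqs 4$), so we can construct $G$ as a permutation group of (near) minimal degree and realise a representative $H$ of the relevant conjugacy class of subgroups; when $H$ is non-maximal in $G$ we build it as $M \cap G$ for an appropriate maximal subgroup $M$ of an almost simple overgroup with socle $G$, using the third column of Table \ref{tab:nparab} to guide the construction.

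Once $G$ and $H$ are in hand, I would compute conjugacy class representatives of $G$ and, for each class $x^G$, test whether $x^G \cap H = \emptyset$ by running through the conjugacy classes of $H$ (or, when $H$ is large, by testing a fusion of class representatives). This determines $\Delta(G)$ as a union of $G$-classes and hence gives $\delta(G,H)$ exactly. For the lower bound in part (i) of Proposition \ref{p:class_sol}, one simply reads off whether $\delta(G) > 89/325$ in every case (which we expect to hold comfortably, since the offending value $89/325$ is attained only at $({}^2F_4(2)', 2^2.[2^8].S_3)$).

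For part (ii) of Proposition \ref{p:class_sol}, when $H \in \mathcal{S}$ (as indicated by the ``maximal'' flag in Table \ref{tab:nparab}) I would split into two subcases. If $\delta(G) > 1/2$, then Lemma \ref{l:easy2} immediately gives $G = \Delta(G)^2$. Otherwise, I would invoke the Frobenius formula (Lemma \ref{l:frob}): construct the character table of $G$ in \textsf{Magma}, compute
\[
N(g) = \frac{|C||D|}{|G|}\sum_{\chi \in {\rm Irr}(G)} \frac{\chi(x)\chi(y)\overline{\chi(g)}}{\chi(1)}
\]
for every pair of derangement classes $C = x^G$, $D = y^G$ and every class representative $g \in G$, and search for a pair $(C,D)$ with $N(g) > 0$ for all $g \neq 1$, which witnesses $G = \{1\} \cup CD \subseteq \Delta(G)^2$.

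The main (and essentially only) obstacle is computational size: the largest groups in Table \ref{tab:nparab}, such as classical groups in dimension $n$ up to $16$ over $\mathbb{F}_4$, can have very large conjugacy class counts and character tables, so constructing the character table of $G$ may be delicate. This can be mitigated by working with stored tables from the \textsf{GAP} Character Table Library via the fusion maps, exactly as in the treatment of sporadic groups in Section \ref{ss:prop_spor}, rather than recomputing from scratch, and by falling back on the cruder bound $\delta(G) > 1/2$ (combined with Lemma \ref{l:easy2}) whenever the inequality holds, which avoids ever having to run the Frobenius formula.
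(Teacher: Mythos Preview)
Your proposal is correct and follows essentially the same approach as the paper: a direct {\sc Magma} computation of $\delta(G)$ via conjugacy classes, invoking Lemma \ref{l:easy2} whenever $\delta(G)>1/2$ and falling back on the Frobenius formula (Lemma \ref{l:frob}) in the two remaining cases $(G,H)=({\rm L}_3(3),{\rm GL}_2(3))$ and $({\rm U}_4(2),{\rm GU}_3(2))$. The only practical difference is that the paper works in the quasisimple matrix group $L$ (using \textsf{ClassicalMaximals} to construct $J$ with $H=J/Z$) rather than in a permutation representation of $G$, which is more efficient for the larger entries such as ${\rm P\O}_{16}^{+}(3)$.
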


{\scriptsize
\begin{table}
\[
\begin{array}{llcl} \hline
G & \mbox{Type of $H$} & \mbox{Maximal} & \delta(G) \\ \hline
{\rm L}_3(2) & {\rm GL}_2(2) \oplus {\rm GL}_1(2) & \texttt{n} & 0.535 \\
{\rm L}_3(3) & {\rm GL}_2(3) \oplus {\rm GL}_1(3), \, {\rm O}_3(3) & \texttt{n},\, \texttt{y} & 0.418, \, 0.742 \\
{\rm L}_3(4) & {\rm GU}_3(2) & \texttt{y} & 0.685 \\
{\rm L}_4(3) & {\rm GL}_2(3) \wr S_2,\, {\rm O}_4^{+}(3) & \texttt{n}, \, \texttt{y} & 0.649, \,  0.804 \\
{\rm L}_6(3) & {\rm GL}_2(3) \wr S_3 & \texttt{y} & 0.907 \\
{\rm L}_8(3) & {\rm GL}_2(3) \wr S_4 & \texttt{y} & 0.976 \\
{\rm U}_4(2) & {\rm GU}_3(2) \perp {\rm GU}_1(2) & \texttt{y} & 0.418 \\
{\rm U}_4(3) & {\rm GU}_2(3) \wr S_2 & \texttt{y} & 0.646 \\
{\rm U}_5(2) & {\rm GU}_3(2) \perp {\rm GU}_2(2) & \texttt{y} & 0.538 \\
{\rm U}_6(2) & {\rm GU}_3(2) \wr S_2 & \texttt{y} & 0.583 \\
{\rm U}_6(3) & {\rm GU}_2(3) \wr S_3 & \texttt{y} & 0.883 \\
{\rm U}_8(3) & {\rm GU}_2(3) \wr S_4 & \texttt{y} & 0.971 \\
{\rm U}_9(2) & {\rm GU}_3(2) \wr S_3 & \texttt{y} & 0.897 \\
{\rm U}_{12}(2) & {\rm GU}_3(2) \wr S_4 & \texttt{y} & 0.947 \\
{\rm PSp}_6(3) & {\rm Sp}_2(3) \wr S_3 & \texttt{y} & 0.644 \\
{\rm PSp}_8(3) & {\rm Sp}_2(3) \wr S_4 & \texttt{y} & 0.824 \\
\O_7(3) & {\rm O}_4^{+}(3) \perp {\rm O}_3(3) & \texttt{y} & 0.644 \\
\O_8^{+}(2) & {\rm O}_2^{-}(2) \times {\rm GU}_3(2) & \texttt{n} & 0.744 \\
{\rm P\O}_{8}^{+}(3) & {\rm O}_4^{+}(3) \wr S_2 & \texttt{y} & 0.769 \\ 
{\rm P\O}_{12}^{+}(3) & {\rm O}_4^{+}(3) \wr S_3 & \texttt{y} & 0.955 \\
{\rm P\O}_{16}^{+}(3) & {\rm O}_4^{+}(3) \wr S_4 & \texttt{y} &  0.993 \\ \hline
\end{array}
\]
\caption{The groups $(G,H)$ in Lemma \ref{l:nparab1}}
\label{tab:nparab}
\end{table}
}

\begin{proof}
All of these cases can be handled using {\sc Magma} \cite{magma}. In the final column of Table \ref{tab:nparab} we calculate $\delta(G)$ to $3$ significant figures.

First assume $H$ is maximal in $G$ and write $G = L/Z$ and $H = J/Z$, where $L$ is one of the quasisimple matrix groups ${\rm SL}_n^{\e}(q)$, ${\rm Sp}_n(q)$, $\O_n^{\e}(q)$ and $Z = Z(L)$. In this situation, working with the standard matrix representation of $L$, we can use the function \textsf{ClassicalMaximals} to construct $J$ and we then compute $\delta(L,J) = \delta(G)$ by inspecting the  conjugacy classes in $L$ and $J$.

There are four remaining cases where $H$ is non-maximal. Here $H = M \cap G$, where $M$ is maximal in an almost simple group $A$ with socle $G$. To handle these cases, we use the function \textsf{AutomorphismGroupSimpleGroup} to construct a permutation representation of $A$ and we then construct $M$, and hence $H$, by using the function \textsf{MaximalSubgroups}. We then compute $\delta(G)$ by considering the conjugacy classes in $G$ and $H$.

If $\delta(G)>1/2$ then $G = \Delta(G)^2$ by Lemma \ref{l:easy2}. Otherwise, $(G,H) = ({\rm L}_3(3),{\rm GL}_2(3))$ or $({\rm U}_4(2),{\rm GU}_3(2))$ and by  working with the character table of $G$ and Lemma \ref{l:frob} it is easy to show that $G = \{ 1 \} \cup CD$ for certain conjugacy classes $C,D$ of derangements. 
\end{proof}

Finally, let us turn to the remaining infinite families $(G,H)$ with $H$ non-parabolic, which are recorded in Table \ref{tab:nparab2} (see \cite[Lemma 6.2(v)]{Bur21}). We  consider each case in turn.

{\scriptsize
\begin{table}
\[
\begin{array}{lll} \hline
G & \mbox{Type of $H$} & \mbox{Conditions} \\ \hline
{\rm L}_{n}^{\e}(q) & {\rm GL}_1^{\e}(q^n) & \mbox{$n \geqs 3$ prime} \\  
 & {\rm GL}_1^{\e}(q) \wr S_n & n \in \{3,4\} \\ 
 & 3^{1+2}.{\rm Sp}_2(3) & \mbox{$n=3$, $q = p \equiv \e \imod{3}$} \\
 & {\rm GU}_3(2) & \mbox{$(n,\e) = (3,-)$, $q = 2^k$, $k \geqs 3$ prime} \\ 
{\rm Sp}_4(q) & {\rm O}_2^{\e}(q) \wr S_2 & \mbox{$q \geqs 4$ even} \\ 
& {\rm O}_2^{-}(q^2) & \mbox{$q \geqs 4$ even} \\ 
{\rm P\O}_{8}^{+}(q) & {\rm O}_{2}^{-}(q^2) \times {\rm O}_{2}^{-}(q^2) & \\
& {\rm O}_{2}^{\e}(q) \wr S_4 & \\ \hline
\end{array}
\]
\caption{The infinite families with $H$ non-parabolic}
\label{tab:nparab2}
\end{table}
}

\begin{lem}\label{l:singer0}
The conclusion to Proposition \ref{p:class_sol} holds if $G = {\rm L}_{n}^{\e}(q)$ and $H$ is of type ${\rm GL}_1^{\e}(q^n)$, where $n \geqs 3$ is prime.
\end{lem}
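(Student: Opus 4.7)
The plan is to split into two sub-cases according to $n$ and, in each case, to handle separately the bound $\delta(G) > 89/325$ (needed when $H \in \mathcal{S}^{+}$) and the covering property $G = \Delta(G)^2$ (needed when $H \in \mathcal{S}$).

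First consider $n = 3$. Here $G = \Delta(G)^2$ has already been established: when $G = {\rm U}_3(q)$, Theorem \ref{t:rankone} gives $G = \{1\} \cup C^2$ for a class $C$ of derangements; when $G = {\rm L}_3(q)$, Lemma \ref{l:n3} does the same (via the element $y$ of order $(q^2-1)/d$, which is explicitly a derangement when $H$ is the Singer normaliser). Only the lower bound $\delta(G) > 89/325$ remains. I would verify this directly from the character tables in \cite[Table 2]{SF}: as in Lemmas \ref{l:psl3} and \ref{l:psu3}, the permutation character $1^G_H$ decomposes as $1 + {\rm St} + \psi$ for an explicit small sum $\psi$ of unipotent characters, so the set of $G$-classes of derangements can be read off, and a short tally of the regular semisimple contributions (centraliser orders $(q^2-1)/d$, $(q+\e)^2/d$, and $q^2-\e q + 1$) yields $\delta(G) > 89/325$ for all $q$, with {\sc Magma} dispatching a finite list of small $q$.

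Now assume $n \geqs 5$ is prime. Let $T \leqs H$ be the Singer torus, so $H = N_G(T)$ and $H/T \cong C_n$. The key observation is that any $y \in H$ either lies in $T$ or has order divisible by $n$. Hence, if $x \in G$ is a regular semisimple element with $C_G(x)$ not $G$-conjugate to $T$ and with $|x|$ coprime to $n$, then $x \notin T^g$ for any $g$, so $x \in \Delta(G)$. For the covering property, I would apply Proposition \ref{p:lev} with the class $C=(A, J_{n_1}(\lambda_1), \dots, J_{n_k}(\lambda_k))^G$ chosen so that some $n_i \geqs 2$, so $C$ contains elements whose Jordan form is non-semisimple. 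When $n \ne p$, the order $|H| = n\cdot |T|$ is coprime to $p$, so no element of $H$ can have such Jordan form; when $n = p$ the only unipotent elements in $H$ lie in regular unipotent classes inside $H \setminus T$, and this is avoided by choosing the partition with two distinct eigenvalues $\lambda_i$. In either case $C \subseteq \Delta(G)$ and Proposition \ref{p:lev} gives $G = C^2 = \Delta(G)^2$.

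To complete the density bound for $n \geqs 5$ I would enumerate regular semisimple derangements via the non-Singer tori. The $G$-classes of maximal tori are indexed by partitions $\lambda \perps n$ with Weyl group of order $|W_\lambda| = \prod_i i^{m_i} m_i!$, and the Singer torus is $T_{(n)}$ with $|W_{(n)}| = n$. Using Lübeck's data \cite{Lub}, the proportion of regular semisimple elements of torus type $T_\lambda$ is $|W_\lambda|^{-1} - O(1/q)$; summing over $\lambda \ne (n)$ gives
\[
\delta(G) \geqs \sum_{\lambda \perps n,\, \lambda \ne (n)} \frac{1}{|W_\lambda|} - O(1/q) \;=\; 1 - \frac{1}{n} - O(1/q),
\]
which exceeds $89/325$ comfortably for all $q$ above a small explicit constant. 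The residual small cases (in particular $(n,q) = (5,2)$ and possibly $(5,3),(7,2)$) would be handled by an explicit listing of several regular semisimple centraliser orders $|C_G(x)|$ coprime to $n$ and distinct from $|T|$, in the spirit of Cases 4(a)--(c) of Lemma \ref{l:ex_tor2}. The main obstacle I expect is precisely this effective control of the error term at the smallest permitted $q$: the asymptotic estimate is wasteful, and one must extract a sufficient reserve of non-Singer regular semisimple classes from \cite{Lub} to force $89/325$ uniformly; a secondary (minor) obstacle is the characteristic case $n = p$ in the covering argument, which is handled by the eigenvalue choice described above.
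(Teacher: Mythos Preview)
Your approach is correct in outline but considerably more elaborate than what is needed. The paper's proof rests on a single structural observation you did not exploit: \emph{every non-identity element of $H = T.n$ is regular in $G$}. Indeed, a nontrivial $x \in T$ has $C_G(x) = T$, while an $x \in H \setminus T$ has order exactly $n$ and is a regular element (unipotent if $n=p$, otherwise regular semisimple with an easily-described centraliser). Consequently the non-derangements form the union of the identity, the $(|T|-1)/n$ regular semisimple classes of Singer type, and the regular elements of order $n$; counting these directly gives
\[
\delta(G) \geqs 1 - \frac{1}{|G|} - \frac{d}{c} - \frac{1}{n}\Bigl(1 - \frac{1}{|T|}\Bigr),
\]
where $c = (q-1)^{n-1}$ or $(q^2-1)^{(n-1)/2}$. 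This exceeds $1/2$ for all $(n,q)$ except ${\rm L}_3(2)$ and ${\rm L}_3(3)$, which are checked by hand. Since $\delta(G) > 1/2$ already implies $G = \Delta(G)^2$ by Lemma~\ref{l:easy2}, parts (i) and (ii) fall out simultaneously, with no need for Proposition~\ref{p:lev}, the torus enumeration, or the separate $n=3$ analysis.

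Your route does work, but note two loose ends. First, your density sum $\sum_{\lambda \ne (n)} |W_\lambda|^{-1}$ counts \emph{all} non-Singer regular semisimple elements, yet your derangement criterion required $(|x|,n)=1$; the discrepancy (the regular elements of order $n$) is precisely the $d/c$ term above and must be subtracted, not absorbed into $O(1/q)$. Second, for $n=3$ the permutation character $1^G_H$ for the Singer normaliser is not of the shape $1 + {\rm St} + \psi$ with small $\psi$ as in Lemmas~\ref{l:psl3}--\ref{l:psu3} (those treat Borel subgroups, which are far larger); the character-table computation is still feasible but has many more constituents. In both places the paper's regularity observation bypasses the difficulty entirely.
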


\begin{proof}
Here $H = N_G(T) = T.n$, where $T$ is a cyclic maximal torus of order $(q^n-\e)/d(q-\e)$ with $d = (n,q-\e)$. If $G = {\rm L}_3(2)$ then it is easy to check that $\delta(G) = 3/8$ and $G = \{1\} \cup CD$, where $C$ and $D$ are the unique conjugacy classes of elements of order $2$ and $4$, respectively. In each of the remaining cases we claim that $\delta(G) > 1/2$, so $G = \Delta(G)^2$ by Lemma \ref{l:easy2}.

 First observe that every non-identity element $x \in H$ is regular as an element of $G$. More precisely, either $x \in T$ and $C_G(x) = T$, or $x$ has order $n$ and we calculate that $G$ contains at most $d|G|/c$ regular elements of order $n$, where $c = (q-1)^{n-1}$ if $\e=+$ and $d=n$, otherwise $c = (q^2-1)^{(n-1)/2}$. For example, if $n$ divides $q$, then every regular element in $G$ of order $n$ has Jordan form $(J_n)$ on the natural module for $G$ and we deduce that there are 
\[
\frac{|{\rm SL}_{n}^{\e}(q)|}{q^{n-1}} = \frac{d|G|}{q^{n-1}} < \frac{d|G|}{(q^2-1)^{(n-1)/2}}
\]
such elements in $G$.

Since there are exactly $(|T|-1)/n$ distinct $G$-classes of regular semisimple elements $x \in G$ with $|C_G(x)|=|T|$ (see \cite[Example (a), p.484]{FJ93}, for example), it follows that  
\[
\delta(G) \geqs \frac{1}{|G|}\left(|G|-1-\frac{d|G|}{c} -\frac{|T|-1}{n}\cdot \frac{|G|}{|T|}\right) = 1 - \frac{1}{|G|}-\frac{d}{c}-\frac{1}{n}\left(1-\frac{1}{|T|}\right)
\]
and thus $\delta(G)> 1/2$ unless $G = {\rm L}_3(3)$. In the latter case, we compute 
$\delta(G) = 251/432$.
\end{proof}

\begin{lem}\label{l:nparab2}
The conclusion to Proposition \ref{p:class_sol} holds if $G = {\rm L}_{3}^{\e}(q)$ and $H$ is of type ${\rm GL}_1^{\e}(q) \wr S_3$, $3^{1+2}.{\rm Sp}_2(3)$ or ${\rm GU}_3(2)$.
\end{lem}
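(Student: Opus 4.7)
The plan is to handle part (ii) by citing prior results, and to prove part (i) uniformly via a single Singer-cycle argument.

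For part (ii), in all three cases $G = {\rm L}_3^{\e}(q)$ with $q \geqs 3$: the excluded group $G = {\rm L}_3(2) \cong {\rm L}_2(7)$ has no maximal subgroup of any of the stated types. Hence Theorem \ref{t:rankone} (when $\e = -$) or Theorem \ref{t:psl}(i) (when $\e = +$) supplies a conjugacy class $C$ of derangements with $G = \{1\} \cup C^2$, which yields $G = \Delta(G)^2$ immediately.

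For part (i), set $d = (3, q-\e)$ and $e = (q^2 + \e q + 1)/d$, the order of a Singer cycle in $G$. Regular semisimple elements $x$ with $|C_G(x)| = e$ form $(e-1)/3$ distinct $G$-classes, each of size $|G|/e$ (this counting is standard; compare with the character table in \cite{SF}). Their total contribution to $\delta(G)$ is $(e-1)/(3e)$, and an elementary check shows $(e-1)/(3e) > 89/325$ whenever $e \geqs 6$. So the proof reduces to verifying two things: that every Singer element is a derangement on $G/H$, and that $e \geqs 6$ in each case.

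The first reduces to showing $(e,|H|) = 1$, since a Singer element $x$ lies in a cyclic subgroup of order $e$ inside $C_G(x)$, so $x$ has a fixed point only if $|x|$ divides $|H|$. In Case 1, $|H| = 6(q-\e)^2/d$, and the identity $q^2 + \e q + 1 = (q - \e)(q + 2\e) + 3$ gives $(q^2+\e q+1, q-\e) = (3,q-\e) = d$, from which $(e,(q-\e)^2) = 1$; a short parity and mod-$3$ check (treating the $d=1$ and $d=3$ subcases by writing $q-\e = 3m$ in the latter and computing $e \equiv 1 \imod{3}$) then disposes of the factor of $6$. In Cases 2 and 3, $|H| = 648 = 2^3 \cdot 3^4$, so the same parity and mod-$3$ observations give $(e, 648) = 1$. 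Verifying $e \geqs 6$ is a finite check: the minimal admissible $q$ in each case (namely $q = 3$ in Case 1, $q \in \{5,7\}$ in Case 2, and $q = 8$ in Case 3) already gives $e \geqs 7$.

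The only point that requires genuine care is the coprimality $(e,|H|) = 1$ in Case 1 when $d = 3$, where $|H|$ and $e$ both interact nontrivially with $q - \e$; this is handled by the substitution $q - \e = 3m$ above. Once that coprimality is in hand, a single uniform bound $(e-1)/(3e) \geqs 2/7 > 89/325$ settles part (i) with no further numerical case analysis.
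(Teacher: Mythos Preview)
Your proof is correct and follows essentially the same approach as the paper: for part (i) you exploit the Singer elements with $|C_G(x)| = e = (q^2+\e q+1)/d$, verify $(e,|H|)=1$ so these are derangements, and obtain $\delta(G) \geqs (e-1)/(3e) \geqs 2/7 > 89/325$, exactly as the paper does. For part (ii) you cite Theorems \ref{t:rankone} and \ref{t:psl}(i) from the paper itself, whereas the paper invokes \cite[Theorems 7.1, 7.3]{GM} directly; this is a harmless repackaging since those theorems ultimately rest on \cite{GM}. One small imprecision: in Cases 2 and 3 you write $|H| = 648$, but in the simple group $G$ the subgroup $H$ may be strictly smaller (the paper notes $H \leqs {\rm ASL}_2(3)$); what matters, and what your argument actually uses, is only that $|H|$ is a $\{2,3\}$-number, so the conclusion is unaffected.
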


\begin{proof}
Set $d = (3,q-\e)$ and $e = (q^2 + \e q + 1)/d$. 

First assume $H$ is of type ${\rm GL}_1^{\e}(q) \wr S_3$ and note that $q \geqs 5$ if $\e=+$ (see \cite[Table 8.3]{BHR}). If $x \in G$ is regular semisimple with $|C_G(x)| = e$, then $(|H|,|C_G(x)|) = 1$ and thus $x$ is a derangement. As noted in \cite{SF}, there are precisely $(e-1)/3$ conjugacy classes of such elements and thus 
\[
\delta(G) \geqs \frac{e-1}{3e} \geqs \frac{2}{7}.
\]
In addition, \cite[Theorems 7.1 and 7.3]{GM} imply that $G = \{1\} \cup C^2$, where $C$ is any class of elements of order $e$.

The case $H = 3^{1+2}.{\rm Sp}_2(3)$ is entirely similar. Here $q = p \equiv \e \imod{3}$, $H \leqs {\rm ASL}_2(3)$ and every regular semisimple element with $|C_G(x)| = e$ is a derangement. Similarly, if $H$ is of type ${\rm GU}_3(2)$, then $\e=-$, $q = 2^k$ for some odd prime $k$ and the same argument applies since we have $H \leqs {\rm PGU}_3(2) \cong {\rm ASL}_2(3)$.
\end{proof}

\begin{lem}\label{l:nparab3}
The conclusion to Proposition \ref{p:class_sol} holds if $G = {\rm L}_{4}(q)$ and $H$ is of type ${\rm GL}_1(q) \wr S_4$.
\end{lem}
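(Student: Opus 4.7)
Set $d = (4, q-1)$, so $|H| = 24(q-1)^3/d$; geometrically, $H$ stabilises a decomposition $V = L_1 \oplus L_2 \oplus L_3 \oplus L_4$ of the natural module into lines, and an element of $G$ lies in a conjugate of $H$ precisely when it preserves some such decomposition (inducing a permutation $\sigma \in S_4$ on the four lines).

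For the width claim, I would apply Proposition \ref{p:msw} with $n=4$ to obtain conjugacy classes $C = x^G$ and $D = y^G$ with $|x| = (q^2+1)/(2,q-1)$ and $|y| = (q^3-1)/d$ satisfying $G = \{1\} \cup CD$, and then verify $x, y \in \Delta(G)$. For the latter, a Zsigmondy primitive prime divisor $r$ of $q^4-1$ divides $q^2+1$ and hence $|x|$, and satisfies $r \geqs 5$ with $r \nmid q-1$, so $r$ does not divide $|H|$; the analogous argument with a Zsigmondy prime of $q^3-1$ dividing $q^2+q+1 \mid |y|$ handles $y$. Hence $C, D \subseteq \Delta(G)$ and $G = \Delta(G)^2$.

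For the bound $\delta(G) > 89/325$, my plan is to count derangements inside a single maximal torus $T \leqs G$ of order $(q^3-1)/d$, corresponding to the partition $(3,1)$ of $4$, with $|N_G(T)/T| = 3$. The key observation is that a regular element $y' \in T$ preserves a unique decomposition $V = V_3 \oplus V_1$ with $V_3$ irreducible as an $\langle y' \rangle$-module, and $V_1$ is its only invariant line. Hence if $y'$ were conjugate into $H$, the induced $\sigma \in S_4$ would fix exactly one point, forcing $\sigma = (123)(4)$. Writing $y'$ in this form in ${\rm SL}_4(q)$ as a monomial matrix and cubing shows that $(y')^3|_{V_3}$ must be a scalar, equivalently the eigenvalue $\alpha \in \mathbb{F}_{q^3}^* \setminus \mathbb{F}_q^*$ of $y'$ on $V_3$ satisfies $\alpha^{3(q-1)} = 1$. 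At most $\gcd(q^3-1, 3(q-1)) - (q-1) \leqs 2(q-1)$ regular $y' \in T$ have this property, and the count is zero unless $q \equiv 1 \pmod 3$.

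Noting $|T^{\mathrm{reg}}| = q^3 - q$ (subtracting non-regular scalars from $\mathbb{F}_q^*$) and applying the standard orbit formula $|T^{\mathrm{reg}}|/(|W_T| \cdot |T|)$ for the proportion of $g \in G$ with centraliser conjugate to $T$, I obtain
\[
\delta(G) \geqs \frac{q^3 - q - 2(q-1)}{3(q^3-1)} = \frac{q^2+q-2}{3(q^2+q+1)},
\]
which exceeds $89/325$ for all $q \geqs 4$; and when $q = 3$, the vanishing of the bad count yields the stronger $(q^3-q)/(3(q^3-1)) = 4/13 > 89/325$. The main technical step is the restriction of the permutation $\sigma \in S_4$ to $(123)(4)$ and the subsequent translation into the eigenvalue condition $\alpha^{3(q-1)} = 1$; the remaining estimates are routine.
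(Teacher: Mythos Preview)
Your argument is essentially correct, with one small gap and a different organisation from the paper's proof.

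\medskip

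\textbf{The gap.} Your inference ``$V_1$ is the only invariant line, hence $\sigma$ fixes exactly one point'' only shows that $\sigma$ fixes \emph{at most} one point. You still need to exclude the fixed-point-free possibilities $\sigma \in \{(12)(34), (1234)\}$. The double transposition is immediate (it would give a $y'$-invariant $2$-space, but the only invariant subspaces are $0, V_1, V_3, V$). The $4$-cycle needs a line: if $\sigma = (1234)$ then the characteristic polynomial of $y'$ is $t^4 - c$ for some $c \in \mathbb{F}_q^\times$, and since $\beta \in \mathbb{F}_q$ is a root we get $c = \beta^4$, whence $t^4-\beta^4 = (t-\beta)(t+\beta)(t^2+\beta^2)$ (or $(t-\beta)^4$ in characteristic $2$), which has no irreducible cubic factor --- contradicting the shape of $y'$. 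With this addition your restriction to $\sigma = (123)(4)$ is justified. Also note that by \cite[Table 8.8]{BHR} one has $q \geqs 5$ here, so your separate checks at $q=3,4$ are unnecessary (though harmless).

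\medskip

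\textbf{Comparison with the paper.} The paper also analyses the $(3,1)$-torus and arrives at exactly your contribution $\tfrac{1}{3} - \tfrac{1}{q^2+q+1}$ (with the same $2(q-1)/3$ bad classes when $3 \mid q-1$), but then \emph{adds} the contribution from the $(4)$-torus of order $(q^4-1)/d(q-1)$, giving a further $\tfrac{1}{4} - \tfrac{e}{4(q^2+1)}$. This pushes the total above $\tfrac{1}{2}$, and Lemma~\ref{l:easy2} then yields $G = \Delta(G)^2$ for free. Your route is more economical for part~(i) --- one torus suffices for the $89/325$ bound --- but you then need a separate argument for part~(ii), which you supply cleanly via Proposition~\ref{p:msw} and a Zsigmondy prime check. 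Both strategies work; the paper's is more unified, yours decouples the two conclusions.
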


\begin{proof}
Set $d = (4,q-1)$ and note that $q \geqs 5$ (see \cite[Table 8.8]{BHR}).

First assume $x \in G$ is regular semisimple with $|C_G(x)| = (q^4-1)/d(q-1)$. We claim that $x$ is a derangement. Seeking a contradiction, suppose $x \in H$ and note that  $(|H|,|C_G(x)|)$ divides $q+1$, which implies that $|x|$ divides $q+1$. 
Since $x$ is regular, it follows that $x$ is conjugate to the image of a block-diagonal matrix of the form ${\rm diag}(A,B) \in {\rm SL}_4(q)$, where $A,B \in {\rm GL}_2(q)$ are irreducible, with distinct eigenvalues in $\mathbb{F}_{q^2}$. But this implies that $|C_G(x)| = (q+1)(q^2-1)/d$ and we have reached a contradiction. This justifies the claim. From \cite[Example (a), p.484]{FJ93}, we see that $G$ has exactly  
\[
\frac{1}{4d}(q+1)(q^2+1-e)
\]
such conjugacy classes, where $e = (2,q-1)$, whence
\[
\frac{(q+1)(q^2+1-e)/4d}{(q^4-1)/d(q-1)} = \frac{1}{4} - \frac{e}{4(q^2+1)}
\]
is the contribution to $\delta(G)$ from these elements.

Next suppose $y \in G$ is regular semisimple with $|C_G(y)| = (q^3-1)/d$ and note that $(|H|,|C_G(x)|)$ divides $e'(q-1)$, where $e'=(3,q-1)$. If $e'=1$ then by repeating the argument above we deduce that $y$ is a derangement (indeed, we have $|C_G(z)| = (q-1)^3/d$ for every regular semisimple element $z \in G$ such that $|z|$ divides $q-1$). Now assume $e'=3$. As before, $y$ is a derangement if $|y|$ divides $q-1$, so we may assume that $|y| = 9m$ for some divisor $m$ of $(q-1)/3$. To analyse this situation, it will be convenient to switch to the matrix groups $L = {\rm SL}_4(q)$ and $J = (q-1)^3{:}S_4$, where $J$ is the stabiliser in $L$ of a direct sum decomposition
$V = \la v_1 \ra \oplus \la v_2 \ra \oplus \la v_3 \ra \oplus \la v_4 \ra$ of the natural module.

Write $\mathbb{F}_{q}^{\times} = \la \omega \ra$. In terms of the basis $\{v_1,v_2,v_3,v_4\}$ for $V$, we observe that each $a \in J$ with $|C_L(a)| = q^3-1$ is $J$-conjugate to an element of the form
\[
\left(\begin{array}{cccc}
\omega^i & & & \\
& 1 & & \\
& & 1 & \\
& & & \omega^{-i} \end{array}\right)(1,2,3) = \left(\begin{array}{cccc}
 & & \omega^i & \\
1 &  & & \\
& 1 &  & \\
& & & \omega^{-i} \end{array}\right) \in J,
\]
where $1 \leqs i <q$ is indivisible by $3$. Since $3$ divides $q-1$, there are precisely $2(q-1)/3$ such $J$-classes, none of which are fused in $L$.

By considering rational canonical forms, we see that each $L$-class $a^L$ with $|C_L(a)| = q^3-1$ corresponds to a monic irreducible cubic polynomial over $\mathbb{F}_q$. So there are precisely
\[
\frac{1}{3}\sum_{m|3}\mu(m)q^{\frac{3}{m}} = \frac{1}{3}q(q^2-1)
\]
such classes, where $\mu$ is the M\"{o}bius function, and thus the contribution to $\delta(G)$ from the regular semisimple elements $y$ with $|C_G(y)| = (q^3-1)/d$ is at least 
\[
\frac{q(q^2-1)/3-2(q-1)/3}{q^3-1} = \frac{1}{3} - \frac{1}{q^2+q+1}.
\]
Therefore,
\[
\delta(G) \geqs \frac{1}{4} - \frac{e}{4(q^2+1)} + \frac{1}{3} - \frac{1}{q^2+q+1} \geqs \frac{643}{1209} > \frac{1}{2}
\]
for all $q \geqs 5$ and the result follows.
\end{proof}

\begin{lem}\label{l:nparab30}
The conclusion to Proposition \ref{p:class_sol} holds if $G = {\rm U}_{4}(q)$ and $H$ is of type ${\rm GU}_1(q) \wr S_4$.
\end{lem}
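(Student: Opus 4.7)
The plan is to follow the strategy of Lemma~\ref{l:nparab3} almost verbatim, substituting unitary analogues throughout. Set $d = (4, q+1)$ and $e = (2, q+1)$. After consulting \cite[Table 8.10]{BHR} for the precise maximality conditions on $q$, any small exceptional values can be handled computationally as in Lemma~\ref{l:nparab1}; in the generic regime the aim is to exhibit two families of regular semisimple derangements whose combined contribution forces $\delta(G) > 1/2$, at which point Lemma~\ref{l:easy2} gives $G = \Delta(G)^2$.

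The two candidate centraliser orders are $(q-1)(q^2+1)/d$, coming from the image in ${\rm PSU}_4(q)$ of the Coxeter torus of order $q^4-1$ in ${\rm GU}_4(q)$, and $(q^3+1)/d$, coming from the $(3,1)$-type torus of order $(q^3+1)(q+1)$. For an element $x \in G$ with $|C_G(x)| = (q-1)(q^2+1)/d$, the greatest common divisor $(|H|,|C_G(x)|)$ divides $24 \cdot e$, since $\gcd(q+1,q-1) = \gcd(q+1,q^2+1) = e$; so any $x \in H$ would have order bounded independently of $q$. Mirroring the argument of Lemma~\ref{l:nparab3}, the regularity of $x$ together with the list of admissible ``cycle-types'' of elements of $H = {\rm GU}_1(q) \wr S_4$ --- which act on the natural module as orthogonal sums of $1$- and $2$-dimensional non-degenerate Hermitian subspaces --- forces a centraliser shape inconsistent with $(q-1)(q^2+1)/d$, producing a contradiction. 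The element $y$ with $|C_G(y)| = (q^3+1)/d$ is treated analogously; a separate branch is required when $3$ divides $q+1$, paralleling the $3 \mid q-1$ branch of Lemma~\ref{l:nparab3} and again reducing to a direct-sum enumeration inside the quasisimple cover ${\rm SU}_4(q)$.

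To count the contributions to $\delta(G)$, I would invoke the unitary analogue of \cite[Example (a), p.484]{FJ93}, or alternatively enumerate monic irreducible polynomials of degrees $4$ and $3$ over $\mathbb{F}_{q^2}$ that are self-conjugate under the Hermitian reciprocity $f(x) \mapsto x^{\deg f}\,\overline{f}(1/x)$. Adding the two contributions yields a lower bound on $\delta(G)$ of the shape $\tfrac{1}{4} - O(q^{-2}) + \tfrac{1}{3} - O(q^{-1})$, which exceeds $\tfrac{1}{2}$ for all $q \geqs 3$. The principal obstacle is the unitary bookkeeping: verifying for each torus that the intersection with $Z({\rm SU}_4(q))$ contributes the expected factor of $d$, and, more subtly, checking that no element of $H$ whose order is bounded by the coprimality estimate can realise a centraliser of the prescribed shape in $G$. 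Both points should reduce to a routine case analysis on the $S_4$-projection of the putative element, split according to the parity of $q$ and whether $3 \mid q+1$.
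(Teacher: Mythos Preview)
Your plan matches the paper's proof almost exactly: the same two tori, the same $3\mid q+1$ branch for the cubic torus, and the same endgame via $\delta(G)>1/2$ and Lemma~\ref{l:easy2}. Two corrections are worth making.

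First, your bound ``$(|H|,|C_G(x)|)$ divides $24e$'' is not right (for $q=97$ the gcd is $96$, while $24e=48$), and the ``order bounded independently of $q$'' framing is a detour. The paper instead observes that this gcd divides $q-1$; then the derangement conclusion is immediate, since any regular semisimple $z\in G$ with $|z|\mid q-1$ has its eigenvalues in $\mu_{q-1}$ and hence paired as $\{\lambda,\lambda^{-1}\}$, forcing $|C_G(z)|=(q^2-1)(q-1)/d$ rather than $(q-1)(q^2+1)/d$. Your alternative cycle-type analysis would also work, but you must include the $3$- and $4$-cycle projections in $S_4$, not only the cases giving $1$- and $2$-dimensional blocks.

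Second, the explicit lower bound you obtain only exceeds $1/2$ for $q\geqs 5$; the paper handles $q\in\{3,4\}$ by direct computation of $\delta(G)$. For $q=2$ one finds $\delta(G)=31/80<1/2$, so Lemma~\ref{l:easy2} does not apply and an explicit class $C$ of derangements with $G=C^2$ must be produced for part~(ii) of Proposition~\ref{p:class_sol} (the paper takes $C$ to be the class of elements of order $5$). Your ``small exceptional values handled computationally'' would presumably catch this, but it should be flagged.
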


\begin{proof}
This is essentially identical to the previous case. Set $d = (4,q+1)$ and first consider a regular semisimple element $x \in G$ with $|C_G(x)| = (q^4-1)/d(q+1)$. Since $(|H|,|C_G(x)|)$ divides $q-1$ we deduce that $x$ is a derangement (indeed, if $z \in G$ is regular semisimple and $|z|$ divides $q-1$, then $|C_G(z)| = (q^2-1)(q-1)/d$) and we calculate that there are precisely
\[
\frac{1}{4d}(q-1)(q^2+1-e)
\]
such classes in $G$, where $e = (2,q-1)$.

Next set $L = {\rm SU}_4(q)$ and $J = (q+1)^3{:}S_4$. Then $L$ has exactly $q(q^2-1)/3$ regular semisimple classes $y^L$ with $|C_L(y)| = q^3+1$, all of which contain derangements if $q+1$ is indivisible by $3$. However, if $q \equiv -1 \imod{3}$, then by arguing as in the proof of Lemma \ref{l:nparab3} we deduce that $2(q+1)/3$ of these classes meet $J$. So it follows that the contribution to $\delta(G)$ from regular semisimple elements $y \in G$ with $|C_G(y)| = (q^3+1)/d$ is at least
\[
\frac{q(q^2-1)/3 - 2(q+1)/3}{q^3+1} = \frac{1}{3} - \frac{1}{q^2-q+1}
\]
and this implies that
\[
\delta(G) \geqs \frac{1}{4} - \frac{e}{4(q^2+1)} + \frac{1}{3} - \frac{1}{q^2-q+1} \geqs \frac{47}{91}
\]
for all $q \geqs 5$. If $q=4$ we compute $\delta(G) = 3508723/4243200$, and similarly we get $\delta(G) = 20128/25515$ for $q=3$, hence $\delta(G) > 1/2$ for all $q \geqs 3$ and thus $G = \Delta(G)^2$ by Lemma \ref{l:easy2}. Finally, for $q=2$ we compute $\delta(G) = 31/80$ and it is straightforward to check that $G = C^2$, where $C$ is the unique conjugacy class of elements of order $5$.
\end{proof}

\begin{lem}\label{l:nparab6}
The conclusion to Proposition \ref{p:class_sol} holds if $G = {\rm Sp}_{4}(q)$ and $H$ is of type ${\rm O}_2^{\e}(q) \wr S_2$ or ${\rm O}_2^{-}(q^2)$, where $q \geqs 4$ is even.
\end{lem}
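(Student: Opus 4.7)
The plan is to mirror the strategy of Lemma \ref{l:sp4}: identify regular semisimple classes of $G = \mathrm{Sp}_4(q)$ whose centraliser orders are essentially coprime to $|H|$, count them using Enomoto's classification \cite[Table IV-1]{Eno2}, and read off an explicit lower bound for $\delta(G)$. First I would record the order and structure of $H$ in each case: if $H$ is of type $O_2^{\e}(q)\wr S_2$ then $H \cong D_{2(q-\e)}\wr S_2$ has order $8(q-\e)^2$, while if $H$ is of type $O_2^-(q^2)$ then $H \cong D_{2(q^2+1)}.2$ has order $4(q^2+1)$. The key arithmetic input is that, since $q$ is even, the integers $q-1$, $q+1$ and $q^2+1$ are pairwise coprime and all odd, which makes the relevant gcd calculations with $|H|$ essentially trivial.

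Next I would use Enomoto to list the regular semisimple classes of $G$ with cyclic maximal torus centralisers, namely those with $|C_G(x)| \in \{(q-1)^2,\, (q+1)^2,\, q^2-1,\, q^2+1\}$, together with the class counts $q(q-2)/8$, $q(q-2)/8$ and $q^2/4$ for the centraliser orders $(q-1)^2$, $(q+1)^2$ and $q^2+1$ respectively. Case by case: when $H$ is of type $O_2^+(q)\wr S_2$, every element $x\in G$ with $|C_G(x)|\in\{(q+1)^2,q^2+1\}$ satisfies $(|H|,|C_G(x)|)=1$ and is therefore a derangement; when $H$ is of type $O_2^-(q)\wr S_2$, the same conclusion holds for $|C_G(x)|\in\{(q-1)^2,q^2+1\}$; and when $H$ is of type $O_2^-(q^2)$, it holds for $|C_G(x)|\in\{(q-1)^2,(q+1)^2\}$. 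Summing the contributions gives bounds of the shape
\[
\delta(G)\;\geqs\;\frac{q(q-2)}{8(q+1)^2}+\frac{q^2}{4(q^2+1)}
\]
(and the two analogues), each of which is easily checked to exceed $89/325$ for all even $q \geqs 4$, establishing part (i).

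For part (ii), I would observe that the very same bounds exceed $1/2$ once $q$ is large enough, so Lemma \ref{l:easy2} gives $G=\Delta(G)^2$. For the residual small values of $q$ (essentially $q = 4$, where the bound is very close to $1/2$), I would sharpen the count by including the regular semisimple classes with $|C_G(x)|=q^2-1$ whose element orders are coprime to $q-\e$ (respectively $q^2+1$); these elements also lie in $\Delta(G)$ because $H$ meets the torus of order $q^2-1$ only in its subgroup of order $q-\e$ (resp.\ does not meet it at all in the type $O_2^-(q^2)$ case). Alternatively, one can invoke Lemma \ref{l:frob} to exhibit two conjugacy classes $C,D$ of derangements with $G=\{1\}\cup CD$, checking the non-vanishing of the Frobenius sum directly from the character table of $\mathrm{Sp}_4(q)$; for $q=4$ (and, if needed, $q=8$) this is a routine \textsc{Magma} computation.

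The main obstacle is the tightness of the derangement count for small $q$: the ``coprime centraliser'' test used above clears the $89/325$ threshold comfortably but not the $1/2$ threshold required for Lemma \ref{l:easy2}, so the analysis of part (ii) at $q=4$ requires either the refined gcd argument sketched above for the $q^2-1$-torus, or an explicit character-theoretic verification of $G = \{1\}\cup CD$ for suitable derangement classes. All other steps are routine given the earlier lemmas in Section \ref{ss:class_sol}.
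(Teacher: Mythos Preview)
There are two issues, one an unnecessary detour and one a genuine gap.

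First, you have overlooked that these subgroups satisfy $H \in \mathcal{S}^{+} \setminus \mathcal{S}$: when $q$ is even, $G = {\rm Sp}_4(q)$ admits a graph automorphism, and the subgroups of type ${\rm O}_2^{\e}(q) \wr S_2$ and ${\rm O}_2^{-}(q^2)$ are maximal only in an almost simple overgroup of $G$, not in $G$ itself. Consequently part (ii) of Proposition \ref{p:class_sol} is vacuous here, and the only thing to prove is the bound $\delta(G) > 89/325$. All of your work towards $G = \Delta(G)^2$ (the $q^2-1$ refinement, the Frobenius-sum verification, the {\sc Magma} checks for $q=4,8$) is unnecessary.

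Second, your bound for the ${\rm O}_2^{-}(q^2)$ case is wrong. Using only the tori with $|C_G(x)| \in \{(q-1)^2, (q+1)^2\}$ gives
\[
\delta(G) \geqs \frac{q(q-2)}{8(q-1)^2} + \frac{q(q-2)}{8(q+1)^2},
\]
and this tends to $\tfrac{1}{8}+\tfrac{1}{8} = \tfrac{1}{4} < 89/325$ as $q \to \infty$; it never exceeds $89/325$ for any $q$. The fix is immediate: since $|H| = 4(q^2+1)$ and $q$ is even, we have $(|H|, q^2-1) = 1$, so every regular semisimple element with $|C_G(x)| = q^2-1$ is a derangement. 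There are $\tfrac{1}{2}q(q-2)$ such classes, yielding $\delta(G) \geqs q(q-2)/2(q^2-1) \geqs 8/21$ for $q \geqs 8$, with $q=4$ handled by direct computation. This is exactly the argument the paper uses; it also uses the $q^2-1$ torus (rather than $(q\mp\e)^2$ and $q^2+1$) in the ${\rm O}_2^{\e}(q) \wr S_2$ cases, via the slightly subtler observation that although $(|H|,q^2-1) = q-\e$, any regular semisimple element of order dividing $q-\e$ has centraliser $(q-\e)^2$ rather than $q^2-1$. As a minor aside, the tori of order $(q\pm 1)^2$ are not cyclic.
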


\begin{proof}
Here $H \in \mathcal{S}^{+}\setminus \mathcal{S}$ and so we just need to verify the bound $\delta(G) > 89/325$.

First assume $H$ is of type ${\rm O}_2^{\e}(q) \wr S_2$, so $H = D_{2(q-\e)} \wr S_2$. If $x \in G$ is a regular semisimple element with $|C_G(x)| = q^2-1$, then $(|H|,|C_G(x)|) = q-\e$ and we deduce that $x$ is a derangement (indeed, $|C_G(z)| = (q-\e)^2$ for every regular semisimple element $z \in G$ such that $|z|$ divides $q-\e$). By inspecting \cite[Table IV-1]{Eno2} we see that there are exactly $\frac{1}{2}q(q-2)$ such classes and thus 
\[
\delta(G) \geqs \frac{q(q-2)}{2(q^2-1)} \geqs \frac{8}{21}
\]
for all $q \geqs 8$. And if $q=4$ then $\e=-$ (see \cite[Table 8.14]{BHR}) and we compute $\delta(G) = 3481/4896$. 

Now assume $H$ is of type ${\rm O}_2^{-}(q^2)$, so $H = (q^2+1).4$. As above, each $x \in G$ with $|C_G(x)| = q^2-1$ is a derangement (indeed, we have $(|H|,|C_G(x)|) = 1$) and we deduce that $\delta(G) \geqs 8/21$ for $q \geqs 8$. Finally, for $q=4$ we compute $\delta(G) = 21011/28800$. 
\end{proof}

\begin{lem}\label{l:nparab8}
The conclusion to Proposition \ref{p:class_sol} holds if $G = {\rm P\O}_{8}^{+}(q)$ and $H$ is of type ${\rm O}_{2}^{-}(q^2) \times {\rm O}_{2}^{-}(q^2)$.
\end{lem}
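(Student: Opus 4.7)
The plan is to follow the template of Lemmas \ref{l:ex_tor} and \ref{l:ex_tor2}: exhibit several conjugacy classes of regular semisimple derangements whose centralisers meet $|H|$ in a controlled way, and aim for the stronger bound $\delta(G) > 1/2$. By Lemma \ref{l:easy2} this forces $G = \Delta(G)^2$, so both parts of Proposition \ref{p:class_sol} drop out simultaneously. Here $|H| = (q^2+1)^2 \cdot c$ for an absolute constant $c$ bounded by a small integer (coming from the dihedral factors $2^2$ inside each ${\rm O}_2^-(q^2)$, the $S_2$-swap between the two summands, and a possible field-type automorphism), so $|H|/|G|$ is $O(q^{-24})$ and $\delta(G) \to 1$ as $q \to \infty$; the real content is to make this effective and to dispose of the smallest values of $q$.

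I would choose three maximal tori of $G$, corresponding to the Weyl group classes in $W(D_4)$ of cycle types $[\e 3, \e 1]$ and $[+4]$: let $T_\e$ be a torus of order $(q^3-\e)(q-\e)/d$ for $\e = \pm$, and let $T_0$ be a torus of order $(q^4-1)/d$, where $d = (4,q^4-1)$. The elementary identities
\[
\gcd(q^2+1, q-\e) = (2, q-\e), \quad \gcd(q^2+1, q^3-\e) = (2, q+\e), \quad \gcd(q^2+1, q^4-1) = (2, q-1)
\]
imply that $\gcd(|H|, |T|)$ is bounded by a small absolute constant for each $T \in \{T_+, T_-, T_0\}$.

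For each such $T$, every regular semisimple element $x \in G$ with $C_G(x) = T$ is then a derangement. Indeed, if some $G$-conjugate of $x$ lay in $H$, then $x$ would lie in a subgroup $S < T$ of order at most $\gcd(|H|, |T|)$. This is already a contradiction for $q$ large enough via Lemma \ref{l:cox}, which forces $|x| \geqs h = 6$; for the remaining $q$ one embeds $T$ in a maximal-rank subgroup $M$ of $G$ --- namely an orthogonal summand subgroup of type $\Omega_2^\e(q) \times \Omega_6^\e(q)$ for $T_\e$, and an $A_3$-type Levi of shape $\frac{1}{d}{\rm GL}_4(q)$ for $T_0$ --- chosen so that $T \leqs C_M(S)$ for every possible $S$. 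Then $T \leqs C_M(x) \leqs C_G(x) = T$ yields the desired contradiction. Counting via $|T|/|N_G(T)/T|$ with $|N_G(T)/T| = 6, 6, 4$ for $T_+, T_-, T_0$ respectively (read off from the centraliser in $W(D_4)$ of a representative Weyl element and adjusted for the isogeny $\Omega \to {\rm P\Omega}$), each torus contributes approximately $1/6, 1/6, 1/4$ to $\delta(G)$, so the three together give $\delta(G) \geqs 7/12 - O(q^{-1}) > 1/2$ for all $q \geqs q_0$. The finitely many exceptional small-$q$ cases are handled by a direct {\sc Magma} computation along the lines of Lemma \ref{l:nparab1}.

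The main obstacle I expect is the fiddly bookkeeping: extracting the precise number of regular semisimple $G$-classes with a given centraliser from \cite{Lub} (rather than the raw algebraic-group count $|T|/|N_G(T)/T|$, which can over- or under-count by small factors depending on the isogeny), accounting for the triality symmetry of $D_4$ (which can permute torus classes in unexpected ways), and checking that the threshold for ``$q$ sufficiently large'' is low enough to leave a manageable set of small-$q$ cases to verify by machine.
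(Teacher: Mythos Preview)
Your overall strategy matches the paper's, but there is a genuine error and a missed simplification.

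The error: your identity $\gcd(q^2+1,\,q^4-1)=(2,q-1)$ is false, since $q^4-1=(q^2-1)(q^2+1)$ and hence $q^2+1$ divides $q^4-1$. So $\gcd(|H|,|T_0|)$ is of size roughly $q^2+1$, not a bounded constant, and neither Lemma~\ref{l:cox} (with $h=6$) nor your proposed embedding of $T_0$ in a ${\rm GL}_4(q)$-type Levi yields a contradiction: the unique order-$(q^2+1)$ subgroup $S$ of the cyclic torus of ${\rm GL}_4(q)$ already has $C_M(S)=T_0$, so you cannot deduce $T_0<C_M(x)$. Without $T_0$ your three-torus estimate collapses to roughly $1/6+1/6=1/3$, which does not exceed $1/2$. (Your Weyl-index $4$ for $T_0$ is also off; the correct value is $8$, further reducing the putative contribution.)

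The missed simplification: this subgroup satisfies $H\in\mathcal{S}^{+}\setminus\mathcal{S}$ (it is of the form $M\cap G$ with $M$ maximal only in a triality extension of $G$, not in $G$ itself). So part~(ii) of Proposition~\ref{p:class_sol} is vacuous here and you only need $\delta(G)>89/325$, not $\delta(G)>1/2$. The paper exploits this: for $q$ odd it uses only the two tori $T_{\pm}$ of order $(q^3-\e)(q-\e)/2$ (exactly your $T_\e$), handling the residual possibility $|x|=16$ by observing that a Sylow $2$-subgroup of $H$ is non-cyclic rather than by an embedding argument, and obtains $\delta(G)>89/325$ directly for all $q\geqslant 3$. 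For $q$ even it does something slicker than your torus-by-torus count: every semisimple element of $H$ lies in the maximal torus $(q^2+1)^2$, so \emph{every} regular semisimple element with $|C_G(x)|\ne(q^2+1)^2$ is a derangement, and summing over all other torus classes via \cite{Lub} gives $\delta(G)>1/2$ for $q\geqslant 4$; only $q=2$ is checked by machine.
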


\begin{proof}
Once again we have $H \in \mathcal{S}^{+}\setminus \mathcal{S}$ and so our goal is to show that $\delta(G) > 89/325$. Set $d=(2,q-1)$ and observe that
\[
H = N_G(P) = (D_{2(q^2+1)/d} \times D_{2(q^2+1)/d}).2^2,
\]
where $P$ is a Sylow $r$-subgroup of $G$ and $r$ is an odd prime divisor of $q^2+1$. For $q=2$ we compute $\delta(G) = 5740943/6967296$, so for the remainder we may assume $q \geqs 3$.

If $q$ is even, then every semisimple element $x \in H$ is contained in the torus $(q^2+1)^2$, which means that $|C_G(x)| = (q^2+1)^2$ for every regular semisimple element $x \in H$. The conjugacy classes in $G$ are available in \textsf{Chevie} \cite{Chevie} (also see \cite{Lub}) and it is a routine exercise to check that 
\[
\delta(G) \geqs \frac{1}{|G|}|\{x \in G \,:\, \mbox{$x$ regular semisimple, $|C_G(x)| \ne (q^2+1)^2$}\}| > \frac{1}{2}
\]
for all $q \geqs 4$.

Now assume $q$ is odd and let $x \in H$ be a regular semisimple element with $|C_G(x)| = (q^3-\e)(q-\e)/2$ and $\e = \pm$. Since $(|H|,|C_G(x)|) \in \{1,4,16\}$, Lemma \ref{l:cox} implies that $x$ is a derangement unless $|x| = 16$, which coincides with the order of a Sylow $2$-subgroup of $H$. But the Sylow $2$-subgroups of $H$ are non-cyclic, so $H$ does not contain any elements of order $16$. It follows that each $x \in G$ with $|C_G(x)| = (q^3-\e)(q-\e)/2$ is a derangement. 

Set $K = {\rm Spin}_8^{+}(q)$, so $G = K/Z$ with $Z = Z(K) = C_2 \times C_2$. By inspecting \cite{Lub}, we observe that $K$ has exactly 
\[
N = \frac{|K|}{(q^3-\e)(q-\e)} \cdot \frac{1}{6}q(q-\e)(q^2-1)
\]
elements $y$ with $|C_K(y)| = (q^3-\e)(q-\e)$. By considering the images of these elements, it follows that $G = K/Z$ has at least $N/4$ regular semisimple elements $x$ with $|C_G(x)| = (q^3-\e)(q-\e)/2$ and we deduce that 
\[
\delta(G) \geqs \frac{1}{6}\left(\frac{q(q-1)(q^2-1)}{(q^3-1)(q-1)}+\frac{q(q+1)(q^2-1)}{(q^3+1)(q+1)}\right) > \frac{89}{325}
\]
for all $q \geqs 3$.
\end{proof}

In order to complete the proof of Proposition \ref{p:class_sol}, we may assume 
$G = {\rm P\O}_{8}^{+}(q)$ and $H$ is of type ${\rm O}_{2}^{\e}(q) \wr S_4$.

\begin{lem}\label{l:nparab9}
The conclusion to Proposition \ref{p:class_sol} holds if $G = {\rm P\O}_{8}^{+}(q)$ and $H$ is of type ${\rm O}_{2}^{\e}(q) \wr S_4$.
\end{lem}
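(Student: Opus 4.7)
The argument will parallel the approach used in Lemma~\ref{l:nparab8} for the closely related subgroup of type ${\rm O}_2^-(q^2) \times {\rm O}_2^-(q^2)$. Since the Coxeter number of $D_4$ is $h=6$, Lemma~\ref{l:cox} implies that every regular semisimple element $x \in G$ is a derangement provided $\gcd(|H|, |C_G(x)|) < 6$. The strategy is to count such elements using the list of maximal tori of ${\rm P\O}_8^+(q)$ together with the class data from \cite{Lub}, and verify $\delta(G) > 89/325$; when $H$ is maximal (i.e., $H \in \mathcal{S}$), the conclusion $G = \Delta(G)^2$ will follow from $\delta(G) > 1/2$ via Lemma~\ref{l:easy2}, or via a direct character-theoretic argument using Lemma~\ref{l:frob}.

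First I would dispose of the small cases $q \in \{2,3\}$ computationally, constructing $G$ and $H$ in {\sc Magma} as in the proofs of Lemmas~\ref{l:nparab1} and \ref{l:nparab8} (passing to an almost simple overgroup and applying \textsf{MaximalSubgroups} when $H$ is non-maximal), and reading off $\delta(G)$ from the conjugacy classes.

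For $q \geqs 4$, the large normal abelian subgroup of $H$ has exponent dividing $2(q-\e)$, so the relevant cyclotomic polynomials $\Phi_3(q), \Phi_4(q), \Phi_6(q)$ are essentially coprime to $|H|$: the only contribution to $\gcd(|H|, |T|)$ for the relevant tori $T$ comes from a factor dividing $|W(D_4)| \cdot 2^4 = 3072$, and after restricting to orders $\geqs h=6$ one obtains only small obstructions. The key tori I would exploit are the $\Phi_4^2$-torus $(q^2+1)^2$ (treated in Lemma~\ref{l:nparab8}), the Coxeter-type tori of orders $(q^3-\e')(q-\e')/2$ for $\e' = \pm$, and the $\Phi_6$- and $\Phi_3$-tori of type $(q^2+\e' q+1)^2$. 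For those whose gcd with $|H|$ is nontrivial, I would argue exactly as in Lemma~\ref{l:nparab8}: embed the torus $T = C_G(x)$ in a maximal-rank subgroup $M$ (such as one of type $(q-\e')\times \O_6^{\e'}(q)$ or $\O_4^+(q) \times \O_4^+(q)$) and show that any fixed-point element would force $T < C_M(x)$, a contradiction. Summing the contributions from these classes using \cite{Lub} then yields $\delta(G) > 89/325$, and for $q \geqs 4$ the same estimate will in fact give $\delta(G) > 1/2$, so Lemma~\ref{l:easy2} completes the proof that $G = \Delta(G)^2$ whenever $H$ is maximal.

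\textbf{Main obstacle.} The principal difficulty is the accurate bookkeeping of maximal tori of ${\rm P\O}_8^+(q)$: the Weyl group $W(D_4)$ has $13$ conjugacy classes, giving thirteen types of maximal tori, and for each one we must determine both $\gcd(|H|,|T|)$ and (when this gcd is nonzero but $<h$) an explicit maximal-rank overgroup in which to carry out the centraliser contradiction. Triality and the presence of multiple $G$-conjugacy classes of subgroups of type ${\rm O}_2^\e(q) \wr S_4$ add a further layer of subtlety in matching the $H$-classes to $G$-classes, which must be tracked carefully to ensure the bound holds in all subcases.
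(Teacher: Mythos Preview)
Your overall strategy matches the paper's: use Lemma~\ref{l:cox} with $h=6$ together with a maximal-rank embedding argument to certify that certain regular semisimple classes consist of derangements, count them via \cite{Lub}, and conclude $\delta(G)>1/2$ so that Lemma~\ref{l:easy2} gives $G=\Delta(G)^2$. So the architecture is right.

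However, two details need correcting. First, the tori of order $(q^2\pm q+1)^2$ that you plan to use do not occur in ${\rm P\O}_8^+(q)$: elements of order $3$ in $W(D_4)\cong 2^3{:}S_4$ are conjugate to $3$-cycles in $S_4$ and have characteristic polynomial $\Phi_3\Phi_1^2$ on the reflection representation, so the corresponding torus has order $(q^2+q+1)(q-1)^2$, not $\Phi_3^2$. (The $\Phi_3^2$- and $\Phi_6^2$-tori live in ${}^3D_4(q)$, which is perhaps what you were thinking of.) The paper instead works with just two torus families: those with $|C_G(x)|=(q+\e)(q^3+\e)$, where the gcd with $|H|$ is at most $3$ so Lemma~\ref{l:cox} applies directly, and those with $|C_G(y)|=q^4-1$ (cyclic), where the embedding argument is run inside a subgroup of type ${\rm GL}_4^{\pm}(q)$ rather than the orthogonal decompositions you suggest. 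This pair already suffices, so your concern about tracking all thirteen $W(D_4)$-classes is unnecessary.

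Second, the computational base cases need to go a little further than $q\in\{2,3\}$: the explicit inequality one gets from the two torus families above only exceeds $1/2$ once $q\geqslant 8$ (for $q$ even) or $q\geqslant 7$ (for $q$ odd), so the paper also handles $q=4$ and $q=5$ with {\sc Magma}. With these adjustments your plan goes through.
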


\begin{proof}
First observe that $q \geqs 5$ if $\e=+$ (see \cite[Table 8.50]{BHR}).

To begin with, we will assume $q$ is even. For $q \in \{2,4\}$ we have $e=-$ and with the aid of {\sc Magma} it is easy to check that $\delta(G)>1/2$. Now assume $q \geqs 8$.

If $x \in G$ is regular semisimple and $|C_G(x)| = (q+\e)(q^3+\e)$, then Lemma \ref{l:cox} implies that $x$ is a derangement since $(|H|,|C_G(x)|) \leqs 3$. From \cite{Lub} we read off that $G$ has precisely $q(q^2-1)(q+\e)/6$ conjugacy classes of such elements. 

Next let $y \in G$ be regular semisimple with $|C_G(y)| = q^4-1$, in which case $T = C_G(y)$ is cyclic and $(|H|,|T|)= (3,q+\e)(q-\e)$ divides $q^2-1$. Let $S$ be the unique subgroup of $T$ of order $q^2-1$ and note that we may embed $T$ in a subgroup $M = {\rm GL}_4(q)$. Then $T < C_M(S)$ and we deduce that $y$ is a derangement.

Since $G$ has $3q^2(q^2-2)/8$ distinct classes $y^G$ with $|C_G(y)| = q^4-1$ (see \cite{Lub}), it follows that 
\[
\delta(G) \geqs \frac{q(q^2-1)}{6(q^3+\e)} + \frac{3q^2(q^2-2)}{8(q^4-1)} > \frac{1}{2}
\]
for all $q \geqs 8$, hence $G = \Delta(G)^2$ by Lemma \ref{l:easy2}.

Now assume $q$ is odd. If $q = 3$ then $\e=-$ and $H = N_G(P)$, where $P$ is a normal subgroup of a Sylow $2$-subgroup of $G$ with $|P|=2^7$. Using {\sc Magma}, we compute $\delta(G)>1/2$. In the same way, one can check that $\delta(G)> 1/2$ when $q=5$, so for the remainder we may assume $q \geqs 7$. It will be convenient to work in the quasisimple group $L = \O_{8}^{+}(q)$, in which case $H$ is the image of the subgroup $J = \O_{2}^{\e}(q)^4.[2^6].S_4$ of $L$. In addition, set $K = {\rm Spin}_{8}^{+}(q)$ and $Z = Z(K) = C_2 \times C_2$, so $G = K/Z$ and $L = K/Z_1$ for some central subgroup $Z_1$ of order $2$.

Let $x \in L$ be a regular semisimple element with $|C_L(x)| = (q^4-1)/2$ or $q^4-1$. By arguing as above, we deduce that $x$ is a derangement. In addition, by inspecting \cite{Lub} we see that $K$ has exactly 
\[
N = \frac{|K|}{q^4-1} \cdot \frac{3}{8}(q^4-4q^2+3)
\]
elements $y$ with $|C_K(y)| = q^4-1$. It follows that there are at least $N/2$ regular semisimple elements $x \in L$ with $|C_L(x)| = q^4-1$ or $(q^4-1)/2$.

Next consider a regular semisimple element $y \in L$ with $|C_L(y)| = (q+\e)(q^3+\e)/2$ and note that $(|J|,|C_L(y)|)$ divides $(q+\e)^2/e$, where $e=(3,q+\e)$. Here $T = C_L(y) = \frac{1}{2}(q+\e) \times (q^3+\e)$ is non-cyclic and we may embed $T$ in a subgroup $M$ of type ${\rm GL}_4^{-\e}(q)$. Then $T<C_M(S)$ for every subgroup $S$ of $T$ such that $|S|$ divides $(q+\e)^2/e$, and it follows that $y$ is a derangement. As noted in the proof of Lemma \ref{l:nparab8}, there are at least 
\[
\frac{|L|}{(q^3+\e)(q+\e)} \cdot \frac{1}{6}q(q+\e)(q^2-1)
\]
such elements in $L$.

Putting this together, we deduce that
\[
\delta(G) \geqs \frac{3(q^4-4q^2+3)}{8(q^4-1)} + \frac{q(q^2-1)}{6(q^3+\e)} > \frac{1}{2}
\]
for all $q \geqs 7$. The result follows.
\end{proof}

\vs

This completes the proof of Proposition \ref{p:class_sol}. By combining this result with Propositions \ref{p:spor_sol}, \ref{p:alt_sol} and \ref{p:ex_sol}, we conclude that the proof of Theorem \ref{t:main_sol} is complete. In particular, we have now proved Theorems \ref{t:main2} and \ref{t:main4} in all cases.

\subsection{Primitive groups}\label{ss:prim_sol}
	
We now use Theorem \ref{t:main_sol}(i) to prove part (ii) of Theorem \ref{t:main3} (recall that Theorem \ref{t:main3}(i) was established in Section \ref{ss:prim}, so this will complete the proof of Theorem \ref{t:main3}).
	
\begin{proof}[Proof of Theorem \ref{t:main3}(ii)]
Let $G \leqs {\rm Sym}(\O)$ be a finite primitive permutation group with socle $N$ and soluble point stabiliser $H$. Recall that our goal is to show that $\delta(N) \geqs 89/325$, with equality if and only if $N = {}^2F_4(2)'$ and $H \cap N = 2^2.[2^8].S_3$.

As a consequence of the O'Nan-Scott theorem and our soluble point stabiliser hypothesis, $G$ is either affine, almost simple or a product type group $G \leqs L \wr S_b$, where $G$ acts on $\O = \Gamma^b$ via the product action and $L \leqs {\rm Sym}(\Gamma)$ is an almost simple primitive group with socle $S$ and soluble point stabilisers. 

If $G$ is affine, then $N$ is regular and $\delta(N) = 1 -|N|^{-1} \geqs 1/2$. And if $G$ is almost simple, then Theorem \ref{t:main_sol}(i) implies that $\delta(N) \geqs 89/325$, with equality if and only if $N = {}^2F_4(2)'$ and $H \cap N = 2^2.[2^8].S_3$. Finally, if $G \leqs L \wr S_b$ is a product type group, then $N = S^b$ with $S$ simple and by combining \eqref{e:sharp} with Theorem \ref{t:main_sol}(i) we deduce that 
\[
\delta(N) > \delta(S,\Gamma) \geqs \frac{89}{325}.
\]
The result follows.
\end{proof}

\section{Derangement generation}\label{s:gen}

Recall that every alternating group can be generated by two elements. For example, 
\[
A_n = \la (1,2,3), \, (\delta, \delta+1, \ldots, n) \ra,
\]
for all $n \geqs 4$, where $\delta = 1$ if $n$ is odd, otherwise $\delta = 2$. By a celebrated theorem of Steinberg \cite{St}, every finite simple group of Lie type can also be generated by a pair of elements, and the same is true for all the sporadic simple groups (see \cite{AG}). So this allows us to conclude, via CFSG, that every finite simple group is generated by two elements. In recent years, this theorem has been extended in many different directions and there is now a vast literature on a wide range of $2$-generation properties of simple groups (for example, see the survey articles \cite{B_sur, H_sur} for a sample of some of the remarkable results that have been established).

Now suppose $G \leqs {\rm Sym}(\O)$ is a finite transitive permutation group with  $|\O| \geqs 2$ and let $\Delta(G)$ be the set of derangements in $G$. The (normal) subgroup $D(G) = \la \Delta(G) \ra$ generated by $\Delta(G)$ has been studied by Bailey et al. in \cite{BCGR}, where several interesting results concerning the order and structure of the quotient group $G/D(G)$ are established. Of course, if $G$ is simple then $G = D(G)$ and it is natural to ask whether or not $G$ is generated by a pair of derangements. In this final section, our main theorem shows that this is indeed the case. Moreover, we will prove that such a group is generated by two \emph{conjugate} derangements. The following result is stated as Theorem \ref{t:main6} in Section \ref{s:intro}.

\begin{thm}\label{t:gen}
Let $G \leqs {\rm Sym}(\O)$ be a finite simple transitive permutation group. Then there exists a derangement $x \in G$ such that $G = \la x, x^g\ra$ for some $g \in G$.
\end{thm}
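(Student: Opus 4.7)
The plan is to combine the reduction remark from the introduction with the probabilistic witness theorem of Guralnick and Kantor \cite{GK}. First, if $H$ is strictly contained in a maximal subgroup $M$ of $G$, then $\bigcup_{g\in G} H^g \subseteq \bigcup_{g\in G} M^g$, so every derangement for the primitive action on $G/M$ is also a derangement on $G/H = \O$; hence we may assume that $H$ is maximal in $G$. Recall that a conjugacy class $C$ of $G$ is a \emph{witness} if for every $1 \ne u \in G$ some $v \in C$ satisfies $G = \la u,v \ra$, in which case $G = \la x, x^g \ra$ for a suitable $g$ whenever $x \in C$. The key observation from the introduction is this: if $C = x^G$ and $D = y^G$ are two witnesses such that no maximal overgroup of $x$ is $G$-conjugate to any maximal overgroup of $y$, then at least one of $x$ and $y$ must be a derangement on $G/H$, since otherwise a $G$-conjugate of the maximal subgroup $H$ would simultaneously be a maximal overgroup of $x$ and of $y$. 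The theorem then follows at once by applying the witness property to whichever of $x,y$ is the derangement. So it suffices to exhibit, for each finite simple $G$, a pair of witness classes with this ``disjoint overgroups'' property.

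I would then proceed by the classification of finite simple groups. For sporadic groups, a computational approach using the character tables and fusion maps in \cite{GAPCTL}, together with Lemma \ref{l:frob} and standard fixed-point-ratio estimates, should deliver the witness pair. For alternating groups $A_n$ I would take one witness class to consist of long cycles (say $n$- or $(n-1)$-cycles), whose overgroups are severely restricted by classical results of Jordan, Jones and Marggraff on primitive groups containing large cycles, and a second witness class of elements whose cycle type precludes precisely those families of overgroups. For classical groups the central tool is Proposition \ref{p:singer}, asserting that every class of Singer cycles is a witness; the maximal overgroups of a Singer cycle are tightly controlled by the work of Bereczky \cite{Ber} and Guralnick--Penttila--Praeger--Saxl \cite{GPPS} (essentially field-extension subgroups and the torus normaliser), so one can pair the Singer witness with a regular semisimple witness in a complementary maximal torus whose order is divisible by primitive prime divisors absent from any Singer overgroup. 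For exceptional groups an analogous strategy uses large-order regular semisimple elements in two non-$W$-conjugate maximal tori (for instance a Coxeter torus and a twisted counterpart), with overgroup control again furnished by \cite{GPPS}.

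The principal obstacle will be the Lie-type analysis: one must verify both the witness property (via explicit fixed-point-ratio estimates of Liebeck--Shalev or Liebeck--Saxl--Seitz type) and the disjoint-overgroups condition, which requires a careful case-by-case tour through Aschbacher's families $\mathcal{C}_1$--$\mathcal{C}_8 \cup \mathcal{S}$ in the classical case and the Liebeck--Seitz classification in the exceptional case. For groups of small rank over small fields the generic bounds may fail, and a handful of exceptions will need to be handled by direct computation in {\sc Magma}, by inspection of character tables and subgroup lattices. A secondary difficulty is ensuring that the ``second'' witness can always be chosen so that its maximal overgroups genuinely avoid the (few) maximal overgroups of the Singer or Coxeter witness; this is where careful use of primitive prime divisor arithmetic becomes essential.
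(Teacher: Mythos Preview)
Your proposal matches the paper's approach almost exactly for sporadic, exceptional, and classical groups: the paper also reduces to maximal $H$, invokes the disjoint-overgroups criterion (Lemma \ref{l:us2}), and then exhibits for each family a pair of witnesses whose maximal overgroups lie in disjoint conjugacy classes. For classical groups the paper pairs a Singer cycle (overgroups are field-extension subgroups, by \cite{Ber}) with a second regular semisimple element whose overgroups are reducible (e.g.\ $P_1,P_{n-1}$ for ${\rm L}_n(q)$); for exceptional groups it uses generators of two distinct maximal tori as in your Coxeter-torus suggestion (see Table \ref{tab:exgen}).

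The one place where the paper diverges from your plan is the alternating case. The paper does \emph{not} use the witness-pair method for $A_n$; instead it constructs explicit pairs of conjugate derangements directly. When $H$ is transitive on $\{1,\ldots,n\}$ it takes two overlapping $p$-cycles $(1,\ldots,p)$ and $(n-p+1,\ldots,n)$ for a prime $p>n/2$ and invokes Jordan's theorem; when $H$ is intransitive it writes down elements with carefully chosen cycle types and checks generation by hand. Your witness-pair route for $A_n$ is plausible, but note that the maximal overgroups of an $n$-cycle include every imprimitive wreath product $(S_a\wr S_b)\cap A_n$ with $n=ab$, not just the primitive groups governed by the Jordan--Jones results you cite, so verifying the disjoint-overgroups condition would require more bookkeeping than you indicate. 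The paper's direct construction sidesteps this entirely and is rather cleaner here.
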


One of the key ingredients in our proof of Theorem \ref{t:gen} involves the concept of uniform spread. Following Breuer et al. \cite{BGK}, we say that a finite group $G$ has \emph{positive uniform spread} if there exists a conjugacy class $C = y^G$ with the property that for any non-identity $x \in G$, there exists an element $z \in C$ such that $G = \la x,z \ra$. In this situation, we call $y$, or the class $C$ itself, a \emph{witness}. The main theorem of \cite{GK} shows that every finite simple group has this remarkably strong $2$-generation property and we refer the reader to \cite{BGK,BGH,BH0,BH1} for more recent extensions and generalisations.

The next two lemmas will be applied repeatedly in the proof of Theorem \ref{t:gen}. In the first one, and for the remainder of the paper, we write 
\begin{equation}\label{e:My}
\mathcal{M}(y) = \{ H \,:\, \mbox{$H<G$ maximal and $y \in H$}\}
\end{equation}
for the set of maximal subgroups of $G$ containing the element $y \in G$. In addition, 
\[
{\rm fpr}(z,G/H) = \frac{|z^G \cap H|}{|z^G|}
\]
is the \emph{fixed point ratio} of $z \in G$ with respect to the natural transitive action of $G$ on $\O = G/H$, which is simply the proportion of points in $\O$ fixed by $z$. The following elementary observation was a key tool in \cite{GK} and we include a short proof for completeness.

\begin{lem}\label{l:us}
Let $G$ be a finite group. An element $y \in G$ is a witness if 
\begin{equation}\label{e:fpry}
\sum_{H \in \mathcal{M}(y)} {\rm fpr}(z,G/H) < 1
\end{equation}
for all $z \in G$ of prime order.
\end{lem}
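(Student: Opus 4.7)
The plan is to use a standard probabilistic argument based on fixed point ratios, together with a small reduction to prime order elements and a conjugation trick that converts a statement about conjugates of $y$ into a statement about conjugates of $z$.

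First I would reduce to the case where $x$ itself has prime order. Given any non-identity $x \in G$, pick $z \in \langle x \rangle$ of prime order. Since $\langle z, y^g \rangle \leqs \langle x, y^g \rangle$, it suffices to find $g \in G$ such that $G = \langle z, y^g \rangle$, and so we may assume from the outset that $x = z$ has prime order.

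Next I would dualise the problem by conjugation. The identity $\langle z, y^g \rangle = \langle z^{g^{-1}}, y \rangle^g$ shows that $G = \langle z, y^g \rangle$ if and only if $G = \langle y, z^h \rangle$ with $h = g^{-1}$. So it is enough to exhibit $h \in G$ with $\langle y, z^h \rangle = G$. This step is the key reason the hypothesis in the lemma involves $\mathcal{M}(y)$ rather than the maximal overgroups of $z$: we now need to control only those maximal subgroups $H$ that contain $y$.

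The probabilistic heart of the argument is the standard fixed point ratio identity
\[
\frac{|\{h \in G \,:\, z^h \in H\}|}{|G|} = \frac{|z^G \cap H|}{|z^G|} = {\rm fpr}(z,G/H).
\]
If $\langle y, z^h \rangle \ne G$, then $\langle y, z^h \rangle$ lies in some maximal subgroup $H$ of $G$, which necessarily belongs to $\mathcal{M}(y)$. Applying the union bound over $H \in \mathcal{M}(y)$, the proportion of $h \in G$ for which $\langle y, z^h \rangle \ne G$ is at most
\[
\sum_{H \in \mathcal{M}(y)} \frac{|\{h \in G \,:\, z^h \in H\}|}{|G|} \;=\; \sum_{H \in \mathcal{M}(y)} {\rm fpr}(z,G/H).
\]
By hypothesis this quantity is strictly less than $1$, so there is at least one $h \in G$ with $\langle y, z^h \rangle = G$, and then $g = h^{-1}$ satisfies $G = \langle z, y^g \rangle \leqs \langle x, y^g \rangle$, proving that $y$ is a witness.

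There is no serious obstacle here; the only point requiring a moment of care is the verification of the fixed point ratio identity and the reduction to prime-order elements, both of which are routine.
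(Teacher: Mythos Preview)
Your proof is correct and takes essentially the same approach as the paper: reduce to a prime-order element $z \in \langle x \rangle$, use the conjugation duality $\langle z, y^g \rangle = \langle y, z^{g^{-1}} \rangle^g$ to switch the roles of $y$ and $z$, and then apply the union bound over $\mathcal{M}(y)$ together with the fixed point ratio identity. The paper compresses the conjugation step into the single observation that $G \ne \langle v, z \rangle$ for $v \in y^G$ if and only if $z^g \in H$ for some $g \in G$ and $H \in \mathcal{M}(y)$, whereas you spell it out more explicitly, but the substance is identical.
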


\begin{proof}
Let $x \in G$ be a non-identity element and suppose $z = x^m$ has prime order. Let 
\[
\mathbb{P}(y,z) = \frac{|\{ v \in y^G \,:\, G \ne \la v,z \ra \}|}{|y^G|}
\]
be the probability that $z$ and a uniformly random conjugate of $y$ do not generate $G$. Now $G \ne \la v,z \ra$ if and only if $z^g \in H$ for some $g \in G$ and $H \in \mathcal{M}(y)$. And since $|z^G \cap H|/|z^G|$ is the probability that a random conjugate of $z$ is contained in $H$, it follows that
\[
\mathbb{P}(y,z) \leqs \sum_{H \in \mathcal{M}(y)} \frac{|z^G \cap H|}{|z^G|} =  \sum_{H \in \mathcal{M}(y)} {\rm fpr}(z,G/H).
\]
So if \eqref{e:fpry} holds then $\mathbb{P}(y,z) < 1$ and thus 
$G = \la z,y^g \ra = \la x, y^g \ra$
for some $g \in G$. Since $x$ is an arbitrary non-identity element, we conclude that $y$ is a witness.
\end{proof}

\begin{rem}\label{r:us}
Suppose $G$ is simple and $\{H_1, \ldots, H_t\}$ is a complete set of representatives of the conjugacy classes of maximal subgroups in $G$. Then each $H_i$ is self-normalising and we have
\begin{equation}\label{e:fpruseful}
\sum_{H \in \mathcal{M}(y)} {\rm fpr}(z,G/H) = \frac{1}{|G|}\sum_{i=1}^t |H_i|\chi_i(x)\chi_i(z),
\end{equation}
for all $z \in G$, where $\chi_i = 1^G_{H_i}$ is the permutation character for the action of $G$ on $G/H_i$.
\end{rem}

The connection between uniform spread and Theorem \ref{t:gen} is given by the following easy observation.

\begin{lem}\label{l:us2}
Let $G \leqs {\rm Sym}(\O)$ be a finite transitive permutation group and suppose $x,y \in G$ are witnesses such that
\begin{equation}\label{e:int}
\{ A^g \,:\, A \in \mathcal{M}(x), \, g \in G \} \cap \mathcal{M}(y) = \emptyset.
\end{equation}
Then there exists a derangement $z \in G$ such that $G = \la z,z^g\ra$ for some $g \in G$.
\end{lem}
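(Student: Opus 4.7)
The plan is to show that the disjointness hypothesis forces at least one of the two witnesses $x, y$ to be a derangement, after which the conclusion becomes an immediate consequence of the witness property of that element.

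To that end, I would argue by contradiction, supposing that both $x$ and $y$ have fixed points on $\O$. Let $H$ be a point stabiliser; the stabilisers of all other points form the single $G$-conjugacy class $\{H^g : g \in G\}$. The assumption means $x \in H^{g_1}$ and $y \in H^{g_2}$ for suitable $g_1, g_2 \in G$. I would then fix any maximal subgroup $M_2$ of $G$ with $H^{g_2} \leqs M_2$, so $M_2 \in \M(y)$, and transport it by $g_2^{-1}g_1$ to obtain $M_1 := M_2^{g_2^{-1}g_1}$, a maximal subgroup containing $H^{g_1}$ and therefore $x$; thus $M_1 \in \M(x)$. Since $M_2$ is, by construction, a $G$-conjugate of $M_1$, it lies simultaneously in $\M(y)$ and in $\{A^g : A \in \M(x),\, g \in G\}$, contradicting the stated hypothesis.

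Having established that at least one of the two elements, call it $z$, is a derangement, I would invoke the definition of witness for $z$ applied to the nontrivial element $z$ itself: this furnishes some $g \in G$ such that $G = \la z, z^g \ra$, which is the required conclusion.

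There is essentially no obstacle here; the one observation worth highlighting is that the argument does not require the point stabiliser $H$ to be maximal in $G$. Non-maximality is handled precisely by the step of passing from $H^{g_2}$ to a maximal overgroup $M_2$, which is the only place where maximality of the subgroups in $\M(x)$ and $\M(y)$ is actually used.
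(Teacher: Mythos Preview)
Your proof is correct and follows essentially the same approach as the paper's, just with the details spelled out: the paper simply asserts that emptiness of the intersection forces one of $x,y$ to be a derangement and then applies the witness property, whereas you supply the contrapositive argument via a common maximal overgroup of conjugate point stabilisers.
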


\begin{proof}
Let $H$ be a point stabiliser. Since the intersection in \eqref{e:int} is empty, it follows that some element $z \in \{x,y\}$ is a derangement on $\O$. And since $z$ is a witness, we conclude that $G = \la z,z^g \ra$ for some $g \in G$. 
\end{proof}

\begin{rem}\label{r:usp}
Notice that a weaker form of Lemma \ref{l:us2} will be sufficient for establishing the conclusion in Theorem \ref{t:gen}. Indeed, it suffices to show that there exist elements $x,y \in G$ such that \eqref{e:int} holds and we have
\[
\sum_{H \in \mathcal{M}(z)} {\rm fpr}(z,G/H) < 1 
\]
for $z \in \{x,y\}$. In particular, it is not strictly necessary to show that $x$ or $y$ is a witness.
\end{rem}

\begin{rem}\label{r:invgen}
Let us also note that if $x,y \in G$ satisfy the condition in \eqref{e:int}, then $\{x,y\}$ is an \emph{invariable} generating set for $G$, in the sense that $G = \la x^a,y^b\ra$ for all $a,b \in G$.
\end{rem}

In view of Lemma \ref{l:us2} and the connection with uniform spread, we will make extensive use of \cite{BGK,GK}, where explicit witnesses for simple groups are identified. In several cases, we will need to prove that some additional elements are also witnesses and we will typically do this via Lemma \ref{l:us}, which involves determining the maximal overgroups and bounding the corresponding fixed point ratios. For example, if $G$ is a classical group then we will work closely with \cite{Ber,BHR,GPPS,KL} to study maximal overgroups and we will appeal to the fixed point ratio bounds in \cite{Bur1} and \cite[Section 3]{GK}. We will also use computational methods to handle some of the small simple groups that are amenable to direct computation in \textsf{GAP} \cite{GAP} or {\sc Magma} \cite{magma} (see Propositions \ref{p:sporgen} and \ref{p:gensmall}, for example).

\subsection{Alternating groups}\label{ss:gen_an}

Here we prove Theorem \ref{t:gen} in the case where $G = A_n$ is an alternating group with $n \geqs 5$. First we handle the groups of small degree.

\begin{lem}\label{l:smalldeg}
The conclusion to Theorem \ref{t:gen} holds if $G = A_n$ and $n \leqs 15$.
\end{lem}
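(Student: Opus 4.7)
The proof is a direct computer verification. The essential reduction is the following: if $H < G$ is a core-free subgroup contained in a maximal subgroup $M$ of $G$, and $x \in G$ is a derangement for the action on $G/M$, then $x$ is also a derangement for the action on $G/H$, since every $G$-conjugate of $H$ is contained in a $G$-conjugate of $M$. As the property $G = \langle x, x^g \rangle$ depends only on the abstract group $G$, it suffices to verify the theorem for every primitive faithful action of $G$, that is, for every conjugacy class of maximal subgroups of $G$.

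For each $n \in \{5, 6, \ldots, 15\}$ the plan is to construct $G = A_n$ in GAP or Magma, enumerate its conjugacy classes of maximal subgroups, and for each representative $M$ identify the $G$-classes contained in $\Delta(G, M)$ by computing the fusion map from $M$-classes to $G$-classes. For each such class $C = x^G$ of derangements, the existence of some $g \in G$ with $G = \langle x, x^g \rangle$ is certified by verifying the standard inequality
\[
\sum_{K \in \mathcal{M}(x)} \frac{|x^G \cap K|}{|x^G|} < 1,
\]
with $\mathcal{M}(x)$ the set of maximal subgroups of $G$ containing $x$ (compare Lemma \ref{l:us}). Enumerating $\mathcal{M}(x)$ reduces to testing, for each conjugacy class of maximal subgroups of $G$, whether a representative meets $x^G$, which is immediate from the character table together with a presentation of $G$ as a permutation group on $n$ points.

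No genuine obstacle arises. For $A_n$ with $n \leqs 15$ the list of conjugacy classes of $G$ and the list of conjugacy classes of maximal subgroups are both short, the subgroup structure is completely understood, and in practice many natural candidate classes --- for instance, the class of $n$-cycles when $n$ is odd --- satisfy the criterion. As a fall-back, if the probabilistic bound ever fails for a given action, one can simply search within a candidate derangement class for a generating pair $(x, x^g)$, which remains feasible in this range. The entire verification terminates in a few seconds of computer time.
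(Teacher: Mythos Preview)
Your proposal is correct and takes essentially the same approach as the paper: a direct computer verification, reducing to maximal subgroups and identifying derangement classes for each primitive action. The only cosmetic difference is that the paper dispenses with the probabilistic inequality altogether and simply uses random search to exhibit a generating pair of conjugate derangements, which you list as your fall-back; in this range either method works without difficulty.
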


\begin{proof}
This is a routine {\sc Magma} \cite{magma} computation. First, we construct a set of representatives of the conjugacy classes of maximal subgroups $H$ of $G$. Then by inspecting the conjugacy classes of elements in $H$ and $G$, it is straightforward to determine the set of derangements in $G$ (with respect to the action of $G$ on $\O = G/H$) and using random search we can easily find two conjugate derangements that generate $G$.
\end{proof}

For the remainder of this section, we may assume $n \geqs 16$. The following number-theoretic lemma will be useful (note that the conclusion is false if $n=15$).

\begin{lem}\label{l:nt1}
Let $n \geqs 16$ be an integer. Then there exist primes $p$ and $q$ such that $n/2 < p < q \leqs n-3$ and $q \ne (n+p)/2$.
\end{lem}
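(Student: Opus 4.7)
The plan is to exploit the parity of $n$ to separate the genuine content of the lemma from the constraint $q \neq (n+p)/2$, and then to control the number of primes in $(n/2, n-3]$ by iterating a sharp form of Bertrand's postulate.

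First I would reduce to a purely prime-counting problem by splitting on parity. Any prime $p > n/2 \geqs 8$ is odd, so when $n$ is even the integer $(n+p)/2$ is not an integer at all, and hence the inequality $q \neq (n+p)/2$ is automatic. In this case the lemma reduces to producing two primes in $(n/2, n-3]$. When $n$ is odd, I would instead aim for three primes $p_1 < p_2 < p_3$ in $(n/2, n-3]$: for such a triple, the identities $p_2 = (n+p_1)/2$ and $p_3 = (n+p_1)/2$ cannot both hold (they would force $p_2 = p_3$), so at least one of the pairs $(p_1,p_2)$ or $(p_1,p_3)$ satisfies $q \neq (n+p)/2$, and we are done.

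Next I would establish the prime-counting claim. The tool of choice is Nagura's refinement of Bertrand's postulate, which states that for every real $x \geqs 25$ the interval $(x, 6x/5)$ contains a prime. Iterating this three times starting from $x_0 = n/2$ produces primes $p_1, p_2, p_3$ with $n/2 < p_1 < p_2 < p_3 < (6/5)^3 \cdot (n/2) = 108 n/125$. The hypothesis $x \geqs 25$ is satisfied throughout once $n \geqs 50$, and the inclusion $108 n/125 \leqs n-3$ holds whenever $n \geqs 23$. Hence for all $n \geqs 50$ we already have three primes in $(n/2, n-3]$, which handles both parity cases simultaneously.

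This leaves only the finitely many values $16 \leqs n \leqs 49$, which I would verify by direct inspection of the primes in $(n/2, n-3]$. For even $n$ in this range one always finds at least two primes (e.g.\ $\{11,13\} \subset (8,13]$ for $n=16$), and the parity argument above closes the case. For odd $n$, the interval contains at least three primes for every $n \in \{21, 23, \ldots, 49\}$, while the two exceptional values $n = 17$ and $n = 19$ admit only the primes $\{11,13\}$; in both of these the integer $(n+11)/2$ equals $14$ or $15$, neither of which is a prime (in particular neither equals $q = 13$), so the pair $(p,q) = (11,13)$ satisfies the condition. The main technical nuisance is precisely this bookkeeping for the very small odd values of $n$, but since it is a finite check it presents no real obstacle.
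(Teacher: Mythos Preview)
Your proof is correct. The overall architecture matches the paper's --- invoke a Bertrand-type prime-density result for large $n$, then handle a finite initial segment by direct inspection --- but the details differ in two respects worth noting.

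First, you introduce a parity split that the paper does not use: for even $n$ you observe that $(n+p)/2$ is never an integer, so only two primes in $(n/2,n-3]$ are needed, and the third prime is only required for odd $n$. The paper instead argues uniformly for both parities, producing three primes $p_1<p_2<p_3\leqslant n-4$ and then choosing $q\in\{p_2,p_3\}$ to avoid the single forbidden value $(n+p_1)/2$. Your observation is a pleasant simplification for half the cases, at the cost of a slightly longer case analysis.

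Second, you iterate Nagura's theorem three times, while the paper appeals once to Ramanujan's result that $(n/2,n]$ contains at least five primes for $n\geqslant 41$; after discarding the at most two primes among $n-3,n-2,n-1,n$, this immediately yields three primes in $(n/2,n-4]$. Ramanujan's bound gives a cleaner one-shot argument and a slightly smaller finite check ($n\leqslant 40$ versus your $n\leqslant 49$), but your iterated-Nagura approach is equally valid and perhaps more widely known.
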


\begin{proof}
The cases with $16 \leqs n \leqs 40$ can be checked directly, so let us assume $n \geqs 41$. Then by a theorem of Ramanujan \cite{Ram}, there are at least $5$ primes in the interval $(n/2,n]$, which immediately implies that there are primes $p_i$ such that $n/2 < p_1 < p_2 < p_3 \leqs n-4$. The result now follows since we can set $p = p_1$ and $q = p_2$ or $p_3$, ensuring that $q \ne (n+p)/2$. 
\end{proof}

\begin{prop}\label{p:gen_an}
The conclusion to Theorem \ref{t:gen} holds if $G$ is an alternating group.
\end{prop}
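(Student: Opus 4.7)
By Lemma \ref{l:smalldeg} we may assume $n \geqs 16$. The plan is to apply Lemma \ref{l:us2}: produce two witnesses $x, y \in G = A_n$ whose sets of maximal overgroups are disjoint up to $G$-conjugacy; then for any point stabiliser $H$, at least one of $x, y$ is a derangement on $G/H$ and generates $G$ with a conjugate. Fix primes $p < q$ with $n/2 < p < q \leqs n-3$ and $q \neq (n+p)/2$ via Lemma \ref{l:nt1}.

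The first witness $x$ will be a $p$-cycle. Its maximal overgroups are highly restricted: by the main theorem of \cite{Jones}, no proper primitive subgroup of $S_n$ contains an $\ell$-cycle for $\ell$ prime with $\ell \leqs n-3$, and an elementary argument rules out imprimitive subgroups $(S_a \wr S_b) \cap A_n$ containing a cycle of length $p > n/2$ (the $p$-cycle's orbit structure is incompatible with any block system whose blocks have size $a < p$). Thus $\mathcal{M}(x)$, up to conjugacy, consists only of the intransitive subgroups $H_k = (S_k \times S_{n-k}) \cap A_n$ with $1 \leqs k \leqs n - p$. Now $y \in H_k$ if and only if some subset of $y$'s cycle lengths sums to $k$, so to make $\mathcal{M}(y)$ disjoint from $\mathcal{M}(x)$ up to conjugacy it suffices to ensure every cycle length of $y$ exceeds $n - p$. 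I would take $y$ to be an $n$-cycle when $n$ is odd (an even permutation, with the single cycle length $n > n - p$), and an element of cycle type $(a, n-a)$ with $\min(a, n-a) > n - p$ when $n$ is even (an even permutation since $n$ is even, with such $a$ existing as $p > n/2$). The role of the second prime $q$ and the inequality $q \neq (n+p)/2$ will appear when verifying that $y$ is a witness, ruling out an accidental coincidence between a cycle length of $y$ and the order of a problematic element.

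With these $x, y$, Lemma \ref{l:us2} reduces the proposition to showing that $x$ and $y$ are witnesses in $G$. The verification for $x$ is immediate from Lemma \ref{l:us}: the maximal overgroups are exactly the $H_k$ for $k \leqs n - p$, and standard fixed-point-ratio estimates on the $k$-subset actions of $A_n$ (e.g.\ the bounds recalled in \cite[Section 3]{GK} and refined in \cite{Bur1}) ensure that the sum in \eqref{e:fpry} is strictly less than $1$ for $n \geqs 16$, with an initial finite range checked directly.

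The main obstacle is the analogous verification for $y$, which requires a description of the maximal overgroups of an $n$-cycle (for $n$ odd) or an $(a, n-a)$-element (for $n$ even). In the odd case, the classical Feit--Jones classification limits the transitive overgroups to $N_G(\la y \ra)$, affine groups ${\rm AGL}_d(r)$ when $n = r^d$ is a prime power, and the wreath products $(S_a \wr S_{n/a}) \cap A_n$ for each divisor $a$ of $n$; in the even case one obtains a similarly short explicit list, further trimmed by the condition $q \neq (n+p)/2$. For each family, the fpr bounds of \cite{Bur1, GK} keep the sum in \eqref{e:fpry} below $1$ for $n \geqs 16$, with a small initial range dispatched by computation in {\sc Magma} as in Lemma \ref{l:smalldeg}. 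This final fpr accounting is the longest and most delicate step; once completed, Lemma \ref{l:us2} immediately delivers Theorem \ref{t:gen} for $G = A_n$.
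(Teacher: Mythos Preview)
Your approach has a genuine gap: a $p$-cycle with $p \leqs n-3$ is \emph{not} a witness for $A_n$, so the appeal to Lemma \ref{l:us2} breaks down at the first step. Concretely, take $z = (1,2,3)$. For any $p$-cycle $y$, the subgroup $\la z,y\ra$ is supported on $\mathrm{supp}(z)\cup\mathrm{supp}(y)$, a set of size at most $p+3\leqs n$; equality forces the supports to be disjoint, in which case $z$ and $y$ commute and $\la z,y\ra\cong C_3\times C_p$. Either way $\la z,y\ra\ne A_n$, so no conjugate of a $p$-cycle generates with a $3$-cycle. Equivalently, the sum in \eqref{e:fpry} is far from being less than $1$: already the contribution from the single class $H_1=(S_1\times S_{n-1})\cap A_n$ is $\chi_1(x)\cdot{\rm fpr}(z,G/H_1)=(n-p)(n-3)/n$, which exceeds $1$ for all $n\geqs 16$ (for instance $n=16$, $p=11$ gives $5\cdot 13/16>4$). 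So the assertion that ``standard fixed-point-ratio estimates \ldots\ ensure that the sum in \eqref{e:fpry} is strictly less than $1$'' is false, and no amount of bookkeeping will repair it. Your use of the condition $q\ne(n+p)/2$ is also left unexplained: in your setup $y$ is an $n$-cycle (or a two-cycle element) and this quantity never appears.

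The paper's proof avoids the witness machinery altogether. It splits according to the type of the point stabiliser $H$ and, in each case, writes down by hand two \emph{conjugate} elements $x,y=x^g$ that are visibly derangements on $G/H$ and then checks directly that $\la x,y\ra=A_n$ (transitivity on $[n]$ plus the presence of a $p$-cycle, together with Jordan's theorem, suffices). When $H$ is transitive on $[n]$, two overlapping $p$-cycles do the job; when $H=(S_k\times S_{n-k})\cap G$, one uses elements of cycle type $[p,n-p]$ ($n$ even) or $[p,(n-p)/2,(n-p)/2]$ ($n$ odd), switching from $p$ to $q$ for the finitely many bad values of $k$ --- and it is precisely here, in the odd case, that the hypothesis $q\ne(n+p)/2$ is needed to ensure the replacement element is still a derangement.
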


\begin{proof}
Write $G = A_n$. In view of Lemma \ref{l:smalldeg}, we may assume $n \geqs 16$ and we fix primes $p$ and $q$ as in Lemma \ref{l:nt1}. Let $H$ be a point stabiliser and set $[n] = \{1, \ldots, n\}$.

First assume $H$ acts transitively on $[n]$ and set $L = \la x, y \ra$, where $x$ and $y$ are the $p$-cycles
\[
x=(1,2,\ldots,p),\;\; y=(n-p+1,\ldots,n).
\]
Since $p \geqs n-p+1$, it follows that $L$ acts transitively on $[n]$. In addition, since $p > n/2$, we observe that no transitive imprimitive subgroup of $G$ contains an element of order $p$, so $L$ is primitive. Finally, a classical theorem of Jordan (see \cite[Theorem 13.9]{Wielandt}) implies that no proper primitive subgroup of $G$ contains a $p$-cycle, whence $L = G$. In addition, this argument shows that neither $x$ nor $y$ is contained in a proper transitive subgroup of $G$, so $x$ and $y$ are conjugate derangements and the result follows.

For the remainder, we may assume $H$ is intransitive. Moreover, without loss of generality, we may assume that $H = (S_k \times S_{n-k}) \cap G$ for some $1 \leqs k < n/2$, which allows us to identify $\O$ with the set of $k$-element subsets of $[n]$. 

First assume $n \geqs 16$ is even. Define the elements
\begin{align*}
x & = (1,\ldots,p)(p+1,\ldots, n) \\
y & = (1,\ldots,p-1,p+1)(p+2,p,p+3,\ldots, n)
\end{align*}
in $G$ and set $L = \langle x,y\rangle$. Note that $y = x^z$, where $z = (p,p+1,p+2) \in G$, so $x$ and $y$ are $G$-conjugate. In addition, note that $L$ is transitive on $[n]$ and $x^{n-p}$ is a $p$-cycle, so by arguing as above we deduce that $L = G$. Clearly, $x$ and $y$ are derangements unless $k = n-p$. And if $k =n-p$ then we can redefine $x$ and $y$ with $p$ replaced by $q$ and repeat the argument.

Finally, let us assume $n \geqs 17$ is odd and set $L = \la x,y \ra$, where
\begin{align*}
x & = (1,\ldots,p)(p+1, \ldots, \ell)(\ell+1,\ldots,n) \\
y & = (1, \ldots, p-1,p+1)(p,p+2, \ldots, \ell-1,\ell+1)(\ell,\ell+2, \ldots, n)
\end{align*}
and $\ell = (n+p)/2$. Note that $x$ and $y$ both have cycle-type $[p,(n-p)/2, (n-p)/2]$, so they are conjugate in $G$. Then $L$ acts transitively on $[n]$ and it contains a $p$-cycle, so as before we deduce that $L=G$. Moreover, $x$ and $y$ are derangements unless $k \in \{(n-p)/2, n-p\}$, so let us assume we are in one of these two cases. Here we can repeat the argument, replacing $p$ by $q$ in the construction of $x$ and $y$, noting that the condition 
$q \ne (n+p)/2$ in Lemma \ref{l:nt1} implies that $x$ and $y$ are derangements on $\O$ when $k \in \{(n-p)/2, n-p\}$.
\end{proof}

\subsection{Sporadic groups}\label{ss:gen_spor}

Next let us turn to the sporadic groups. Here we adopt a computational approach, working  extensively with Lemma \ref{l:us2}. As before, we write 
$\mathbb{B}$ and $\mathbb{M}$ for the Baby Monster and Monster sporadic groups, respectively.

\begin{prop}\label{p:sporgen}
The conclusion to Theorem \ref{t:gen} holds if $G$ is a sporadic group.
\end{prop}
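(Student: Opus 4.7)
The plan is to apply Lemma \ref{l:us2}: for each sporadic simple group $G$ I would exhibit two witnesses $x,y \in G$ such that no maximal overgroup of $x$ is $G$-conjugate to a maximal overgroup of $y$. Then for any transitive action of $G$ on $\Omega = G/H$, at least one of $x$, $y$ is a derangement on $\Omega$, and being a witness, generates $G$ together with a suitable conjugate of itself.

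For all sporadic groups other than $\mathbb{B}$ and $\mathbb{M}$, I would proceed computationally in \textsf{MAGMA}, working with a faithful permutation representation of $G$ of minimal degree. The strategy is to construct representatives of each conjugacy class of maximal subgroups via \textsf{MaximalSubgroups}, and then by random search locate conjugacy classes $x^G$ and $y^G$ whose maximal overgroup collections, as extracted from the character-table / ATLAS data in \cite{GAPCTL}, are disjoint up to $G$-conjugacy. Good candidates for $x$ and $y$ are elements of prime order $r$ with $r \mid |G|$ to low power, since these have severely restricted sets $\mathcal{M}(x)$ (cf.\ \eqref{e:My}). The witness property is then verified via Lemma \ref{l:us}, using \eqref{e:fpruseful} and the permutation characters $1^G_{H_i}$; the sum is manageable precisely because $\mathcal{M}(x)$ is small.

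For $G = \mathbb{B}$ and $G = \mathbb{M}$ I would work entirely through the \textsf{GAP} Character Table Library. In the Monster, a natural choice is to take $x$ of order $41$ (whose maximal overgroups are limited to $3^8.\mathrm{P\Omega}_8^-(3).2$ and $41{:}40$, as already noted in the proof of Proposition \ref{p:spor1}) together with $y$ of order $71$ (contained only in the single maximal subgroup $71{:}35$); these are not $G$-conjugate, so the disjointness condition of Lemma \ref{l:us2} holds. The witness conditions then reduce to bounding $\sum_{H \in \mathcal{M}(x)} \mathrm{fpr}(z,G/H)$ over a mere two maximal subgroups, and similarly one subgroup for $y$, which can be checked for every conjugacy class of prime-order elements $z$ via \eqref{e:fpruseful} and the available character-theoretic data. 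An entirely analogous choice (e.g.\ elements of order $47$ or $31$) settles $\mathbb{B}$.

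The main obstacle is that not all fusion maps from the maximal subgroups of $\mathbb{M}$ are stored in \cite{GAPCTL}, so verifying the witness bound for every prime-order $z$ requires care. I would circumvent this in two ways: first, by invoking \textsf{PossibleClassFusions} to enumerate candidate fusions and checking that each produces the same permutation character (as was done in the Baby Monster case in the proof of Proposition \ref{p:spor1}); second, by noting the crude but often decisive observation that $\mathrm{fpr}(z,G/H) = 0$ whenever $|z|$ does not divide $|H|$. Because the chosen overgroups of $x$ and $y$ have orders involving very few primes, this already forces the fixed point ratio sum to vanish for the vast majority of prime-order $z \in \mathbb{M}$, leaving only a short finite list of classes requiring direct inspection.
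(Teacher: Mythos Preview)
Your strategy is exactly the paper's: apply Lemma \ref{l:us2} by exhibiting, for each sporadic $G$, two witnesses whose maximal overgroups lie in disjoint $G$-classes, verifying the witness property via Lemma \ref{l:us} and \eqref{e:fpruseful} with the character-table data. The paper carries this out in \textsf{GAP} rather than \textsc{Magma}, records the resulting pairs in Table \ref{tab:sporgen}, and for $\mathbb{B}$, $\mathbb{M}$ chooses classes $\texttt{55A},\texttt{47A}$ and $\texttt{71A},\texttt{59A}$ respectively; note one factual slip in your proposal --- the unique maximal overgroup of an element of order $71$ in $\mathbb{M}$ is ${\rm L}_2(71)$, not $71{:}35$ (see \cite[Table 1]{DLP}), though this does not affect the argument.
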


\begin{proof}
First assume $G \ne \mathbb{B}, \mathbb{M}$. Using \textsf{GAP} \cite{GAP} and the information available in the \textsf{GAP} Character Table Library \cite{GAPCTL}, it is straightforward to evaluate the expression in \eqref{e:fpruseful} for all $y,z \in G$ (of course, to do this we only need to work with a set of conjugacy class representatives). In every case, we can combine this with Lemma \ref{l:us} to identify witnesses $x$ and $y$ satisfying all the conditions in Lemma \ref{l:us2}, which gives the desired result. Two such elements are recorded in Table \ref{tab:sporgen} (using the standard Atlas \cite{Atlas} labelling for conjugacy classes), together with the maximal overgroups $\mathcal{M}(x)$ and $\mathcal{M}(y)$ (see \eqref{e:My}, as well as Remark \ref{r:spor_tab} below).

Next assume $G = \mathbb{B}$ and fix elements $x \in \texttt{55A}$ and $y \in \texttt{47A}$. The character table of every maximal subgroup of $G$ is available in \cite{GAPCTL} and just by considering element orders it is easy to check that every maximal overgroup of $y$ is of the form $H = 47{:}23$. Moreover, we can compute the permutation character $\chi = 1^G_H$ and we find that $\chi(y) = 1$, which implies that $y$ is contained in a unique maximal subgroup of $G$ (namely, $N_G(\la y \ra) = 47{:}23$). Then by applying Lemma \ref{l:us}, we  deduce that $y$ is a witness. Similarly, one can check that $\mathcal{M}(x) = \{ 5{:}4 \times {\rm HS}.2, S_5 \times {\rm M}_{22}{:}2\}$ and once again we see that $x$ is a witness. The result now follows from Lemma \ref{l:us2}.

Finally, let us assume $G = \mathbb{M}$ is the Monster. Fix $x,y \in G$, where $x \in \texttt{71A}$ and $y \in \texttt{59A}$. By inspecting the list of maximal subgroups of $G$ (see \cite[Table 1]{DLP}), it is easy to see that ${\rm L}_2(71)$ is the only maximal subgroup containing an element of order $71$. And similarly, ${\rm L}_2(59)$ is the only maximal subgroup with an element of order $59$. In both cases we can use \cite{GAPCTL} to compute the corresponding permutation characters and we conclude that $x$ and $y$ both have unique maximal overgroups. As above, the result now follows from Lemma \ref{l:us2}.
\end{proof}

\begin{rem}\label{r:spor_tab}
In the final two columns of Table \ref{tab:sporgen}, we list the maximal subgroups in the sets $\mathcal{M}(x)$ and $\mathcal{M}(y)$. In order to clarify our notation, note that if $G = {\rm M}_{12}$ and $x \in \texttt{11A}$, then $\mathcal{M}(x)$ contains two non-conjugate subgroups isomorphic to ${\rm M}_{11}$. On the other hand, if $G = {\rm J}_2$ and $y \in \texttt{7A}$, then $\mathcal{M}(y)$ contains two conjugate subgroups isomorphic to ${\rm U}_{3}(3)$.
\end{rem}

{\footnotesize
\begin{table}
\[
\begin{array}{lllll} \hline
G & x & y & \mathcal{M}(x) & \mathcal{M}(y) \\ \hline 
{\rm M}_{11} &  \texttt{11A} & \texttt{8A}  & {\rm L}_2(11) & {\rm M}_{10},\, 3^2{:}{\rm SD}_{16},\, 2.S_4 \\ 

{\rm M}_{12} & \texttt{11A} & \texttt{10A} & {\rm M}_{11}, \, {\rm M}_{11}, \, {\rm L}_2(11) &  A_6.2^2, \, A_6.2^2, \, 2 \times S_5 \\

{\rm M}_{22} & \texttt{11A} & \texttt{8A}  & {\rm L}_2(11) & 2^4{:}A_6, \, 2^4{:}S_5, \, {\rm M}_{10} \\
 
{\rm M}_{23} &  \texttt{23A} & \texttt{15A} & 23{:}11 & A_8, \, 2^4{:}(3 \times A_5).2 \\  

{\rm M}_{24} &  \texttt{23A} & \texttt{21A} & {\rm M}_{23},\, {\rm L}_2(23) & {\rm L}_3(4){:}S_3, \, 2^6{:}({\rm L}_3(2) \times S_3) \\
   
{\rm J}_1 & \texttt{19A} & \texttt{15A} & 19{:}6 & D_6 \times D_{10} \\ 
  
{\rm J}_{2} &  \texttt{10C} & \texttt{7A} & 2^{1+4}{:}A_5, \, A_5\times D_{10}, \, 5^2{:}D_{12}  & {\rm U}_3(3) \mbox{ (two)}, \, {\rm L}_3(2).2 \\
  
{\rm J}_{3} &  \texttt{19A} & \texttt{17A} & {\rm L}_2(19), \, {\rm L}_2(19) & {\rm L}_2(16).2, \, {\rm L}_2(17) \\
  
{\rm J}_{4} &  \texttt{43A} & \texttt{29A} & 43{:}14 & 29{:}28 \\
  
{\rm HS} &  \texttt{15A} & \texttt{11A}  & S_8, \, 5{:}4 \times A_5 & {\rm M}_{22},\, {\rm M}_{11}, \, {\rm M}_{11} \\
  
{\rm He} & \texttt{17A} & \texttt{14C}  & {\rm Sp}_4(4).2 & 2^{1+6}.{\rm L}_3(2),\, 
7^2{:}2.{\rm L}_2(7),\, 7^{1+2}{:}(S_3\times 3) \\

{\rm McL} & \texttt{15A} & \texttt{11A} & 3^{1+4}{:}2S_5, \, 2.A_8, \, 5^{1+2}{:}3{:}8 & {\rm M}_{22},\, {\rm M}_{22},\, {\rm M}_{11} \\
  
{\rm Suz}  & \texttt{14A} & \texttt{13A} & {\rm J}_2.2 \mbox{ (two)}, \, (A_4\times {\rm L}_3(4)){:}2 & G_2(4), \, {\rm L}_3(3).2, \, {\rm L}_3(3).2, \, {\rm L}_2(25) \mbox{ (three)} \\
    
{\rm Ru} & \texttt{29A} & \texttt{26A} & {\rm L}_2(29) & (2^2 \times {}^2B_2(8)){:}3, \,  {\rm L}_2(25).2^2 \mbox{ (two)} \\
  
{\rm O'N} & \texttt{31A} & \texttt{19A} & {\rm L}_2(31), \, {\rm L}_2(31) & {\rm L}_3(7).2, \, {\rm L}_3(7).2, \, {\rm J}_1 \\

{\rm Co}_1 & \texttt{26A} & \texttt{23A} & (A_4\times G_2(4)){:}2 & {\rm Co}_2, \, {\rm Co}_3, \, {\rm M}_{24} \\
   
{\rm Co}_2 &  \texttt{30A} & \texttt{23A} & {\rm U}_6(2).2, \, 2^{1+8}{:}{\rm Sp}_6(2), \, {\rm HS}.2 & {\rm M}_{23} \\
  
{\rm Co}_3 & \texttt{30A} & \texttt{23A} & {\rm McL}.2,\, 2.{\rm Sp}_6(2),\, {\rm U}_3(5){:}S_3, \, 3^{1+4}{:}4S_6 & {\rm M}_{23} \\
  
{\rm HN} & \texttt{22A} & \texttt{19A} & 2.{\rm HS}.2 & {\rm U}_3(8).3 \\

{\rm Ly} & \texttt{67A} & \texttt{37A} & 67{:}22 & 37{:}18 \\
       
{\rm Th} &  \texttt{39A} & \texttt{36A} & (3 \times G_2(3)){:}2 & 2^{1+8}.A_9, \, 3.[3^8].2S_4 \\

{\rm Fi}_{22} & \texttt{22A} & \texttt{13A} & 2.{\rm U}_6(2) & \O_7(3), \, \O_7(3), \, {}^2F_4(2) \\
  
{\rm Fi}_{23} & \texttt{35A} & \texttt{23A} & S_{12} & 2^{11}.{\rm M}_{23}, \, {\rm L}_{2}(23) \\
  
{\rm Fi}_{24}' & \texttt{29A} & \texttt{23A} & 29{:}14 & {\rm Fi}_{23}, \, 2^{11}.{\rm M}_{24} \\
    
\mathbb{B} & \texttt{55A} & \texttt{47A} & 5{:}4 \times {\rm HS}.2, \, S_5 \times {\rm M}_{22}{:}2 & 47{:}23 \\
  
\mathbb{M} & \texttt{71A} & \texttt{59A} & {\rm L}_2(71) & {\rm L}_2(59) \\ \hline
\end{array}
\]
\caption{The elements $x,y \in G$ in the proof of Proposition \ref{p:sporgen}}\label{tab:sporgen}
\end{table}
}

\subsection{Exceptional groups}\label{ss:gen_ex}

In this section we assume $G$ is a finite simple exceptional group of Lie type over $\mathbb{F}_{q}$, where $q = p^f$ and $p$ is a prime. We begin by handling the groups in $\mathcal{A}$, which is defined as follows:
\[
\mathcal{A} = \{ {}^2G_2(3)', G_2(2)',  G_2(3), G_2(4), {}^2F_4(2)', {}^3D_4(2), F_4(2), {}^2E_6(2) \}.
\]

\begin{lem}\label{l:ex_gensmall}
The conclusion to Theorem \ref{t:gen} holds if $G \in \mathcal{A}$.
\end{lem}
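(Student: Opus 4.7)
The plan is to proceed exactly as in the proof of Proposition \ref{p:sporgen}, using Lemma \ref{l:us2} in conjunction with Lemma \ref{l:us} and Remark \ref{r:us}. Each of the eight groups in $\mathcal{A}$ has modest order, and the character table of $G$, together with the character tables of the maximal subgroups of $G$ and the corresponding fusion maps, is available in the \textsf{GAP} Character Table Library \cite{GAPCTL}. This is exactly the data required to evaluate the expression in \eqref{e:fpruseful}.

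For each $G \in \mathcal{A}$, I would exhibit a pair of conjugacy classes $C = x^G$ and $D = y^G$ satisfying:
\begin{itemize}
\item[(i)] both $x$ and $y$ are witnesses in the sense of Lemma \ref{l:us}; and
\item[(ii)] no element of $\mathcal{M}(x)$ is $G$-conjugate to any element of $\mathcal{M}(y)$.
\end{itemize}
Condition (ii) implies via Lemma \ref{l:us2} that at least one of $x, y$ is a derangement for any transitive action of $G$, and condition (i) then supplies a conjugate $g$ with $G = \langle x, x^g \rangle$ or $G = \langle y, y^g \rangle$. To check (i), for each class $K = z^G$ of elements of prime order in $G$ we evaluate
\[
\sum_{H \in \mathcal{M}(x)} {\rm fpr}(z, G/H) \;=\; \frac{1}{|G|}\sum_{i=1}^{t} |H_i|\,\chi_i(x)\chi_i(z),
\]
where $H_1, \ldots, H_t$ are representatives of the conjugacy classes of maximal subgroups of $G$ and $\chi_i = 1^G_{H_i}$ (see Remark \ref{r:us}), and verify that this is strictly less than $1$; the same check is performed for $y$. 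Condition (ii) is read off directly from the permutation characters $\chi_i(x)$ and $\chi_i(y)$, which encode the sets $\mathcal{M}(x)$ and $\mathcal{M}(y)$.

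Natural candidates for $x$ and $y$ are elements of large prime, or prime-power, order, since Zsigmondy-type restrictions then force $\mathcal{M}(x)$ and $\mathcal{M}(y)$ to be very small. Concretely, I expect one can take elements of orders $7$ and $9$ in ${}^2G_2(3)' \cong {\rm L}_2(8)$; orders $7$ and $12$ in $G_2(2)' \cong {\rm U}_3(3)$; orders $13$ and $7$ in $G_2(3)$; orders $13$ and $21$ in $G_2(4)$; orders $13$ and $16$ in ${}^2F_4(2)'$; orders $13$ and $28$ in ${}^3D_4(2)$; orders $17$ and $28$ in $F_4(2)$; and orders $19$ and $17$ in ${}^2E_6(2)$. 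In every case, inspection of the list of maximal subgroups of $G$ (available in \cite{BHR, LSS} and the online Atlas \cite{WebAt}) shows that such elements are contained in only a handful of maximal subgroups, with disjoint $G$-class structures, so that (ii) is transparent.

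The main obstacle is purely computational: the groups $F_4(2)$ and ${}^2E_6(2)$ have comparatively many conjugacy classes of maximal subgroups, so the bookkeeping in verifying (i) via \eqref{e:fpruseful} is the most delicate step. However, because $\mathcal{M}(x)$ and $\mathcal{M}(y)$ are tightly constrained by the prime-order choices above, each of the sums over $\mathcal{M}(x)$ collapses to only a few terms, and the computations with \textsf{GAP} are entirely routine. The groups $G_2(2)' \cong {\rm U}_3(3)$ and ${}^2G_2(3)' \cong {\rm L}_2(8)$ of small order can alternatively be handled by a brute-force random search in {\sc Magma}, just as in Lemma \ref{l:smalldeg}.
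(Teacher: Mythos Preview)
Your approach is essentially the same as the paper's: for seven of the eight groups the paper does exactly what you describe, computing \eqref{e:fpruseful} via the \textsf{GAP} Character Table Library and exhibiting witnesses $x,y$ with disjoint maximal-overgroup classes (the paper's Table \ref{tab:exgensmall} records almost the same element orders you propose, with minor variations such as \texttt{3A} rather than an order-$9$ element in ${}^2G_2(3)'$ and \texttt{7D} rather than an order-$28$ element in ${}^3D_4(2)$).

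The one point where your sketch is too optimistic is ${}^2E_6(2)$. The paper singles this group out and does \emph{not} run the \eqref{e:fpruseful} computation directly from the library; instead it appeals to Wilson's classification of the maximal subgroups \cite{Wil_max} (see also \cite{Craven}) to pin down $\mathcal{M}(x)$ and $\mathcal{M}(y)$ by hand, and then invokes the uniform bound ${\rm fpr}(z,G/H)\leqslant 1/57$ from \cite{LLS} to verify that $y\in\texttt{13A}$ is a witness, rather than summing individual permutation-character values. Your blanket claim that all fusion maps are available in \cite{GAPCTL} is not justified for this group, and without them the formula in Remark~\ref{r:us} cannot be evaluated directly. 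Your strategy is correct in spirit, but for ${}^2E_6(2)$ you should expect to supplement the library computation with the structural input and the fixed-point-ratio bound the paper uses.
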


\begin{proof}
For $G \ne {}^2E_6(2)$ we can proceed as in the proof of Proposition \ref{p:sporgen}, working the \textsf{GAP} Character Table Library \cite{GAPCTL} to evaluate the expression in \eqref{e:fpruseful} for all $y,z \in G$. In this way, using Lemma \ref{l:us}, we can find two elements $x,y \in G$ satisfying the conditions in Lemma \ref{l:us2} and the result follows (two such elements are presented in Table \ref{tab:exgensmall} with respect to the standard Atlas \cite{Atlas} notation for conjugacy classes, together with the respective maximal overgroups $\mathcal{M}(x)$ and $\mathcal{M}(y)$).

To complete the proof, we may assume $G = {}^2E_6(2)$. The maximal subgroups of $G$ have been determined up to conjugacy by Wilson in \cite{Wil_max} (also see Tables 3 and 10 in \cite{Craven}). Fix elements $x \in \texttt{19A}$ and $y \in \texttt{13A}$. 

By inspecting the orders of the maximal subgroups of $G$, we find that $H = {\rm U}_3(8).3$ is the only one containing an element of order $19$. Moreover, we can use \textsf{GAP} to compute the character $\chi = 1^G_H$ and we deduce that $\chi(x) = 1$. Therefore, $\mathcal{M}(x) = \{ H \}$ and thus $x$ is a witness by Lemma \ref{l:us}.

Now let us consider $y$, which represents the unique conjugacy class of elements of order $13$ in $G$. By inspection, we see that each $H \in \mathcal{M}(y)$ is isomorphic to $F_4(2)$ or ${\rm Fi}_{22}$, noting that there are three conjugacy classes of each type. By working with the corresponding permutation characters, we deduce that $\mathcal{M}(y)$ comprises three subgroups isomorphic to $F_4(2)$, which represent the three classes of maximal subgroups of this type, together with six subgroups isomorphic to ${\rm Fi}_{22}$ (two subgroups from each of the three classes of this type). By \cite[Theorem 1]{LLS} we have ${\rm fpr}(z,G/H) \leqs 1/57$ for all $z \in G$ of prime order and all maximal subgroups $H$ of $G$, whence 
\[
\sum_{H \in \mathcal{M}(y)} {\rm fpr}(z,G/H) \leqs \frac{9}{57}
\]
for all $z \in G$ of prime order. Therefore, $y$ is a witness by Lemma \ref{l:us} and we can now conclude by appealing to Lemma \ref{l:us2}.
\end{proof}

{\footnotesize
\begin{table}
\[
\begin{array}{lllll} \hline
G & x & y & \mathcal{M}(x) & \mathcal{M}(y) \\ \hline

{}^2G_2(3)' & \texttt{7A} & \texttt{3A} & 2^3{:}7 \mbox{ (two)}, \, D_{14} & D_{18} \\

G_2(2)' & \texttt{12A} & \texttt{7A} & 3^{1+2}.D_8,\, 4.S_4 & {\rm L}_3(2) \\

G_2(3) & \texttt{13A} & \texttt{9A} & {\rm L}_3(3){:}2, \, {\rm L}_3(3){:}2, \, {\rm L}_2(13) & (3^2 \times 3^{1+2}){:}2S_4, \, (3^2 \times 3^{1+2}){:}2S_4, \, {\rm L}_2(8){:}3 \mbox{ (three)}\\

G_2(4) & \texttt{21A} & \texttt{13A} & 3.{\rm L}_3(4){:}2 & {\rm U}_3(4){:}2, \, {\rm L}_2(13) \\

{}^2F_4(2)' & \texttt{16A} & \texttt{13A} & 2.[2^8].5.4,\, 2^2.[2^8].S_3 & {\rm L}_3(3).2, \,  {\rm L}_3(3).2, \, {\rm L}_2(25) \mbox{ (three)} \\

{}^3D_4(2) & \texttt{13A} & \texttt{7D} & 13{:}4 & G_2(2) \mbox{ (seven)}, \, (7 \times {\rm SL}_3(2)).2 \mbox{ (four)}, \, 7^2.{\rm SL}_2(3) \\

F_4(2) & \texttt{28A} & \texttt{17A} & [2^{15}].{\rm Sp}_6(2), \, {}^3D_4(2).3, \, {\rm L}_3(2)^2.2,  & {\rm Sp}_8(2), \, {\rm Sp}_8(2) \\
& & & [2^{20}].(S_3 \times {\rm L}_3(2)) \mbox{ (two)} & \\

{}^2E_6(2) & \texttt{19A} & \texttt{13A} & {\rm U}_3(8).3 & F_4(2),  F_4(2),  F_4(2), \\
& & & & {\rm Fi}_{22} \, \mbox{(two)},  {\rm Fi}_{22} \, \mbox{(two)},  {\rm Fi}_{22} \, \mbox{(two)} \\ \hline
\end{array}
\]
\caption{The elements $x,y \in G$ in the proof of Lemma \ref{l:ex_gensmall}, $G \in \mathcal{A}$}
\label{tab:exgensmall}
\end{table}
}

For the remainder of this section, we may assume $G \not\in \mathcal{A}$. The following lemma will be a useful observation.

\begin{lem}\label{l:ex_gen1}
Suppose $G \not\in \mathcal{A}$ and $x \in G$ satisfies $|\mathcal{M}(x)| \leqs 12$. Then $x$ is a witness.
\end{lem}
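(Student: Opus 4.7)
The plan is to apply Lemma \ref{l:us}. According to that lemma, $x$ is a witness provided
\[
\sum_{H \in \mathcal{M}(x)} {\rm fpr}(z, G/H) < 1
\]
for every element $z \in G$ of prime order. Since the hypothesis gives $|\mathcal{M}(x)| \leqs 12$, it is enough to establish the uniform inequality
\[
{\rm fpr}(z, G/H) < \frac{1}{12}
\]
for every maximal subgroup $H < G$ and every prime-order element $z \in G$, so under the hypothesis $G \notin \mathcal{A}$ the problem reduces entirely to this single fixed-point-ratio bound.

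To establish it I would appeal to \cite{LLS}, where Lawther, Liebeck and Seitz prove explicit upper bounds on ${\rm fpr}(z, G/H)$ when $G$ is a simple exceptional group of Lie type, $z$ has prime order and $H$ is maximal in $G$. These bounds are sensitive to the Aschbacher-type of $H$ (parabolic, maximal-rank reductive, exotic local, almost simple $\mathcal{S}$-subgroup, etc.), and in every case they are strong enough to yield ${\rm fpr}(z, G/H) < 1/12$ once the defining field or the rank of $G$ is even moderately large. Indeed, this effect has already been used in the proof of Lemma \ref{l:ex_gensmall}, where the same theorem of \cite{LLS} gave the much stronger bound $1/57$ for $G = {}^2E_6(2)$.

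The main step is therefore the case analysis verifying that $\mathcal{A}$ captures precisely the groups for which the $1/12$ bound can fail. For the large-rank groups $E_6^{\e}(q)$, $E_7(q)$, $E_8(q)$ and for all other exceptional types over a sufficiently large field, the tables in \cite{LLS} immediately give an upper bound on ${\rm fpr}(z, G/H)$ of the shape $O(q^{-1})$ or better, which is vastly smaller than $1/12$ and requires no further work. The delicate cases are the small instances that lie just outside $\mathcal{A}$, such as $G_2(5)$, $G_2(7)$, ${}^2G_2(27)$, ${}^2B_2(8)$, ${}^3D_4(3)$ and $F_4(3)$, and these form the main obstacle in the argument. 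For each such group one consults the list of maximal subgroups (from \cite{BHR} together with \cite[Table 5.2]{LSS}) and either reads the required estimate directly from \cite{LLS}, or, in genuinely borderline cases, verifies the inequality ${\rm fpr}(z,G/H) = |z^G \cap H|/|z^G| < 1/12$ using the conjugacy class and permutation character data available in \cite{GAPCTL} and \textsf{Chevie} \cite{Chevie}. Once all such small exceptional groups have been checked, the uniform bound is proved, the hypothesis of Lemma \ref{l:us} is satisfied, and the conclusion that $x$ is a witness follows.
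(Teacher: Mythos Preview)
Your approach is essentially the same as the paper's: reduce via Lemma~\ref{l:us} to a uniform fixed point ratio bound, then invoke \cite{LLS}. The paper's proof is a single sentence citing \cite[Theorem~1]{LLS}, which already states directly that ${\rm fpr}(z,G/H) \leqs 1/13$ for every prime-order $z$ and every maximal subgroup $H$, uniformly across all simple exceptional groups $G \notin \mathcal{A}$. So the case analysis you propose for small groups such as $G_2(5)$, ${}^2B_2(8)$, ${}^3D_4(3)$, $F_4(3)$ is unnecessary: the set $\mathcal{A}$ is defined precisely so that this clean bound from \cite[Theorem~1]{LLS} applies to everything outside it, and no further checking (computational or otherwise) is required.
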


\begin{proof}
This follows by combining Lemma \ref{l:us} with \cite[Theorem 1]{LLS}, which implies that  ${\rm fpr}(z,G/H) \leqs 1/13$ for all $z \in G$ of prime order and all maximal subgroups $H$ of $G$. 
\end{proof}

\begin{prop}\label{p:ex_gen}
The conclusion to Theorem \ref{t:gen} holds if $G$ is an exceptional group of Lie type.\end{prop}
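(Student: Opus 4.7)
The plan is to combine Lemma \ref{l:us2} with Lemma \ref{l:ex_gen1}. Since the groups in $\mathcal{A}$ have already been handled by Lemma \ref{l:ex_gensmall}, we may assume $G \notin \mathcal{A}$, in which case every element $x \in G$ with $|\mathcal{M}(x)| \leqs 12$ is automatically a witness. Thus it suffices to exhibit two elements $x, y \in G$, each with at most $12$ maximal overgroups, such that no subgroup in $\mathcal{M}(x)$ is $G$-conjugate to any subgroup in $\mathcal{M}(y)$.

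The natural candidates for $x$ and $y$ are regular semisimple elements lying in two maximal tori $T_1, T_2$ of $G$ whose orders involve distinct primitive prime divisors $\Phi_e(q)$ and $\Phi_{e'}(q)$ of suitably large cyclotomic polynomials, where $e$ and $e'$ correspond to tori associated to ``regular'' elements of the Weyl group. For instance, for $G = E_8(q)$ I would take $|x|$ divisible by a primitive prime divisor of $\Phi_{30}(q)$ and $|y|$ by one of $\Phi_{24}(q)$; for $G = E_7(q)$, $E_6^{\pm}(q)$, $F_4(q)$, ${}^3D_4(q)$, $G_2(q)$, ${}^2F_4(q)$, ${}^2G_2(q)$ and ${}^2B_2(q)$ the analogous pair of tori is easy to read off from \cite[Table 5.2]{LSS}, and the same $(e,e')$ pattern already underpins much of the case analysis carried out in the proof of Proposition \ref{p:ex_sol}.

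The maximal overgroups of such large-order regular semisimple elements are severely restricted by the known classification of maximal subgroups of exceptional groups together with the constraint that $|H|$ is divisible by $|x|$ (respectively $|y|$) for every $H \in \mathcal{M}(x)$ (resp.\ $\mathcal{M}(y)$). In practice this typically forces each such $H$ to be either the torus normaliser $N_G(T_i)$, a maximal reductive subgroup of maximal rank whose order is divisible by $\Phi_e(q)$, or one of a small handful of almost simple exceptional subgroups whose orders happen to admit the relevant divisibility. Using this information together with the tables of semisimple classes in \cite{Lub}, the bound $|\mathcal{M}(x)|, |\mathcal{M}(y)| \leqs 12$ should follow in every case (and in most cases one expects $|\mathcal{M}(x)|, |\mathcal{M}(y)| \leqs 3$, as in Case 4 of Lemma \ref{l:ex_tor2}). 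The disjointness condition in Lemma \ref{l:us2} is then immediate, since every $H \in \mathcal{M}(x)$ has order divisible by $\Phi_e(q)$ while every $H \in \mathcal{M}(y)$ has order divisible by $\Phi_{e'}(q)$, and the choice of $e, e'$ ensures that no maximal subgroup (up to $G$-conjugacy) can satisfy both divisibility conditions simultaneously.

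The main obstacle will be the threshold cases of small rank ($G_2(q)$, ${}^3D_4(q)$ and the twisted rank-one groups) over small base fields, where the classification of maximal subgroups contains several almost simple or reductive exceptions that could spoil the cyclotomic divisibility argument, and where the $12$-overgroup bound can be tight. For these I would either refine the choice of $(x,y)$, exchanging a single $\Phi_e$ for a product of two small distinct cyclotomic polynomials so that a shared divisibility is forced, or invoke a direct computation via \textsf{GAP} or {\sc Magma} in the style of Lemma \ref{l:ex_gensmall}, which remains feasible because each residual $G$ has only finitely many conjugacy classes of maximal subgroups, with orders and character tables readily accessible via \cite{GAPCTL}.
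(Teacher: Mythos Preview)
Your overall strategy matches the paper's: reduce to $G \notin \mathcal{A}$, then exhibit two regular semisimple elements $x,y$ lying in distinct maximal tori so that $|\mathcal{M}(x)|,|\mathcal{M}(y)| \leqs 12$ and no overgroup of $x$ is $G$-conjugate to an overgroup of $y$, concluding via Lemmas \ref{l:ex_gen1} and \ref{l:us2}. The paper carries this out with explicit choices recorded in a table, citing Weigel \cite{Weigel} and \cite[Theorem 2.1]{GM2} to pin down $\mathcal{M}(x)$ and $\mathcal{M}(y)$ exactly rather than relying on a cyclotomic divisibility heuristic.

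Two points where your sketch would need strengthening. First, your disjointness argument (``no maximal subgroup can have order divisible by both $\Phi_e(q)$ and $\Phi_{e'}(q)$'') is not safe in general: for instance in $E_7(q)$ the paper's overgroups $H_1,H_2,H_3$ are large maximal-rank subgroups whose orders are divisible by several cyclotomic factors, so one really does need the explicit description of $\mathcal{M}(x)$ and $\mathcal{M}(y)$ rather than a divisibility shortcut. Second, the case $G = {}^3D_4(q)$ is genuinely exceptional and is singled out in the paper: here the natural element $y$ of order $q^2+q+1$ satisfies $|\mathcal{M}(y)| = 18$ when $q=3$, so Lemma \ref{l:ex_gen1} fails outright, and for $q \geqs 4$ the paper abandons the ``$y$ is a witness'' route and instead verifies directly (via Remark \ref{r:usp}) that $\sum_{H \in \mathcal{M}(y)} {\rm fpr}(y,G/H) < 1$ using crude centraliser bounds. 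Your proposal anticipates threshold difficulties but does not flag that the $12$-overgroup bound can actually be exceeded for an infinite family, requiring a qualitatively different fixed-point-ratio computation rather than just a refined choice of $(x,y)$ or a finite \textsf{GAP} check.
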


\begin{proof}
In view of Lemma \ref{l:ex_gensmall}, we may assume $G \not\in \mathcal{A}$. In every case, we will show that there exist witnesses $x,y \in G$ satisfying the conditions in Lemma \ref{l:us2}. 

For now, let us assume $G \ne {}^3D_4(q)$ and choose $x,y \in G$ as in Table \ref{tab:exgen}. In the table, we write 
$\Phi_i$ for the $i$-th cyclotomic polynomial evaluated at $q$ and we use the notation $d = (2,q-1)$, $e = (3,q-1)$ and $e'=(3,q+1)$. In addition, for $G = {}^2F_4(q)$ we define
\[
\a^{\e} = q^2 + \e\sqrt{2q^3} + q + \e\sqrt{2q} + 1
\]
for $\e = \pm$. For $G = E_7(q)$, we have
\[
H_1 = e'.({}^2E_6(q) \times (q+1)/de').e'.2,\;\; H_2 = e.(E_6(q) \times (q-1)/de).e.2
\]
and $H_3$ is an $E_6$-parabolic subgroup of type $P_7$ (in terms of the standard labelling of maximal parabolic subgroups of $G$).

Observe that in every case, $x$ and $y$ generate distinct maximal tori of $G$, which we denote by $T_x$ and $T_y$, respectively. The existence of cyclic tori of the given orders, and hence the existence of $x$ and $y$, follows from the general theory of tori in finite reductive groups (see \cite[Section 3.3]{Carter}, for example). So in view of Lemmas \ref{l:us2} and \ref{l:ex_gen1}, it just remains to justify the given description of the sets $\mathcal{M}(x)$ and $\mathcal{M}(y)$ in Table \ref{tab:exgen}.
 
{\footnotesize
\begin{table}
\[
\begin{array}{llllll} \hline
G & & |x| & |y| & \mathcal{M}(x) &  \mathcal{M}(y) \\ \hline
{}^2B_2(q) & & q+\sqrt{2q}+1 & q-\sqrt{2q}+1 & T_x.4 & T_y.4 \\

{}^2G_2(q) & & q + \sqrt{3q} + 1 & q - \sqrt{3q} + 1 & T_x.6 & T_y.6 \\

G_2(q) & q \equiv \e \imod{3} & q^2 - \epsilon q + 1 & q^2 + \epsilon q + 1 & {\rm SL}_3^{-\epsilon}(q).2  & {\rm SL}_3^{\epsilon}(q).2 \\ 

 & p = 3 & q^2 + q + 1 & q^2 - q + 1 & {\rm SL}_3(q).2 \mbox{ (two)} & {\rm SU}_3(q).2 \mbox{ (two)}  \\

{}^2F_4(q) & & \a^+ & \a^- & T_x.12 & T_y.12 \\

F_4(q) & p \ne 2 & q^4 - q^2 + 1 & q^4+1 & {}^3D_4(q).3 & 2.\O_9(q) \\

& p = 2 & q^4 - q^2 + 1 & q^4+1 & {}^3D_4(q).3 \mbox{ (two)} & {\rm Sp}_8(q) \mbox{ (two)} \\

E_6(q) & & \Phi_9/e & \Phi_{12}\Phi_3/e & {\rm L}_3(q^3).3 & ({}^3D_4(q) \times (q^2+q+1)/e).3 \\ 

{}^2E_6(q) & & \Phi_{18}/e' & \Phi_{12}\Phi_6/e' & {\rm U}_3(q^3).3 & ({}^3D_4(q) \times (q^2-q+1)/e').3 \\

E_7(q) & q \geqs 3 & \Phi_{18}\Phi_2/d & \Phi_9\Phi_1/d & H_1 & H_2, \, H_3 \mbox{ (two)} \\
& q = 2 & 129 & 73 & {\rm SU}_8(2) & H_2, \, H_3 \mbox{ (two)} \\
E_8(q) & & \Phi_{30} & \Phi_{15} & T_x.30 & T_y.30 \\ \hline
\end{array}
\]
\caption{The elements $x,y \in G$ in the proof of Proposition \ref{p:ex_gen}, $G \not\in \mathcal{A}$, $G \ne {}^3D_4(q)$}\label{tab:exgen}
\end{table}
}

For now we will assume $G \ne G_2(q)$. If we exclude the groups
\begin{equation}\label{e:exc}
\{ F_4(q), \, {}^2E_6(q),\, E_7(q) \,:\, q=2,3 \}
\end{equation}
then the maximal overgroups of $T_x$ are determined by Weigel \cite{Weigel} and we read off the subgroups in $\mathcal{M}(x)$ recorded in the fourth column of the table. And for the groups in \eqref{e:exc} we can appeal to the proof of \cite[Proposition 6.2]{GK}. Similarly, the description of $\mathcal{M}(y)$ in Table \ref{tab:exgen} follows from \cite[Theorem 2.1]{GM2}.

Now suppose $G = G_2(q)$ and recall that $q \geqs 5$ since we are assuming $G \not\in \mathcal{A}$. As above, we can read off $\mathcal{M}(y)$ from \cite[Theorem 2.1]{GM2}. Similarly, our description of $\mathcal{M}(x)$ follows from \cite{Weigel} unless $q \equiv -1 \imod{3}$. Here $|x| = q^2+q+1$ and by inspecting the list of maximal subgroups of $G$ (see \cite[Tables 8.30, 8.41]{BHR}, for example) it is easy to see that each subgroup in $\mathcal{M}(x)$ is conjugate to $H = {\rm SL}_3(q).2$. Moreover, we note that $x^G \cap H$ is the union of two ${\rm SL}_3(q)$-classes, both of which have size $|{\rm SL}_3(q)|/(q^2+q+1)$. Therefore,
\[
{\rm fpr}(x,G/H) \cdot |G:N_G(H)| = \frac{|x^G\cap H|}{|x^G|} \frac{|G|}{|H|} = 1
\]
and we conclude that $x$ is contained in a unique conjugate of $H$, as recorded in Table \ref{tab:exgen}.

To complete the proof, we may assume $G = {}^3D_4(q)$ with $q \geqs 3$ (recall that ${}^3D_4(2) \in \mathcal{A}$). Let $x,y \in G$ be regular semisimple elements with respective orders $q^4-q^2+1$ and $q^2+q+1$, and corresponding centralisers of order $q^4-q^2+1$ and $(q^2+q+1)^2$. By \cite[Proposition 2.11]{GM} we have $\mathcal{M}(x) = \{ T.4 \}$, where $T = \la x \ra$ is a maximal torus, and thus $x$ is a witness. So to complete the proof, we may assume $G$ has point stabiliser $H = T.4$. 

Now let us consider $y$. As noted in the proof of \cite[Proposition 2.3]{GM2}, each $H \in \mathcal{M}(y)$ is conjugate to one of the following
\[
H_1 = G_2(q),\; H_2 = ((q^2+q+1) \circ {\rm SL}_3(q)).(q^2+q+1,3).2,
\]
\[
H_3 = (q^2+q+1)^2.{\rm SL}_2(3),\; H_4 = {\rm PGL}_3(q),
\]
where $H_4$ only arises when $q \equiv 1 \imod{3}$. In particular, $y$ is a derangement on $\O = G/H$ and so it just 
remains to show that $G = \la y, y^g \ra$ for some $g \in G$.

For $q=3$ we can use {\sc Magma} to show that $|\mathcal{M}(y)| = 18$ and the result follows via Lemmas \ref{l:us} and \ref{l:us2} since \cite[Theorem 1]{LLS} gives ${\rm fpr}(z,G/H) \leqs 1/73$ for all $z \in G$ of prime order and all maximal subgroups $H$ of $G$. 
Now assume $q \geqs 4$. As noted in Remark \ref{r:usp}, we just need to show that
\[
\sum_{H \in \mathcal{M}(y)} {\rm fpr}(y,G/H) = \sum_{i=1}^4 n_i \cdot {\rm fpr}(y,G/H_i)  = |G|\sum_{i=1}^4 |H_i|^{-1} \cdot {\rm fpr}(y,G/H_i)^2 < 1,
\]
where $n_i$ is the number of conjugates of $H_i$ containing $y$. Since 
$|G|<q^{28}$ and $|y^G|>\frac{1}{2}q^{24}$, the crude bound 
\[
{\rm fpr}(y,G/H_i) < \frac{|H_i|}{|y^G|} < 2|H_i|q^{-24}
\]
yields
\[
|G|\sum_{i=1}^4 |H_i|^{-1} \cdot {\rm fpr}(y,G/H_i)^2 < 4q^{-20}\sum_{i=1}^4 |H_i| < 1
\]
and the result follows.
\end{proof}

\subsection{Classical groups}\label{ss:gen_class}

In order to complete the proof of Theorem \ref{t:gen}, it remains to handle the classical groups. So let $G$ be a finite simple classical group over $\mathbb{F}_q$, where $q = p^f$ and $p$ is a prime. Let $V$ be the natural module and set $n = \dim V$. 

As explained at the start of Section \ref{ss:class_sol}, we may assume that $G$ is one of the groups in \eqref{e:class_list}. In addition, we may exclude the groups in \eqref{e:omit} due to the existence of exceptional isomorphisms involving some of the low dimensional classical groups.

Recall that the main theorem on the subgroup structure of finite classical groups is due to Aschbacher \cite{Asch} and we will refer repeatedly to the detailed information on maximal subgroups in \cite{BHR,KL}. Also recall that the \emph{type} of a maximal subgroup gives an approximate description of its structure. For example, if $G = {\rm L}_6(q)$ and $H$ is the stabiliser of a direct sum decomposition $V = V_1 \oplus V_2 \oplus V_3$ of the natural module into $2$-spaces, then we say that $H$ is a subgroup of type ${\rm GL}_2(q) \wr S_3$, noting that the precise structure can be read off from \cite[Proposition 4.2.9]{KL} (also see \cite[Table 8.24]{BHR}). Detailed information on the maximal subgroups of classical groups is presented in \cite{BHR,KL}, including a complete classification (up to conjugacy) for the groups with $n \leqs 12$.

Let us also recall that if $e,q \geqs 2$ are integers, then a prime divisor $r$ of $q^e-1$ is a \emph{primitive prime divisor} (ppd for short) if $r$ does not divide $q^i-1$ if $i<e$. By a theorem of Zsigmondy \cite{Zsig}, a ppd exists unless $(e,q) = (6,2)$, or if $e=2$ and $q$ is a Mersenne prime. If $G$ is a classical group as above and $x \in G$ has prime order $r$, where $r$ is a ppd of $q^e-1$ with $e>n/2$, then the maximal overgroups of $x$ (up to conjugacy) are described in \cite{GPPS} and we will use this important result in several proofs.

Let $G$ be one of the groups listed in Table \ref{tab:sing} and recall that an element $y \in G$ is a \emph{Singer cycle} if the order of $y$ is as given in the table. Note that every Singer cycle acts irreducibly on the natural module $V$. In the proof of the following result, we write $\pi(\ell)$ for the set of prime divisors of the integer $\ell \geqs 2$.

{\footnotesize
\begin{table}
\[
\begin{array}{lll} \hline
G & |y| & \mbox{Conditions} \\ \hline
{\rm L}_n(q) & (q^n-1)/e & \mbox{$n \geqs 2$, $e = (n,q-1)(q-1)$} \\
{\rm U}_n(q) & (q^n+1)/e' & \mbox{$n \geqs 3$ odd, $e' = (n,q+1)(q+1)$} \\  
{\rm PSp}_{n}(q) & (q^{n/2}+1)/d & \mbox{$n \geqs 4$, $d = (2,q-1)$} \\
{\rm P\O}_{n}^{-}(q) & (q^{n/2}+1)/d & \mbox{$n \geqs 8$, $d = (2,q-1)$} \\ \hline
\end{array}
\]
\caption{Singer cycles}
\label{tab:sing}
\end{table}
}

\begin{prop}\label{p:singer}
Every Singer cycle is a witness.
\end{prop}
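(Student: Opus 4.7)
The plan is to apply Lemma \ref{l:us}: to show that a Singer cycle $y \in G$ is a witness it suffices to verify
\[
\Sigma(z) := \sum_{H \in \mathcal{M}(y)} {\rm fpr}(z, G/H) < 1
\]
for every element $z \in G$ of prime order. The two inputs I will need are (i) a precise description of $\mathcal{M}(y)$, which should be very restricted because $y$ acts irreducibly on $V$ and $|y|$ is divisible by primitive prime divisors of $q^e-1$ for a large exponent $e$, and (ii) sharp upper bounds on ${\rm fpr}(z,G/H)$ for the surviving maximal overgroups $H$, which I will extract from \cite{Bur1} and \cite[Section 3]{GK}.

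First I would unify the four cases of Table \ref{tab:sing}. In each case $|y|$ is divisible by a primitive prime divisor $r$ of $q^e - 1$ with $e = n$ (for ${\rm L}_n(q), {\rm PSp}_n(q), {\rm P\O}_n^-(q)$) or $e = 2n$ (for ${\rm U}_n(q)$ with $n$ odd). In all cases $e > n/2$, so the main theorem of \cite{GPPS} applies to describe the maximal subgroups of $G$ whose order is divisible by $r$. Because $y$ is irreducible on $V$, the subgroups in $\mathcal{C}_1, \mathcal{C}_2, \mathcal{C}_4, \mathcal{C}_7$ (reducible, imprimitive, tensor product types) and the relevant Aschbacher $\mathcal{C}_6$ normalizers drop out, and what survives is essentially the list of field extension subgroups of type ${\rm GL}_{n/s}(q^s), {\rm GU}_{n/s}(q^s), {\rm Sp}_{n/s}(q^s), \O_{n/s}^{-}(q^s)$ for suitable prime divisors $s$ of $n/2$ or $n$, together with a short list of $\mathcal{S}$-type exceptions in low dimension (as catalogued in \cite[Tables 7, 8]{GPPS} and in \cite{Ber}). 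For each surviving class I would use the fact that $N_G(\langle y \rangle) = \langle y \rangle.k$ (with $k \in \{n, 2n\}$ the torus twist) to count, via $\{A^g : g \in G\} \cap \mathcal{M}(y)$, the number of $G$-conjugates of $A$ containing $y$: this count is $|y^G \cap A|/|y^G| \cdot |G:A|$ and is small because $y^G \cap A$ is essentially a single $N_A(\langle y \rangle)$-orbit.

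Then I would bound $\Sigma(z)$. For each field extension $H$ of type above, $|G:H|$ grows like $q^{n^2(1-1/s)}$ (or its analogue in the other families), so the standard fixed point ratio bounds of \cite{Bur1} give ${\rm fpr}(z,G/H) \leqs C \cdot |G:H|^{-\eta}$ for explicit $\eta > 0$; summed over the $O(\log n)$ prime divisors $s$, this gives $\Sigma(z) < 1$ whenever $q^n$ is not too small. The main obstacle will be the small-rank, small-$q$ boundary: dimensions $n \leqs $ some small bound and $q \in \{2,3,4,5\}$, where the asymptotic fpr bounds are not sharp enough and where a few exceptional $\mathcal{S}$-type overgroups appear (as in \cite[Table 8]{GPPS}). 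For these cases I would proceed by direct computation in {\sc Magma} or \textsf{GAP}, using the character table and the explicit list of maximal subgroups from \cite{BHR}, exactly as in the proofs of Propositions \ref{p:sporgen} and \ref{p:ex_gen}: identify $\mathcal{M}(y)$ (usually a single conjugacy class, namely the Singer normalizer $N_G(\langle y \rangle)$ itself), evaluate the permutation characters $1^G_H$ at $y$ to confirm the count, and check $\Sigma(z) < 1$ directly via the Atlas character table. In many generic subcases $n$ is prime so the only field extension subgroup containing $y$ is $N_G(\langle y \rangle)$, and the inequality becomes trivial from the fpr bound ${\rm fpr}(z, G/N_G(\langle y \rangle)) \leqs 1/|y| < 1$.
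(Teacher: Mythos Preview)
Your overall strategy matches the paper's proof: determine $\mathcal{M}(y)$ precisely, then apply the fixed point ratio bound from \cite{Bur1} together with $|\mathcal{M}(y)| < \log_2 n$ to verify \eqref{e:fpry}. However, there are two points where your plan is looser than what the paper actually does, and one of them is a genuine omission.

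First, the paper does not rely on \cite{GPPS} to classify $\mathcal{M}(y)$; it uses Bereczky's theorem \cite{Ber} directly, which classifies the maximal overgroups of a Singer cycle in a classical group, together with \cite[Lemma 2.12]{BGK} and \cite[Propositions 5.8, 5.21]{BGK} to show that $y$ lies in \emph{exactly one} conjugate of each surviving type. This is cleaner than your proposed counting argument and avoids having to sieve through the long GPPS case analysis. More importantly, your enumeration of the surviving overgroup types is incomplete: for $G = {\rm PSp}_n(q)$ with $q$ even, $\mathcal{M}(y)$ also contains a subgroup of type ${\rm O}_n^{-}(q)$ (this is a $\mathcal{C}_8$ subgroup, not a field extension subgroup and not an $\mathcal{S}$-type exception); for $G = {\rm PSp}_{2m}(q)$ with $qm$ odd and for $G = {\rm P\O}_{2m}^{-}(q)$ with $m$ odd, $\mathcal{M}(y)$ also contains a $\mathcal{C}_3$ subgroup of type ${\rm GU}_m(q)$, which is not of the form ${\rm Sp}_{n/s}(q^s)$ or ${\rm O}_{n/s}^{-}(q^s)$ you listed. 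These extra overgroups arise in infinitely many cases, so they cannot be absorbed into your ``short list of $\mathcal{S}$-type exceptions in low dimension''. Fortunately each adds at most one subgroup to $\mathcal{M}(y)$, so the bound $|\mathcal{M}(y)| < \log_2 n$ and the subsequent fpr estimate still go through; but you would need to account for them explicitly.

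Second, the fpr bound you quote is not in the form the paper uses. The main theorem of \cite{Bur1} gives ${\rm fpr}(z,G/H) < |z^G|^{-\frac{1}{2}+\frac{1}{n}}$ (plus a correction term for orthogonal groups), a bound in terms of the class size $|z^G|$ rather than the index $|G:H|$; the paper then feeds in an explicit lower bound on $|z^G|$ (e.g.\ $|z^G| > q^{2n-2}$ for ${\rm L}_n(q)$). Your formulation ${\rm fpr}(z,G/H) \leqs C|G:H|^{-\eta}$ is not what \cite{Bur1} provides, and your final remark that ${\rm fpr}(z, G/N_G(\langle y\rangle)) \leqs 1/|y|$ is not justified.
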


\begin{proof}
Let $G$ be one of the groups in Table \ref{tab:sing} and let $y \in G$ be a Singer cycle. We will consider each possibility for $G$ in turn.

First assume $G = {\rm L}_n(q)$ is a linear group, so $n \geqs 2$ and $(n,q) \ne (2,2), (2,3)$. We begin by claiming that 
\begin{equation}\label{e:sing_lin}
\mathcal{M}(y) = \left\{ \begin{array}{ll}
\{ S_3, \, A_4 \mbox{ {\rm (two)}} \} & \mbox{if $(n,q) = (2,5)$} \\
\{ S_4, \, S_4 \}   & \mbox{if $(n,q) = (2,7)$} \\
\{ A_5, \, A_5 \} & \mbox{if $(n,q) = (2,9)$} \\
\{ {\rm L}_3(2),\, {\rm L}_3(2), \, {\rm L}_3(2) \} & \mbox{if $(n,q) = (3,4)$} \\
\{ H_k \, :\, k \in \pi(n) \} & \mbox{otherwise,}
\end{array}\right.
\end{equation}
where $H_k$ is a field extension subgroup of type ${\rm GL}_{n/k}(q^k)$. Here our notation indicates that $\mathcal{M}(y)$ contains two conjugate subgroups isomorphic to $A_4$ when $G = {\rm L}_2(5)$, and it contains two non-conjugate subgroups isomorphic to $S_4$ when $G = {\rm L}_2(7)$.

The groups with  $(n,q) = (2,5), (2,7), (2,9)$ and $(3,4)$ can be handled using \textsc{Magma}, so we may  assume that we are not in one of these cases. Then by the main theorem of \cite{Ber}, each $H \in \mathcal{M}(y)$ is a field extension subgroup of type ${\rm GL}_{n/k}(q^k)$, where $k$ is a prime divisor of $n$. Moreover, \cite[Lemma 2.12]{BGK} implies that $y$ is contained in a unique subgroup of type ${\rm GL}_{n/k}(q^k)$ for each prime $k$, so this justifies \eqref{e:sing_lin} and it just remains to show that $y$ is a witness. We will do this by establishing the inequality \eqref{e:fpry} in Lemma \ref{l:us}.

If $n \leqs 5$ then $|\mathcal{M}(y)| = 1$ and thus \eqref{e:fpry} holds for all $z \in G$ of prime order. Now assume $n \geqs 6$ and $H \in \mathcal{M}(y)$. Fix an element $z \in G$ of prime order and note that 
\[
{\rm fpr}(z,G/H) < |z^G|^{-\frac{1}{2}+\frac{1}{n}}
\]
by the main theorem of \cite{Bur1}. Now $|\mathcal{M}(y)|  = |\pi(n)| < \log_2n$ and it is straightforward to show that $|z^G| > q^{2n-2}$ (see \cite[Section 3]{Bur2}, for example). Therefore 
\[
\sum_{H \in \mathcal{M}(y)} {\rm fpr}(z,G/H) < \log_2 n\left(q^{2n-2}\right)^{-\frac{1}{2}+\frac{1}{n}}
\]
and it is easy to check that this upper bound is less than $1$ for all $n \geqs 6$ and $q \geqs 2$.

\vs

Next assume $G = {\rm U}_n(q)$, where $n \geqs 3$ is odd and $(n,q) \ne (3,2)$. Here we claim that
\begin{equation}\label{e:sing_unit}
\mathcal{M}(y) = \left\{ \begin{array}{ll}
\{ {\rm L}_2(7) \} & \mbox{if $(n,q) = (3,3)$} \\
\{ A_7, \, A_7, \, A_7\}   & \mbox{if $(n,q) = (3,5)$} \\
\{ {\rm L}_2(11) \} & \mbox{if $(n,q) = (5,2)$} \\
\{ H_k \, :\, k \in \pi(n) \} & \mbox{otherwise,}
\end{array}\right.
\end{equation}
where $H_k$ is a field extension subgroup of type ${\rm GU}_{n/k}(q^k)$. For $(n,q) = (3,3), (3,5)$ or $(5,2)$ we can use {\sc Magma} to verify the result (note that if $(n,q) = (3,5)$ then $G$ has three conjugacy classes of maximal subgroups isomorphic to $A_7$ and $\mathcal{M}(y)$ contains a representative from each class). And in each of the remaining cases, we apply \cite[Proposition 5.21]{BGK}.

\vs

Now suppose $G = {\rm PSp}_n(q)$ is a symplectic group, where $n=2m \geqs 4$ and $(n,q) \ne (4,2)$. We claim that
\begin{equation}\label{e:sing_symp}
\mathcal{M}(y) = \left\{ \begin{array}{ll}
\{ H_2, \, 2^4.A_5 \mbox{ {\rm  (two)}} \} & \mbox{if $(n,q) = (4,3)$} \\
\{ H_3 \mbox{ {\rm (three)}}, \, {\rm O}_6^{-}(2) \} & \mbox{if $(n,q) = (6,2)$} \\
\{ H_2, \, {\rm O}_8^{-}(2), \, {\rm L}_2(17) \}   & \mbox{if $(n,q) = (8,2)$} \\
\{ H_k \, :\, k \in \pi(m) \} \cup \mathcal{J} & \mbox{otherwise,}
\end{array}\right.
\end{equation}
where $H_k$ is a field extension subgroup of type ${\rm Sp}_{n/k}(q^k)$ and 
\[
\mathcal{J} = \left\{\begin{array}{ll}
\{ H  \} & \mbox{if $q$ is even} \\
\{ K \} & \mbox{if $qm$ is odd} \\ 
\emptyset & \mbox{otherwise}
\end{array}\right.
\]
with $H,K$ of type ${\rm O}_{n}^{-}(q)$ and ${\rm GU}_m(q)$, respectively.

In order to justify the claim, first assume $q$ is even. The cases $(n,q) = (6,2), (8,2)$ can be checked directly, and for each of the remaining groups we can appeal to \cite[Proposition 5.8]{BGK} (and its proof). As indicated in \eqref{e:sing_symp}, if $G = {\rm Sp}_6(2)$ then $y$ is contained in exactly $3$ conjugate field 
extension subgroups of type ${\rm Sp}_2(2^3)$ (this observation was not noted in the proof of \cite[Proposition 5.8]{BGK}, and it was also incorrectly omitted in the statement of \cite[Lemma 8.7]{BH1}). 

Now assume $q$ is odd. The case $(n,q) = (4,3)$ can be handled directly. For the remaining groups, the main theorem of \cite{Ber} implies that each $H \in \mathcal{M}(y)$ is a field extension subgroup of type ${\rm Sp}_{n/k}(q^k)$ or ${\rm GU}_m(q)$, noting in the latter case that $H$ contains a Singer cycle if and only if $m$ is odd. The description of $\mathcal{M}(y)$ in \eqref{e:sing_symp} now follows from \cite[Lemma 2.12]{BGK}. 

To complete the argument for symplectic groups, we need to show that $y$ is a witness when $q$ is odd and $(n,q) \ne (4,3)$. If $n=4$ then $|\mathcal{M}(y)| = 1$ and the result follows from Lemma \ref{l:us}, so we may assume $n \geqs 6$. Fix a subgroup $H \in \mathcal{M}(y)$ and let $z \in G$ be an element of prime order. Now $|\mathcal{M}(y)| < \log_2n$ and $|z^G| \geqs (q^n-1)/2$, so by applying the main theorem of \cite{Bur1} we deduce that
\[
\sum_{H \in \mathcal{M}(y)} {\rm fpr}(z,G/H) < \log_2n\left(\frac{1}{2}(q^n-1)\right)^{-\frac{1}{2}+\frac{1}{n}+\frac{1}{n+2}}.
\]
It is routine to check that this upper bound is less than $1$ for all $n \geqs 6$, $q \geqs 3$ and the result follows.

\vs

To complete the proof, let $G = {\rm P\O}_{n}^{-}(q)$ with $n=2m \geqs 8$. By combining the main theorem of \cite{Ber} with \cite[Lemma 2.12]{BGK}, we deduce that
\begin{equation}\label{e:sing_ominus}
\mathcal{M}(y) = \{ H_k \,: \, k \in \pi(m) \} \cup \mathcal{J},
\end{equation}
where $H_k$ is a field extension subgroup of type ${\rm O}_{n/k}^{-}(q^k)$ and 
\[
\mathcal{J} = \left\{ \begin{array}{ll}
\{ H \} & \mbox{if $m$ is odd} \\
\emptyset & \mbox{otherwise}
\end{array}\right.
\]
with $H$ of type ${\rm GU}_m(q)$. If $z \in G$ has prime order, then by combining the bounds $|\mathcal{M}(x)| < \log_2n$ and $|z^G| > q^{2n-6}$ with the main theorem of \cite{Bur1}, we get
\[
\sum_{H \in \mathcal{M}(y)} {\rm fpr}(z,G/H) < \log_2n \left(q^{2n-6}\right)^{-\frac{1}{2}+\frac{1}{n} + \frac{1}{n-2}} < 1
\]
for all $n \geqs 8$, $q \geqs 2$ and the result follows via Lemma \ref{l:us}.
\end{proof}

We are now ready to prove Theorem \ref{t:gen} for classical groups. It is convenient to use 
{\sc Magma} to verify the result for some of the low dimensional classical groups defined over small fields. With this aim in mind, let $\mathcal{B}$ denote the following collection of groups:
\[
\def\arraystretch{1.5}
\begin{array}{l}
{\rm L}_2(11), {\rm L}_3(2), {\rm L}_3(3), {\rm L}_3(4), {\rm L}_4(3) \\
{\rm U}_3(4), {\rm U}_3(5), {\rm U}_4(2), {\rm U}_4(3), {\rm U}_4(4), {\rm U}_4(5), {\rm U}_5(2), {\rm U}_5(3), {\rm U}_5(4), {\rm U}_6(2), {\rm U}_6(3), {\rm U}_7(2) \\
{\rm PSp}_4(3), {\rm Sp}_4(4), {\rm PSp}_4(5), {\rm Sp}_6(2), {\rm PSp}_6(3), {\rm Sp}_6(4), {\rm Sp}_8(2), {\rm Sp}_{10}(2), {\rm Sp}_{12}(2) \\
\O_7(3), \O_{8}^{\pm}(2), {\rm P\O}_{8}^{\pm}(3), \O_{8}^{+}(4), {\rm P\O}_8^{+}(5), \O_{10}^{\pm}(2), {\rm P\O}_{10}^{\pm}(3), \O_{12}^{\pm}(2), {\rm P\O}_{12}^{\pm}(3)
\end{array}
\]

\begin{prop}\label{p:gensmall}
The conclusion to Theorem \ref{t:gen} holds if $G \in \mathcal{B}$.
\end{prop}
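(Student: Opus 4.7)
The plan is to verify Proposition~\ref{p:gensmall} directly by computer, case by case, working in \textsc{Magma}. For each $G\in\mathcal{B}$, I would first construct $G$ in a faithful representation of manageable degree: the function \textsf{ClassicalMaximals} (or, when $G$ is available in an Atlas-style permutation representation, \textsf{AutomorphismGroupSimpleGroup}) gives a standard matrix or permutation model of $G$ together with a set $\{H_{1},\dots,H_{t}\}$ of representatives of the conjugacy classes of maximal subgroups. Computing conjugacy classes of $G$ and of each $H_{i}$, and then the fusion map, determines the set $\Delta(G,H_{i})$ of derangements for each primitive action of $G$. Since any faithful transitive action of $G$ factors through a primitive one, it suffices to treat the $t$ maximal actions.

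Following the strategy of Lemma~\ref{l:us2}, for each $G$ I would look for two witnesses $x,y\in G$ such that no maximal overgroup of $x$ is $G$-conjugate to a maximal overgroup of $y$. The cleanest way to locate candidates is to pick elements whose order is divisible by a primitive prime divisor of $q^{e}-1$ for an $e$ close to $n$, so that the maximal overgroups are severely restricted by \cite{GPPS}, and in particular to consider Singer cycles, which are witnesses by Proposition~\ref{p:singer}. Then, having fixed such a pair $(x,y)$, I verify the hypotheses of Lemma~\ref{l:us} for both elements using Remark~\ref{r:us}: namely, compute $\mathcal{M}(x)$ and $\mathcal{M}(y)$ explicitly (each can be read off from the class fusion data), and check that
\[
\sum_{H\in\mathcal{M}(x)}\mathrm{fpr}(z,G/H)<1\quad\text{and}\quad\sum_{H\in\mathcal{M}(y)}\mathrm{fpr}(z,G/H)<1
\]
for every prime-order class representative $z\in G$, using the Frobenius-type formula \eqref{e:fpruseful}. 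Once the two witnesses are in hand, Lemma~\ref{l:us2} immediately yields, for each point stabiliser $H$ in the list, a derangement $z\in\{x,y\}$ together with a conjugate $z^{g}$ generating $G$.

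For the groups in $\mathcal{B}$ that are small enough (essentially all of $\mathcal{B}$ apart from the largest orthogonal examples), a more efficient variant is to skip the witness condition altogether and, for each maximal subgroup $H_{i}$, produce the required generating pair by random search inside $\Delta(G,H_{i})$: pick $x\in\Delta(G,H_{i})$, then sample $g\in G$ uniformly at random and test whether $\langle x,x^{g}\rangle=G$. The fact that $\delta(G,H_{i})$ is bounded below (by Theorem~\ref{t:FG0}) and that the proportion of generating pairs is high means that this succeeds essentially at once; the order test $\langle x,x^{g}\rangle=G$ can be checked by comparing group orders in the matrix representation.

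The main obstacle is practical rather than conceptual: for the largest members of $\mathcal{B}$, namely $\mathrm{P\Omega}_{12}^{\pm}(3)$ and $\mathrm{Sp}_{12}(2)$, storing the full conjugacy class list or computing all maximal subgroups of a representative stabiliser is borderline. For these, I would avoid listing every class of $H_{i}$ and instead work with just the spectrum $\omega(H_{i})$ and a representative of each $G$-class $C$ with $C\cap H_{i}=\emptyset$; picking $x$ of order in $\omega(G)\setminus\omega(H_{i})$ guarantees $x\in\Delta(G,H_{i})$ and already cuts the search drastically. Combined with the Singer-cycle witnesses of Proposition~\ref{p:singer}, which always lie in $\Delta(G,H_{i})$ unless $H_{i}$ is one of the very few overgroups of a Singer cycle listed in \eqref{e:sing_lin}--\eqref{e:sing_ominus}, this reduces the whole proposition to a short, finite check that \textsc{Magma} executes in a routine manner.
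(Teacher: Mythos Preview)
Your proposal is correct, and your ``more efficient variant'' is exactly what the paper does. The paper's proof is purely the direct random-search approach: for each maximal subgroup $H$, work in the quasisimple cover $L \in \{{\rm SL}_n^{\e}(q), {\rm Sp}_n(q), \O_n^{\e}(q)\}$ (so $G = L/Z$ and $H = J/Z$), use \textsf{ClassicalMaximals} to construct $J$, determine $\Delta(L,J)$ from the conjugacy classes, and then randomly search for $x \in \Delta(L,J)$ and $y \in L$ with $L = \langle x, x^y \rangle$. The paper does not attempt the witness-pair strategy of Lemma~\ref{l:us2} for any group in $\mathcal{B}$; these groups were placed in $\mathcal{B}$ precisely because the uniform choices of $x,y$ made in the later Propositions~\ref{p:lingen}--\ref{p:orthplus8gen} break down at these small parameters. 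Your witness approach would likely succeed case by case, but it is more work than the direct search, and your remark that Singer cycles handle most stabilisers needs adjustment since ${\rm P\O}_{12}^{+}(3)$ and the other plus-type orthogonal groups in $\mathcal{B}$ have no Singer cycle in the sense of Table~\ref{tab:sing}. Finally, the spectrum shortcut you describe for the largest groups (seek $x$ with $|x| \notin \omega(J)$, or with $|J|$ indivisible by $|x|$) is recorded in the paper as Remark~\ref{r:comm}.
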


\begin{proof}
We can proceed as in the proof of Lemma \ref{l:smalldeg}, working with {\sc Magma} \cite{magma}. Set $\O = G/H$, where $H$ is a maximal subgroup of $G$. For the computation, it is convenient to work in the corresponding matrix group $L \in \{ {\rm SL}_{n}^{\e}(q), {\rm Sp}_n(q), \O_n^{\e}(q)\}$, so we have $G = L/Z$ and $H = J/Z$, where $Z = Z(L)$. We can use the function \textsf{ClassicalMaximals} to construct a conjugate of $J$ and by taking conjugacy classes we can determine the set $\Delta(L,J)$ of derangements in $L$ with respect to the action on $L/J$. Finally, we use random search to find elements $x \in \Delta(L,J)$ and $y \in L$ such that $L = \la x,x^y \ra$, which in turn implies that $G$ is generated by two conjugate derangements on $\O$.
\end{proof}

\begin{rem}\label{r:comm}
For the groups in $\mathcal{B}$ that are handled in Proposition \ref{p:gensmall}, there is often no need to determine the complete set of derangements in order to reach the desired conclusion. For example, in many cases we can simply construct $L$ and $J$ as in the proof of the proposition, and then randomly search for elements $x,y \in L$ such that $L = \la x,x^y \ra$ and $|J|$ is indivisible by $|x|$, which means that $x \in \Delta(L,J)$. Alternatively, we can construct a set representatives of the conjugacy classes in $J$ and then seek random elements $x,y \in L$ such that $L = \la x,x^y\ra$ and $J$ does not contain a conjugate of $x$ (and in many cases we can simply check that $J$ has no element of order $|x|$).
\end{rem}

We begin by considering the linear groups.

\begin{prop}\label{p:lingen}
The conclusion to Theorem \ref{t:gen} holds if $G = {\rm L}_n(q)$ is a linear group.
\end{prop}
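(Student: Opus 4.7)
The plan is to construct two witnesses $x, y \in G$ whose sets of maximal overgroups are conjugacy-disjoint, and then invoke Lemma \ref{l:us2}. First, the groups ${\rm L}_n(q)$ in the set $\mathcal{B}$, together with those excluded by the isomorphisms in \eqref{e:omit}, are handled by Proposition \ref{p:gensmall}; so we may assume $G$ lies outside this finite list of exceptions.

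For the first element I would take $y \in G$ to be a Singer cycle. By Proposition \ref{p:singer}, $y$ is a witness, and the description of $\mathcal{M}(y)$ in \eqref{e:sing_lin} shows that, outside the sporadic low-dimensional cases already absorbed into $\mathcal{B}$, every maximal overgroup of $y$ is a field extension subgroup of type ${\rm GL}_{n/k}(q^k)$ for some prime divisor $k$ of $n$, so $\mathcal{M}(y)$ lies in the Aschbacher class $\mathcal{C}_3$.

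For the second element I would split into two cases. When $n = 2$, I would let $x$ be an element of order $(q-1)/(2, q-1)$, generating a maximal torus. Its only maximal overgroup is the $\mathcal{C}_2$-subgroup $D_{2(q-1)/d}$ by \cite{Ber} (together with \cite[Table 8.1]{BHR}), hence $x$ is a witness by Lemma \ref{l:us}; since this overgroup is of $\mathcal{C}_2$-type while $\mathcal{M}(y)$ consists of the single $\mathcal{C}_3$-subgroup $D_{2(q+1)/d}$, disjointness holds. When $n \geqs 3$, I would pick a primitive prime divisor $r$ of $q^{n-1}-1$ (which exists by Zsigmondy's theorem unless $(n, q)$ falls into a small list already contained in $\mathcal{B}$), and take $x \in G$ to be the image of a block-diagonal matrix ${\rm diag}(A, 1) \in {\rm SL}_n(q)$ where $A \in {\rm GL}_{n-1}(q)$ has order $r$. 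By the main theorem of \cite{GPPS} (see \cite[Examples 2.1-2.9]{GPPS}), the maximal overgroups of $x$ are, up to conjugacy, a bounded collection contained in the parabolics $P_1$ and $P_{n-1}$, together with a short list of $\mathcal{C}_5$, $\mathcal{C}_8$ or $\mathcal{S}$ subgroups arising only for specific small $(n, q)$. In particular, no element of $\mathcal{M}(x)$ is of type ${\rm GL}_{n/k}(q^k)$ with $n/k \geqs 2$, which gives the required disjointness from $\mathcal{M}(y)$.

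To verify that $x$ is itself a witness, I would apply Lemma \ref{l:us} combined with the fixed-point-ratio bound ${\rm fpr}(z, G/H) < |z^G|^{-1/2 + 1/n}$ from \cite[Theorem 1]{Bur1}, together with the usual lower bound $|z^G| > q^{2n-2}$ for $z$ of prime order (cf.\ \cite[Section 3]{Bur2}); since $|\mathcal{M}(x)|$ is bounded above by a logarithmic function of $n$, the estimate
\[
\sum_{H \in \mathcal{M}(x)} {\rm fpr}(z, G/H) < 1
\]
follows for all pairs $(n, q)$ not already covered by $\mathcal{B}$, mirroring the bookkeeping used in the proof of Proposition \ref{p:singer}.

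The main obstacle will be the exceptional cases where additional maximal overgroups of $x$ or $y$ arise: the failure of Zsigmondy for $(n-1, q) = (6, 2)$ and for $n = 3$ with $q$ a Mersenne prime; the sporadic overgroups of Singer cycles recorded in \eqref{e:sing_lin}; and the almost simple overgroups of ppd elements listed in \cite[Tables 7 and 8]{GPPS}. The task is to check that each such genuinely small exception is already dealt with by Proposition \ref{p:gensmall}, or otherwise to modify the choice of $x$ (for instance, by replacing $r$ with a different prime or by swapping the roles of $P_1$ and $P_{n-1}$) so that both disjointness and the witness property persist.
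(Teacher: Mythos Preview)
Your strategy is the paper's: pair a Singer cycle with an element tied to an irreducible $(n-1)$-dimensional block and appeal to Lemma \ref{l:us2}. But there are three concrete gaps in your execution.

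First, for $n=2$ your split-torus element $x$ of order $(q-1)/d$ is \emph{not} irreducible, so Bereczky's theorem does not apply; $x$ fixes two $1$-spaces and hence lies in two $P_1$ parabolics in addition to the normaliser $D_{2(q-1)/d}$. This is easily repaired (disjointness from the Singer's $\mathcal{C}_3$-overgroup still holds, and ${\rm fpr} < 1/3$ on each of the three overgroups suffices), but the claim as written is false.

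Second, and more seriously, the fixed point ratio bound ${\rm fpr}(z,G/H) < |z^G|^{-1/2+1/n}$ from \cite{Bur1} explicitly excludes subspace actions. For $H = P_1$ or $P_{n-1}$ and $z$ a transvection one has ${\rm fpr}(z,G/H) = (q^{n-1}-1)/(q^n-1)$, which is roughly $1/q$ and far larger than $|z^G|^{-1/2+1/n}$. So your witness check for $x$ does not go through; you need the direct bound ${\rm fpr}(z,G/P_1) < 1/2$ together with $|\mathcal{M}(x)| = 2$, not a logarithmic count.

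Third, taking $x$ of prime order $r$ rather than of order $(q^{n-1}-1)/d$ gives you much weaker control over $\mathcal{M}(x)$: the GPPS list for a bare ppd element includes $\mathcal{C}_2$, $\mathcal{C}_5$, $\mathcal{C}_6$, $\mathcal{C}_8$ and $\mathcal{S}$ subgroups for infinitely many $(n,q)$, not just a finite list absorbable into $\mathcal{B}$. The paper instead takes the full order $(q^{n-1}-1)/d$, which kills these extra overgroups by order considerations and leaves $\mathcal{M}(x) = \{P_1, P_{n-1}\}$ cleanly.
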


\begin{proof}
As noted above (see \eqref{e:omit}), we may assume $(n,q) \ne (4,2)$ and $q \geqs 11$ if $n=2$. And in view of Proposition \ref{p:gensmall}, we may assume $G \not\in \mathcal{B}$. Set $d = (n,q-1)$ and $e=d(q-1)$. As usual, we write $P_k$ for the stabiliser in $G$ of a $k$-dimensional subspace of $V$.
 
Let $x \in G$ be a Singer cycle. By Proposition \ref{p:singer}, $x$ is a witness and each $H \in \mathcal{M}(x)$ is a field extension subgroup of type ${\rm GL}_{n/k}(q^k)$, one for each prime divisor $k$ of $n$ (see \eqref{e:sing_lin}).

Let $y \in G$ be an element of order $(q^{n-1}-1)/d$ and note that $y$ fixes a decomposition $V = U \oplus W$ of the natural module, where $W$ is an $(n-1)$-dimensional subspace on which $y$ acts irreducibly. We claim that $y$ is a witness and
\[
\mathcal{M}(y) = \left\{ 
\begin{array}{ll}
\{ D_{2(q-1)/d}, \, P_1 \mbox{ {\rm (two)}} \} & \mbox{if $n=2$ and $q \geqs 13$} \\
\{ P_1, \, P_{n-1} \} & \mbox{if $n \geqs 3$ and $(n,q) \ne (3,2), (3,3), (3,4), (4,2)$,} 
\end{array}\right.
\]
so the desired result follows from Lemma \ref{l:us2}. 

To justify the claim, first assume $n=2$ and $q \geqs 13$. By inspecting \cite[Tables 8.1, 8.2]{BHR}, we deduce that $\mathcal{M}(y)$ comprises $N_G(\la y \ra) = D_{2(q-1)/d}$, together with two $P_1$ parabolic subgroups (note that $y$ is only contained in two such subgroups because it fixes exactly two $1$-dimensional subspaces of $V$). By applying the main theorem of \cite{LS} we deduce that ${\rm fpr}(z,G/H) < 1/3$ for all $z \in G$ of prime order and all $H \in \mathcal{M}(y)$, so  Lemma \ref{l:us} implies that  $y$ is a witness.

Now assume $n \geqs 3$. Clearly, $U$ and $W$ are the only proper nonzero subspaces of $V$ fixed by $y$, which implies that $y$ is contained in unique $P_1$ and $P_{n-1}$ parabolic subgroups. If we can show that $\mathcal{M}(y) = \{P_1, P_{n-1}\}$, then the result will follow via Lemmas \ref{l:us} and \ref{l:us2} since 
\[
{\rm fpr}(z,G/H) \leqs \frac{2^{n-1}-1}{2^n-1} < \frac{1}{2}
\]
for all $z \in G$ of prime order and $H \in \{P_1, P_{n-1}\}$ (maximal when $q=2$ and $z$ is a transvection). So it just remains to show that $y$ is not contained in a proper irreducible subgroup.

For $n=3$ this can be checked by inspecting the list of maximal subgroups of $G$ presented in \cite[Tables 8.3, 8.4]{BHR}, so let us assume $n \geqs 4$. The case $(n,q) = (7,2)$ can be verified directly, which means that we may assume $|y|$ is divisible by a primitive prime divisor $r$ of $q^{n-1}-1$. The maximal subgroups of $G$ containing an element of order $r$ are described in \cite{GPPS}. Using this, together with the fact that $|y| = (q^{n-1}-1)/d$, it is a straightforward exercise to show that $\mathcal{M}(y) = \{P_1, P_{n-1}\}$ as required (see the proof of Theorem \ref{t:psl}, where we used \cite{GPPS} to study the maximal overgroups of the element $y$ defined in Proposition \ref{p:msw}). This justifies the claim and the proof is complete.
\end{proof}

Next we prove Theorem \ref{t:gen} for the unitary groups.

\begin{prop}\label{p:unitgen}
The conclusion to Theorem \ref{t:gen} holds if $G = {\rm U}_n(q)$ is a unitary group.
\end{prop}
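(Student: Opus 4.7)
The plan is to adapt the proof of Proposition \ref{p:lingen} to the unitary setting: I will exhibit witnesses $x, y \in G$ whose maximal-overgroup sets $\mathcal{M}(x)$ and $\mathcal{M}(y)$ lie in disjoint $G$-conjugacy classes of subgroups, then invoke Lemma \ref{l:us2}. By Proposition \ref{p:gensmall} we may assume $G \notin \mathcal{B}$, so in particular $n$ and $q$ are large enough that Zsigmondy's theorem supplies primitive prime divisors in the relevant cyclotomic factors of $|G|$. Write $e = (n,q+1)$.

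When $n$ is odd, take $x$ to be a Singer cycle of order $(q^n+1)/(e(q+1))$. By Proposition \ref{p:singer}, $x$ is a witness, and \eqref{e:sing_unit} identifies $\mathcal{M}(x)$ with the family of field-extension subgroups of type ${\rm GU}_{n/k}(q^k)$, one for each prime $k \in \pi(n)$. For $y$, take a regular element fixing a non-degenerate orthogonal decomposition $V = U \perp W$ with $\dim U = 1$, acting trivially on $U$ and with $y|_W$ of order divisible by a primitive prime divisor of $q^{n-1}-1$ (with $n-1$ large enough to invoke \cite{GPPS}). Paralleling the treatment of the analogous element $y$ in Proposition \ref{p:lingen}, the main theorem of \cite{GPPS} confines $\mathcal{M}(y)$ to a short list, from which the subfield, tensor-product, field-extension and $\mathcal{S}$-class possibilities are eliminated by order comparisons against the tables in \cite{GPPS, BHR, KL}; this leaves only the stabiliser of $U \perp W$ (which uniquely contains $y$, since $y$ fixes exactly one non-degenerate $1$-space) and, in some cases, a $P_1$ parabolic. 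In particular no field-extension subgroup survives, so $\mathcal{M}(x) \cap \mathcal{M}(y) = \emptyset$ up to $G$-conjugacy, and the witness property of $y$ follows from Lemma \ref{l:us} combined with the fixed-point-ratio bounds of \cite{Bur1}.

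When $n$ is even no Singer cycle exists. Here I would take $x$ of order $(q^{n-1}+1)/e$ from an element acting irreducibly on a non-degenerate hyperplane of odd dimension $n-1$, and $y$ of order divisible by a primitive prime divisor of $q^{2n}-1$ from a field-extension maximal torus in a ${\rm GU}_{n/2}(q^2)$-subgroup. Applying \cite{GPPS} as above, $\mathcal{M}(x)$ reduces to the non-degenerate hyperplane stabiliser while $\mathcal{M}(y)$ sits inside the Aschbacher class $\mathcal{C}_3$, so \eqref{e:int} holds and the witness property is verified as before.

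The main obstacle will be the even case: both witnesses must be built without the anchor of a Singer cycle, and in small sporadic dimensions the Aschbacher class $\mathcal{S}$ of almost simple irreducible subgroups can contain elements of the prescribed ppd orders, forcing a case-by-case analysis against \cite{GPPS, KL, BHR}. The residual low-dimensional exceptions where this generic argument fails are precisely those absorbed into $\mathcal{B}$ and handled computationally by Proposition \ref{p:gensmall}.
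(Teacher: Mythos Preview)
Your overall strategy matches the paper's: find witnesses $x,y$ with $\mathcal{M}(x)$ and $\mathcal{M}(y)$ in disjoint conjugacy classes, then apply Lemma~\ref{l:us2}. Your choice of $x$ is also essentially the paper's in both parities. The problem is your choice of $y$, which does not transfer from the linear case in the way you suggest.

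For $n$ odd you take $y$ fixing $V = U \perp W$ with $\dim U = 1$ and $|y|$ divisible by a ppd of $q^{n-1}-1$. But $n-1$ is even, and an element of ${\rm GU}_{n-1}(q)$ in the torus of order $q^{n-1}-1$ does \emph{not} act irreducibly on $W$ over $\mathbb{F}_{q^2}$: it comes from a ${\rm GL}_{(n-1)/2}(q^2)$-type subgroup and fixes a pair of totally isotropic $(n-1)/2$-spaces. Hence $\mathcal{M}(y)$ contains $P_{(n-1)/2}$ parabolics, not just $N_1$ and perhaps $P_1$. Moreover, since ${\rm U}_n(q)\leqs {\rm GL}_n(q^2)$, a ppd of $q^{n-1}-1$ is a ppd of $(q^2)^{(n-1)/2}-1$ with $(n-1)/2 < n/2$, so \cite{GPPS} does not apply and you cannot rule out irreducible overgroups as claimed. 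The paper instead uses elements whose order involves $q^k+1$ with $k$ odd (specifically $k=n-2$ for small $n$, and the elements from \cite[Table~II]{GK} for $n\geqs 9$), precisely because these act irreducibly on a non-degenerate $k$-space and give $e=k>n/2$ in ${\rm GL}_n(q^2)$. For $n$ even your proposal breaks down more directly: ${\rm U}_n(q)$ has no field-extension subgroup of type ${\rm GU}_{n/2}(q^2)$ (the $\mathcal{C}_3$ subgroup for the prime $2$ is of type ${\rm GL}_{n/2}(q^2)$), and $|{\rm U}_n(q)|$ is not divisible by any ppd of $q^{2n}-1$, so no element of the order you describe exists. The paper's $y$ for $n=2m\geqs 8$ has order $(q^{m+\delta}+1)(q^{m-\delta}+1)/e$ with $m\pm\delta$ odd, and for $n\in\{4,6\}$ uses $(q^n-1)/e$ fixing a pair of totally isotropic $n/2$-spaces; in each case $\mathcal{M}(y)$ is determined directly and is disjoint from $\mathcal{M}(x)=\{N_1\}$.
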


\begin{proof}
We may assume $n \geqs 3$. Recall that ${\rm U}_3(2)$ is soluble and ${\rm U}_3(3) \cong G_2(2)'$ was handled in Lemma \ref{l:ex_gensmall}, so we may assume $q \geqs 4$ when $n=3$. In addition, we may assume $G \not\in \mathcal{B}$ (see Proposition \ref{p:gensmall}). We will write $P_k$ (respectively, $N_k$) for the stabiliser of a $k$-dimensional totally isotropic (respectively, non-degenerate) subspace of the natural module $V$. Set $d = (n,q+1)$ and $e=d(q+1)$.

\vs

\noindent \emph{Case 1.} $n$ is odd

\vs

First assume $n$ is odd. Let $x \in G$ be a Singer cycle. Then Proposition  \ref{p:singer} implies that $x$ is a witness and \eqref{e:sing_unit} indicates that each $H \in \mathcal{M}(x)$ is a field extension subgroup of type ${\rm GU}_{n/k}(q^k)$. In particular, each maximal overgroup of $x$ acts irreducibly on $V$.

For $n = 2m+1 \geqs 9$, we define $y \in G$ as in \cite[Table II]{GK}. Then $y$ is a witness by \cite[Proposition 4.1]{GK} and the final column of \cite[Table II]{GK} indicates that 
each $H \in \mathcal{M}(y)$ acts reducibly on $V$. We now conclude by applying Lemma \ref{l:us2}.

Now assume $n \in \{3,5,7\}$. Let $y \in G$ be a regular semisimple element of order $q^{n-2}+1$ with $|C_G(y)| = (q^{n-2}+1)(q+1)/d$. More specifically, for $n=3$ we take $y$ to be the image of the diagonal matrix ${\rm diag}(\l,\l^2,\l^{q-2}) \in {\rm SU}_3(q)$ with respect to an orthonormal basis for $V$, where $\mathbb{F}_{q^2}^{\times} = \la \mu \ra$ and $\l = \mu^{q-1}$. Note that $y$ fixes an orthogonal decomposition 
\[
V = U \perp W_1 \perp W_2
\]
of the natural module, where each summand is non-degenerate and the $W_i$ are $1$-dimensional. 

We claim that 
\[
\mathcal{M}(y) = \left\{ \begin{array}{ll}
\{ G_{W_1}, \, G_{W_2}, \, G_{U^{\perp}} \} = \{ N_1 \mbox{ (two)}, \, N_2\} & \mbox{if $n \in \{5,7\}$ and $(n,q) \ne (5,2)$} \\
\{ G_{W_1}, \, G_{W_2}, \, G_{U}, \, K \} = \{ N_1 \mbox{ (three)}, \, K \} & \mbox{if $n=3$ and $q \geqs 7$,}
\end{array}\right.
\]
where $K$ is a subgroup of type ${\rm GU}_1(q) \wr S_3$.

Since $y$ acts irreducibly on $U$, it is clear that the reducible subgroups in $\mathcal{M}(y)$ are as described. For $n \in \{5,7\}$ we note that some power of $y$ has order $r$, where $r$ is a primitive prime divisor of $q^{2n-4}-1$, and it is easy to verify the above claim by inspecting the maximal subgroups of $G$ in \cite{BHR}. Now assume $n=3$, so $q \geqs 7$ and representatives of the conjugacy classes of maximal subgroups of $G$ are listed in \cite[Tables 8.5, 8.6]{BHR}. Visibly, $y$ fixes a unique orthogonal decomposition of $V$ into non-degenerate $1$-spaces, so it is contained in a unique subgroup of type ${\rm GU}_1(q) \wr S_3$. In addition, if $q$ is odd then $y$ is not contained in a subgroup of type ${\rm O}_3(q)$ (since by construction, $y$ does not lift to an element in ${\rm SU}_3(q)$ with a nonzero $1$-eigenspace). None of the remaining irreducible maximal subgroups of $G$ contain an element of order $|y|$ and this establishes the claim. 

We now complete the proof of the proposition for $n \in \{3,5,7\}$. Recall that we may assume $G \not\in \mathcal{B}$, so $q \geqs Q$ where $Q = 7,5,3$ for $n = 3,5,7$. In view of Lemma \ref{l:us2}, it suffices to show that $y$ is a witness. If $n=3$ then  $|\mathcal{M}(y)| = 4$ and the main theorem of \cite{LS} yields ${\rm fpr}(z,G/H) \leqs 4/21$ for all $H \in \mathcal{M}(y)$ and all $z \in G$ of prime order. Now apply Lemma \ref{l:us}. Similarly, if $n \in \{5,7\}$ and $q \geqs 5$, then the bound ${\rm fpr}(z,G/H) \leqs 4/15$ from \cite{LS} is good enough. So this just leaves $(n,q) = (7,3), (7,4)$. Here \cite[Proposition 3.16]{GK} implies that ${\rm fpr}(z,G/H) < 1/3$ for all $H \in \mathcal{M}(y)$ and once again the result follows from Lemma \ref{l:us}.

\vs

\noindent \emph{Case 2.} $n$ is even 

\vs

For the remainder, we may assume $n = 2m \geqs 4$ is even. In view of Proposition \ref{p:gensmall}, we may assume $q \geqs 7$ if $n=4$, and $q \geqs 4$ if $n=6$. Let $x \in G$ be an element of order $(q^{n-1}+1)/d$. Then \cite[Proposition 5.22]{BGK} gives $\mathcal{M}(x) = \{N_1\}$ and thus $x$ is a witness.
  
Suppose $n \geqs 8$ and let $y \in G$ be an element of order $(q^{m+\delta}+1)(q^{m-\delta}+1)/e$, where $\delta = 1$ if $m$ is even, otherwise $\delta = 2$. By inspecting \cite[Table II]{GK} we see that $\mathcal{M}(y) = \{ N_{m-\delta} \}$, so $y$ is a witness and we conclude via Lemma \ref{l:us2}. So to complete the proof, we may assume $n=4$ or $6$.

Suppose $n = 6$ with $q \geqs 4$ and let $y \in G$ be a regular semisimple element of order $(q^6-1)/e$. Now $y$ fixes a decomposition $V = U \oplus W$ of the natural module, where $U$ and $W$ are totally isotropic $3$-spaces on which $y$ acts irreducibly. In particular, $U$ and $W$ are the only proper nonzero subspaces of $V$ fixed by $y$, so 
$\mathcal{M}(y)$ contains two $P_3$ parabolic subgroups (namely, the stabilisers of $U$ and $W$), together with a unique subgroup of type ${\rm GL}_3(q^2)$ corresponding to the stabiliser of the decomposition $V = U \oplus W$. By inspecting \cite[Tables 8.26, 8.27]{BHR}, it is clear that any additional subgroup in $\mathcal{M}(y)$ must be a field extension subgroup of type ${\rm GU}_{2}(q^3)$. By arguing as in the proof of \cite[Lemma 8.7]{BH1}, we deduce that $y$ is contained in exactly one such subgroup and thus $|\mathcal{M}(y)| = 4$.
For $H = P_3$, \cite[Proposition 3.15]{GK} implies that ${\rm fpr}(z,G/H) < 1/6$ for all $z \in G$ of prime order, while the main theorem of \cite{LS} yields ${\rm fpr}(z,G/H) \leqs 1/3$ for $H$ of type ${\rm GL}_3(q^2)$ or ${\rm GU}_{2}(q^3)$. Therefore, \eqref{e:fpry} holds and thus $y$ is a witness by Lemma \ref{l:us}. As before, we now complete the argument by appealing to Lemma \ref{l:us2}.

Finally, suppose $n=4$ with $q \geqs 7$. Let $y \in G$ be a regular semisimple element of order $(q^4-1)/e$. By inspecting \cite[Tables 8.10, 8.11]{BHR}, it is easy to see that $\mathcal{M}(y)$ comprises two $P_2$ parabolic subgroups and a unique subgroup of type ${\rm GL}_2(q^2)$. As above, it is straightforward to show that \eqref{e:fpry} holds for all $z \in G$ of prime order and we now conclude via Lemmas \ref{l:us} and \ref{l:us2}.
\end{proof}

\begin{prop}\label{p:sympgen}
The conclusion to Theorem \ref{t:gen} holds if $G = {\rm PSp}_n(q)$ is a symplectic group.
\end{prop}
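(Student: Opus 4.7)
The plan is to follow the template established for linear and unitary groups in Propositions \ref{p:lingen} and \ref{p:unitgen}. Write $n = 2m$ and $d = (2,q-1)$. By appealing to Proposition \ref{p:gensmall} we may discard every case with $G \in \mathcal{B}$, so in particular we may assume $q \geqs 7$ when $n = 4$, $q \geqs 5$ when $n = 6$, and more generally we are free to invoke the primitive prime divisor machinery of \cite{GPPS} and the detailed list of maximal subgroups in \cite{BHR}.

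The first witness $x \in G$ will be a Singer cycle, of order $(q^m+1)/d$. By Proposition \ref{p:singer}, $x$ is a witness, and by \eqref{e:sing_symp} every subgroup $H \in \mathcal{M}(x)$ acts irreducibly on the natural module $V$ — each such $H$ is a field extension subgroup of type ${\rm Sp}_{n/k}(q^k)$ for some prime $k \mid m$, or of type ${\rm O}_n^-(q)$ (when $q$ is even), or of type ${\rm GU}_m(q)$ (when $qm$ is odd).

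The second witness $y \in G$ will be chosen to have only reducible maximal overgroups, fixing a non-degenerate orthogonal decomposition $V = U \perp W$ with $\dim U = 2$ and $\dim W = 2m-2$. For $m \geqs 4$, following the template in \cite[Table II]{GK}, I would take $y$ to be a regular semisimple element acting as a Singer-type element on $W$ of order divisible by a primitive prime divisor of $q^{2(m-1)}-1$, and by a suitable irreducible element on $U$. Enumeration of $\mathcal{M}(y)$ then proceeds via \cite{GPPS}, yielding (after case analysis) that $\mathcal{M}(y)$ consists of the $N_2$-type stabiliser of $U$ together with at most a bounded number of further reducible subgroups; the fixed point ratio bounds of \cite{Bur1} and \cite{LS} then verify \eqref{e:fpry} of Lemma \ref{l:us}, so $y$ is a witness. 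For the remaining cases $m \in \{2, 3\}$ lying outside $\mathcal{B}$, one proceeds by direct inspection of the tables in \cite{BHR}, defining $y$ explicitly and enumerating $\mathcal{M}(y)$ by hand, in exact parallel with Case~2 of the proof of Proposition \ref{p:unitgen}. Since every $H \in \mathcal{M}(x)$ is irreducible on $V$ while every $H \in \mathcal{M}(y)$ is reducible, the hypothesis \eqref{e:int} of Lemma \ref{l:us2} is immediate, and the proposition follows.

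The main obstacle will be the case where $q$ is even, because then $\mathcal{M}(x)$ contains the irreducible subgroup ${\rm O}_n^-(q)$ in addition to the field extension subgroups, and for small $m$ (particularly $m = 2$) the list of irreducible maximal subgroups of $G$ is rich enough that one must check carefully that no proposed reducible $y$ is $G$-conjugate into any of them. This amounts to verifying, from the arithmetic of $|y|$ and the fixed decomposition $V = U \perp W$, that $y$ cannot lie in a field extension subgroup or in an orthogonal subgroup of type ${\rm O}_n^{\pm}(q)$; these checks are routine but case-sensitive, and together with the low-dimensional exceptional configurations (which must either be forced into $\mathcal{B}$ or handled by a bespoke choice of $y$) will constitute the bulk of the work.
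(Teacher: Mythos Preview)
Your outline matches the paper's approach closely for $q$ odd, but there is a genuine gap in the even-characteristic case. You assert that the second witness $y$ (acting irreducibly on a non-degenerate $2$-space $U$ and on its complement $W$) will have only reducible maximal overgroups, and you anticipate that the ``routine'' checks will confirm $y \notin {\rm O}_n^{\pm}(q)$. In fact this check fails: an irreducible element of order $q+1$ on $U$ lies in ${\rm O}_2^{-}(q) < {\rm Sp}_2(q)$, and a Singer-type element of order $q^{m-1}+1$ on $W$ lies in ${\rm O}_{2m-2}^{-}(q) < {\rm Sp}_{2m-2}(q)$, so $y \in {\rm O}_2^{-}(q) \times {\rm O}_{2m-2}^{-}(q) < {\rm O}_n^{+}(q)$, and this orthogonal subgroup is an \emph{irreducible} maximal subgroup of ${\rm Sp}_n(q)$ when $q$ is even. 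Thus $\mathcal{M}(y)$ is not purely reducible, and your ``irreducible versus reducible'' dichotomy for verifying \eqref{e:int} collapses.

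The paper repairs this not by avoiding ${\rm O}_n^{+}(q)$ but by accepting it: one shows $\mathcal{M}(y) = \{N_2, {\rm O}_n^{+}(q)\}$ (for $m \geqs 3$, $q$ even) and then observes that ${\rm O}_n^{+}(q)$ is not $G$-conjugate to any member of $\mathcal{M}(x)$, since the latter contains ${\rm O}_n^{-}(q)$ and field extension subgroups only. So \eqref{e:int} still holds, but for a more delicate reason than you state. Two further points you have not anticipated: for $q=2$ with $m$ even one must also exclude the irreducibly embedded $S_{n+2}$ via the fully deleted permutation module (an order comparison handles $m \geqs 8$; smaller $m$ are in $\mathcal{B}$), and for $n=4$ with $q$ odd the paper abandons the semisimple template altogether and takes $y$ of order $p(q+1)$ with a genuine unipotent part, giving $\mathcal{M}(y) = \{P_1, {\rm Sp}_2(q)\wr S_2\}$.
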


\begin{proof}
We may assume $n = 2m \geqs 4$, with $(n,q) \ne (4,2),(4,3)$ and $G \not\in \mathcal{B}$ (see \eqref{e:omit} and Proposition \ref{p:gensmall}). As before, we will write $P_k$ (respectively, $N_k$) for the stabiliser of a $k$-dimensional totally isotropic (respectively, non-degenerate) subspace of the natural module $V$. Set $d = (2,q-1)$. 

First assume $m \geqs 4$. Let $x \in G$ be a Singer cycle, so $x$ is a witness by Proposition \ref{p:singer} and the maximal overgroups of $x$ are listed in \eqref{e:sing_symp}. If $q$ is odd then we can choose $y \in G$ as in \cite[Table II]{GK}. Then $y$ is a witness by \cite[Proposition 4.1]{GK} and we note that every subgroup in $\mathcal{M}(y)$ acts reducibly on $V$. Since the maximal overgroups of $x$ are irreducible, the desired result follows via Lemma \ref{l:us2}.

Now assume $q$ is even (with $m \geqs 4$) and fix $y \in G$ with $|y| = {\rm lcm}(q^{m-1}+1,q+1)$. Then $y$ fixes an orthogonal decomposition $V = U \perp W$ of the natural module, where $\dim U = 2$ and $y$ acts irreducibly on both summands.  We claim that $y$ is a witness and $\mathcal{M}(y) = \{ N_2, {\rm O}_{n}^{+}(q) \}$, which allows us to conclude via Lemma \ref{l:us2} once again. 

In order to prove the claim, first observe that $U$ and $W$ are the only proper nonzero subspaces of $V$ fixed by $y$, so the stabiliser $N_2$ of $U$ is the only reducible subgroup in $\mathcal{M}(y)$. Let us also note that we may embed
\[
y \in {\rm O}_2^{-}(q) \times {\rm O}_{n-2}^{-}(q) < {\rm O}_{n}^{+}(q) = H < G
\]
and we deduce that $y$ is contained in a unique conjugate of $H$ since $y^G \cap H = y^H$ and $C_H(y) = C_G(y)$. In addition, we observe that the action of $y$ on $V$ is not compatible with containment in an orthogonal subgroup ${\rm O}_n^{-}(q)$. At this point, we can now work with \cite{GPPS} to rule out the existence of any additional subgroups in $\mathcal{M}(y)$, noting that $|y|$ is divisible by a primitive prime divisor of $q^{n-2}-1$ (recall that ${\rm Sp}_8(2)$ is in $\mathcal{B}$).

Here it is worth pausing to highlight the special case where $q = 2$, $m$ is even and $H = S_{n+2}$ is irreducibly embedded in $G$ via the fully deleted permutation module (see \cite[Example 2.6(a)]{GPPS}). Now $|y| = 2^{m-1}+1$ and \cite[Theorem 2]{MNR} gives
\[
\log |z| \leqs \sqrt{\ell\log \ell}\left(1 + \frac{\log\log \ell - \a}{2\log \ell}\right)
\]
for all $z \in H$, where $\ell =n+2$ and $\a = 0.975$ (where $\log$ denotes the natural logarithm). It is straightforward to check that this upper bound implies that $|z|< |y|$ for all $m \geqs 20$. And one can check directly that $H$ does not contain an element of order $|y|$ if $8 \leqs m \leqs 18$, so there are no such subgroups in $\mathcal{M}(y)$. However, if $m = 6$ then $H = S_{14}$ has a unique conjugacy class of elements of order $2^5+1 = 33$, so for $(n,q) = (12,2)$ one has to carefully choose $y$ in order to avoid containment in a conjugate of $H$. This can always be done since $G$ has three classes of such elements, only one of which meets $H$. But in any case, we should observe that the group ${\rm Sp}_{12}(2)$ was handled 
computationally in Proposition \ref{p:gensmall}.

This leaves us with the cases $n = 4,6$. For $n=6$ with $q \geqs 4$ we take $x$ to be a  Singer cycle and we define $y$ to be a regular semisimple element of order $(q^2+1)(q+1)/d$. Here $y$ fixes an orthogonal decomposition $V = U \perp W$, where $U$ and $W$ are non-degenerate spaces on which $y$ acts irreducibly, with $\dim U = 2$. Then $\mathcal{M}(y) = \{ N_2, {\rm O}_6^{+}(q)\}$ if $q$ is even, and $\mathcal{M}(y) = \{ N_2 \}$ if $q$ is odd (see \cite[p.767]{GK}), so the result follows by combining \eqref{e:sing_symp} with Lemma \ref{l:us2}.

Finally, let us assume $n = 4$ and $q \geqs 7$. Let $x \in G$ be a Singer cycle and note that $\mathcal{M}(x) = \{ {\rm Sp}_2(q^2).2, {\rm O}_4^{-}(q)\}$ if $q$ is even and $\mathcal{M}(x) = \{ H \}$ if $q$ is odd, where $H$ is a field extension subgroup of type ${\rm Sp}_2(q^2)$ (see \eqref{e:sing_symp}). We define $y \in G$ as follows. For $q$ even, we take $y$ to be a regular semisimple element of order $q+1$ with $|C_G(y)| = (q+1)^2$. More precisely, we take $y = {\rm diag}(A,A^2)$ with respect to a standard symplectic basis $\{e_1,f_1,e_2,f_2\}$ of the natural module, where $A \in {\rm Sp}_2(q)$ is a Singer cycle. 
And for $q$ odd, we choose $y$ of order $p(q+1)$ with $|C_G(y)| = q(q+1)$. We claim that $y$ is a witness and $\mathcal{M}(y) = \{ H,K \}$, where $H$ is of type ${\rm Sp}_2(q) \wr S_2$, and $K$ is of type ${\rm O}_4^{+}(q)$ for $q$ even and type $P_1$ for $q$ odd. Given the claim, the main theorem of \cite{LS} implies that \eqref{e:fpry} holds for all $z \in G$ of prime order and we conclude by applying Lemmas \ref{l:us} and \ref{l:us2}. So it just remains to determine the subgroups in $\mathcal{M}(y)$.

Suppose $q$ is even and note that the maximal subgroups of $G$ are listed in \cite[Table 8.14]{BHR}. Here $y$ fixes an orthogonal decomposition $V = U \perp W$ of the natural module, where $U$ and $W$ are non-degenerate $2$-spaces on which $y$ acts irreducibly. Since $U$ and $W$ are the only proper nonzero subspaces of $V$ fixed by $y$, it follows that $y$ is contained in a unique subgroup of type ${\rm Sp}_2(q) \wr S_2$. In addition, $y$ is not contained in a parabolic subgroup. Subgroups of type ${\rm Sp}_2(q^2)$ or ${\rm O}_4^{-}(q)$ do not contain regular semisimple elements of order $q+1$ with a trivial $1$-eigenspace, and we note that $|{}^2B_2(q)|$ is indivisible by $q+1$. Next suppose $L = {\rm Sp}_4(q_0)$ is a subfield subgroup with $q=q_0^k$ and $k$ is a prime. If $k$ is odd then $L$ does not contain any elements of order $q+1$. And if $k=2$ then each $z \in L$ of order $q+1 = q_0^2+1$ is a Singer cycle and the eigenvalues on $V \otimes \mathbb{F}_{q^2}$ of such an element are incompatible with those of $y$, so there are no subfield subgroups in $\mathcal{M}(y)$. Finally, we note that 
\[
y \in {\rm O}_2^{-}(q) \times {\rm O}_2^{-}(q) < {\rm O}_4^{+}(q) = H < G
\]
and we calculate that $y$ is contained in a unique conjugate of $H$. 

Now assume $q$ is odd. Here $y$ is the image (modulo scalars) of ${\rm diag}(A,B) \in {\rm Sp}_4(q)$ with respect to a symplectic basis $\{e_1,f_1,e_2,f_2\}$, where $A \in {\rm Sp}_2(q)$ is a Singer cycle and $B \in {\rm Sp}_2(q)$ is a transvection. Note that $y$ fixes a unique $1$-dimensional subspace of $V$, so it is contained in a unique $P_1$ parabolic subgroup. It also fixes a unique orthogonal decomposition $V = U \perp W$ with non-degenerate summands, so $y$ is contained in a unique subgroup of type ${\rm Sp}_2(q) \wr S_2$. Finally, by inspecting \cite[Tables 8.12, 8.13]{BHR} we observe that no other maximal subgroup of $G$ contains an element of order $p(q+1)$ and the proof is complete.
\end{proof}

In order to complete the proof of Theorem \ref{t:gen}, we may assume $G = {\rm P\O}_{n}^{\e}(q)$ is an orthogonal group with $n \geqs 7$. As before, $P_k$ will denote the stabiliser in $G$ of a $k$-dimensional totally singular subspace of $V$. In addition, if $k$ is even then we use $N_k^{\delta}$ to denote the stabiliser of a non-degenerate $k$-space of type $\delta \in \{+,-\}$. (Recall that a non-degenerate $k$-space has \emph{plus-type} if it contains a totally singular subspace of dimension $k/2$, otherwise it has \emph{minus-type}.)

\begin{prop}\label{p:orthoddgen}
The conclusion to Theorem \ref{t:gen} holds if $G = \O_n(q)$ is an orthogonal group with $n$ odd.
\end{prop}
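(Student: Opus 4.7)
The plan is to mirror the strategy of Propositions \ref{p:lingen}--\ref{p:sympgen}: produce two witnesses $x, y \in G$ whose respective sets of maximal overgroups $\mathcal{M}(x)$ and $\mathcal{M}(y)$ are disjoint up to conjugacy, and then invoke Lemma \ref{l:us2} to conclude. Since $\O_n(q)$ with $n$ odd is simple only for $q$ odd, I write $n = 2m+1$ with $m \geqs 3$ and $q$ odd, and recall that $\O_7(3) \in \mathcal{B}$ has already been dispatched by Proposition \ref{p:gensmall}.

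For the first witness I would take a regular semisimple element $x \in G$ of order $(q^m+1)/2$, contained inside a maximal subgroup $H$ of type ${\rm O}_{2m}^{-}(q)$ (namely, the stabiliser of a suitable nonsingular $1$-space in the natural module $V$). Viewed inside the simple quotient $H/Z(H) \cong {\rm P\O}_{2m}^{-}(q)$, the element $x$ is a Singer cycle, so Proposition \ref{p:singer} together with \eqref{e:sing_ominus} controls the maximal overgroups of $x$ inside $H$. The full set of overgroups of $x$ in $G$ can then be pinned down via \cite{GPPS}, using that $|x|$ is divisible by a primitive prime divisor of $q^{2m}-1$: the candidates are $H$ itself together with field-extension subgroups of type ${\rm O}_{2m/k}^{-}(q^k)$ for prime divisors $k$ of $m$, plus (when $m$ is odd) a subgroup of type ${\rm GU}_m(q)$; crucially, all of them preserve the same nonsingular $1$-space that $H$ stabilises and are therefore reducible on $V$.

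For the second witness I would take the element $y \in G$ prescribed by \cite[Table II]{GK}; by \cite[Proposition 4.1]{GK} this element is automatically a witness, and inspection of the final column of \cite[Table II]{GK} shows that every member of $\mathcal{M}(y)$ acts irreducibly on $V$. Consequently no conjugate of any subgroup in $\mathcal{M}(x)$ lies in $\mathcal{M}(y)$, and Lemma \ref{l:us2} delivers the desired pair of conjugate derangements generating $G$.

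The main obstacle is the detailed verification that $\mathcal{M}(x)$ consists only of reducible subgroups. Eliminating the geometric Aschbacher classes $\C_3$, $\C_6$, $\C_8$ can be done by divisibility of $|x|$, but ruling out irreducible almost simple ($\mathcal{S}$-type) overgroups requires combing through the sporadic cases in \cite[Examples 2.6--2.9]{GPPS}, which is only delicate in low dimension. A short fixed-point-ratio check, using the bounds of \cite{Bur1} combined with $|\mathcal{M}(x)| \leqs |\pi(m)| + 2$, is also needed to confirm via Lemma \ref{l:us} that $x$ itself is a witness; for any residual small-rank cases not already in $\mathcal{B}$, one can fall back on direct inspection of the maximal subgroup tables in \cite[Tables 8.39, 8.53]{BHR}, or on a {\sc Magma} computation in the spirit of Proposition \ref{p:gensmall}.
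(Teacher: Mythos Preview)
There is a genuine gap in your choice of second witness. For $G = \O_{2m+1}(q)$ the element prescribed in \cite[Table II]{GK} is precisely a regular semisimple element of order $(q^m+1)/2$, and the final column of that table records its unique maximal overgroup as the stabiliser of a nonsingular $1$-space whose orthogonal complement has minus type --- in the paper's notation, $N_{n-1}^{-}$. In other words, the Table~II element is (up to conjugacy) your element $x$ itself, and its overgroups are \emph{reducible}, not irreducible. So your proposed pair $(x,y)$ collapses to a single conjugacy class of witnesses with $\mathcal{M}(x) = \mathcal{M}(y) = \{N_{n-1}^{-}\}$, and the disjointness hypothesis \eqref{e:int} of Lemma~\ref{l:us2} fails outright. (Incidentally, your description of $\mathcal{M}(x)$ is also slightly off: the field-extension subgroups ${\rm O}_{2m/k}^{-}(q^k)$ and the ${\rm GU}_m(q)$ subgroup you list are maximal in $\O_{2m}^{-}(q)$, not in $G$; they sit inside $N_{n-1}^{-}$ and so do not belong to $\mathcal{M}(x)$. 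By \cite[Proposition~5.20]{BGK} one has simply $\mathcal{M}(x) = \{N_{n-1}^{-}\}$.)

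The paper repairs this by manufacturing a genuinely new second witness, not drawn from \cite{GK}: one takes $y = {\rm diag}(A,B)$ where $A \in \O_{n-3}^{-}(q)$ is a Singer cycle and $B \in \O_3(q)$ is regular unipotent, so that $|y| = p(q^{m-1}+1)/2$. A short argument with \cite{GPPS} then gives $\mathcal{M}(y) = \{P_1,\, N_{n-3}^{-}\}$, and since neither of these is $G$-conjugate to $N_{n-1}^{-}$, Lemma~\ref{l:us2} applies. The point is that for odd-dimensional orthogonal groups there is no irreducible torus element available to play against $x$, so one is forced to build a second witness whose reducible overgroups are of \emph{different} types from $N_{n-1}^{-}$; mixing in a unipotent block is a clean way to force containment in $P_1$ and $N_{n-3}^{-}$ while excluding $N_{n-1}^{-}$.
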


\begin{proof}
Here $q$ is odd and $n=2m+1 \geqs 7$. In view of Proposition \ref{p:gensmall}, we may assume $(n,q) \ne (7,3)$. Let $x \in G$ be a regular semisimple element of order $(q^m+1)/2$ and note that $x$ fixes an orthogonal decomposition $V = U \perp W$ of $V$, where $U$ is a non-degenerate minus-type space of dimension $n-1$. By \cite[Proposition 5.20]{BGK} we have $\mathcal{M}(x) = \{ N_{n-1}^{-} \}$ (the proposition in \cite{BGK} is stated for $q \geqs 5$, but the given conclusion still holds when $q = 3$ and $n \geqs 9$). In particular, $x$ is a witness by Lemma \ref{l:us}.

We define $y = {\rm diag}(A,B)$, where $A \in \O_{n-3}^{-}(q)$ is a Singer cycle and $B \in \O_3(q)$ is a regular unipotent element (so $B$ has Jordan form $(J_3)$ on the natural $3$-dimensional module). Note that $|y| = p(q^{m-1}+1)/2$ and $y$ fixes an orthogonal decomposition $V = U \perp W$, where $U$ is a non-degenerate minus-type space of dimension $n-3$. We claim that $\mathcal{M}(y) = \{ P_1, N_{n-3}^{-} \}$, in which case the main theorem of \cite{LS} implies that \eqref{e:fpry} holds for all $z \in G$ of prime order and we conclude in the usual manner via Lemmas \ref{l:us} and \ref{l:us2}. So it remains to determine the subgroups in $\mathcal{M}(y)$.

To do this, first note that $y$ fixes exactly three proper nonzero subspaces of $V$, namely $U$ and $W$, together with the totally singular $1$-dimensional $1$-eigenspace of $B$ on $W$. Therefore, $P_1$ and $N_{n-3}^{-}$ are the only reducible subgroups in $\mathcal{M}(y)$. The existence of an irreducible subgroup in $\mathcal{M}(y)$ can be ruled out by appealing to \cite{GPPS}, noting that $|y|$ is divisible by a primitive prime divisor of $q^{n-3}-1$. We leave the reader to check the details.
\end{proof}

\begin{prop}\label{p:orthminusgen}
The conclusion to Theorem \ref{t:gen} holds if $G = {\rm P\O}_n^{-}(q)$ is an orthogonal group.
\end{prop}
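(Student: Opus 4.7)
The plan is to follow the template developed in Propositions \ref{p:unitgen}, \ref{p:sympgen} and \ref{p:orthoddgen}. Write $n = 2m \geqs 8$; in view of Proposition \ref{p:gensmall}, we may assume $G \not\in \mathcal{B}$, which disposes of the small-dimensional/small-$q$ cases (in particular we may assume $q$ is large enough that the Zsigmondy primes we shall need exist). The overall strategy is to exhibit two witnesses $x,y \in G$ such that every subgroup in $\mathcal{M}(x)$ acts irreducibly on the natural module $V$, while every subgroup in $\mathcal{M}(y)$ acts reducibly. Since no $G$-conjugate of a member of $\mathcal{M}(x)$ can then coincide with a member of $\mathcal{M}(y)$, Lemma \ref{l:us2} will deliver the conclusion.

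For $x$, I take a Singer cycle of order $(q^m+1)/d$, where $d = (2,q-1)$. By Proposition \ref{p:singer}, $x$ is a witness, and the description \eqref{e:sing_ominus} shows that each $H \in \mathcal{M}(x)$ is either a field extension subgroup of type $\mathrm{O}^-_{n/k}(q^k)$ for some prime $k \mid m$, or (only when $m$ is odd) a subgroup of type ${\rm GU}_m(q)$; in particular every maximal overgroup of $x$ acts irreducibly on $V$.

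For $y$, I would choose a regular semisimple element fixing an orthogonal decomposition $V = U \perp W$ with $U$ non-degenerate of plus-type and $\dim U = 2$, $W$ non-degenerate of minus-type and $\dim W = n-2$, acting as an element of order $q-1$ on $U$ and as a Singer cycle of order $(q^{m-1}+1)/d$ on $W$, so that $y$ is irreducible on each summand. Then $U$ and $W$ are the only proper nonzero $y$-invariant subspaces of $V$, so the only reducible maximal subgroup containing $y$ is the stabiliser $N_2^{+}$ of this decomposition. To establish $\mathcal{M}(y) = \{N_2^{+}\}$, I use that $|y|$ is divisible by a primitive prime divisor of $q^{2(m-1)}-1$ and invoke the main theorem of \cite{GPPS} to eliminate irreducible overgroups, working through the Aschbacher collections $\C_3, \C_5, \C_6, \C_7, \C_8$ and $\mathcal{S}$ in turn, exactly as in the proofs of Propositions \ref{p:unitgen} and \ref{p:sympgen}. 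Once $\mathcal{M}(y) = \{N_2^{+}\}$ is established, the fixed-point-ratio bound from the main theorem of \cite{LS} (or \cite{Bur1}) gives $\mathrm{fpr}(z,G/N_2^{+}) < 1$ for every $z \in G$ of prime order, so Lemma \ref{l:us} shows $y$ is a witness, and Lemma \ref{l:us2} completes the proof.

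The main obstacle lies in the case analysis needed to pin down $\mathcal{M}(y)$: the generic argument using \cite{GPPS} is routine, but the $\mathcal{S}$-subgroups (alternating, sporadic and cross-characteristic classical embeddings) must be ruled out by comparing element orders using the bounds in \cite{GMPS} and \cite{MNR}, and some care is required in the low-rank/small-$q$ boundary cases — in particular ${\rm P\O}_8^-(q)$ with $q$ small, and those $m$ where the collection $\mathcal{J}$ in \eqref{e:sing_ominus} contributes a ${\rm GU}_m(q)$-subgroup — where ad hoc arguments will be needed, although all such residual configurations should already be covered by Proposition \ref{p:gensmall} via the list $\mathcal{B}$.
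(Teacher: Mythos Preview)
Your choice of $x$ (a Singer cycle) and the overall strategy of pairing an irreducible witness with a reducible one via Lemma \ref{l:us2} match the paper exactly. The problem is with your analysis of $y$.

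You claim that an element of order $q-1$ acts irreducibly on the plus-type $2$-space $U$. This is false: $\mathrm{O}_2^+(q) \cong D_{2(q-1)}$, and every non-identity element fixes at least one line (the split-torus elements of order dividing $q-1$ fix both totally singular lines $W_1, W_2 \subset U$, while each reflection has a one-dimensional axis). Hence your $y$ fixes $W_1$ and $W_2$, and $\mathcal{M}(y)$ contains two $P_1$ parabolic subgroups in addition to $N_2^{+} = N_{n-2}^{-}$. Your conclusion $\mathcal{M}(y) = \{N_2^{+}\}$ is therefore wrong, and the one-line fpr argument you give does not suffice; one needs the sharper bounds from \cite[Propositions 3.15, 3.16]{GK} to handle the three subgroups simultaneously. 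This is precisely what the paper does for $n \in \{8,10,12\}$: it uses essentially your element (written as type $(n-2)^{-} \perp (1 \oplus 1)$), correctly identifies $\mathcal{M}(y) = \{P_1, P_1, N_{n-2}^{-}\}$, and then applies those bounds.

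There is a secondary gap: you propose this $y$ for all $n \geqs 8$, but for $q=2$ the action on $U$ has order $q-1=1$, so $y$ is not regular semisimple and the whole analysis collapses; since $\mathrm{P}\Omega_n^{-}(2)$ with $n \geqs 14$ is not in $\mathcal{B}$, this case is genuinely uncovered. The paper avoids this by simply citing the off-the-shelf witness from \cite[Table II]{GK} for all $n \geqs 14$ (where \cite[Proposition 4.1]{GK} already shows it is a witness with only reducible overgroups), reserving the explicit construction for $n \in \{8,10,12\}$ with $q \geqs 4$.
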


\begin{proof}
We may assume $n = 2m \geqs 8$. Let $x \in G$ be a Singer cycle, so $x$ is a witness by Proposition \ref{p:singer} and the subgroups in $\mathcal{M}(x)$ are recorded in \eqref{e:sing_ominus}. For $n \geqs 14$ we can define $y \in G$ as in \cite[Table II]{GK}. Then $y$ is a witness by \cite[Proposition 4.1]{GK}, while  \cite[Table II]{GK} indicates that every subgroup in $\mathcal{M}(y)$ acts reducibly on $V$. So for $n \geqs 14$, we conclude by applying Lemma \ref{l:us2}.

To complete the proof, we may assume $n \in \{8,10,12\}$. For $q \in \{2,3\}$ we can use {\sc Magma} to verify Theorem \ref{t:gen} directly (see Proposition \ref{p:gensmall}), so we are free to assume $q \geqs 4$. Let $y \in G$ be a regular semisimple element of type $(n-2)^{-} \perp (1 \oplus 1)$, so $y$ fixes a decomposition $V = U \perp (W_1 \oplus W_2)$ of the natural module, where $U$ is a non-degenerate minus-type space of dimension $n-2$ on which $y$ acts as a Singer cycle, and the $W_i$ are totally singular $1$-spaces. We claim that $\mathcal{M}(y) = \{H,K,L\}$, where $H$ and $K$ are $P_1$ parabolic subgroups and $L = N_{n-2}^-$.  

By construction, $y$ fixes exactly $4$ proper nonzero subspaces of $V$, namely $U$, $W_1$, $W_2$ and $U^{\perp} = W_1 \oplus W_2$. Therefore, $H$, $K$ and $L$ are the only reducible subgroups in $\mathcal{M}(y)$. By inspecting the relevant tables in \cite[Chapter 8]{BHR}, it is easy to see that there are no additional subgroups in $\mathcal{M}(y)$ and the claim follows.

Finally, let $z \in G$ be an element of prime order. Then by applying the upper bounds on ${\rm fpr}(z,G/H)$ in \cite[Propositions 3.15, 3.16]{GK}, we deduce that \eqref{e:fpry} holds  and the result follows via Lemma \ref{l:us2}.
\end{proof}

\begin{prop}\label{p:orthplusgen}
The conclusion to Theorem \ref{t:gen} holds if $G = {\rm P\O}_n^{+}(q)$ is an orthogonal group with $n \geqs 10$.
\end{prop}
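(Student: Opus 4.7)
Following the template of Propositions \ref{p:orthoddgen} and \ref{p:orthminusgen}, the plan is to exhibit two witnesses $x, y \in G$ for which no subgroup in $\mathcal{M}(x)$ is $G$-conjugate to a subgroup in $\mathcal{M}(y)$, and then invoke Lemma \ref{l:us2}. The groups $\O_{10}^{+}(2)$, ${\rm P\O}_{10}^{+}(3)$, $\O_{12}^{+}(2)$ and ${\rm P\O}_{12}^{+}(3)$ lie in $\mathcal{B}$ and are handled by Proposition \ref{p:gensmall}, so we may assume either $n \geqs 14$, or $n \in \{10,12\}$ with $q \geqs 4$. Write $n = 2m$, so $m \geqs 5$, and let $V$ be the natural module.

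For the ``reducible'' witness $y$, when $n \geqs 14$ I will take $y$ to be the element prescribed in \cite[Table II]{GK}, so that $y$ is a witness by \cite[Proposition 4.1]{GK} and every subgroup in $\mathcal{M}(y)$ acts reducibly on $V$ in a prescribed way (typically a single subspace stabiliser $N_k^{\delta}$). For $n \in \{10,12\}$ with $q \geqs 4$, I will define $y$ directly as a regular semisimple element preserving an explicit non-degenerate decomposition, determine $\mathcal{M}(y)$ by inspection of the tables in \cite[Chapter 8]{BHR}, and verify the witness condition \eqref{e:fpry} using the fixed point ratio estimates in \cite[Section 3]{GK} and \cite{Bur1}.

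For the other witness $x$, I will take a regular semisimple element of order $\operatorname{lcm}(q^{m-1}+1,\,q+1)/d$, for an appropriate scalar factor $d$, stabilising an orthogonal decomposition $V = U \perp W$ of type $(n-2)^{-} \perp 2^{-}$ and acting irreducibly on each summand. The only reducible subgroups in $\mathcal{M}(x)$ are then the two subspace stabilisers of type $N_{n-2}^{-}$ and $N_{2}^{-}$. To exclude irreducible subgroups from $\mathcal{M}(x)$, I will appeal to the main theorem of \cite{GPPS}, using that $|x|$ is divisible by a primitive prime divisor of $q^{2(m-1)}-1$ and $2(m-1) > n/2$; the surviving possibilities (field extension subgroups of type ${\rm O}_{n/k}^{-}(q^k)$ or ${\rm GU}_m(q)$, together with a short list of geometric or almost simple embeddings) will be eliminated by comparing the eigenvalue and centraliser structure of $x$ with that of a hypothetical overgroup, just as in the proofs of Propositions \ref{p:orthoddgen} and \ref{p:orthminusgen}. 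Since the overgroups of $y$ are reducible subspace stabilisers of a type disjoint from $\{N_{n-2}^{-}, N_{2}^{-}\}$, the disjointness hypothesis of Lemma \ref{l:us2} will then hold.

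The main obstacle will be the detailed \cite{GPPS}-based case analysis needed to pin down $\mathcal{M}(x)$ in the borderline cases $n \in \{10,12\}$ with $q \geqs 4$, where a handful of field-extension or tensor-product subgroups could a priori contain $x$; these must be ruled out individually. A secondary point is to check that $x$ really is a witness, i.e.\ that $\sum_{H \in \mathcal{M}(x)} {\rm fpr}(z,G/H) < 1$ for all $z \in G$ of prime order. Since our analysis will yield $|\mathcal{M}(x)| \leqs 2$ in almost all cases, and the fixed point ratio of a prime-order element on a subspace action $G/N_k^{-}$ is bounded well below $1/2$ by \cite[Propositions 3.15, 3.16]{GK}, this will reduce to a routine numerical verification, after which Lemma \ref{l:us2} completes the proof.
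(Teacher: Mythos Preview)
Your overall plan matches the paper's, but with the roles of the two witnesses swapped and a different source for one of them; more importantly, there is a genuine error in your analysis of $\mathcal{M}(x)$.

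A minor slip first: for your element $x$ of type $(n-2)^{-} \perp 2^{-}$, the stabilisers $N_{n-2}^{-}$ and $N_{2}^{-}$ are the \emph{same} subgroup, since each summand is the orthogonal complement of the other. So there is only one reducible subgroup in $\mathcal{M}(x)$.

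The real gap is your assertion that the \cite{GPPS} analysis will eliminate all irreducible overgroups of $x$. When $m$ is even this is false. Writing $x = {\rm diag}(A,B)$ with $A \in \O_{2}^{-}(q)$ and $B \in \O_{n-2}^{-}(q)$ both Singer cycles, one realises $x$ inside ${\rm GU}_{1}(q) \times {\rm GU}_{m-1}(q) < {\rm GU}_{m}(q) < G$ (here $m-1$ is odd, so ${\rm GU}_{m-1}(q)$ embeds in $\O_{2(m-1)}^{-}(q)$ and contains a cyclic torus of order $q^{m-1}+1$). In fact $x$ lies in exactly one subgroup from each of the two $G$-classes of type ${\rm GU}_{m}(q)$, so for $m$ even one has $\mathcal{M}(x) = \{N_{2}^{-}, H, K\}$ with $H,K$ non-conjugate of type ${\rm GU}_{m}(q)$. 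Three overgroups is still fine for the witness inequality \eqref{e:fpry}, but your disjointness check against the \cite[Table II]{GK} element must now confront these ${\rm GU}_{m}(q)$ overgroups, and your proposal does not address this at all; you also leave the overgroups of the \cite[Table II]{GK} element entirely unspecified.

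The paper avoids any fresh \cite{GPPS} case analysis. It takes precisely your $x$ as its \emph{second} witness $y$ and records exactly the overgroup structure above. For the first witness it uses the elements from \cite[Proposition 5.13]{BGK} (for $m$ odd) and \cite[Theorem 2.14]{Saul} (for $m$ even), whose maximal overgroups are already determined in those references: $\{N_{m-1}^{-}\}$ for $m$ odd, and $\{N_{m-2}^{-}, H, K\}$ for $m$ even, with $H,K$ field extension subgroups of type ${\rm O}_{m}^{+}(q^{2})$ or ${\rm GU}_{m}(q)$. One then concludes via Lemmas \ref{l:us} and \ref{l:us2}.
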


\begin{proof}
Write $n=2m$. In view of Proposition \ref{p:gensmall}, we may assume $q \geqs 4$ if $n \in  \{10,12\}$. Define $x$ as in \cite[Proposition 5.13]{BGK} for $m$ odd, and as in \cite[Theorem 2.14]{Saul} for $m$ even. Then as explained in \cite{BGK,Saul}, we have  
\[
\mathcal{M}(x) = \left\{ \begin{array}{ll}
\{ N_{m-1}^{-} \} & \mbox{if $m$ is odd} \\
\{ N_{m-2}^{-}, H,K\} & \mbox{if $m$ is even,}
\end{array}\right.
\]
where $H$ and $K$ are non-conjugate field extension subgroups of type ${\rm O}_{m}^{+}(q^2)$ if $m \equiv 2 \imod{4}$ and type ${\rm GU}_m(q)$ if $m \equiv 0 \imod{4}$. Clearly, if $m$ is odd then $x$ is a witness. And the same conclusion holds for $m$ even, in view of Lemma \ref{l:us} and the fixed point ratio bounds in \cite{Bur1} and \cite[Proposition 3.16]{GK}.

Let $y \in G$ be a regular semisimple element fixing an orthogonal decomposition $V = U \perp W$ into minus-type non-degenerate spaces, where $\dim U = 2$ and $y$ acts irreducibly on both summands. More precisely, we take $y$ to be the image (modulo scalars) of an element in $\O_n^{+}(q)$ of order ${\rm lcm}(q^{m-1}+1,q+1)/d$, where $d = (2,q-1)$. We claim that $\mathcal{M}(y)$ comprises $N_2^{-}$ (namely, the stabiliser $G_U = G_W$), together with two non-conjugate field extension subgroups of type ${\rm GU}_m(q)$ when $m$ is even. 

To see this, first note that $U$ and $W$ are the only proper nonzero subspaces of $V$ fixed by $y$, so $G_U$ is the only reducible subgroup in $\mathcal{M}(y)$. Since $|y|$ is divisible by a primitive prime divisor of $q^{n-2}-1$, we can work with \cite{GPPS} to show that the only additional subgroups in $\mathcal{M}(y)$ are of type ${\rm GU}_m(q)$ with $m$ even. So let us assume $m$ is even and note that $G$ has two conjugacy classes of subgroups $H$ of type ${\rm GU}_m(q)$ (see \cite[Proposition 4.3.18]{KL}). We calculate that $y^G \cap H = y^H$ and $C_G(y) = C_H(y)$, so $y$ is contained in a unique conjugate of $H$ and the claim follows. 

Finally, we can use Lemma \ref{l:us} to show that $y$ is a witness and we conclude by applying Lemma \ref{l:us2}.
\end{proof}

The following proposition completes the proof of Theorem \ref{t:gen}.

\begin{prop}\label{p:orthplus8gen}
The conclusion to Theorem \ref{t:gen} holds if $G = {\rm P\O}_8^{+}(q)$.
\end{prop}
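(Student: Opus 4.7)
The plan is to apply Lemma \ref{l:us2} by producing witnesses $x, y \in G$ whose sets of maximal overgroups are disjoint up to $G$-conjugacy. By Proposition \ref{p:gensmall} we may assume $q \geqs 7$.

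The new feature, compared with Proposition \ref{p:orthplusgen}, is that $G = {\rm P\O}_8^+(q)$ admits a group of triality automorphisms acting nontrivially on the set of $G$-conjugacy classes of maximal subgroups. In particular, triality fuses the three classes of $P_1$-type parabolic subgroups (in standard labelling $P_1, P_3, P_4$) and the three classes of subgroups of type $\Omega_7(q)$, and it has a significant effect on several other $\mathcal{C}_i$-classes recorded in \cite[Tables 8.50--8.52]{BHR}. Consequently each overgroup set $\mathcal{M}(\cdot)$ must be analysed more carefully than for $n \geqs 10$.

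For the first witness $x$, I would take a regular semisimple element fixing an orthogonal decomposition $V = U \perp W$, where $U$ is a non-degenerate $2$-space of minus type and $W$ is a non-degenerate $6$-space of minus type on which $x$ acts as a Singer cycle, so that $|x|$ is the order of the corresponding cyclic maximal torus ${\rm lcm}(q+1, q^3+1)/d$ with $d=(2,q-1)$. Since $U$ and $W$ are the only proper nonzero subspaces of $V$ fixed by $x$, the only reducible subgroup in $\mathcal{M}(x)$ is $G_U = N_2^-$; the irreducible overgroups can then be enumerated using \cite{GPPS}, since $|x|$ is divisible by a primitive prime divisor of $q^6-1$, together with the maximal subgroup list in \cite[Table 8.50]{BHR}, the main candidates being the three $G$-classes of subgroups of type $\Omega_7(q)$ produced by triality (each of which contains a standard embedding $\Omega_2^-(q) \times \Omega_6^-(q)$, so $x$ may lie in one conjugate from each). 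For the second witness $y$, I would take a regular semisimple element of order $(q^4-1)/d$ lying in the cyclic maximal torus of the Levi subgroup ${\rm GL}_4(q)$ of a $P_4$-parabolic; such $y$ fixes precisely the pair of complementary totally singular $4$-spaces preserved by that Levi, so the overgroups in $\mathcal{M}(y)$ consist only of the two $P_4$-parabolics and the $\mathcal{C}_2$-subgroup stabilising the unordered pair (of type ${\rm GL}_4(q).2$), with \cite{GPPS} again ruling out irreducible overgroups. Since the Aschbacher types appearing in $\mathcal{M}(x)$ are disjoint from those in $\mathcal{M}(y)$, no $G$-conjugate of any member of $\mathcal{M}(x)$ lies in $\mathcal{M}(y)$.

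The main obstacles will be (i) rigorously identifying $\mathcal{M}(x)$, since tracking exactly how many triality-related $\Omega_7(q)$ subgroups contain the specific torus through $x$ is delicate and requires careful use of the embedding data in \cite[Proposition 4.1.6]{KL}, and (ii) verifying that $x$ and $y$ are witnesses, which via Lemma \ref{l:us} reduces to checking
\[
\sum_{H \in \mathcal{M}(z)} {\rm fpr}(w, G/H) < 1
\]
for $z \in \{x,y\}$ and every element $w \in G$ of prime order, using the fixed point ratio bounds of \cite{Bur1} and \cite[Propositions 3.15, 3.16]{GK}. Once these ingredients are in place, Lemma \ref{l:us2} completes the proof, and hence the proof of Theorem \ref{t:gen}.
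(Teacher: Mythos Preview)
Your element $x$ of type $2^-\perp 6^-$ is precisely the paper's second witness, so that choice is sound in principle. However, your description of $\mathcal{M}(x)$ is wrong: the irreducible overgroups are two non-conjugate field extension subgroups of type ${\rm GU}_4(q)$, not triality images of $\Omega_7(q)$ (see \cite[p.767]{GK} and the $m$ even case in Proposition~\ref{p:orthplusgen}). They arise via the embedding ${\rm GU}_1(q)\times{\rm GU}_3(q) < {\rm GU}_4(q)$, which on restriction of scalars realises exactly the $\O_2^-(q)\times\O_6^-(q)$ containing $x$.

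The fatal gap is your second witness $y$ of order $(q^4-1)/d$. First, your appeal to \cite{GPPS} is invalid: that theorem requires a primitive prime divisor of $q^e-1$ with $n/2 < e \leqs n$, whereas every prime dividing $q^4-1$ has $e \leqs 4 = n/2$, so the hypothesis is not met. Second, and decisively, your $y$ \emph{does} lie in a ${\rm GU}_4(q)$ subgroup. A generator of the torus of type $(4)$ in ${\rm GU}_4(q)$ (order $q^4-1$) has eigenvalues $\{\alpha,\alpha^{-q},\alpha^{q^2},\alpha^{-q^3}\}$ on the natural $\mathbb{F}_{q^2}$-module, and on restriction of scalars these become $\{\alpha^{\pm q^i}:0\leqs i\leqs 3\}$ on $V$, which is exactly the eigenvalue set of a Singer cycle in the ${\rm GL}_4(q)$ Levi. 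Hence $\mathcal{M}(x)$ and $\mathcal{M}(y)$ both contain subgroups of type ${\rm GU}_4(q)$, condition~\eqref{e:int} of Lemma~\ref{l:us2} fails, and the pair $(x,y)$ cannot be used. The paper sidesteps this by pairing the $2^-\perp 6^-$ element instead with the element of \cite[Theorem~2.14]{Saul}, whose overgroup set $\{{\rm O}_4^+(q^2),\,{\rm O}_4^+(q^2),\,{\rm O}_4^-(q)\wr S_2\}$ is determined there and is visibly disjoint from $\{N_2^-,\,{\rm GU}_4(q),\,{\rm GU}_4(q)\}$.
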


\begin{proof}
The groups with $q \leqs 5$ can be handled using {\sc Magma} (see Proposition \ref{p:gensmall}), so we may assume $q \geqs 7$. Define $x \in G$ as in \cite[Theorem 2.14]{Saul}, which tells us that $\mathcal{M}(x) = \{H,K,L\}$, where $H$ and $K$ are field extension subgroups of type ${\rm O}_4^{+}(q^2)$ and $L$ is of type ${\rm O}_4^{-}(q) \wr S_2$. In addition, define $y$ as in the proof of Proposition \ref{p:orthplusgen}. Then $\mathcal{M}(y)$ contains a unique reducible subgroup of type $N_2^{-}$, together with two non-conjugate field extension subgroups of type ${\rm GU}_4(q)$ (see \cite[p.767]{GK}), so $y$ is a witness and the result follows from Lemma \ref{l:us2}. 
\end{proof}

\vs

This completes the proof of Theorem \ref{t:main6}.

\end{document}